\documentclass[12pt,reqno]{amsart}

\usepackage[colorlinks=true,
            linkcolor=blue,
            citecolor=blue,
            urlcolor=blue,
            pdfstartview=FitH,
            bookmarks=true,
            bookmarksopen=true]{hyperref}

\usepackage{amssymb, amsmath, amsfonts, latexsym}
\usepackage{enumerate}

\usepackage{color}
\usepackage{diagbox}
\newtheorem{thm}{Theorem}[]
\newtheorem{lem}{Lemma}[section]

\newtheorem{rmk}{Remark}[section]
\theoremstyle{definition}

\usepackage{graphicx}
\usepackage{subcaption}
\numberwithin{equation}{section} \theoremstyle{remark}

\title[Product of Ginibre]{\bf From Gaussian to Gumbel: 
extreme eigenvalues of complex Ginibre products with exact rates}

\author{Y\MakeLowercase{utao} M\MakeLowercase{a} and X\MakeLowercase{ujia} M\MakeLowercase{eng}}
\address{School of Mathematical Sciences $\&$ Laboratory  of Mathematics and Complex Systems of Ministry of Education, Beijing Normal University, 100875 Beijing, China.} 
\email{mayt@bnu.edu.cn, 202321130122@mail.bnu.edu.cn} 

\begin{document}
\maketitle
 
\begin{abstract} 

We consider the product of \(k_{n}\) independent \(n\times n\) complex Ginibre matrices and denote its eigenvalues by \(Z_{1},\ldots ,Z_{n}\). Let \(\alpha = \lim_{n\to\infty} n / k_{n}\). Using the determinantal point process method, we reduce the study of extremal eigenvalues to the evaluation of determinants of certain \(n\times n\) matrices. In the modulus case, rotational invariance makes the relevant matrix diagonal, which yields a product representation in terms of Gamma tail probabilities. In the real-part case, the matrix is no longer diagonal; we handle this by a polar-coordinate reduction that introduces an independent uniform angle and leads to explicit formulas involving Gamma variables and trigonometric integrals.

After appropriate rescaling, the spectral radius \(\max_{1\leq j\leq n}|Z_{j}|\) converges weakly to a nontrivial distribution \(\Phi_{\alpha}\) when \(\alpha \in (0, +\infty)\), to the Gumbel distribution when \(\alpha = +\infty\), and to the standard normal distribution when \(\alpha = 0\). The family \(\{\Phi_{\alpha}\}_{\alpha >0}\) extends continuously to the boundary regimes: \(\Phi_{\alpha}\) converges weakly to the standard normal law as \(\alpha \to 0^{+}\) and to the Gumbel law as \(\alpha \to +\infty\). Thus the three limiting regimes are connected by the single parameter \(\alpha\), yielding a continuous transition from Gaussian to Gumbel distribution. For the spectral radius, we obtain the exact rates of convergence both in the fixed-\(\alpha\) regime and at the boundaries \(\alpha = 0\) and \(\alpha = +\infty\). For the rightmost eigenvalue \(\max_{1\leq j\leq n}\Re Z_{j}\), we establish the convergence rates in the boundary regimes, while for \(\alpha \in (0, +\infty)\) we show that the limiting distribution, though not available in closed form, still interpolates continuously between the normal and Gumbel laws.
\end{abstract} 

Keywords: Product of Ginibre ensemble; spectral radius; rightmost eigenvalue; Berry-Esseen bound; continuous transition.

AMS Classification Subjects 2020: 60B20, 15B52, 60G70, 60F05.

\tableofcontents

\section{Introduction}
Products of random matrices form a central class of models in modern random matrix theory and arise naturally in wireless communications, disordered systems, quantum transport, dynamical systems, and non-Hermitian statistical mechanics(\cite{Ipsen2015}). Among them, products of complex Ginibre matrices are especially tractable: for a fixed number of factors, the joint density of the eigenvalues is explicit, and the underlying determinantal structure makes it possible to study fine spectral statistics in considerable detail; see, for example, Akemann and Burda \cite{AkBur} and Adhikari {\it et al.} \cite{Adhi}. For various results concerning
products of random matrices, the readers are referred to \cite{AkBur, AkIp15, BaiY98, Bordenave, BouLa, BurdaJW10, BJLN10, Burda13, Crisanti12, Forrester15, GT, Ipsen, IpsenKie14, JQ17, JQ19, Kopel2020, LW15, LWW23,  LW24, QiXie, RS11, RRSV15, Tik, Zeng17}.

In this paper we study extreme eigenvalues of products of complex Ginibre matrices in a regime where the number of factors is allowed to vary with the dimension. Let
${\bf A}_1, \dots, {\bf A}_{k_n}$ be independent $n\times n$ complex Ginibre matrices, and let $Z_1,\dots,Z_n$ be the eigenvalues of the product ${\bf A}_1{\bf A}_2\cdots{\bf A}_{k_n}$. We focus on the two natural edge observables
\[
\max_{1\le j\le n}|Z_j|
\qquad \text{and} \qquad 
\max_{1\le j\le n}\Re Z_j,
\]
namely the spectral radius and the rightmost eigenvalue. The relevant asymptotic parameter is
\[
\alpha:=\lim_{n\to\infty}\frac{n}{k_n}\in[0,+\infty].
\]
The three cases $\alpha=0$, $\alpha\in(0,+\infty)$, and $\alpha=+\infty$ correspond, respectively, to dense-factor regime, the  proportional regime, and the sparse-factor regime.

For Ginibre products, the spectral radius has a rich asymptotic theory. We first recall the results of Jiang and Qi in \cite{JQ17}. 
\begin{thm}[Jiang-Qi] 
Let $k_n$ and $\alpha_n$ be defined as above. The following holds.

(a). If \(\alpha=+\infty\), then 
\[\sqrt{\alpha_n \log \alpha_n}\big( n^{-k_n/2} \max_{1 \leq j \leq n} |Z_j| - 1 \big) - (\log \alpha_n - \log(\sqrt{2\pi} \log \alpha_n))\]
converges weakly to the Gumbel distribution.

(b). If \(\alpha \in (0, \infty)\), for any $x>0,$ we have 
\[\lim_{n\to\infty}\mathbb{P}(\max_{1 \leq j \leq n} |Z_j|\le x n^{k_n/2})=\prod_{j=0}^{+\infty}\Phi(\frac{j}{\sqrt{\alpha}}+\frac{1}{2\sqrt{\alpha}}+2\sqrt{\alpha}\log x).\] 

(c). If \(\alpha=0\), then
\[\lim_{n\to\infty}\mathbb{P}(\max_{1 \leq j \leq n} \log |Z_j|\le  \frac12 k_n \psi(n)+\frac{x}{\sqrt{2\alpha_n}})=\Phi(x).\]
Here, $\Phi$ is the distribution function of standard normal and $\psi(n)$ is the digamma function.
	\end{thm}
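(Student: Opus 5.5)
The plan rests on the Kostlan-type representation that the determinantal structure gives us in the modulus case. The moduli $\{|Z_1|,\dots,|Z_n|\}$ have the same joint law as $\{Y_1,\dots,Y_n\}$, where $Y_j=\prod_{r=1}^{k_n} s_{j,r}^{1/2}$ and the $s_{j,r}$ are independent $\mathrm{Gamma}(j,1)$ variables. Independence across $j$ then gives
\[
\mathbb{P}\Big(\max_{j}|Z_j|\le t\Big)=\prod_{j=1}^{n}\mathbb{P}\Big(\sum_{r=1}^{k_n}\log s_{j,r}\le 2\log t\Big).
\]
Each factor is the distribution function of a sum of $k_n$ i.i.d. copies of $\log s_j$, with $s_j\sim\mathrm{Gamma}(j,1)$. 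Its mean is $\psi(j)$ and its variance is $\psi'(j)\approx 1/j$. So I would write each factor as $\mathbb{P}\big((S_j-k_n\psi(j))/\sqrt{k_n\psi'(j)}\le x_j\big)$ and apply a Berry--Esseen bound. The third absolute moment of $\log s_j$ is $O(j^{-3/2})$, so the error is $O(k_n^{-1/2})$ uniformly in $j$. The tail of $\log s_j$ is exponential, which also allows Cramér-type moderate deviations when $x_j$ is large.

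For case (b), set $t=xn^{k_n/2}$ and reindex by $i=n-j$. Since $\psi(n-i)=\log n-(i+\tfrac12)/n+O(n^{-2})$ and $\psi'(n-i)\approx 1/n$, we get
\[
x_{n-i}=\frac{2k_n\log x+k_n(i+\tfrac12)/n+O(k_n/n^2)}{\sqrt{k_n/n}} \to 2\sqrt\alpha\log x+\frac{i+\tfrac12}{\sqrt\alpha}.
\]
This holds for each fixed $i$, because $k_n/n\to 1/\alpha$. The factors with $i\le M$ therefore converge to $\prod_{i\le M}\Phi(\cdot)$. For the tail, $i>M$, we need $\sum_{i>M}\big(1-\text{factor}\big)$ to be small. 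The plan is to bound $1-\text{factor}$ by $e^{-c\,i^2/\alpha}$ for $i\le \varepsilon n$, using Gaussian-type tails from the moderate-deviation or Chernoff bound. For $i>\varepsilon n$ we use $\psi(n)-\psi(j)\ge c$ and a Chernoff bound with exponent of order $k_n$, times $n$ terms, which is still $o(1)$.

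For case (a), $k_n=o(n)$ and each $x_j$ is large. Only the $j$ with $n-j$ of order $\sqrt{\alpha_n\log\alpha_n}$ matter. The product becomes $\exp\big(-\sum_i \bar\Phi(x_{n-i})(1+o(1))\big)$. Take $t$ as in the centering of (a), so that $2k_n\log t\approx k_n\log n+2k_n\big(\log\alpha_n-\log(\sqrt{2\pi}\log\alpha_n)+y\big)/\sqrt{\alpha_n\log\alpha_n}$. Then $x_{n-i}\approx a_n+i/\sqrt{\alpha_n}$, and $\sum_i\bar\Phi(a_n+i/\sqrt{\alpha_n})\approx \sqrt{\alpha_n}\,\bar\Phi(a_n)/a_n$. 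The centering makes this equal to $e^{-y}$, which gives the Gumbel law. Replacing the exact factors by Gaussian tails at the level $x\sim\sqrt{2\log\alpha_n}$ needs Cramér's moderate-deviation theorem, which controls the relative error $1+O(x^3/\sqrt{k_n})$. This is the step I expect to be the main obstacle: verifying that this relative error is uniformly small in the relevant window. It may require that $k_n$ grow, or else a direct saddle-point analysis of the Gamma tails.

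For case (c), $n=o(k_n)$, and the top factor $j=n$ dominates. Take $2\log t=k_n\psi(n)+2x/\sqrt{2\alpha_n}$. Because $\psi'(n)\approx 1/n$, the normalized argument at $j=n$ tends to $x$, so that factor tends to $\Phi(x)$. For $j=n-i$ the argument is at least $x+i\sqrt{k_n/n}\to\infty$. The Berry--Esseen and Chernoff bounds then give $\sum_{i\ge1}\big(1-\text{factor}\big)\to0$, by the same splitting as in case (b).
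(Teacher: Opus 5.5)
The genuine gap is in (a), at exactly the step you flag. The regime $\alpha=+\infty$ includes bounded $k_n$ (e.g.\ $k_n\equiv 1$, a single Ginibre matrix), and there every limit theorem ``in $k_n$'' is vacuous.

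\emph{What (a) requires.} You need each tail $\mathbb P(\log Y_{n-i}>2\log t)$ with relative error $o(1)$, uniformly in $i$. Each such tail is itself only of order $\sqrt{\log\alpha_n/\alpha_n}$, and the relevant Gaussian level is $u\approx\sqrt{\log\alpha_n}$ (not $\sqrt{2\log\alpha_n}$). An additive Berry--Esseen error $O(k_n^{-1/2})$ is useless at that scale. The relative error $O(u^3/\sqrt{k_n})$ you quote is small only if $k_n\gg(\log\alpha_n)^3$.

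\emph{The missing idea.} The effective number of summands is $nk_n$, not $k_n$, because a $\mathrm{Gamma}(\ell,1)$ variable is a sum of $\ell$ exponentials. The paper does not reprove the quoted theorem, but its proof of Theorem~\ref{main} rests on this, in Lemma~\ref{333}:
\begin{enumerate}
\item write $\log(S_{\ell,r}/\ell)=(S_{\ell,r}-\ell)/\ell+R_r$ and use \eqref{newsum}, i.e.\ $\sum_r S_{\ell,r}\stackrel{d}{=}\sum_{i\le \ell k_n}\xi_i$ with i.i.d.\ standard exponentials;
\item control $\sum_r R_r$ at scale $\alpha_n^{-3/5}$ by Chebyshev, using $\mathrm{Var}(\log S_{\ell,1}-S_{\ell,1}/\ell)=\frac{1}{2\ell^2}+O(\ell^{-3})$; this costs an additive error $\lesssim n^{-4/5}$, negligible against the main terms;
\item apply Petrov's moderate-deviation theorem to the $\ell k_n\ge n/2$ exponentials, which gives relative error $O(u^3(nk_n)^{-1/2})$ whatever $k_n$ is;
\item handle the far indices with Lemma~\ref{H} and monotonicity.
\end{enumerate}
Your saddle-point alternative could also be made to work, since tilting keeps $\log s_j$ in the log-Gamma family, whose normalized cumulants are of order $(jk_n)^{1-m/2}$. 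As written, however, (a) is not proved.

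Second, the constants you assert in (a) and (c) do not come out of your own reduction.
\begin{itemize}
\item In (c), with $2\log t=k_n\psi(n)+2x/\sqrt{2\alpha_n}$, the normalized argument of the top factor is $\sqrt2\,x\,(n\psi'(n))^{-1/2}\to\sqrt2\,x$. So you get $\Phi(\sqrt2\,x)$, not $\Phi(x)$.
\item In (a), $t=n^{k_n/2}(1+\delta)$ with $\delta=(c_n+y)/\sqrt{\alpha_n\log\alpha_n}$ and $c_n=\log\alpha_n-\log(\sqrt{2\pi}\log\alpha_n)$. Then $2\log t=k_n\log n+2\delta+O(\delta^2)$, not ``$2k_n\log t\approx\cdots$''. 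Hence $x_{n-i}\approx 2\delta\sqrt{\alpha_n}+(i+\frac12)/\sqrt{\alpha_n}$ with $2\delta\sqrt{\alpha_n}\approx 2\sqrt{\log\alpha_n}$. By Chernoff, $\sum_i\Psi(x_{n-i})\to0$, so the probability tends to $1$ for every $y$. The centering gives $e^{-y}$ only with the prefactor $2\sqrt{\alpha_n\log\alpha_n}$.
\end{itemize}
So the quoted (a) and (c) contain transcription slips. The versions consistent with Theorem~\ref{main}, where $\sqrt{\alpha_n}(2\log\max_j|Z_j|-k_n\psi(n))$ is asymptotically standard normal when $\alpha=0$, are:
\begin{itemize}
\item $2\sqrt{\alpha_n\log\alpha_n}\,(n^{-k_n/2}\max_j|Z_j|-1)-c_n$ converges weakly to the Gumbel law;
\item $\mathbb P(\max_j\log|Z_j|\le\frac12k_n\psi(n)+\frac{x}{2\sqrt{\alpha_n}})\to\Phi(x)$.
\end{itemize}
Your statements ``tends to $x$'' and ``the centering makes this equal to $e^{-y}$'' were asserted rather than computed.

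The same stray $k_n$ appears in (b): the numerator of $x_{n-i}$ should be $2\log x+k_n(i+\frac12)/n+O(k_n(i+1)^2/n^2)$, since $2k_n\log x$ would diverge. Your limit there is nonetheless correct. Part (b), and part (c) with the corrected normalization, follow the paper's route: the product formula of Lemma~\ref{ctor}, a CLT for the top factors, and a Chernoff truncation.
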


Thus, depending on the growth rate of \(k_n\) relative to \(n\), three completely different limit laws appear. Wang \cite{Wang18} further studied order statistics of the moduli in a broader class of polynomial ensembles, while Qi and Xie \cite{QiXie} obtained a more unified formulation on the logarithmic scale for products of rectangular complex Ginibre matrices. Ma and Qi \cite{MaQi2024} obtained similar results for products of Ginibre matrices and their inverses.

These works reveal a clear trichotomy for radial extremes, but they also leave open a basic structural issue. The normalizations used in the three regimes are different, and in the dense regime the natural observable is the \emph{logarithm} of the spectral radius rather than the spectral radius itself. As a consequence, the earlier results do not by themselves exhibit a genuine one-parameter interpolation between the Gaussian, intermediate, and Gumbel regimes.

The situation is even subtler for the rightmost eigenvalue. Unlike the spectral radius, the largest real part is not a radial observable, and rotational symmetry no longer diagonalizes the relevant operator. Even for a single Ginibre matrix, the analysis of the rightmost eigenvalue typically relies on determinantal kernels and Fredholm determinants, rather than on a reduction to independent radial variables. In related settings, Bender \cite{Benderellip} obtained an edge transition for elliptic Ginibre ensembles, and for single Ginibre matrices effective error bounds and sharp convergence rates for extremal statistics have been established in recent work; see, for instance, \cite{Cipolloni22Directional, HuMa25}. More recently, universality of extremal eigenvalue fluctuations has also been proved for broad classes of complex i.i.d.\ non-Hermitian matrices; see \cite{CipoErXu}. However, these results do not provide a direct counterpart for products of complex Ginibre matrices when $\alpha\in[0, +\infty]$. To the best of our knowledge, for the rightmost eigenvalue of Ginibre products there has been no closed-form limiting law, no quantitative analysis, and no explicit Fredholm-determinant asymptotics available in the literature.

The present paper addresses both the structural and the quantitative aspects of this problem. Our first goal is to show that the three spectral-radius regimes can in fact be embedded into a single continuous picture. We introduce an $\alpha$-dependent rescaling under which the spectral radius converges, for every fixed $\alpha\in(0,+\infty)$, to a family of distribution functions $\{\Phi_\alpha\}_{\alpha>0}$, and this family extends continuously to the boundary regimes:
\[
\Phi_\alpha \Longrightarrow \Phi \quad \text{as }\alpha\to0^+,
\qquad
\Phi_\alpha \Longrightarrow \Lambda \quad \text{as }\alpha\to+\infty,
\]
where $\Phi$ is the standard normal law and $\Lambda(x)=e^{-e^{-x}}$ is the Gumbel law. Thus the single parameter $\alpha$ governs a continuous transition from Gaussian to Gumbel behavior.

Our second goal is quantitative. For the spectral radius, we obtain exact asymptotics for the Berry Esseen bound  to the limiting law in all three regimes. In particular, we derive exact convergence rates when $\alpha=0$ and $\alpha=+\infty$, as well as exact fixed-$\alpha$ asymptotics in the proportional regime. We also determine the boundary asymptotics of the interpolating family $\Phi_\alpha$ itself as $\alpha\to0^+$ and $\alpha\to+\infty$. In this sense, the paper gives both a unified limiting picture and a precise quantitative theory for the spectral radius of Ginibre products.

We also prove an analogous continuous transition for the rightmost eigenvalue. After a suitable rescaling of $\max_{1\le j\le n} \Re Z_j$, we recover the Gaussian limit at $\alpha=0$ and the Gumbel limit at $\alpha=+\infty$. For every fixed $\alpha\in(0,+\infty)$, the limiting distribution is given by the Fredholm determinant of an explicit trace-class operator. This provides a continuous interpolation from Gaussian to Gumbel for the largest real part as well. The strength of the quantitative result, however, is necessarily different from that for the spectral radius: in the rightmost problem we obtain exact convergence rates in the two boundary regimes and sharp boundary asymptotics of the limiting family, but we do not obtain a full fixed-$\alpha$ convergence-rate formula in the interior regime. The obstruction is precisely that the limiting object is no longer an explicit infinite product, but a genuinely non-diagonal Fredholm determinant.

Our approach is based on the determinantal structure of the eigenvalue point process, but the two observables lead to markedly different algebraic problems. For the spectral radius, rotational invariance makes the relevant matrix diagonal. This reduces the gap probability to a product representation involving tail probabilities of Gamma variables, and this explicit structure is what allows us to identify the interpolating family and extract exact convergence rates. For the rightmost eigenvalue, by contrast, the corresponding matrix is not diagonal. A polar-coordinate reduction introduces an additional independent angular variable and leads to explicit integral formulas involving Gamma variables together with trigonometric terms. This distinction is the main technical feature of the paper: the modulus problem is essentially diagonal, while the real-part problem is inherently non-diagonal.

A further methodological point is that the standard approximation
\[
\det({\rm I}-\mathbb{K}_n|_E)\approx \exp\bigl(-{\rm Tr}(\mathbb{K}_n|_E)\bigr)
\]
is sufficient only in the sparse-factor regime, where the relevant operator norm is small enough for first-order trace asymptotics to dominate. In the regimes $\alpha=0$ and $\alpha\in(0,\infty)$, this mechanism is no longer adequate. Our analysis instead relies on refined asymptotics of the matrix entries arising in the determinantal reduction, which are then assembled to identify the limit and, in the spectral-radius case, to derive exact quantitative errors. 

\subsection{Statement of the results}

We now state our main results separately for the two edge observables
\[
\max_{1\le j\le n}|Z_j|,
\qquad
\max_{1\le j\le n}\Re Z_j,
\]
under the standing assumption
\[
\alpha_n:=\frac{n}{k_n}\longrightarrow \alpha\in[0,+\infty].
\]
Throughout, \(\Phi\) denotes the density and distribution function of the
standard normal law, and
\[
\Lambda(x):=e^{-e^{-x}},\qquad x\in\mathbb R,
\]
denotes the Gumbel distribution function.

\paragraph{\bf Spectral radius.}
Define the rescaled spectral radius by
\[
X_n:=b_n^{-1}\Bigl[\sqrt{\alpha_n}\bigl(2\log \max_{1\le j\le n}|Z_j|-k_n\psi(n)\bigr)-a_n\Bigr],
\]
where
\[
a_n:=\sqrt{\log(\alpha_n+1)}
-\frac{\log\!\bigl(\sqrt{2\pi}\log(\alpha_n+e^{1/\sqrt{2\pi}})\bigr)}
{\sqrt{\log(\alpha_n+e)}},
\qquad
b_n:=\frac{1}{\sqrt{\log(\alpha_n+e)}},
\]
and \(\psi(x)=\Gamma'(x)/\Gamma(x)\) is the digamma function.

For \(\alpha\in(0,+\infty)\), define
\[
a:=\sqrt{\log(\alpha+1)}
-\frac{\log\!\bigl(\sqrt{2\pi}\log(\alpha+e^{1/\sqrt{2\pi}})\bigr)}
{\sqrt{\log(\alpha+e)}},
\qquad
b:=\frac{1}{\sqrt{\log(\alpha+e)}},
\]
\[
v_\alpha(j,x):=\frac{j-1}{\sqrt{\alpha}}+a+bx,
\qquad j\ge 1,\ x\in\mathbb R,
\]
and
\[
\Phi_\alpha(x):=\prod_{j=1}^{\infty}\Phi\bigl(v_\alpha(j,x)\bigr).
\]

To state the exact fixed-\(\alpha\) asymptotics, we further define
\[
q_1(j,x)
:=\frac{1}{12\sqrt{\alpha}}
\Bigl[2\alpha\bigl(v^2_\alpha(j,x)-1\bigr)
-3\sqrt{\alpha}(2j-1)v_\alpha(j,x)+6j(j-1)\Bigr],
\]
and
\[
q_2(j,x):=c_1-c_2x-\frac{j-1}{2\alpha^{3/2}},
\]
where, writing
\[
w(t):=2t\log t,
\]
we set
\[
c_1:=
\frac{\sqrt{\log(\alpha+1)}}{w(\alpha+1)}
+\frac{2}{w(\alpha+e^{1/\sqrt{2\pi}})\sqrt{\log(\alpha+e)}}
-\frac{\log\!\bigl(\sqrt{2\pi}\log(\alpha+e^{1/\sqrt{2\pi}})\bigr)}
{w(\alpha+e)\sqrt{\log(\alpha+e)}},
\]
and
\[
c_2:=\frac{1}{2(\alpha+e)(\log(\alpha+e))^{3/2}}.
\]

\begin{thm}[Spectral radius]\label{main}
Let \(X_n\), \(\alpha\), \(a_n\), \(v_\alpha\), \(q_1(j,\cdot)\), \(q_2(j,\cdot)\), and
\(\Phi_\alpha\) be defined as above. Then the following Berry Esseen bound 
hold for the spectral radius.

\begin{enumerate}
\item If \(\alpha=+\infty\), then
\[
\sup_{x\in\mathbb R}\big|\mathbb P(X_n\le x)-e^{-e^{-x}}\big|
=\frac{(\log\log \alpha_n)^2}{2e\log \alpha_n}(1+o(1))
\]
for all sufficiently large \(n\).

\item If \(\alpha=0\), then
\[
\sup_{x\in\mathbb R}\big|\mathbb P(X_n\le x)-\Phi(x)\big|
=(1+o(1))
\sup_{x\in\mathbb R}\phi(x)\big|\sqrt{\alpha_n}-\frac{x}{4n}\big|
\]
as \(n\to+\infty\).

\item If \(\alpha\in(0,+\infty)\), then
\[
\sup_{x\in\mathbb R}\big|\mathbb P(X_n\le x)-\Phi_\alpha(x)\big|
\]
\[
=(1+o(1))
\sup_{x\in\mathbb R}
\Phi_\alpha(x)
\big|
\sum_{j=1}^{\infty}
\frac{\phi(v_\alpha(j,x))}{\Phi(v_\alpha(j,x))}
\bigl[n^{-1}q_1(j,x)+(\alpha_n-\alpha)q_2(j,x)\bigr]
\big|
\]
whenever \(n\) is sufficiently large.

\item As the tuning parameter \(\alpha\) varies from \(0\) to \(+\infty\), one has
\[
\lim_{\alpha\to0^+}
\frac{1}{\sqrt{\alpha}}
\sup_{x\in\mathbb R}\left|\Phi_\alpha(x)-\Phi(x)\right|
=\frac{1}{\sqrt{2\pi}},
\]
and
\[
\lim_{\alpha\to+\infty}
\frac{\log\alpha}{(\log\log\alpha)^2}
\sup_{x\in\mathbb R}\big|\Phi_\alpha(x)-e^{-e^{-x}}\big|
=\frac{1}{2e}.
\]
\end{enumerate}
\end{thm}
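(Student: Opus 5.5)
The starting point is the standard radial decomposition for products of complex Ginibre matrices. By rotational invariance the determinantal kernel restricted to a disk is diagonal in the monomial basis. Hence the moduli $\{|Z_j|^2\}$ have the same law as $\{\prod_{r=1}^{k_n} \Gamma_{j,r}\}_{j=1}^n$, where the $\Gamma_{j,r}$ are independent and $\Gamma_{j,r}\sim\mathrm{Gamma}(j,1)$. Therefore
\[
\mathbb P\Big(\max_j |Z_j|\le t\Big)=\prod_{j=1}^{n}\mathbb P\Big(\sum_{r=1}^{k_n}\log\Gamma_{j,r}\le 2\log t\Big).
\]
Writing $j=n-i+1$, we set $t$ so that $\sqrt{\alpha_n}(2\log t-k_n\psi(n))=a_n+b_nx$. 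The $i$-th factor is then $\mathbb P(S_{n-i+1}\le k_n\psi(n)+(a_n+b_nx)/\sqrt{\alpha_n})$, with $S_m=\sum_r\log\Gamma_{m,r}$. Its mean is $k_n\psi(m)$ and its variance is $k_n\psi'(m)$. Using $\psi(n)-\psi(n-i+1)=(i-1)/n+O(i^2/n^2)$ and $k_n\psi'(n)\approx k_n/n=1/\alpha_n$, the standardized argument is $\approx (i-1)/\sqrt{\alpha_n}+a_n+b_nx=v_{\alpha_n}(i,x)$. This already explains the shape of $\Phi_\alpha$.

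For the quantitative statement I would do an Edgeworth expansion of each factor. $S_m$ is a sum of $k_n$ i.i.d.\ variables with explicit cumulants $k_n\psi^{(\ell)}(m)$, so
\[
\mathbb P(S_m\le \cdot)=\Phi(u)-\phi(u)\frac{\kappa_3}{6}(u^2-1)+O(k_n^{-1}),
\]
where $\kappa_3=\psi''(m)/(k_n^{1/2}\psi'(m)^{3/2})\approx -1/\sqrt{k_n m}$. I would then expand $u$ to second order: $\psi(n)-\psi(m)$ to order $i^2/n^2$, $\psi'(m)^{-1/2}$ to order $i/n$, and $a_n,b_n$ around $a,b$ via $\alpha_n-\alpha$. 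The $n^{-1}$ terms should assemble into $q_1(i,x)$; this is a routine but careful bookkeeping check of the combination $2\alpha(v^2-1)-3\sqrt\alpha(2i-1)v+6i(i-1)$. The $(\alpha_n-\alpha)$ terms come from differentiating $a_n$, $b_n$ and $(i-1)/\sqrt{\alpha_n}$, which gives $c_1-c_2x-(i-1)/(2\alpha^{3/2})$ (note $\frac{d}{d\alpha}\sqrt{\log(\alpha+1)}=1/w(\alpha+1)\cdot\sqrt{\log(\alpha+1)}$, matching $c_1$). Taking logarithms of the product, $\log\Phi(v+\delta)=\log\Phi(v)+\frac{\phi}{\Phi}(v)\delta+O(\delta^2)$, and summing over $i$ yields item (3).

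The main technical work is controlling the tail. For $i\gtrsim \sqrt{n}\log n$ (or $i/\sqrt{\alpha}$ large) the factors are $1-$superexponentially small, which needs a moderate-deviation/Chernoff bound for $S_m$ uniformly in $m$. The region $x\to-\infty$ must be handled separately by monotonicity, since there both sides are tiny.

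Items (1) and (2) follow the same scheme with different dominant terms. For $\alpha=0$ only $i=1$ matters: the other $v(i,x)\ge (i-1)/\sqrt{\alpha_n}\to\infty$ give total error $O(e^{-1/(3\alpha_n)})$. Since $a_n\sim\sqrt{\alpha_n}$ and $b_n\to1$, the single Gaussian factor with Edgeworth correction gives the error $\phi(x)(\sqrt{\alpha_n}-x/(4n))$. For $\alpha=\infty$ I would use $\prod\Phi(v_i)\approx\exp(-\sum(1-\Phi(v_i)))$ and compare the sum with the integral $\sqrt{\alpha}\int_{a+bx}^\infty(1-\Phi)$ via Euler–Maclaurin. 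The constants in $a_n$ are exactly chosen so that this equals $e^{-x}(1+O((\log\log\alpha)^2/\log\alpha))$. The leading discrepancy comes from the quadratic term in the expansion of $\log(1-\Phi(y))$ at $y\approx\sqrt{2\log\alpha}$, and maximizing $e^{-x}\Lambda(x)\cdot\frac{(\log\log\alpha)^2}{2\log\alpha}$ over $x$ gives the $1/(2e)$ (attained at $x=0$). Item (4) is the same computation applied directly to $\Phi_\alpha$, with $n=\infty$. As $\alpha\to0$ the $j=1$ factor $\Phi(a+bx)$ with $a\approx\sqrt\alpha$, $b\approx1$ gives $\sup\phi(x)\sqrt\alpha=\sqrt{\alpha/(2\pi)}$. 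As $\alpha\to\infty$ it is identical to the Gumbel computation above.

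I expect the main obstacle to be the uniform-in-$x$ error control in item (3). The exact constant requires the remainder to be $o(n^{-1}+|\alpha_n-\alpha|)$ uniformly over all $x$ and summably over $i$. I would handle this by splitting $i\le n^{1/4}$ (Edgeworth with explicit remainder) and $i>n^{1/4}$ (Chernoff tails).
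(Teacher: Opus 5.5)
Your treatment of items (2)--(4) follows essentially the same route as the paper: product formula, per-factor Edgeworth expansion, expansion of the standardized threshold around $v_\alpha(j,x)$, truncation in $j$, and monotonicity for the tails in $x$. For item (4) the factors really are $\Phi(v_\alpha(j,x))$, so Mills' ratio suffices.

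\textbf{Main gap: item (1).} For $\alpha=+\infty$ the statement allows $k_n$ to stay bounded (e.g.\ $k_n=1$). An expansion of $\sum_{r=1}^{k_n}\log\Gamma_{m,r}$ with remainder in powers of $k_n^{-1/2}$ is then worthless. Even for slowly growing $k_n$, any additive remainder is the wrong kind of control, because every factor sits in the moderate-deviation zone. Indeed $u_n(1,x)\approx\sqrt{\log\alpha_n}$ (not $\sqrt{2\log\alpha_n}$), so $M^{(n)}_{1,1}(x)\asymp e^{-x}\sqrt{\log\alpha_n/\alpha_n}$. You need $M^{(n)}_{j,j}(x)$ with \emph{relative} error $o((\log\log\alpha_n)^2/\log\alpha_n)$, uniformly for $1\le j\lesssim\sqrt{\alpha_n\log\alpha_n}$. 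Even granting an additive error $O(k_n^{-3/2})$ per factor, the summed error $\sqrt{n\log\alpha_n}\,k_n^{-2}$ is too large unless $k_n\gg n^{1/4}$ up to logarithms. Writing $\prod\Phi(v_i)$ for the finite-$n$ product silently assumes exactly the missing relative-error estimate.

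\textbf{How the paper closes it.} The paper linearizes $\log(S_{\ell,r}/\ell)\approx(S_{\ell,r}-\ell)/\ell$. It then uses Chebyshev with $\operatorname{Var}(\log S_{\ell,1}-S_{\ell,1}/\ell)=\frac{1}{2\ell^2}+O(\ell^{-3})$ to show that the linearization error exceeds $\varepsilon=\alpha_n^{-3/5}$ only with probability $O(n^{-4/5})$, which is negligible. Finally it applies Petrov's moderate-deviation theorem to $\sum_r S_{\ell,r}\stackrel{d}{=}\sum_{i=1}^{\ell k_n}\xi_i$ with i.i.d.\ exponential $\xi_i$. The effective sample size is therefore $\ell k_n\ge n-j+1$, not $k_n$. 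A saddle-point argument on the explicit m.g.f.\ $(\Gamma(\ell+\lambda)/\Gamma(\ell))^{k_n}$ would also work, since its standardized cumulants behave like those of a sum of $\ell k_n$ i.i.d.\ terms. Some relative-error moderate-deviation input of this kind is indispensable; only after it does your Mills-ratio/Euler--Maclaurin computation (which matches the paper's) go through.

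\textbf{Smaller issue: item (3).} You state the Edgeworth remainder as $O(k_n^{-1})$. For $\alpha\in(0,\infty)$ this is the same order as the term $n^{-1}q_1(j,x)$ you want to isolate, so as written it cannot yield the exact constant. You need the sharper fact used in the paper's Edgeworth lemma: the $k_n^{-1}$-term involves the squared skewness and the excess kurtosis of $\log S_{m,1}$, both $O(m^{-1})$. This makes the remainder $O(k_n^{-3/2}+k_n^{-1/2}n^{-3/2})=o(n^{-1})$. You also need a Cramér condition that holds uniformly in $m$.
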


\paragraph{\bf Rightmost eigenvalue.}
For the largest real part, it is more convenient to formulate the scaling through
threshold events. Define \(\widetilde X_n\) by
\[
\widetilde{X}_n:=\widetilde{b}_n^{-1}\Bigl[\sqrt{\alpha_n}\bigl(2\log \max_{1\le j\le n}\Re Z_j-k_n\psi(n)\bigr)-\widetilde{a}_n\Bigr]
\]
where
\[
\widetilde b_n:=\frac{\sqrt{2}}{\sqrt{\log(\alpha_n+e^2)}},
\]
and
\[
\widetilde a_n
:=
\sqrt{\frac{\log(\alpha_n+1)}{2}}
-
\frac{
\sqrt{2}\bigl(
\log(2^{-3/4}\pi)
+\frac54\log\log(\alpha_n+e^{2^{3/5}\pi^{-4/5}})
\bigr)
}
{\sqrt{\log(\alpha_n+e^2)}}.
\]
Let \(\widetilde a\) and \(\widetilde b\) denote the corresponding limits of
\(\widetilde a_n\) and \(\widetilde b_n\), respectively.

For \(\alpha\in(0,+\infty)\), define the infinite-dimensional matrix
\[
\widetilde M(x,\alpha)
=
\bigl(\widetilde M_{j,k}(x,\alpha)\bigr)_{j,k\ge 1}
\]
by
\[
\widetilde M_{j,k}(x,\alpha)
=
\frac{2}{\pi}
\exp\!\big(-\frac{(j-k)^2}{4\alpha}\big)
\int_0^{\pi/2}
\cos((j-k)\theta)\,
\Psi\!\big(
\widetilde v_\alpha\!\big(\frac{j+k}{2},x\big)
-\sqrt{\alpha}\log\cos^2\theta
\big)
\,d\theta
\]
whenever \(j-k\) is even, and
\[
\widetilde M_{j,k}(x,\alpha)=0
\qquad\text{whenever } j-k \text{ is odd},
\]
where
\[
\Psi(y):=1-\Phi(y),
\qquad
\widetilde v_\alpha(j,x):=\frac{j-1}{\sqrt{\alpha}}+\widetilde a+\widetilde b x.
\]

\begin{thm}[Rightmost eigenvalue]\label{thmrealpart}
Let \(\widetilde X_n\), \(\alpha\), and \(\alpha_n\) be defined as above.

\begin{enumerate}
\item If \(\alpha=+\infty\), then
\[
\sup_{x\in\mathbb R}\big|\mathbb P(\widetilde X_n\le x)-e^{-e^{-x}}\big|
=
\frac{25(\log\log \alpha_n)^2}{16e\log \alpha_n}(1+o(1))
\]
for all sufficiently large \(n\).

\item If \(\alpha=0\), then
\[
\sup_{x\in\mathbb R}\big|\mathbb P(\widetilde X_n\le x)-\Phi(x)\big|
=
(1+o(1))
\sup_{x\in\mathbb R}
\phi(x)\big|
\frac{\sqrt{2}+4\ln 2}{2}\sqrt{\alpha_n}
-\frac{x}{4n}
\big|
\]
as \(n\to+\infty\).

\item If \(\alpha\in(0,+\infty)\), then for each fixed \(x\in\mathbb R\), the
infinite-dimensional matrix \(\widetilde M(x,\alpha)\) defines a trace-class operator on
\(\ell^2(\mathbb N)\). Hence
\[
\widetilde\Phi_\alpha(x):=\det({\rm I}-\widetilde M(x,\alpha))
\]
is well defined. Moreover, \(\widetilde\Phi_\alpha\) is a distribution function and
\[
\lim_{n\to+\infty}\mathbb P(\widetilde X_n\le x)=\widetilde\Phi_\alpha(x),
\qquad x\in\mathbb R.
\]
The family \(\{\widetilde\Phi_\alpha\}_{\alpha>0}\) interpolates continuously between the
Gaussian and Gumbel laws, and its boundary asymptotics are given by
\[
\lim_{\alpha\to0^+}
\frac{1}{\sqrt{\alpha}}
\sup_{x\in\mathbb R}\left|\widetilde\Phi_\alpha(x)-\Phi(x)\right|
=
\frac{\sqrt{2}+4\ln 2}{2\sqrt{2\pi}},
\]
and
\[
\lim_{\alpha\to+\infty}
\frac{\log\alpha}{(\log\log\alpha)^2}
\sup_{x\in\mathbb R}\left|\widetilde\Phi_\alpha(x)-e^{-e^{-x}}\right|
=
\frac{25}{16e}.
\]
\end{enumerate}
\end{thm}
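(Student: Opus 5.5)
The plan is to reduce the distribution of $\widetilde X_n$ to a finite determinant using the determinantal structure, and then take asymptotics entry by entry.

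\textbf{Reduction to an $n\times n$ determinant.} The eigenvalues of ${\bf A}_1\cdots{\bf A}_{k_n}$ form a determinantal point process with radial weight $w_{k_n}(|z|)$, a Meijer $G$-function. The monomials $z^{j-1}$ are orthogonal with norms $h_j=\pi\Gamma(j)^{k_n}$. Put $t_n=\exp\{\tfrac12(k_n\psi(n)+(\widetilde a_n+\widetilde b_n x)/\sqrt{\alpha_n})\}$ and $E=\{\Re z>t_n\}$. By Andr\'eief's identity,
\[
\mathbb P(\widetilde X_n\le x)=\det\big(\delta_{jk}-M^{(n)}_{jk}\big)_{j,k=1}^n,\qquad
M^{(n)}_{jk}=h_j^{-1/2}h_k^{-1/2}\int_E z^{j-1}\bar z^{k-1}w_{k_n}(|z|)\,dA(z).
\]
Write $z=re^{i\theta}$. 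The radial measure $r^{j+k-1}w_{k_n}(r)\,dr$, normalised, is the law of $R$ with $R^2\stackrel{d}{=}\prod_{l=1}^{k_n}\Gamma_{(j+k)/2,l}$, a product of independent Gamma variables. The constraint $r\cos\theta>t_n$ restricts $\theta$ to $|\theta|<\pi/2$. Symmetry $\theta\mapsto\theta+\pi$ kills $M_{jk}$ when $j-k$ is odd, and for even $j-k$ it gives
\[
M^{(n)}_{jk}=\frac{\Gamma(\tfrac{j+k}{2})^{k_n}}{\sqrt{\Gamma(j)^{k_n}\Gamma(k)^{k_n}}}\cdot\frac{2}{\pi}\int_0^{\pi/2}\cos((j-k)\theta)\,\mathbb P\Big(\textstyle\sum_l\log\Gamma_{\frac{j+k}{2},l}>2\log t_n-\log\cos^2\theta\Big)d\theta .
\]

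\textbf{Entrywise asymptotics.} The prefactor is handled by Stirling. It equals $\exp\{-k_n\frac{(j-k)^2}{4}\psi'(\tfrac{j+k}2)(1+o(1))\}$, and in the relevant range of indices, $j,k=n-O(\cdot)$ after the reindexing $j\mapsto n+1-j$, this becomes $\exp(-(j-k)^2/(4\alpha_n))$. The probability comes from a CLT with Edgeworth correction for $\sum_l\log\Gamma_{m,l}$. Its mean is $k_n\psi(m)$ and its variance is $k_n\psi'(m)\approx k_n/m$. For $m=n-(j-1)$ this gives $\Psi(\widetilde v_{\alpha_n}(j,x)-\sqrt{\alpha_n}\log\cos^2\theta)$ plus $O(n^{-1})$ and $O(\alpha_n-\alpha)$ corrections, which mirrors the diagonal computation in Theorem~\ref{main}.

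\textbf{The interior regime, part (3).} Each entry of the finite matrix converges to $\widetilde M_{j,k}(x,\alpha)$. Gaussian tail bounds give
\[
|\widetilde M_{jk}|\le Ce^{-(j-k)^2/4\alpha}\,\Psi(c(j+k)/\sqrt\alpha+\cdots),
\]
which decays in both directions. So $\widetilde M$ is trace class, and the $n\times n$ determinants converge to $\det(I-\widetilde M)$ by continuity of the Fredholm determinant in trace norm, with dominated convergence for the tails. $\widetilde\Phi_\alpha$ is a distribution function because it is a pointwise limit of distribution functions that is monotone, with limits $0$ and $1$ at $\mp\infty$.

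\textbf{The boundary regimes.} For $\alpha\to\infty$, and for part (1), $\|\widetilde M\|_{\mathrm{tr}}\to0$, so $\log\det(I-M)=-\operatorname{Tr}M+O(\|M\|^2)$. The trace is $\sum_j\frac{2}{\pi}\int_0^{\pi/2}\Psi(\cdots)\,d\theta$. A Laplace expansion near $\theta=0$, using $-\log\cos^2\theta\approx\theta^2$, contributes an extra factor of order $(\log\alpha)^{-1/4}$. This is the source of the $\frac54\log\log$ and $2^{-3/4}\pi$ constants, and the constants in $\widetilde a_n$ are chosen exactly so that $\operatorname{Tr}\to e^{-x}$. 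The error term is the next-order term, of size $(\log\log\alpha)^2/\log\alpha$, and its supremum is attained where $e^{-x}\cdot e^{-e^{-x}}$ is maximal, giving the $1/e$ factor.

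For $\alpha\to0$, and for part (2), the $j=1$ term dominates and all off-diagonal entries are $O(e^{-1/4\alpha})$. One expands $\det(I-M)=\prod(1-M_{jj})\,(1+O(\text{off-diagonal}^2))$ together with the $\theta$-integral. The angular average of $\sqrt\alpha\log\cos^2\theta$ gives a shift of $\sqrt\alpha\cdot\frac{2}{\pi}\int_0^{\pi/2}(-\log\cos^2\theta)\,d\theta=2\ln2\sqrt\alpha$, since $\widetilde b=\sqrt2$ rescales appropriately. The remaining $\tfrac{\sqrt2}{2}\sqrt\alpha$ comes from the $j=2$-type corrections and from $\widetilde a$.

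\textbf{Main obstacle.} The hardest step is obtaining uniform-in-$x$ error control in the nondiagonal determinant. In particular, the second-order term $\tfrac12\operatorname{Tr}M^2$ at $\alpha=\infty$ must be shown to be $o((\log\log\alpha)^2/\log\alpha)$. At $\alpha=0$, the Edgeworth expansion must be made uniform in $\theta$ near $\pi/2$, where $\log\cos^2\theta\to-\infty$. There I would first truncate $\theta$ to $\cos\theta>\alpha^{1/4}$ and bound the remainder by Gaussian tails.
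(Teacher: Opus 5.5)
\textbf{Gap in part (1).} There is a genuine gap in part (1), and it carries over to the $\alpha\to+\infty$ asymptotics of $\widetilde\Phi_\alpha$ in part (3).

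First, the claim $\|\widetilde M\|_{\mathrm{tr}}\to0$ is false. $\widetilde M^{(n)}(x)$ is the Gram matrix of $\{\phi_{n-j}\mathbf{1}_{\widetilde A(x)}\}$, hence positive semidefinite, so its trace norm equals its trace, which you yourself say tends to $e^{-x}$. What must be small is the Hilbert--Schmidt norm. Your expansion $\log\det(\mathrm I-M)=-\operatorname{Tr}M-\sum_{m\ge2}\frac1m\operatorname{Tr}M^m$ needs $\operatorname{Tr}M^2=\|M\|_{\mathrm{HS}}^2=o\bigl((\log\log\alpha_n)^2/\log\alpha_n\bigr)$ uniformly on the central window. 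The paper uses $|\det(\mathrm I-M)-e^{-\operatorname{Tr}M}|\lesssim\|M\|_{\mathrm{HS}}e^{-\operatorname{Tr}M}$ and proves the stronger Lemma~\ref{Mhslem}.

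You name this as the main obstacle but give no mechanism, and the bounds in your sketch cannot deliver it. Write $p=\frac{j+k}{2}$, $q=\frac{j-k}{2}$. Cauchy--Schwarz gives only $\sum_{j,k}M_{jk}^2\le(\operatorname{Tr}M)^2\approx e^{-2x}$. The Gaussian prefactor with $|\cos|\le1$ gives $|M_{jk}|\lesssim e^{-q^2/\alpha_n}M_{pp}$. But $M_{pp}$ decays in $p$ on the scale $\sqrt{\alpha_n/\log\alpha_n}$, where the Gaussian factor is $\approx1$, so this bound is of order $\sum_p p\,M_{pp}^2\asymp e^{-2x}$. Neither bound is small, and at the left end of the window $e^{-2x}$ is as large as $\log\alpha_n$.

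\textbf{The missing idea.} You need the oscillatory cancellation in $\int_0^{\pi/2}\cos(2q\theta)\,\mathbb P(\cdots)\,d\theta$. The probability factor is concentrated on $\theta\lesssim(\alpha_n\log\alpha_n)^{-1/4}$, so $M_{jk}$ decays in $q$ long before $q\sim\sqrt{\alpha_n}$.

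The paper keeps the trivial bound only for $q<\alpha_n^{2/5}$, contributing $\alpha_n^{2/5}\sum_pM_{pp}^2\lesssim e^{-2x}\alpha_n^{-1/10}\sqrt{\log\alpha_n}$. For $\alpha_n^{2/5}\le q<p\le j_n$ it does the following:
\begin{enumerate}
\item substitutes $t=-\log\cos^2\theta$ and $s=\sqrt t$;
\item replaces $\cos(2q\arccos e^{-t/2})$ by $\cos(2qs)$;
\item integrates by parts, obtaining $|M_{jk}|\lesssim(q\,\widetilde u_n(p,x))^{-1}e^{-\widetilde u_n^2(p,x)/2}$.
\end{enumerate}
The resulting $\sum_{q\ge\alpha_n^{2/5}}q^{-2}\lesssim\alpha_n^{-2/5}$ makes this range $O(e^{-2x}\alpha_n^{-2/5}\log\alpha_n)$. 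Without an argument of this kind, the step $\det(\mathrm I-\widetilde M)\approx e^{-\operatorname{Tr}\widetilde M}$ at the stated precision is unsupported.

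\textbf{Parts (2) and (3).} These are sound in outline, and in two places your route differs from the paper's.
\begin{enumerate}
\item In part (2), your observation that off-diagonal entries are $O(e^{-c/\alpha_n})$ makes $\operatorname{Tr}B^2$ super-polynomially small in $z_n$, even after dividing by $1-\widetilde M_{11}\gtrsim(z_n\sqrt{\log z_n})^{-1}$. This is simpler than the paper's Lemma~\ref{traceB2}.
\item In part (3), trace-norm convergence by dominated convergence in the entrywise $\ell^1$ norm (which dominates the trace norm) is cleaner than the paper's block decomposition with Weyl perturbation. You do need tail bounds uniform in $n$.
\end{enumerate}

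\textbf{Slips at $\alpha=0$.}
\begin{enumerate}
\item $\widetilde b_n\to1$, not $\sqrt2$.
\item The $\frac{\sqrt2}{2}\sqrt{\alpha_n}$ comes solely from $\widetilde a_n=\sqrt{\alpha_n/2}+O(\alpha_n)$; the $j\ge2$ factors are $1+O(z_n^{-9/2})$.
\item Truncating at $\cos\theta>\alpha^{1/4}$ discards angular measure $\asymp\alpha^{1/4}$ on which the probability is still $\approx\Psi(x)$. Since $\alpha^{1/4}\gg\sqrt\alpha$, that piece cannot be bounded away. Truncate only where $\sqrt{\alpha_n}\,(-\log\cos^2\theta)$ is large, as the paper does with $t\le\alpha_n^{-1/4}$.
\end{enumerate}
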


The difference between the two theorems is part of the main message of the paper.
For the spectral radius, the determinantal reduction becomes diagonal and leads to
an explicit infinite-product limit together with full fixed-\(\alpha\) quantitative
asymptotics. For the rightmost eigenvalue, the limiting object in the proportional
regime is a genuinely non-diagonal Fredholm determinant, and this is why the
interior fixed-\(\alpha\) rate is not available in closed form.

The key estimates employed in the proof of Theorem \ref{main} can be readily adapted to establish the convergence rate in the $W_1$-Wasserstein distance.
\begin{rmk}\label{w1cor}
		Under the assumptions and notation of Theorem \ref{main}, the corresponding \( W_1 \)-Wasserstein distances satisfy the following asymptotics.

\begin{enumerate}
\item[(1)] Case \(\alpha = 0\):
\[
W_{1}\bigl(\mathcal{L}(X_n),\Phi\bigr)=\Bigl[\sqrt{\alpha_n}\,\bigl(2\Phi(4n\sqrt{\alpha_n})-1\bigr)+\frac{1}{2n}\,\phi(4n\sqrt{\alpha_n})\Bigr](1+o(1)).
\]

\item[(2)] Case \(\alpha \in (0,+\infty)\):
\[
\begin{aligned}
W_{1}\bigl(\mathcal{L}(X_n),\Phi_{\alpha}\bigr)
= \int_{-\infty}^{+\infty} \Phi_\alpha(x) \,
\Bigl| \sum_{j=1}^{\infty} \frac{\phi(v_\alpha(j, x))}{\Phi(v_\alpha(j, x))}
\bigl( n^{-1} q_1(j, x) + (\alpha_n - \alpha) q_2(j, x) \bigr) \Bigr| \, dx.
\end{aligned}
\]

\item[(3)] Case \(\alpha = +\infty\):
\[
W_{1}\bigl(\mathcal{L}(X_n),\Lambda\bigr)=\frac{(\log \log \alpha_n)^{2}}{2\log \alpha_n}\,(1+o(1)).\]
\end{enumerate}

Here, \(\mathcal{L}(X_n)\) denotes the distribution of \(X_n\) and \(\Lambda\) is the Gumbel distribution. Similar results hold for \(\widetilde{X}_n\) when \(\alpha = 0\) or \(\alpha = +\infty\).
\end{rmk} 

\begin{rmk}
It is worth noting that when $k_n = 1,$ the product reduces to a single complex Ginibre ensemble. In this case, the Berry-Esseen bound for the rescaled spectral radius relative to the Gumbel distribution is of order $O\big( \frac{(\log \log n)^2}{\log n} \big).$  This differs from the order $O\big( \frac{\log \log n}{\log n} \big)$ obtained in \cite{HuMa25, MaMeng2025}. As elucidated in \cite{HuMa25, MaTian}, this discrepancy is attributed to the use of different rescaling coefficients.
\end{rmk} 

\begin{rmk}\label{upperbound}
    The supremum expressions appearing in Theorem~\ref{main} for \( \alpha \in (0, +\infty) \) do not admit a closed form. We therefore provide simple, though not sharp, upper bounds as follows:
    \[
    \sup_{x \in \mathbb{R}} \Phi_\alpha(x) \big| \sum_{j=1}^{\infty} \frac{q_1(j, x) \phi(v_\alpha(j, x))}{\Phi(v_\alpha(j, x))} \big| \leq \frac{4}{3} \big( \alpha + \sqrt{\alpha} + 1 \big),
    \]
    and
    \[
    \sup_{x \in \mathbb{R}} \Phi_\alpha(x) \big| \sum_{j=1}^{\infty} \frac{q_2(j, x) \phi(v_\alpha(j, x))}{\Phi(v_\alpha(j, x))} \big| \leq \frac{2}{e \ln 2} \big( c_1 + \frac{c_2(\alpha - 1)}{b} + \frac{1}{\alpha} \big) (1 + \sqrt{\alpha}).
    \]
\end{rmk}

Given the complexity of the proofs of Theorems \ref{main} and \ref{thmrealpart}, we outline the main tools and steps to enhance accessibility.

\subsection{Sketch of the proofs of Theorems 1 and 2}

The proofs are based on a common determinantal reduction followed by a
regime-dependent asymptotic analysis of the resulting matrix entries. As in
\cite{MaMeng2025}, we first localize \(x\) to a central window on which sharp asymptotics
hold; the two tail regions contribute only lower-order terms and are treated
separately. We record only the main reductions here.

\subsubsection{The determinantal reduction}

We use the determinantal point process method of \cite{HKPV2009} to relate the probabilities $\mathbb{P}(X_n\le x)$ and $\mathbb{P}(\widetilde{X}_n\le x)$ to determinants of two $n\times n$ matrices.

Indeed, by Theorem 1 in Adhikari {\it et al.} \cite{Adhi}, $(Z_1, \cdots, Z_n)$ forms a determinantal  point process with correlation kernel 
$$\mathbb{K}_{n}(z, w)=\frac{\sqrt{\varphi(z)\varphi(w)}}{\pi^{k_n} }\sum_{j=0}^{n-1}\frac{(z\bar{w})^j}{(j!)^{k_n}},$$ where
$\varphi$ satisfies $\varphi(z)=\varphi(|z|)$ for any $z\in\mathbb{C}.$  

Define
$$A(x):=\big\{z\in\mathbb{C}|\log | z|\ge \frac{1}{2}k_n\psi(n)+\frac{a_n+b_n x}{2\sqrt{\alpha_n}}\big\}$$ 
and  
$$\widetilde{A}(x):=\big\{z|\log\Re z\ge \frac{1}{2}k_n\psi(n)+\frac{\widetilde{a}_n+\widetilde{b}_n x}{2\sqrt{\alpha_n}}\big\}.$$ Then the basic property of determinantal point processes yields 
 $$\mathbb{P}(X_n\le x)={\rm det}({\rm I}-{\mathbb{K}_n}|_{A(x)}^{} ) \quad \text{and} \quad \mathbb{P}(\widetilde{X}_n\le x)={\rm det}({\rm I}-{\mathbb{K}_n}|_{\widetilde{A}(x)} ).$$ 

  Let $$\phi_j(z)= \frac{\sqrt{\varphi(z)}}{\pi^{k_{n}/2}} \cdot \frac{z^{j}}{(j!)^{k_{n}/2}}, \quad 0\le j\le n-1,$$ which form a standard orthogonal basis for $\mathbb{K}_{n}.$
 Define an $n\times n$ matrix $M^{(n)}(x)=(M_{j, k}^{(n)}(x))_{1\le j, k\le n}$ by      	\[
     	M_{j, k}^{(n)}(x) = \int_{A(x)} \phi_{n-j}(z)\;\overline{\phi_{n-k}(z)}\,  \; d^2 z, \qquad  1\le k, j \le n,
     	\]
  and analogously 
    $$ \widetilde{M}_{j, k}^{(n)}(x) = \int_{\widetilde{A}(x)} \phi_{n-j}(z)\;\overline{\phi_{n-k}(z)}\,  \; d^2 z, \qquad  1\le k, j \le n,$$ 
     	where $\bar{w}$ denotes the conjugate of $w$ for $w\in\mathbb{C}$.  

Since \(\mathbb{K}_n\) is of finite rank (see \cite{HKPV2009}), the Fredholm determinant reduces to a finite determinant:  
$${\rm det}({\rm I}-{\mathbb{K}_n}|_{A(x)}^{} )={\rm det}({\rm I}_n-M^{(n)}(x)), \quad  {\rm det}({\rm I}-{\mathbb{K}_n}|_{\widetilde{A}(x)}^{} )={\rm det}({\rm I}_n-\widetilde{M}^{(n)}(x)).$$ Consequently  
\begin{equation}\label{Pdet0} \aligned \mathbb{P}(X_n\le x)&={\rm det}({\rm I}_n-M^{(n)}(x)), \quad 
\mathbb{P}(\widetilde{X}_n\le x)&={\rm det}({\rm I}_n-\widetilde{M}^{(n)}(x)).
\endaligned \end{equation}  
Thus the problem reduces to analyzing the matrices $M^{(n)}(x)$ and $\widetilde{M}^{(n)}(x).$ 

The identity 
\begin{equation}\label{CJQnewunderstanding}\int_{\mathbb{C}} f(z)\varphi(z)d^2 z=\int_{z_1\cdots z_{k_n}=z} f(z_1\cdots z_{k_n})\exp(-\sum_{m=1}^{k_n} |z_m|^2)\prod_{m=1}^{k_n} d^2 z_m \end{equation}
serves as the basic input in the reduction and is taken from \cite{CJQ25}. Since $\varphi$ is rotation-invariant and the  domain $A(x)$ depends only on the radial component, the matrix $M^{(n)}(x)$ is diagonal. Hence \[
\mathbb{P}(X_n \le x) = \prod_{j=1}^n \bigl(1 - M^{(n)}_{j,j}(x)\bigr).
\]
To express the diagonal entries, define $Y_j = \prod_{r=1}^{k_n} S_{j,r}$, where $(S_{j,r})_{1 \le r \le k_n}$ are i.i.d. random variables with common density
\[
\frac{1}{(j-1)!} y^{j-1} e^{-y} \mathbf{1}_{\{y > 0\}}.
\]
Using the identity \eqref{CJQnewunderstanding}, one can show that
\begin{equation}\label{Mjjnx}
M_{j, j}^{(n)}(x) = \mathbb{P}\bigl(\log Y_{n+1-j} \ge k_n \psi(n) + \frac{a_n + b_n x}{\sqrt{\alpha_n}}\bigr).
\end{equation}

For the matrix $\widetilde{M}^{(n)}(x)$, the analysis is more delicate because the domain $\widetilde{A}(x)$ is no longer radial. 
Its entries admit the following representation. For the diagonal terms,
\[
\widetilde{M}_{j,j}^{(n)}(x)=\mathbb{P}\bigl(\log Y_{n+1-j}+\log\cos^2\Theta \ge k_n\psi(n)+\frac{\widetilde{a}_n+\widetilde{b}_n x}{\sqrt{\alpha_n}}\bigr),
\] 
where $\Theta$ is uniformly distributed on $[0, \frac{\pi}{2})$ and is independent of $\{Y_j\}_{1\le j\le n}.$ For the off-diagonal entries, $\widetilde{M}_{j, k}^{(n)}(x)=0$ when $j-k$ is odd, while for even $j\neq k,$
$$
\begin{aligned}
\widetilde{M}_{j, k}^{(n)}(x) &= \frac{2(\big( n-\frac{j+k}{2} \big)!)^{k_n}}{\pi \bigl( (n-j)! (n-k)! \bigr)^{k_n/2}} \\
&\quad \times \int_{0}^{\pi/2} \cos\bigl( (j-k)\theta \bigr) \, \mathbb{P}\bigl( \log Y_{n+1-\frac{j+k}{2}} + \log\cos^2\theta \ge k_n\psi(n)+\frac{\widetilde{a}_n+\widetilde{b}_n x}{\sqrt{\alpha_n}} \bigr) \, d\theta
.\end{aligned} $$

\subsubsection{Proof strategy for Theorem 1}

Since \(M^{(n)}(x)\) is diagonal, the proof of Theorem \ref{main} reduces to the asymptotic
analysis of the one-dimensional tails \(M_{j,j}^{(n)}(x)\).

\begin{enumerate}
\item \textbf{Case \(\alpha = +\infty\).} 
For this case, the entries \(M_{j,j}^{(n)}(x)\) are uniformly small, so the
determinant is asymptotically governed by the trace:
\[
\det({\rm I}_n-M^{(n)}(x))
=
\exp\bigl(-{\rm Tr}(M^{(n)}(x))\bigr)(1+o(1)).
\]
This yields the Gumbel limit together with the exact boundary rate.

\item \textbf{Case \(\alpha = 0\).} 
The first diagonal term dominates and one obtains
\[
\mathbb{ P}(X_n\le x)
=
\bigl(1-M_{1,1}^{(n)}(x)\bigr)(1+o(1)).
\]
The corresponding asymptotic expansion of \(M_{1,1}^{(n)}(x)\) gives the Gaussian
limit and the exact rate.

\item \textbf{Case \(\alpha \in (0, +\infty)\).} 
An Edgeworth expansion for \(1-M_{j,j}^{(n)}(x)\), uniform
for \(1\le j\le r_n\) on the central window, is combined with a truncation argument:
\[
\mathbb P(X_n\le x)
=
\prod_{j=1}^{r_n}\bigl(1-M_{j,j}^{(n)}(x)\bigr)(1+o(1)).
\]
The same truncation applies to the limiting product,
\[
\Phi_\alpha(x)
=
\prod_{j=1}^{r_n}\Phi(v_\alpha(j,x))(1+o(1)).
\]
The exact fixed-\(\alpha\) asymptotic and the convergence rate follow from comparing the two products
term by term. 
\end{enumerate}

\subsubsection{Proof strategy for Theorem 2}

The rightmost eigenvalue is substantially more delicate because \(\widetilde M^{(n)}(x)\) is not
diagonal, and its structure depends strongly on the regime of $\alpha.$ 

\begin{enumerate}
\item \textbf{Case \(\alpha = +\infty\).}  
In this case, we prove that the Hilbert-Schmidt norm is negligible:  \[\|\widetilde{M}^{(n)}(x)\|_{\rm HS}^2:=\sum_{j, k=1}^n (\widetilde{M}_{j, k}^{(n)}(x))^2\ll 1.\] Hence   
\[
\det\bigl({\rm I}_n-\widetilde{M}^{(n)}(x)\bigr) = (1+o(1))\exp\bigl(-\operatorname{Tr}(\widetilde{M}^{(n)}(x))\bigr),
\]  
which leads to the Gumbel limit and its exact boundary rate.

\item \textbf{Case \(\alpha = 0\).}  
Here the Hilbert-Schmidt norm \(\|\widetilde{M}^{(n)}(x)\|_{\rm HS}\) is no longer negligible. Instead, we show that the off-diagonal contribution is small relative to the diagonal one:
\[
\sum_{1\le k<j\le n}\frac{\bigl(\widetilde{M}_{j,k}^{(n)}(x)\bigr)^2}{\bigl(1-\widetilde{M}_{j,j}^{(n)}(x)\bigr)\bigl(1-\widetilde{M}_{k,k}^{(n)}(x)\bigr)} \ll 1,
\]  
which implies  
\[
\det\bigl({\rm I}_n-\widetilde{M}^{(n)}(x)\bigr) = (1+o(1))\prod_{j=1}^n\bigl(1-\widetilde{M}_{j,j}^{(n)}(x)\bigr).
\]  

A further estimate shows that all but the first diagonal factor are asymptotically
negligible, so the problem again reduces to the leading diagonal term.

\item \textbf{Case \(\alpha \in (0, +\infty)\).}  
This is the most involved regime. We first prove the entrywise convergence \[
\widetilde{M}_{j,k}(x,\alpha) = \lim_{n\to\infty} \widetilde{M}^{(n)}_{j,k}(x), \qquad j, k\ge 1,
\]  
where $\widetilde{M}_{j,k}(x,\alpha)$ is the infinite-dimensional matrix defined in the statement of Theorem \ref{thmrealpart}. Let \(\widehat{M}^{(n)}(x,\alpha)\) denote its truncation to the first \(n\) rows and columns. Then 
\[
\lim_{n\to\infty} \det\bigl({\rm I}_n-\widetilde{M}^{(n)}(x)\bigr) = \lim_{n\to\infty} \det\bigl({\rm I}_n-\widehat{M}^{(n)}(x,\alpha)\bigr) = \det\bigl({\rm I}-\widetilde{M}(x,\alpha)\bigr).
\]  
The trace-class property of \(\widetilde M(x,\alpha)\) makes the Fredholm determinant
well defined and identifies the limiting distribution.

The continuity of the transition is handled directly at the Fredholm-determinant
level. As \(\alpha\to0^+\), we prove
\[
\lim_{\alpha\to0^+}\det({\rm I}-\widetilde M(x,\alpha))=\Phi(x),
\]
while as \(\alpha\to+\infty\),
\[
\lim_{\alpha\to+\infty}\det({\rm I}-\widetilde M(x,\alpha))
=
\exp\big(-\lim_{\alpha\to+\infty}{\rm Tr}(\widetilde M(x,\alpha))\big)
=
e^{-e^{-x}}.
\]
The first limit follows from entrywise continuity together with uniform summability,
and the second from the fact that \(\|\widetilde M(x,\alpha)\|_{\mathrm{HS}}\ll1\) for large
\(\alpha\). 
\end{enumerate}

\subsubsection{A few words on the method}

For determinantal point processes, the standard route to the limiting law of an
edge observable is the first-order approximation
\[
\det({\rm I}-\mathbb{K}_n|_E)\approx \exp\bigl(-{\rm Tr}(\mathbb{K}_n|_E)\bigr),
\]
which is effective when \(\|\mathbb{K}_n|_E\|_{\mathrm{HS}}^2\ll1\). In the present paper, this
mechanism works directly only in the sparse-factor regime \(\alpha=+\infty\). 
For
\(0\le \alpha<+\infty\), the determinant must be analyzed through finer asymptotics
of the reduced matrices \(M^{(n)}(x)\) and \(\widetilde M^{(n)}(x)\). This is precisely
what makes the diagonal modulus problem and the non-diagonal real-part problem
behave so differently. We expect that this approach can also be adapted to other matrix products, such as products of truncated unitary matrices or the spherical ensemble.

\subsection{Structure and notations}

The paper is organized as follows. Section~2 collects several lemmas that hold
uniformly in \(k_n\). Sections~3 and~4 treat the boundary regimes
\(\alpha=+\infty\) and \(\alpha=0\), respectively. Section~5 is devoted to the
proportional regime \(\alpha\in(0,+\infty)\), with Subsection~5.1 for the spectral
radius and Subsection~5.2 for the rightmost eigenvalue. Section~6 verifies the
continuity of the two interpolating families at the boundaries \(\alpha=0\) and
\(\alpha=+\infty\). The remaining technical proofs are collected in Section~7.

We use the following asymptotic notation. For a positive sequence \(z_n>0\), we write
\(t_n=O(z_n)\) if \(\limsup_{n\to\infty}|t_n|/z_n<+\infty\), and \(t_n=o(z_n)\) if
\(t_n/z_n\to0\). When \(t_n\ge0\), we write \(t_n\ll z_n\) (equivalently \(z_n\gg t_n\))
to mean \(t_n=o(z_n)\). For non-negative functions \(f\) and \(g\), we write
\(f\lesssim g\) if there exists a constant \(C>0\) such that \(f\le Cg\) for all
admissible arguments; \(f\gtrsim g\) is defined analogously, and \(f\asymp g\) means
both \(f\lesssim g\) and \(g\lesssim f\).

We also fix several pieces of notation used throughout the paper. The matrix
\(M^{(n)}(x)\) is the finite-dimensional matrix associated with the spectral-radius, while \(\widetilde M^{(n)}(x)\) is the corresponding matrix for the
rightmost eigenvalue. In the proportional regime, \(\widetilde M(x,\alpha)\) denotes the
trace-class limiting operator on \(\ell^2(\mathbb N)\). Tilded symbols always refer to the rightmost-eigenvalue problem; untilded symbols refer to the spectral-radius problem.

Finally, \(\Phi\) denotes the standard normal distribution
function, \(\Psi=1-\Phi\), and \(\Lambda(x)=e^{-e^{-x}}\) denotes the Gumbel
distribution function. The symbol \(W_1\) stands for the \(1\)-Wasserstein distance.
When \(T\) is a trace-class operator, \(\det({\rm I}-T)\) denotes its Fredholm determinant.

\section{Preliminaries}\label{sec:preliminaries}
As noted in the introduction, we require the proper asymptotics of \(M_{j,j}^{(n)}(x)\) and \(\widetilde{M}_{j,k}^{(n)}(x).\) In this section, we first present two lemmas that give exact expressions for the entries of \(M^{(n)}(x)\) and \(\widetilde{M}^{(n)}(x)\) in terms of the random variables \(\{Y_j\}_{1\le j\le n}\). We then establish further lemmas in a unified form that covers both the interior regime \(\alpha\in(0, +\infty)\) and the boundary cases \(\alpha=0\) or \(\alpha=+\infty\). These results will be used in the subsequent sections to derive the precise asymptotics of \(M_{j,j}^{(n)}(x)\) and \(\widetilde{M}_{j,k}^{(n)}(x).\)
 
 Recall the matrices $M^{(n)}(x)$ and $\widetilde{M}^{(n)}(x),$ whose entries are defined by      	\[
     	M_{j, k}^{(n)}(x) = \int_{A(x)} \phi_{n-j}(z)\;\overline{\phi_{n-k}(z)}\,  \; d^2 z, \qquad  1\le k, j \le n,
     	\]
     as well as 
    $$ \widetilde{M}_{j, k}^{(n)}(x) = \int_{\widetilde{A}(x)} \phi_{n-j}(z)\;\overline{\phi_{n-k}(z)}\,  \; d^2 z, \qquad  1\le k, j \le n.$$

\begin{lem}\label{ctor}
	Let \(Z_1, \cdots, Z_n\) be the eigenvalues of \(\prod_{j=1}^{k_n} \boldsymbol{A}_j\), and 
	Let \(\{S_{j, \,r},\ 1\le j\le n,\ 1\le r\le k_n\}\) be independent random variables such that \(S_{j, \,r}\) has density function \(y^{j-1}e^{-y}/(j-1)!\) for $y>0$ and define \(Y_j = \prod_{r=1}^{k_n} S_{j, \,r}\) for \(1 \le j \le n\).  We have $M_{j, k}^{(n)}(x)=0$ if $j\neq k$ and 
$$
	\aligned M_{j, j}^{(n)}(x)&=\mathbb{P}\big(\log Y_{n+1-j}\geq k_n\psi (n)+\frac{a_n+b_nx}{\sqrt{\alpha_n}}\big).
	\endaligned 
	$$
\end{lem}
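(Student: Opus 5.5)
Writing $z=re^{i\theta}$, I will use the rotation invariance $\varphi(z)=\varphi(|z|)$ together with the fact that $A(x)=\{|z|\ge R_n(x)\}$ is radial, where
\[
\log R_n(x)=\tfrac12 k_n\psi(n)+\frac{a_n+b_nx}{2\sqrt{\alpha_n}} .
\]
Substituting the definition of $\phi_j$ gives
\[
M^{(n)}_{j,k}(x)=\frac{1}{\pi^{k_n}\,((n-j)!(n-k)!)^{k_n/2}}\int_{R_n(x)}^{\infty}\int_0^{2\pi} r^{2n-j-k}e^{i(k-j)\theta}\,\varphi(r)\,r\,d\theta\,dr .
\]
For $j\neq k$ the angular integral $\int_0^{2\pi}e^{i(k-j)\theta}d\theta$ vanishes, so $M^{(n)}$ is diagonal. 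For $j=k$, writing $m=n-j$, we get
\[
M^{(n)}_{j,j}(x)=\frac{1}{\pi^{k_n}(m!)^{k_n}}\int_{A(x)}|z|^{2m}\varphi(z)\,d^2z .
\]

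To identify this as a probability, I apply identity \eqref{CJQnewunderstanding} with $f(z)=|z|^{2m}\mathbf 1_{\{|z|\ge R_n(x)\}}$. The right-hand side becomes
\[
\int_{\mathbb C^{k_n}}\mathbf 1_{\{\prod_m|z_m|\ge R_n(x)\}}\prod_{r=1}^{k_n}|z_r|^{2m}e^{-|z_r|^2}\,d^2z_r .
\]
In each factor, pass to polar coordinates $z_r=\rho_re^{i\theta_r}$ and set $s_r=\rho_r^2$, so that $d^2z_r=\tfrac12 d s_r\,d\theta_r$. Each factor then contributes
\[
\pi\, s_r^{m}e^{-s_r}\,ds_r=\pi\, m!\cdot\frac{s_r^{m}e^{-s_r}}{m!}\,ds_r ,
\]
which is $\pi\,m!$ times the Gamma$(m+1)$ density, i.e.\ the density of $S_{m+1,r}$. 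The prefactor $\pi^{k_n}(m!)^{k_n}$ cancels exactly, and the event $\prod_r|z_r|\ge R_n(x)$ becomes $\prod_r s_r\ge R_n(x)^2$. Hence
\[
M^{(n)}_{j,j}(x)=\mathbb P\bigl(\log Y_{m+1}\ge 2\log R_n(x)\bigr),
\]
and since $m+1=n+1-j$ and $2\log R_n(x)=k_n\psi(n)+\frac{a_n+b_nx}{\sqrt{\alpha_n}}$, this is the claimed formula.

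As a sanity check on normalization, taking $R_n=0$ gives $M^{(n)}_{j,j}=1$, consistent with the orthonormality of the $\phi_j$. I expect the main obstacle to be the correct use of \eqref{CJQnewunderstanding}: one must interpret it as the statement that $\varphi$ is the density of the product $z_1\cdots z_{k_n}$ of i.i.d.\ complex Gaussians with weights $e^{-|z|^2}$, and check that the test function $f$ is nonnegative and measurable so the identity applies. The remaining bookkeeping is the factor $\tfrac12$ from $d(\rho^2)$ against the $2\pi$ from the angle, which combine to $\pi$.
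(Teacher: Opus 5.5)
Your proposal is correct and matches the paper's proof: for the diagonal entries you apply \eqref{CJQnewunderstanding} to $|z|^{2(n-j)}\mathbf{1}_{A(x)}$, pass to polar coordinates with $s_r=\rho_r^2$, and recognize the joint Gamma$(n-j+1)$ density after the factor $\pi^{k_n}((n-j)!)^{k_n}$ cancels, exactly as the paper does. The only minor difference is that you make the off-diagonal entries vanish by integrating the angle directly in the single variable $z$ using $\varphi(z)=\varphi(|z|)$, whereas the paper first lifts to $\mathbb{C}^{k_n}$ via \eqref{CJQnewunderstanding} and then integrates out the $k_n$ angles; both rest on the same rotation invariance.
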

\begin{proof}   
By definition, 
$$\aligned M_{j, j}^{(n)}(x)&=\int_{A(x)} |\phi_{n-j}(z)|^2 \; d^2 z =\int_{A(x)}\frac{\varphi(z)}{\pi^{k_{n}}} \cdot \frac{|z|^{2(n-j)}}{((n-j)!)^{k_{n}}}d^2z.\\
\endaligned $$	
Set $L_n(x)=\frac{1}{2}(k_n\psi (n)+\frac{a_n+b_nx}{\sqrt{\alpha_n}})$ and apply \eqref{CJQnewunderstanding} to get 
$$\aligned M_{j, j}^{(n)}(x)=\frac{1}{(\pi(n-j)!)^{k_n}}\int_{\log |z_1\cdots z_{k_n}|\ge L_n(x)} \prod_{m=1}^{k_n}(|z_m|^{2(n-j)} e^{-|z_m|^2})  \prod_{m=1}^{k_n} d^2 z_m.
\endaligned$$ 
We leverage the spherical coordinate $z_m=r_m e^{i\theta_m}$ and then $t_m=r_m^2$  to derive  	
\begin{equation}\label{widehatMjj} \aligned  
M_{j, j}^{(n)}(x)&=\frac{2^{k_n}}{((n-j)!)^{k_n}}\int_{\log (\prod_{m=1}^{k_n}r_m)\ge L_n(x)} \prod_{m=1}^{k_n}(r_m^{2(n-j)+1} e^{- r_m^2})  \prod_{m=1}^{k_n} d r_m \\
&=\frac{1}{((n-j)!)^{k_n}}\int_{\log (\prod_{m=1}^{k_n}t_m)\ge 2L_n(x)} \prod_{m=1}^{k_n}(t_m^{n-j} e^{-t_m} ) \prod_{m=1}^{k_n} d t_m. \\
 \endaligned 
\end{equation}
Observe that the integrand $\frac{1}{((n-j)!)^{k_n}}\prod_{m=1}^{k_n}t_m^{n-j} e^{- t_m} $ in \eqref{widehatMjj} is the joint density function of $(S_{n-j+1, 1}, \cdots, S_{n-j+1, k_n})$  and then the fact $\sum_{m=1}^{k_n}\log S_{n-j+1, m}$ having the same distribution as $\log Y_{n-j+1}$ implies 
\begin{equation}\label{Mjjnx} \aligned  
M_{j, j}^{(n)}(x)=\mathbb{P}(\log Y_{n-j+1}\ge k_n\psi (n)+\frac{a_n+b_nx}{\sqrt{\alpha_n}}). \endaligned 
\end{equation}

A similar argument leads 
\begin{equation}\label{widehatMjk}\aligned M_{j, k}^{(n)}(x)
&= \frac{1}{(\pi^2 (n-j)!(n-k)!)^{k_n/2}}\int_{A(x)}\varphi(z) \bar{z}^{n-j} z^{n-k} d^2z \\
&=\frac{1}{((2\pi)^2 (n-j)!(n-k)!)^{k_n/2}}\int_{[0, 2\pi]^{k_n}}\exp((j-k)i\sum_{m=1}^{k_n}\theta_m)\prod_{m=1}^{k_n}d\theta_m \\
&\quad \times \int_{\log (\prod_{m=1}^{k_n}t_m)\ge 2L_n(x)}\prod_{m=1}^{k_n}t_m^{n-\frac{j+k}{2}} e^{-t_m}  \prod_{m=1}^{k_n} d t_m \\
&=0,\endaligned \end{equation}  
where $0$ comes from the first integral and this reflects the rotation invariance of the correlation kernel.  
The proof is completed. 
\end{proof}

\begin{rmk}  The expressions \eqref{Mjjnx} and \eqref{widehatMjk} imply that
\begin{equation}\label{transreal}
\mathbb{P}(X_n\le x)={\rm det}({\rm I}_n-M^{(n)}(x))=\prod_{j=1}^n \mathbb{P}\big(\log Y_{n+1-j}\geq k_n\psi (n)+\tfrac{a_n+b_nx}{\sqrt{\alpha_n}}\big).
\end{equation}

Jiang and Qi \cite{JQ17} used the rotation invariance of $\varphi$ in \eqref{CJQnewunderstanding} together with the characteristic function method to show that
$$g(|Z_1|,\dots,|Z_n|)\stackrel{d}{=}g(|Y_1|,\dots,|Y_n|)$$
for any symmetric function $g$, which directly yields \eqref{transreal}. This reduction phenomenon was first observed by Kostlan \cite{Kostlan1992} for the complex Ginibre ensemble and has since been extended to various classes of complex non-Hermitian random matrices. Here we present an alternative proof based on the determinantal point process method in \cite{HKPV2009}, which paves the way for analyzing the largest real-part. \end{rmk}

The matrix $\widetilde{M}^{(n)}(x)$ associated with $\mathbb{P}(\widetilde{X}_n\le x)$ is no longer diagonal. This non-diagonality renders the determinant $\det({\rm I}_n-\widetilde{M}^{(n)}(x))$ considerably more difficult to evaluate. 
 
\begin{lem}\label{Mjjlem} Let \( \widetilde{M}_{j, k}^{(n)}(x) \)  be defined as above, and let \( (Y_j)_{1\le j\le n} \) be the sequence of random variables given in Lemma \ref{ctor}. Let \( \Theta \) be a random variable, independent of \( (Y_j)_{1\le j\le n} \), following a uniform distribution on \( [0, \pi/2] \). Then, for \( 1 \le j \le n \),
\begin{equation}\label{Mjjfor}
\widetilde{M}_{j, j}^{(n)}(x) = \mathbb{P}\bigl( \log Y_{n+1-j} + \log \cos^2\Theta \ge k_n\psi(n) + \frac{\widetilde{a}_n + \widetilde{b}_n x}{\sqrt{\alpha_n}} \bigr)
\end{equation}
and \( \widetilde{M}_{j, k}^{(n)}(x) = 0 \) whenever \( j-k \) is odd. For even \( j-k \), 
\begin{equation}\label{Mjkfor}
\begin{aligned}
\widetilde{M}_{j, k}^{(n)}(x) &= \frac{2(\big( n-\frac{j+k}{2} \big)!)^{k_n}}{\pi \bigl( (n-j)! (n-k)! \bigr)^{k_n/2}} \\
&\quad \times \int_{0}^{\pi/2} \cos\bigl( (j-k)\theta \bigr) \, \mathbb{P}\bigl( \log Y_{n+1-\frac{j+k}{2}} + \log\cos^2\theta \ge k_n\psi(n) + \frac{\widetilde{a}_n + \widetilde{b}_n x}{\sqrt{\alpha_n}} \bigr) \, d\theta.
\end{aligned}
\end{equation}
Consequently, for $1\le j\neq k\le n,$ we obtain the estimate
\[
|\widetilde{M}_{j, k}^{(n)}(x)| \le \min\big\{\widetilde{M}_{\frac{j+k}{2}, \frac{j+k}{2}}^{(n)}(x), \sqrt{\widetilde{M}_{j, j}^{(n)}(x)\widetilde{M}_{k, k}^{(n)}(x)}\big\}.
\]
 \end{lem}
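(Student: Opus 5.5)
The plan is to compute each entry $\widetilde M^{(n)}_{j,k}(x)$ directly. We push the integral forward through the identity \eqref{CJQnewunderstanding}, exactly as in the proof of Lemma \ref{ctor}, and then separate radial and angular variables. Write $\widetilde L_n(x)=\frac12\bigl(k_n\psi(n)+\frac{\widetilde a_n+\widetilde b_n x}{\sqrt{\alpha_n}}\bigr)$, so that $\widetilde A(x)=\{z:\ \Re z\ge e^{\widetilde L_n(x)}\}$. By definition,
\[
\widetilde M^{(n)}_{j,k}(x)=\frac{1}{\pi^{k_n}((n-j)!(n-k)!)^{k_n/2}}\int_{\mathbb C} \mathbf 1_{\widetilde A(x)}(z)\,z^{n-j}\bar z^{\,n-k}\varphi(z)\,d^2z .
\]
Applying \eqref{CJQnewunderstanding} with $f(z)=\mathbf 1_{\widetilde A(x)}(z)z^{n-j}\bar z^{n-k}$, we write $z_m=r_me^{i\theta_m}$ and $t_m=r_m^2$. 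The product is then $z=Re^{i\vartheta}$ with $R=\prod_m r_m$ and $\vartheta=\sum_m\theta_m \pmod{2\pi}$, and the integrand becomes $R^{2n-j-k}e^{i(k-j)\vartheta}\mathbf 1\{R\cos\vartheta\ge e^{\widetilde L_n}\}\prod_m e^{-t_m}$.

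The key observation is that a sum of independent uniform angles on $[0,2\pi)$, reduced mod $2\pi$, is again uniform and independent of the radii. So after normalizing $\prod_m d\theta_m$ by $(2\pi)^{k_n}$ and the radial part by $\bigl((n-\tfrac{j+k}{2})!\bigr)^{k_n}$, the radial weights $\prod_m t_m^{\,n-\frac{j+k}{2}}e^{-t_m}$ become exactly the law of $(S_{n+1-\frac{j+k}{2},m})_m$. The entry therefore equals
\[
\frac{\bigl((n-\tfrac{j+k}{2})!\bigr)^{k_n}}{((n-j)!(n-k)!)^{k_n/2}}\cdot\frac{1}{2\pi}\int_0^{2\pi}e^{i(k-j)\vartheta}\,\mathbb P\bigl(Y_{n+1-\frac{j+k}{2}}\cos^2\vartheta\ge e^{2\widetilde L_n},\ \cos\vartheta>0\bigr)\,d\vartheta .
\]

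For the angular integral we use the symmetries $\vartheta\mapsto-\vartheta$ (which removes the sine part) and $\vartheta\mapsto\vartheta+\pi$. The latter is the delicate point. The constraint $\cos\vartheta>0$ is not invariant under it, while the quantity $\log\cos^2\vartheta$ appearing in \eqref{Mjjfor}–\eqref{Mjkfor} is. I would therefore check carefully that the event is effectively symmetric in $\pm\Re z$, i.e. that the relevant set is read through $\log\cos^2$ as in the lemma. In that case the integral over $[0,2\pi)$ equals $(1+(-1)^{j-k})\cdot 2\int_0^{\pi/2}\cos((j-k)\vartheta)(\cdots)\,d\vartheta$. This expression vanishes for odd $j-k$, and for even $j-k$ it yields the prefactor $\frac{2}{\pi}$ in \eqref{Mjkfor}. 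The constant $\frac2\pi$ matching the stated formula is a good consistency check on this reading. I expect this step, namely pinning down the angular range and the parity cancellation, to be the main obstacle; everything else is bookkeeping. When $j=k$ the Gamma prefactor is $1$, and $\frac2\pi\int_0^{\pi/2}(\cdot)\,d\theta$ is exactly the expectation over $\Theta\sim{\rm Unif}[0,\pi/2]$ independent of $Y$, which gives \eqref{Mjjfor}.

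For the final estimate I would give two separate arguments.

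\emph{First bound.} Since $|\cos((j-k)\theta)|\le1$ and the probability is nonnegative, the angular factor is at most $\widetilde M^{(n)}_{m,m}(x)$ with $m=\frac{j+k}{2}$, which is an integer for even $j-k$. It remains to show that the prefactor is at most $1$. This follows from log-convexity of $\ell\mapsto\ell!$, which gives $\bigl((n-m)!\bigr)^2\le (n-j)!\,(n-k)!$ because $n-m$ is the midpoint of $n-j$ and $n-k$.

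\emph{Second bound.} $\widetilde M^{(n)}(x)$ is the Gram matrix of the functions $\phi_{n-j}\mathbf 1_{\widetilde A(x)}$ in $L^2(\mathbb C)$. Hence it is Hermitian positive semidefinite, and Cauchy–Schwarz gives $|\widetilde M^{(n)}_{j,k}|\le\sqrt{\widetilde M^{(n)}_{j,j}\widetilde M^{(n)}_{k,k}}$.

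Taking the minimum of the two bounds completes the plan.
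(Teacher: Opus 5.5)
There is a genuine gap, and it is the step you flagged. It cannot be closed: the statement is false for $\widetilde A(x)$ as defined. Your computation is correct up to the angular integral, and your route is the paper's route. Both push forward through \eqref{CJQnewunderstanding}, use uniformity of $\sum_m\theta_m \bmod 2\pi$, reduce to $Y_{n+1-\frac{j+k}{2}}$ and a single angle, and finish with log-convexity of $\ell\mapsto\ell!$ and the Gram/Cauchy--Schwarz bound.

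The problem is that $\widetilde A(x)=\{z:\ \log\Re z\ge \widetilde L_n(x)\}$ is not invariant under $z\mapsto -z$. So the symmetry $\vartheta\mapsto\vartheta+\pi$ is unavailable. If you evaluate your own display faithfully, the factor $\mathbf 1\{\cos\vartheta>0\}$ confines $\vartheta$ to $(-\pi/2,\pi/2)$. Evenness alone then gives, for every pair $j,k$,
\[
\widetilde M^{(n)}_{j,k}(x)=\frac{\Gamma\bigl(n+1-\tfrac{j+k}{2}\bigr)^{k_n}}{\pi\bigl((n-j)!\,(n-k)!\bigr)^{k_n/2}}\int_0^{\pi/2}\cos\bigl((j-k)\theta\bigr)\,\mathbb P\bigl(\log Y_{n+1-\frac{j+k}{2}}+\log\cos^2\theta\ge 2\widetilde L_n(x)\bigr)\,d\theta .
\]
For odd $j-k$, read $Y_{n+1-\frac{j+k}{2}}$ as a product of $k_n$ i.i.d.\ Gamma variables of that shape.

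\begin{itemize}
\item For even $j-k$ this is exactly half of the right-hand side of \eqref{Mjkfor}. In particular $\widetilde M^{(n)}_{j,j}(x)$ is half the right-hand side of \eqref{Mjjfor}.
\item For odd $j-k$ it is generally nonzero. As $x\to-\infty$ the integral tends to $\sin((j-k)\pi/2)/(j-k)=\pm 1/(j-k)$.
\end{itemize}

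There is also a check independent of any computation. As $x\to-\infty$, $\widetilde A(x)$ increases to the right half-plane. That half-plane has mass exactly $\frac12$ under the rotation-invariant probability density $|\phi_{n-j}|^2$, while the right-hand side of \eqref{Mjjfor} tends to $1$. Your ``consistency check'' via the constant $\frac{2}{\pi}$ is circular: it selects the reading that reproduces the claimed answer instead of testing it.

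So the formulas in the statement, parity cancellation included, belong to the symmetrized set $\{z:\ |\Re z|\ge e^{\widetilde L_n(x)}\}$, i.e.\ to $\max_j|\Re Z_j|$. They do not belong to $\widetilde A(x)$ as defined. The paper's own proof makes exactly this silent substitution. It writes the constraint as $\log\bigl(\cos^2(\sum_m\theta_m)\prod_m t_m\bigr)\ge 2\widetilde L_n(x)$ and integrates $\theta$ over $[0,\pi]$, which discards $\cos\theta>0$. Your suspicion was correct. The right conclusion is that the lemma holds only after replacing $\widetilde A(x)$ by its symmetrization; for $\widetilde A(x)$ itself, the correct version is the display above.

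Your two final bounds are sound and coincide with the paper's. The Cauchy--Schwarz bound holds for any domain. The inequality $\bigl((n-\frac{j+k}{2})!\bigr)^2\le(n-j)!\,(n-k)!$ gives $|\widetilde M^{(n)}_{j,k}(x)|\le\widetilde M^{(n)}_{\frac{j+k}{2},\frac{j+k}{2}}(x)$ for even $j-k$ under either reading.
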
 
\begin{proof}
Setting $\widetilde{L}_n(x)=\frac{1}{2}(k_n\psi (n)+\frac{\widetilde{a}_n+\widetilde{b}_nx}{\sqrt{\alpha_n}})$ and following the same reasoning as for \eqref{widehatMjj}, we have 	
\begin{equation}\label{newtr} \aligned  
\widetilde{M}_{j, j}^{(n)}(x)
&=\frac{1}{(2\pi(n-j)!)^{k_n}}\int_{\log (\cos^2(\sum_{m=1}^{k_n}\theta_m)\prod_{m=1}^{k_n}t_m)\ge 2\widetilde{L}_n(x)} \prod_{m=1}^{k_n}t_m^{n-j} e^{-t_m}  \prod_{m=1}^{k_n} d t_m d\theta_m \endaligned 
\end{equation}
and then we can understand \eqref{newtr} in the similar way as 
 $$\widetilde{M}_{j, j}^{(n)}(x)=\mathbb{P}(\log Y_{n+1-j}+\log \cos^2(\sum_{m=1}^{k_n}\Theta_m)\ge 2\widetilde{L}_n(x)).$$ 
Here, $\{\Theta_m\}_{1\le m\le k_n}$ are i.i.d. uniform on $[0, 2\pi).$ As is well known, the sum modulo $2\pi$ of such independent random variables is again uniform on $[0, 2\pi).$ The property of $\cos^2 \theta$ reduces $[0, 2\pi)$ to $[0, \pi/2)$ and then 
 \begin{equation}\label{Mjjf}  
\widetilde{M}_{j, j}^{(n)}(x)=\mathbb{P}(\log Y_{n+1-j}+\log \cos^2\Theta\ge 2\widetilde{L}_n(x)).
\end{equation}
Similar argument as \eqref{widehatMjk} leads  
$$\aligned \widetilde{M}_{j, k}^{(n)}(x)&=\frac{1}{\pi((n-j)!(n-k)!)^{k_n/2}}\times\\
&\int_{\log (\cos^2\theta\prod_{m=1}^{k_n}t_m)\ge 2\widetilde{L}_n(x), \theta\in [0, \pi]} \cos((j-k)\theta)\prod_{m=1}^{k_n}t_m^{n-\frac{j+k}{2}} e^{-t_m}  d\theta\prod_{m=1}^{k_n} d t_m\endaligned $$   and then similarly as for \eqref{Mjjf} we derive $$\aligned \widetilde{M}_{j, k}^{(n)}(x)&=\frac{((n-\frac{j+k}{2})!)^{k_n}}{\pi((n-j)!(n-k)!)^{k_n/2}}\\
&\times \int_{0}^{\pi} \cos((j-k)\theta)\mathbb{P}(\log Y_{n+1-\frac{j+k}{2}}+\log\cos^2\theta\ge 2\widetilde{L}_n(x)) d\theta.\endaligned $$
When \(j-k\) is odd, the integrand combines a symmetric probability factor with \(\cos((j-k)\theta)\), which is antisymmetric with respect to \(\theta = \pi/2\); hence the integral vanishes. For even \(j-k\), the entire integrand is symmetric about \(\theta = \pi/2\), so the integral over \([0, \pi]\) equals twice the integral over \([0, \pi/2]\). This completes the verification of \eqref{Mjkfor}. 
Review an elementary inequality 
$\frac{((n-\frac{j+k}{2})!)^2}{(n-j)!(n-k)!}\le 1$ and the facts $|\cos((j-k)\theta)|\le 1,$ whence 
	the formula \eqref{Mjkfor} helps us to derive  
	$$
		\aligned |\widetilde{M}_{j, k}^{(n)}(x)|\le\frac{2}{\pi}\int_{0}^{\frac{\pi}{2}}\mathbb{P}(\log Y_{n+1-\frac{j+k}{2}}+\log\cos^2\theta\ge 2\widetilde{L}_{n}(x))d\theta=M_{\frac{j+k}{2}, \frac{j+k}{2}}^{(n)}(x). \endaligned
	$$ 
Finally, the proof is complete, as the Cauchy-Schwarz inequality  
$$ \big| \int_{\widetilde{A}(x)} \phi_j(z) \overline{\phi_k(z)} \, d^2z \big|^2 \le \int_{\widetilde{A}(x)} |\phi_j(z)|^2 \, d^2z \int_{\widetilde{A}(x)} |\phi_k(z)|^2 \, d^2z $$  
is equivalent to  
\begin{equation}\label{CSIneq} 
|\widetilde{M}_{j, k}^{(n)}(x)|^2 \le \widetilde{M}_{j, j}^{(n)}(x) \widetilde{M}_{k, k}^{(n)}(x),
\end{equation}  
which immediately yields the last conclusion.
\end{proof}

Now, we present several lemmas concerning the asymptotics of \(M^{(n)}_{j, j}(x)\), which will also be applicable to \(\widetilde{M}_{j, j}^{(n)}(x)\).  
As noted in the introduction, to derive the asymptotic expression for \(M^{(n)}_{j, j}(x)\), we shall employ either the central limit theorem for the i.i.d. setting or the Edgeworth expansion. Both approaches require knowledge of the moments of \(\log S_{j, 1}\) for \(1 \le j \le n\).

We begin by presenting properties of the digamma function \cite{Abramowitz1968}, which appears in the expectations related to \(\{\log S_{j,1}\}_{j=1}^n\). Subsequently, we state the corresponding expectations. These two lemmas will underpin our subsequent analysis.

\begin{lem}\label{diagammapro}  Let $\psi(x) = \Gamma'(x)/\Gamma(x)$ be the digamma function, with $\Gamma$ being the Gamma function.  
	\begin{enumerate}
			\item[\textup{(a)}.]  For any $s \geq 1$,
		\(
		\psi(j + s) - \psi(j) \leq \frac{s}{j}.
		\)
				\item[\textup{(b)}.]  For sufficiently large $z$, the following asymptotics hold:
		\begin{align*}
			\psi(z) &= \log z - \frac{1}{2z} + O(z^{-2}), \quad\quad
			\psi'(z) = \frac{1}{z} + \frac{1}{2z^2} + O(z^{-3});\\
			\psi^{(2)}(z)  &= -\frac{1}{z^2} - \frac{1}{z^3} + O(z^{-4}),\quad\quad
			\psi^{(3)}(z)  = \frac{2}{z^3} + \frac{3}{z^4} +  O(z^{-5}).
		\end{align*}
	\end{enumerate}
\end{lem}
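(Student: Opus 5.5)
Both parts follow from standard integral representations of the digamma function together with convexity and Stirling-type expansions. I would take them in turn.

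For part (a), I would start from the integral representation
\[
\psi(z)=\log z-\frac{1}{2z}-2\int_0^\infty \frac{t\,dt}{(t^2+z^2)(e^{2\pi t}-1)},
\]
or, more simply, from the series
\[
\psi'(z)=\sum_{m=0}^\infty \frac{1}{(z+m)^2}.
\]
This series shows that \(\psi'\) is positive and decreasing, so \(\psi\) is increasing and concave on \((0,\infty)\). Concavity gives the mean value bound
\[
\psi(j+s)-\psi(j)=\int_j^{j+s}\psi'(u)\,du\le s\,\psi'(j).
\]
It then remains to bound \(\psi'(j)\). Comparing the series with an integral only gives
\[
\psi'(j)\le \frac1{j^2}+\int_j^\infty u^{-2}\,du=\frac1{j^2}+\frac1j,
\]
which is slightly too large. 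So I would use the sharper telescoping bound
\[
\frac{1}{(j+m)^2}\le \frac{1}{(j+m-\tfrac12)(j+m+\tfrac12)}=\frac{1}{j+m-\tfrac12}-\frac{1}{j+m+\tfrac12},
\]
whose sum is \(\psi'(j)\le 1/(j-\tfrac12)\). This still exceeds \(1/j\), so the integrand-level argument needs a different idea.

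Alternatively, one can treat integer \(s\) directly. Using \(\psi(x+1)=\psi(x)+1/x\),
\[
\psi(j+s)-\psi(j)=\sum_{i=0}^{s-1}\frac{1}{j+i}\le \frac{s}{j}.
\]
For real \(s\ge1\), write \(s=\lfloor s\rfloor+\{s\}\) and combine the integer case with concavity on the fractional step. The fractional part contributes
\[
\psi(j+\lfloor s\rfloor+\{s\})-\psi(j+\lfloor s\rfloor)\le \{s\}\,\psi'(j+\lfloor s\rfloor)\le \frac{\{s\}}{j+\lfloor s\rfloor-\tfrac12}\le \frac{\{s\}}{j},
\]
where the last inequality uses \(\lfloor s\rfloor\ge1\). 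This is exactly where the hypothesis \(s\ge 1\) enters.

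For part (b), I would use the classical Stirling asymptotic series
\[
\psi(z)\sim \log z-\frac1{2z}-\sum_{k\ge1}\frac{B_{2k}}{2k\,z^{2k}},
\]
which is valid with remainder bounded by the first omitted term for real \(z>0\). Truncating after \(-\frac1{2z}\) gives the first formula with an \(O(z^{-2})\) error.

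The derivative expansions do not follow from formally differentiating an \(O(\cdot)\) term. Instead, I would differentiate the integral representation above under the integral sign, or equivalently differentiate the Binet-type formula
\[
\psi(z)=\log z-\frac1{2z}-\int_0^\infty\Big(\frac1{e^t-1}-\frac1t+\frac12\Big)e^{-zt}\,dt .
\]
In this form the remainder is a Laplace transform of a function that is \(O(t)\) near \(0\). Its \(m\)-th derivative is therefore \(O(z^{-m-2})\).

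This yields the following:
\begin{itemize}
\item \(\psi'(z)=\frac1z+\frac1{2z^2}+O(z^{-3})\);
\item \(\psi''(z)=-\frac1{z^2}-\frac1{z^3}+O(z^{-4})\);
\item \(\psi'''(z)=\frac2{z^3}+\frac3{z^4}+O(z^{-5})\).
\end{itemize}
These are the derivatives of \(\log z-\frac1{2z}\), with the error orders shifted accordingly. Since these are textbook facts \cite{Abramowitz1968}, the main care point is the justification of differentiating the remainder, which the Laplace-transform form handles. I expect no serious obstacle in the proof.
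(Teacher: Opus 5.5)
Your proof is correct. The paper gives no argument for this lemma; it simply quotes the facts from \cite{Abramowitz1968}. So your write-up is a self-contained substitute rather than a variant of an existing proof.

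In (a), the argument that actually works is your last one: the recurrence $\psi(x+1)=\psi(x)+1/x$ on the integer part of $s$, then concavity together with $\psi'(x)\le 1/(x-\tfrac12)$ on the fractional step. The two abandoned attempts before it should be removed. Your observation that $\psi'(j)>1/j$ also shows the hypothesis $s\ge 1$ is essential, since $\psi(j+s)-\psi(j)\sim s\,\psi'(j)>s/j$ as $s\to0^+$. This matters downstream. In the proof of Lemma~\ref{H} the bound is integrated over $s\in[0,t]$, and what is actually valid there is $\psi(j+s)-\psi(j)\le s\,\psi'(j)\le s/(j-\tfrac12)$ for all $s\ge 0$. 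That bound gives the same conclusion.

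In (b), the Binet-type Laplace representation is the right tool, and it supplies what the bare citation glosses over. Set $f(t)=\frac{1}{e^t-1}-\frac1t+\frac12$, which satisfies $|f(t)|\lesssim t$ on $(0,\infty)$. Then the remainder $R(z)=\int_0^\infty f(t)e^{-zt}\,dt$ obeys $|R^{(m)}(z)|\lesssim\int_0^\infty t^{m+1}e^{-zt}\,dt=(m+1)!\,z^{-m-2}$. This yields all four expansions at once, including the $m=0$ case, so the separate appeal to the Stirling series is unnecessary. It also never requires differentiating an $O(\cdot)$ term.
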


Now, we directly state the expectations related to $\{\log S_{i,1}\}_{i=1}^{n}$ without proof. 
\begin{lem}\label{le}
	Let $(S_{j, 1})_{1\le j\le n}$ be defined as above. The following assertions hold:
\begin{enumerate}	
	\item[\textup{(a)}.]  For the moments and moment generating function, we have:
	\begin{align*}
		\mu:=\mathbb{E}[\log S_{j, 1}] = \psi(j)&, \quad\quad
		\sigma^{2}:={\rm Var}[\log S_{j,1}] = \psi'(j); \\
		\mathbb{E}[S_{j, 1} \log S_{j, 1}] = j \psi(j+1)&, \quad\quad
		\mathbb{E}[e^{\lambda \log S_{j,1}}] = \frac{\Gamma(j+\lambda)}{\Gamma(j)}.
	\end{align*}
	
\item[\textup{(b)}.] For sufficiently large $j$, the skewness and kurtosis correction terms for $\log S_{j,1}$ are given by:
$$\gamma_{1}:=\frac{\mathbb{E}(\log S_{j,1}-\mu)^{3}}{6\sigma^{3}}=-\frac{1}{6\sqrt{j}}(1+O(j^{-1}))\quad \textup{(skewness correction)};$$
$$\gamma_{2}:=\frac{\mathbb{E}[(\log S_{j,1}-\mu)^4] - 3\sigma^4}{24\sigma^4} =\frac{1}{12j}(1+O(j^{-1}))\quad \textup{(kurtosis correction)}.$$
\end{enumerate}
\end{lem}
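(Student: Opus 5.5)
The statement concerns the law of $\log S_{j,1}$ with $S_{j,1}\sim\mathrm{Gamma}(j,1)$. The plan is to compute everything from the moment generating function, which is elementary.

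\textbf{Part (a).} For $\lambda>-j$,
\[
\mathbb E[e^{\lambda\log S_{j,1}}]=\mathbb E[S_{j,1}^\lambda]=\frac{1}{\Gamma(j)}\int_0^\infty y^{j+\lambda-1}e^{-y}\,dy=\frac{\Gamma(j+\lambda)}{\Gamma(j)}.
\]
Hence the cumulant generating function is
\[
K(\lambda)=\log\Gamma(j+\lambda)-\log\Gamma(j),
\]
and its $m$-th cumulant is $\kappa_m=K^{(m)}(0)=\psi^{(m-1)}(j)$. This gives $\mu=\psi(j)$ and $\sigma^2=\psi'(j)$.

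For $\mathbb E[S_{j,1}\log S_{j,1}]$, differentiate $\mathbb E[S^{1+\lambda}]=\Gamma(j+1+\lambda)/\Gamma(j)$ at $\lambda=0$. This yields
\[
\frac{\Gamma(j+1)\,\psi(j+1)}{\Gamma(j)}=j\,\psi(j+1).
\]
Differentiation under the integral sign is justified by dominated convergence, since $y^{j}|\log y|\,e^{-y}$ is integrable near $0$ and near $\infty$.

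\textbf{Part (b).} Use the cumulant relations
\[
\mathbb E(\log S-\mu)^3=\kappa_3=\psi^{(2)}(j),\qquad
\mathbb E(\log S-\mu)^4-3\sigma^4=\kappa_4=\psi^{(3)}(j).
\]
Then substitute the expansions of Lemma \ref{diagammapro}(b). From $\sigma^2=j^{-1}(1+\tfrac1{2j}+O(j^{-2}))$ we get
\[
\sigma^3=j^{-3/2}(1+O(j^{-1})),\qquad \sigma^4=j^{-2}(1+O(j^{-1})).
\]
Therefore
\[
\gamma_1=\frac{-j^{-2}(1+O(j^{-1}))}{6j^{-3/2}(1+O(j^{-1}))}=-\frac{1}{6\sqrt j}\,(1+O(j^{-1})),
\]
\[
\gamma_2=\frac{2j^{-3}(1+O(j^{-1}))}{24j^{-2}(1+O(j^{-1}))}=\frac{1}{12j}\,(1+O(j^{-1})).
\]

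\textbf{Obstacle.} There is no real obstacle. The only care needed is tracking the relative $O(j^{-1})$ errors through the quotients, which the expansions in Lemma \ref{diagammapro}(b) supply to exactly the required order.
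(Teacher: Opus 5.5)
Your proof is correct. The paper states this lemma without proof, so there is nothing to compare it against. Your cumulant-generating-function argument, with $K(\lambda)=\log\Gamma(j+\lambda)-\log\Gamma(j)$ and $\kappa_m=\psi^{(m-1)}(j)$, is the standard way to supply it. The resulting cumulant identities, $\kappa_3=\psi^{(2)}(j)$ and $\kappa_4=\psi^{(3)}(j)$, combined with Lemma~\ref{diagammapro}(b), give exactly the stated relative $O(j^{-1})$ errors. It is also consistent with how the paper later uses these facts:
\begin{itemize}
\item[$\bullet$] the identity $\mathbb{E}[(\log S_{n-m,1}-\psi(n-m))^4]=\psi^{(3)}(n-m)+3\psi'(n-m)^2$ in the proof of Lemma~\ref{ed};
\item[$\bullet$] the identity $\mathbb{E}[(S_{\ell,1}-\ell)\log S_{\ell,1}]=\ell(\psi(\ell+1)-\psi(\ell))$ in the proof of Lemma~\ref{333}.
\end{itemize}

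One small tightening is needed. To differentiate under the integral sign, you need a dominating function that is uniform in $\lambda$ near $0$, not just integrability of the integrand at $\lambda=0$. For $|\lambda|\le 1/2$, the function $(y^{j-1/2}+y^{j+1/2})|\log y|e^{-y}$ works. Alternatively, you can avoid differentiation altogether by size-biasing: since $y\cdot\frac{y^{j-1}e^{-y}}{(j-1)!}=j\cdot\frac{y^{j}e^{-y}}{j!}$, you get $\mathbb{E}[S_{j,1}\log S_{j,1}]=j\,\mathbb{E}[\log S_{j+1,1}]=j\psi(j+1)$ directly.
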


Next, using the expectation of 
$ \log Y_{n-j+1} $ from Lemma \ref{le} and the Markov inequality, we derive  an upper bound for $M^{(n)}_{j, j}(x).$

\begin{lem}\label{H}
		Let  $M^{(n)}_{j, j}(x)$ be defined as above. Then uniformly on $2\le j\ll n,$ we have 
	$$	M^{(n)}_{j, j}(x)
	\leq\exp\big\{-\frac{(j-1)^2}{4\alpha_n}-\frac{(j-1)(a_n+b_n x)}{2\sqrt{\alpha_n}} \big\}
	$$
	as $n\to+\infty.$
\end{lem}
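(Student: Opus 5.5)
The plan is a Chernoff (exponential Markov) bound for the tail in Lemma \ref{ctor}, tuned at a specific exponent. By Lemma \ref{ctor}, with $m:=n+1-j$ and $t_n:=k_n\psi(n)+\frac{a_n+b_nx}{\sqrt{\alpha_n}}$,
\[
M^{(n)}_{j,j}(x)=\mathbb P(\log Y_m\ge t_n)\le e^{-\lambda t_n}\,\mathbb E\, e^{\lambda\log Y_m}
=\exp\Big\{k_n\big[\log\Gamma(m+\lambda)-\log\Gamma(m)\big]-\lambda t_n\Big\}
\]
for every $\lambda\ge0$. This uses independence of the $S_{m,r}$ and the moment generating function in Lemma \ref{le}(a).

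I would take $\lambda=j-1$. Then $m+\lambda=n$ and $\log\Gamma(n)-\log\Gamma(n+1-j)=\sum_{i=1}^{j-1}\psi$-increments, written as
\[
\log\Gamma(n)-\log\Gamma(n-j+1)=\int_{0}^{j-1}\psi(n-j+1+s)\,ds .
\]
Subtracting $(j-1)\psi(n)$ leaves $k_n\int_0^{j-1}[\psi(n-j+1+s)-\psi(n)]\,ds$. By monotonicity and concavity of $\psi$ (its derivative $\psi'$ is decreasing), we have $\psi(n)-\psi(n-u)\ge u\,\psi'(n)$ for $u=j-1-s\in[0,j-1]$. Integrating gives an exponent bounded by
\[
-k_n\psi'(n)\frac{(j-1)^2}{2}-\frac{(j-1)(a_n+b_nx)}{\sqrt{\alpha_n}} .
\]
By Lemma \ref{diagammapro}(b), $k_n\psi'(n)=\frac{k_n}{n}(1+O(1/n))=\frac{1}{\alpha_n}(1+o(1))$, so this is about $-\frac{(j-1)^2}{2\alpha_n}-\frac{(j-1)(a_n+b_nx)}{\sqrt{\alpha_n}}$.

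This bound is stronger than the claimed one, but it depends on the sign of $a_n+b_nx$. The cleaner route is to choose $\lambda=(j-1)/2$, which exactly reproduces the stated constants. With this choice, $m+\lambda=n-(j-1)/2$, and the exponent is
\[
k_n\int_0^{(j-1)/2}\big[\psi(n-j+1+s)-\psi(n)\big]ds-\frac{(j-1)(a_n+b_nx)}{2\sqrt{\alpha_n}}.
\]
The bound $\psi(n)-\psi(n-u)\ge u\psi'(n)\ge u/n$ for $u\in[(j-1)/2,\,j-1]$ then makes the integral at most $-\frac1n\int_{(j-1)/2}^{j-1}u\,du=-\frac{3(j-1)^2}{8n}$. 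The actual claim needs only $-\frac{(j-1)^2}{4n}\cdot\frac{n}{k_n}\cdot k_n$, i.e.\ $k_n\cdot\frac{(j-1)^2}{4n}=\frac{(j-1)^2}{4\alpha_n}$. So the bound $\frac{3}{8}\ge\frac14$ holds with room to spare, and it holds uniformly in $2\le j\ll n$, since $\psi'(n)\ge 1/n$ exactly (from $\psi'(z)=\sum_k (z+k)^{-2}>1/z$). The linear term is $-\lambda\frac{a_n+b_nx}{\sqrt{\alpha_n}}=-\frac{(j-1)(a_n+b_nx)}{2\sqrt{\alpha_n}}$ exactly, with no sign issue.

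The main care point is making the quadratic estimate exact rather than asymptotic. The step to watch is the convexity inequality for $\psi$ on $[n-j+1,n]$ together with $\psi'(n)\ge 1/n$. The condition $j\ll n$ is needed only to keep all arguments positive and $\alpha_n$ well defined, not for any error term. No $o(1)$ corrections appear, so the inequality holds for all large $n$ uniformly.
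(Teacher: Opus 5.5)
Your argument is correct and is essentially the paper's proof: a Chernoff bound using the Gamma moment generating function from Lemma \ref{le}, evaluated at $t=(j-1)/2$. The only difference is in bounding the log-MGF. You bound $\int_0^{(j-1)/2}[\psi(n-j+1+s)-\psi(n)]\,ds$ in one step, via concavity of $\psi$ and the exact inequality $\psi'(n)\ge 1/n$. The paper instead splits it into $\int_0^t[\psi(n-j+1+s)-\psi(n-j+1)]\,ds\le t^2/n$ and $t[\psi(n)-\psi(n-j+1)]\ge t(j-1)/n$; your version is non-asymptotic and gives the sharper constant $3/8$ in place of $1/4$.
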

\begin{proof}
	 Given any $t>0,$  it follows from the Markov inequality that for each $2\le j \ll n$ and any \( x > 0 \),
	\begin{equation}\label{uppermjj}
	M^{(n)}_{j, j}(x)\leq \mathbb{E}(e^{t \log Y_{n-j+1}})\exp \left\{ -t\left( k_n\psi (n)+\alpha_n^{-1/2}(a_n+b_nx) \right) \right\}.\\ 
	\end{equation} 
Leveraging Lemma \ref{le} and the fact of $\log Y_{n-j+1}=\sum_{r=1}^{k_n}\log S_{n-j+1, r} $, we have  
$$\mathbb{E}(e^{t \log Y_{n-j+1}})=(\mathbb{E}e^{t \log S_{n-j+1, 1}})^{k_n}=(\frac{\Gamma(n-j+1+t)}{\Gamma(n-j+1)})^{k_n}.$$ 
We rewrite 
$$\aligned \log &\frac{\Gamma(n-j+1+t)}{\Gamma(n-j+1)}-t \psi(n-j+1)=\int_{0}^t(\psi(n-j+1+s)-\psi(n-j+1)) ds
\endaligned $$ and then apply Lemma \ref{diagammapro} to get an upper bound  
$$\int_0^t \frac{s}{n-j+1} ds= \frac{t^2}{2(n-j+1)}\le \frac{t^2}{n}.$$ 
Meanwhile, 
$$\psi (n) - \psi(n-j+1)=\log n-\log (n-j+1)-\frac{1}{2n}+\frac{1}{2(n-j+1)}+o(n^{-2})\ge \frac{j-1}{n}.$$
Putting these two bounds into the expression \eqref{uppermjj}, we derive 
\begin{equation}\label{k12}\log M^{(n)}_{j, j}(x)\le \frac{t^2-(j-1)t}{\alpha_n}-\frac{(a_n+b_n x)t}{\sqrt{\alpha_n}}.\end{equation}
	for all $ t > 0 $ and sufficiently large $ n. $ 
	Selecting $ t=\frac{j-1}{2}$ gives
	$$	M^{(n)}_{j, j}(x)
	\leq\exp\big\{-\frac{(j-1)^2}{4\alpha_n}-\frac{(j-1)(a_n+b_n x)}{2\sqrt{\alpha_n}} \big\}.
	$$
	The proof is then completed. 
\end{proof}
By virtue of the properties of 
$\log S_{n-j+1, \,r}$
in Lemma \ref{le}, the Edgeworth expansion of 
$\log S_{n-j+1, \,r}$
can be derived (see \cite{DasGupta2008}; for the proof, see \cite{Esseen1945}). This expansion provides a more precise estimation compared to the central limit theorem.
\begin{lem}\label{ed}
	Given $0\leq m\ll n$ and $n\lesssim k_n.$ For any $|x_n|\lesssim n^{1/6},$
$$\mathbb{P}\big( \frac{\log Y_{n-m}-k_n\psi(n-m)}{\sqrt{k_n\psi'(n-m)}}\leq x_n\big)=\Phi(x_n)-\frac{(1-x_n^{2})}{6\sqrt{k_nn}}\phi(x_n)+O(k_n^{-\frac{3}{2}}+k_n^{-\frac{1}{2}}n^{-\frac{3}{2}}).$$
\end{lem}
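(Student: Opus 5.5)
The plan is to prove the expansion directly from the explicit characteristic function of $\log Y_{n-m}$, using Esseen's smoothing inequality. This keeps every constant uniform in the shape parameter $j:=n-m\to\infty$, whereas the textbook Edgeworth theorem \cite{Esseen1945} is stated for a fixed summand law and so does not give this uniformity directly.

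\textbf{Setup.} Write $\log Y_{j}=\sum_{r=1}^{k_n}\log S_{j,r}$ and let $\sigma_j^2=\psi'(j)$. By Lemma \ref{le}(a), the cumulants of $\log S_{j,1}$ are $\kappa_r=\psi^{(r-1)}(j)$. By Lemma \ref{diagammapro}(b) these satisfy $\kappa_r=(-1)^r(r-2)!\,j^{-(r-1)}(1+O(j^{-1}))$. Hence the standardized cumulants of $T:=(\log Y_j-k_n\psi(j))/\sqrt{k_n\psi'(j)}$ are
\[
\lambda_r=\frac{k_n\kappa_r}{(k_n\sigma_j^2)^{r/2}}\asymp (k_nj)^{-(r-2)/2}.
\]
So $T$ behaves like a normalized sum of order $k_nj\asymp k_nn$ terms, and the natural expansion parameter is $(k_nn)^{-1/2}$. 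In particular, $\lambda_3=-(k_nj)^{-1/2}(1+O(j^{-1}))$. Since $m\ll n$, this gives $\lambda_3=-(k_nn)^{-1/2}(1+o(1))$. The one-term Edgeworth correction is $-\frac{\lambda_3}{6}(x^2-1)\phi(x)$, which is exactly the term $-\frac{(1-x_n^2)}{6\sqrt{k_nn}}\phi(x_n)$ in the statement. The relative $O(j^{-1})$ correction in $\lambda_3$ is absorbed into the stated remainder.

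\textbf{Characteristic function.} The characteristic function of $T$ is explicit:
\[
\log \mathbb E e^{itT}=k_n\Big[\log\frac{\Gamma(j+is)}{\Gamma(j)}-is\,\psi(j)\Big],\qquad s=\frac{t}{\sqrt{k_n\psi'(j)}}.
\]
I would Taylor expand $\log\Gamma(j+is)$ to fourth order with the integral remainder, using the bound on $\psi^{(4)}$ on the segment $[j,j+is]$. For $|t|\le \varepsilon\sqrt{k_nj}$ this gives
\[
\log \mathbb E e^{itT}=-\tfrac{t^2}{2}+\tfrac{\lambda_3}{6}(it)^3+\tfrac{\lambda_4}{24}t^4+O\big(|t|^5 (k_nj)^{-3/2}\big).
\]
Consequently, on this range,
\[
\Big|\mathbb E e^{itT}-e^{-t^2/2}\big(1+\tfrac{\lambda_3}{6}(it)^3\big)\Big|\lesssim e^{-t^2/4}\big(\lambda_4 t^4+\lambda_3^2t^6+|t|^5(k_nj)^{-3/2}\big).
\]

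\textbf{Smoothing inequality.} I would apply Esseen's smoothing inequality with cutoff $T_0=\varepsilon\sqrt{k_nj}$. The low-frequency part of the integral contributes $O(\lambda_4+\lambda_3^2)=O((k_nn)^{-1})$, and the $1/T_0$ boundary term is of the same order. This has to be reconciled with the stated remainder $O(k_n^{-3/2}+k_n^{-1/2}n^{-3/2})$. If necessary, I would carry the second-order Edgeworth terms, namely $\lambda_4 He_3\phi$ and $\lambda_3^2 He_5\phi$, explicitly, and check that on the window $|x_n|\lesssim n^{1/6}$ they fit into the error that is actually used later in the paper. I would do this with the Hermite polynomials bounded by powers of $n^{1/6}$, using $n\lesssim k_n$. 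The window restriction is exactly what keeps these polynomial factors under control.

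\textbf{Centering.} The statement centers at $k_n\psi(n-m)$ with $j=n-m$, so no recentering is needed. The condition $m\ll n$ only guarantees $j\asymp n$ uniformly.

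\textbf{Main obstacle.} I expect the hardest step to be the high-frequency region $|t|>T_0$, where uniformity in $j$ must be checked. There I would use the classical bound $|\Gamma(j+is)/\Gamma(j)|^2=\prod_{l\ge0}\big(1+s^2/(j+l)^2\big)^{-1}$. This yields
\[
|\Gamma(j+is)/\Gamma(j)|\le \big(1+s^2/(4j^2)\big)^{-cj}
\]
for a constant $c>0$ depending on nothing. At $s\asymp \sqrt{j}$, i.e. $t\asymp \sqrt{k_nj}$, this bound is $\le e^{-c'}$ per factor. After raising to the power $k_n$, the characteristic function decays like $e^{-c'k_n}$ times a polynomial factor on $T_0<|t|\le$ any polynomial cutoff. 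That decay makes this part of the integral negligible compared with the claimed remainder.
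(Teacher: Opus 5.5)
Your route is genuinely different from the paper's. The paper standardizes the summands and checks two hypotheses: a Cramér condition uniform in $n-m$ (using the same product formula for $|\Gamma(j+is)/\Gamma(j)|$ that you use) and a moment bound. It then invokes the classical two-term Edgeworth expansion with remainder $O(k_n^{-3/2})$. It inserts $\gamma_1=-\frac{1+O(n^{-1})}{6\sqrt n}$ and $\gamma_2=\frac{1+O(n^{-1})}{12n}$, and discards the $k_n^{-1}$ terms, which are $O((k_nn)^{-1})$. You instead derive a one-term expansion directly from the explicit characteristic function, exploiting $\lambda_r\asymp (k_nj)^{1-r/2}$. This makes the argument self-contained and uniformity in $j$ and in all $x\in\mathbb{R}$ transparent. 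It also gives the sharper remainder $O((k_nn)^{-1})$. The paper's argument is shorter, but its $O(k_n^{-3/2})$ term is an artifact of treating the summand law as generic. You also do not need any reconciliation step or second-order terms. Since $n\lesssim k_n$, one has $(k_nn)^{-1}=k_n^{-1/2}n^{-3/2}(n/k_n)^{1/2}\lesssim k_n^{-1/2}n^{-3/2}$. Since $\mathrm{He}_r\phi$ is bounded on $\mathbb{R}$, the window $|x_n|\lesssim n^{1/6}$ plays no role.

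Two steps need correcting. First, the smoothing step as you wrote it fails. The boundary term in Esseen's inequality is of order $1/T_0\asymp (k_nn)^{-1/2}$, not $(k_nn)^{-1}$. That is the same order as the correction term you are trying to identify, so with cutoff $T_0$ you only recover a Berry--Esseen bound. Keep $T_0=\varepsilon\sqrt{k_nj}$ only as the edge of the Taylor region, and take the smoothing cutoff $T_1\asymp k_nn$. The range $T_0<|t|\le T_1$ then has to be controlled, which your last paragraph essentially does.

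Second, your frequency conversion is off. Since $s=t/\sqrt{k_n\psi'(j)}\approx t\sqrt{j/k_n}$, the condition $|t|\ge T_0$ means $|s|\gtrsim\varepsilon j$, not $|s|\asymp\sqrt j$. Your product bound then gives $|\mathbb{E}e^{itT}|\le(1+\varepsilon^2/4)^{-c\,k_nj}$ on that range. So that piece of the integral is $O(e^{-ck_nn}\log(k_nn))$ and negligible; it is not the main obstacle. Your weaker claim $e^{-c'k_n}$ happens to suffice as well.

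Finally, a caveat shared with the paper's proof. Replacing $\lambda_3=-(k_n(n-m))^{-1/2}(1+O(n^{-1}))$ by $-(k_nn)^{-1/2}$ costs $O((m+1)k_n^{-1/2}n^{-3/2})$. This fits the stated remainder only when $m=O(1)$, so your factor $(1+o(1))$ is not absorbed when $m$ grows.
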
 
 \begin{proof}
 	 	
 	We write $$\frac{\log Y_{n-m}-k_n\psi(n-m)}{\sqrt{k_n\psi'(n-m)}}=\sum_{r=1}^{k_n}\frac{\log S_{n-m, r}-\psi(n-m)}{\sqrt{k_n\psi'(n-m)}}=:\sum_{r=1}^{k_n}\zeta_{n-m, r}$$ and let $\varphi_{n-m}$ be the characteristic function of $\zeta_{n-m, 1}.$ 
 	Straightforward calculus and Lemma \ref{le} yield 
 	$$\varphi_{n-m
 	}(t)=\exp\big(-\frac{it \psi(n-m)}{\sqrt{\psi'(n-m)}}\big)\frac{\Gamma(n-m+\frac{it}{\psi'(n-m)})}{\Gamma(n-m)}.$$
 	By the modulus formula of the gamma function, we have
 	$$|\varphi_{n-m}(t)|=\prod_{j=0}^{+\infty}\big(1+\frac{t^2}{\psi'(n-m)(n-m+j)^2}\big)^{-1/2}.$$
 	Since for any fixed $t,$
 	$$\frac{t^2}{\psi'(n-m)(n-m+j)^2}=\frac{t^2(n-m)(1+o(1))}{(n-m+j)^2}\to 0, \quad \text{as}\,\, n\to \infty .$$
 	It follows that
 		$$\aligned |\varphi_{n-m}(t)|&=\exp\{-\frac{1}{2}\sum_{j=0}^{+\infty}\log (1+\frac{t^2}{\psi'(n-m)(n-m+j)^2})\}\\&=\exp\{-\frac{1+o(1)}{2}\sum_{j=0}^{+\infty}\frac{t^2(n-m)}{(n-m+j)^2}\}.
 		\endaligned$$
 	We note
 	$$ \sum_{j=0}^{+\infty}\frac{t^2(n-m)}{(n-m+j)^2}\geq \int_{0}^{+\infty}\frac{(n-m)t^2}{(n-m+y)^2} dy=t^2,$$
 	which implies 
 	$$\sup_{|t|>1}|\varphi_{n-m}(t)|\leq \sup_{|t|>1}\exp\{-\frac{(1+o(1))t^2}{2}\}\leq e^{1/3}<1.$$
 	Therefore, \((\log S_{n-m, \,r})_{1\le r\le k_n}\) satisfy the uniform Cram\'er condition.

 For $0\leq m\ll n$, Lemma \ref{le} leads
 	\[
 	\mathbb{E}[(\log S_{n-m, \,1}-\psi(n-m))^4]=\psi^{(3)}(n-m)+3\psi'(n-m)^2,
 	\]
 	whence
 	\[
 	\mathbb{E}[|\zeta_{n-m, r}|^4]
 	= \frac{\psi^{(3)}(n-m)+3(\psi'(n-m))^2}{k_n^2 (\psi'(n-m))^2}\ll 1.
 	\]
Here, the $\ll$ is doe to 
 	$$\frac{\psi^{(3)}(n-m)}{(\psi'(n-m))^2}\asymp \frac{1}{n}\to 0$$ and \( k_n \geq 1 \).

 Thereby, the i.i.d random variable sequence 
  \((\log S_{n-m, \,r})_{1\le r\le k_n}\) fulfills the conditions required for applying the Edgeworth expansion.  
  
  Hence,  
 	\begin{align*}&\mathbb{P}\left( \sum_{r=1}^{k_n}\zeta_{n-m, r}\leq x_n\right)\\
 =&\Phi(x_n) +  \frac{\gamma_{1}(1-x_n^2)\phi(x_n)}{\sqrt{k_n}}  - \frac{\phi(x_n)}{k_n}( \gamma_2(x_n^3-3x_n)+\frac{\gamma_{1}^{2}}{72}(x_n^5-10x_n^3+15x_n))+O(k_n^{-\frac{3}{2}}),	\label{xn}\end{align*}
 where $\gamma_1$ and $\gamma_2$ are the skewness correction and the kurtosis correction of  $\log S_{n-m, 1},$ respectively. Lemma \ref{le} entails 
 $$\gamma_{1}=-\frac{1+O(n^{-1})}{6\sqrt{n}}\quad \text{and}\quad\gamma_{2}=\frac{1+O(n^{-1})}{12n}.$$
 Since $x^{k}\phi(x)$ is a bounded function for $0\leq k\leq 5$. For $|x_n|\leq n^{1/6},$ we see clearly that the third term is negligible with respect to the second term. 
 Therefore,
 $$\mathbb{P}\left( \frac{\log Y_{n-m}-k_n\psi(n-m)}{\sqrt{k_n}\psi'(n-m)}\leq x_n\right)=\Phi(x_n)-\frac{(1-x_n^{2})}{6\sqrt{k_nn}}\phi(x_n)+O(k_n^{-\frac{3}{2}}+k_n^{-\frac{1}{2}}n^{-\frac{3}{2}}).$$
 \end{proof}
 
At last, we borrow a summation property in \cite{MaMeng2025} as follows. 
\begin{lem}\label{sum} Let $\gamma_n$ be a positive sequence  
	and set $$\varsigma_n(j)=\frac{j-1}{\gamma_n}+t_n$$ for $1\le j\le n.$  Given $L\geq 1$ and any $ t_{n}$ such that
	$ 1\ll \varsigma_{n}(L)$ and let $c>0$ be a fixed constant. 
	\begin{enumerate} 
	\item When $\gamma_n$ satisfies $1\ll \varsigma_{n}(L)\ll \gamma_n,$ we have 
	$$ \aligned \sum\limits_{j=L}^{+\infty}\varsigma_{n}^{-1}(j)e^{- c\varsigma_{n}^{2}(j)}&=\frac{\gamma_n e^{- c\varsigma_{n}^{2}(L)}}{2c \varsigma_{n}^{2}(L)}(1+ O(\varsigma_n^{-2}(L)+\varsigma_n(L)\gamma_n^{-1}))\lesssim \frac{\gamma_n e^{- c\varsigma_{n}^{2}(L)}}{ \varsigma_{n}^{2}(L)} . \endaligned 
	$$	
	\item When $\gamma_n$ is bounded, 
	$$ \sum\limits_{j=L}^{+\infty}\varsigma_{n}^{-1}(j)e^{- c\varsigma_{n}^{2}(L)}\lesssim \frac{ e^{- c\varsigma_{n}^{2}(L)}}{\varsigma_{n}(L)}.
	$$
	\end{enumerate}
\end{lem}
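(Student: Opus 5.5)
The plan is to compare the sum with an integral, using that $f(y):=y^{-1}e^{-cy^2}$ is decreasing on $(0,\infty)$. Write $\varsigma_n(j)=\varsigma_n(L)+(j-L)/\gamma_n$. Part (2) does not actually need this comparison, because the exponent there is frozen at $\varsigma_n(L)$. Part (1) is a Laplace-type estimate of a Gaussian tail sampled on a grid of mesh $\gamma_n^{-1}$.

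\textbf{Part (1).} By monotonicity, the sum satisfies
\[
\gamma_n\int_{\varsigma_n(L)}^{\infty}f(y)\,dy\le \sum_{j\ge L}f(\varsigma_n(j))\le f(\varsigma_n(L))+\gamma_n\int_{\varsigma_n(L)}^{\infty}f(y)\,dy .
\]
Next, evaluate the integral via the substitution $u=y^2$. One integration by parts (a Mills-ratio type expansion) gives
\[
\int_s^\infty y^{-1}e^{-cy^2}\,dy=\frac{e^{-cs^2}}{2cs^2}\bigl(1+O(s^{-2})\bigr),\qquad s\to\infty .
\]
Take $s=\varsigma_n(L)\gg1$. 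The main term is then $\gamma_n e^{-cs^2}/(2cs^2)$ with relative error $O(s^{-2})$. The boundary term $f(s)=e^{-cs^2}/s$ has ratio to the main term equal to $2cs/\gamma_n$, so it contributes a relative error $O(\varsigma_n(L)\gamma_n^{-1})$. This is $o(1)$ exactly under the hypothesis $\varsigma_n(L)\ll\gamma_n$, and the asymptotic statement follows. The bound $\lesssim \gamma_n e^{-cs^2}/s^2$ is then immediate since $1+o(1)\le2$ eventually.

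\textbf{Part (2).} Here the sum written in the statement, $\sum_{j\ge L}\varsigma_n^{-1}(j)\,e^{-c\varsigma_n^2(L)}$, diverges as written. The intended statement must be the analogous one with $e^{-c\varsigma_n^2(j)}$, and I would prove that version. In that case
\[
e^{-c\varsigma_n^2(j)}\le e^{-c\varsigma_n^2(L)}\,e^{-2c\varsigma_n(L)(j-L)/\gamma_n}
\]
and $\varsigma_n(j)\ge\varsigma_n(L)$. Since $\gamma_n\le C$ and $\varsigma_n(L)\gg1$, the ratio $\rho=e^{-2c\varsigma_n(L)/\gamma_n}$ tends to $0$, so the geometric series gives
\[
\sum_{j\ge L}f(\varsigma_n(j))\le \frac{e^{-c\varsigma_n^2(L)}}{\varsigma_n(L)}\cdot\frac{1}{1-\rho}\lesssim \frac{e^{-c\varsigma_n^2(L)}}{\varsigma_n(L)}.
\]

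The only delicate point is bookkeeping the two error sources in (1), the Mills-ratio error $O(s^{-2})$ and the discretisation error $O(s/\gamma_n)$, and checking that the boundary term is controlled relative to the integral. Both are elementary.
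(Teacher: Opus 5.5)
Your proof is correct. The monotone sum--integral sandwich, combined with $\int_s^\infty y^{-1}e^{-cy^2}\,dy=\tfrac12E_1(cs^2)=\frac{e^{-cs^2}}{2cs^2}\bigl(1+O(s^{-2})\bigr)$, produces exactly the two error terms $O(\varsigma_n^{-2}(L))$ and $O(\varsigma_n(L)\gamma_n^{-1})$ of part (1). The geometric domination then settles part (2).

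The paper gives no proof of this lemma; it is imported from \cite{MaMeng2025}. So there is no in-text argument to compare against.

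You are also right that part (2) as printed cannot hold. With the $j$-independent factor $e^{-c\varsigma_n^2(L)}$, the left-hand side is a divergent harmonic-type series. The version with $e^{-c\varsigma_n^2(j)}$ is what the paper actually invokes, for example in the proofs of Lemmas \ref{mgeq1cn} and \ref{jn1}.

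One small economy is available. Your upper sandwich, together with the one-sided bound $\int_s^\infty f\le \frac{e^{-cs^2}}{2cs^2}$ from the integration by parts, already gives
$\sum_{j\ge L}f(\varsigma_n(j))\le f(\varsigma_n(L))\bigl(1+\gamma_n/(2c\varsigma_n(L))\bigr)$ for every $\gamma_n>0$. This yields part (2) directly, and shows it holds whenever $\gamma_n\lesssim\varsigma_n(L)$, so the separate geometric-series step is not needed.
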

 
 \section{Proof of the Theorems for $ \alpha=+\infty$}\label{sec:alphainfinity} 
 
 In this section, we assume 
$\alpha=\lim\limits_{n\to\infty}\frac{n}{k_n}=+\infty,$ under which 
$k_n$ may be of order 
$ O(1). $ A most important advantage for this case is  that the matrix $\widetilde{M}^{(n)}(x)$ satisfies 
$$\|\widetilde{M}^{(n)}(x)\|_{\rm HS}\ll 1
$$
such that 
$${\rm det}({\rm I}_n-\widetilde{M}^{(n)}(x))=\exp(-{\rm Tr}(\widetilde{M}^{(n)}(x)))(1+o(1)).$$ 
Simultaneously, $M^{(n)}_{j, j}(x)=o(1)$ uniformly on $1\le j\le n$ leading  
$$\mathbb{P}(X_n\le x)=\prod_{j=1}^{n}(1-M^{(n)}_{j, j}(x))=\exp(-{\rm Tr}(M^{(n)}(x)))(1+o(1)).$$ 

Then, what we need to do is to find the precise asymptotic of the two traces.

Next, we prepare some lemmas for asymptotics on $M^{(n)}_{j, j}(x)$ and $\widetilde{M}^{(n)}_{j, j}(x).$  
		
\subsection{Estimates on $M^{(n)}_{j, j}(x)$ and $\widetilde{M}^{(n)}_{j, j}(x)$}	
First, we set
\[
j_n=\lfloor\frac{1}{5}\sqrt{\alpha_n\log \alpha_n}\rfloor \qquad \text{and}\qquad 
t_n=\lfloor 8\sqrt{\alpha_n\log \alpha_n}\rfloor.
\]   \begin{lem}\label{333}
	Recall   
 \[a_n = \sqrt{\log(\alpha_n + 1)} - \frac{\log(\sqrt{2\pi}\log(\alpha_n + e^{\frac{1}{\sqrt{2\pi}}}))}{\sqrt{\log(\alpha_n + e)}} \quad \text{and}\quad b_n=\frac{1}{\sqrt{\log(\alpha_n + e)}}.\]Set
	$$u_n(j, x) = \frac{j-1}{\sqrt{\alpha_n}} + a_n + b_n x.$$
	For  $|x|\leq2\log\log \alpha_n$ and as $n\to\infty,$  the following estimates hold uniformly: 
	\begin{enumerate}
		\item If $1\leq j\leq j_n,$ then
		$$ M^{(n)}_{j, j}(x)=\frac{1+O((\log \alpha_n)^{-1})}{\sqrt{2\pi}u_n(j,x)}\exp(-\frac{u^{2}_n(j, x)}{2}).
		$$
		\item If $j_n< j\leq  t_n,$ then
		$$
			M^{(n)}_{j, j}(x)\leq \frac{1}{u_n(j,x)}e^{-\frac{3u^{2}_n(j, x)}{8}}+n^{-4/5} .
		$$
	\end{enumerate}
\end{lem}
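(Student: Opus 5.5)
We start from Lemma \ref{ctor}, which gives $M^{(n)}_{j,j}(x)=\mathbb{P}(\log Y_{n+1-j}\ge k_n\psi(n)+\alpha_n^{-1/2}(a_n+b_nx))$. We standardize with respect to the mean $k_n\psi(n+1-j)$ and the variance $k_n\psi'(n+1-j)$. Writing $m=j-1$, Lemma \ref{diagammapro}(b) gives
\[
k_n\bigl(\psi(n)-\psi(n-m)\bigr)=\frac{k_n m}{n}\bigl(1+O(m/n)\bigr)=\frac{m}{\alpha_n}\bigl(1+O(m/n)\bigr),
\qquad
\sqrt{k_n\psi'(n-m)}=\frac{1}{\sqrt{\alpha_n}}\bigl(1+O(m/n+n^{-1})\bigr).
\]
Hence the standardized threshold is
\[
\widehat u_n(j,x)=\sqrt{\alpha_n}\Bigl(\frac{m}{\alpha_n}+\frac{a_n+b_nx}{\sqrt{\alpha_n}}\Bigr)(1+\varepsilon_{n,j})=u_n(j,x)(1+\varepsilon_{n,j}),
\]
where $\varepsilon_{n,j}=O(j/n+n^{-1})$ uniformly in $j\le t_n$ and $|x|\le 2\log\log\alpha_n$. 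On this window we have $u_n(j,x)\asymp\sqrt{\log\alpha_n}$ for $j\le j_n$, because $a_n\sim\sqrt{\log\alpha_n}$, $|b_nx|\lesssim\log\log\alpha_n/\sqrt{\log\alpha_n}$, and $(j-1)/\sqrt{\alpha_n}\le\frac15\sqrt{\log\alpha_n}$. Consequently $u_n^2\varepsilon_{n,j}\lesssim \log\alpha_n\cdot j_n/n\to0$, since $j_n/n\lesssim\sqrt{\log\alpha_n/\alpha_n}\cdot k_n^{-1}$ is polynomially small in $\alpha_n$. Replacing $\widehat u_n$ by $u_n$ therefore costs only a factor $1+o((\log\alpha_n)^{-1})$ in the Gaussian tail.

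\textbf{Part (1).} We need a moderate-deviation tail estimate of the form
\[
\mathbb{P}(\text{standardized sum}\ge u)=\Psi(u)\bigl(1+O(u^3/\sqrt{k_nn})\bigr)
\]
for $u\asymp\sqrt{\log\alpha_n}$. The Edgeworth expansion of Lemma \ref{ed} is not enough for this, because its error is additive while $\Psi(u)\approx\alpha_n^{-1/2}$ is small. So we use a Cramér-type (Petrov) moderate-deviation result for sums of i.i.d.\ variables with a finite moment generating function. Its input is the cumulant generating function $k_n[\log\Gamma(n-m+t)-\log\Gamma(n-m)]$, which is exactly available from Lemma \ref{le}. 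The relative correction is governed by the third cumulant, $u^3k_n\psi''/(k_n\psi')^{3/2}\asymp u^3/\sqrt{k_nn}$. Alternatively, one can perform the exponential tilt directly: take $t$ solving $k_n\psi(n-m+t)=$ threshold, so $t\approx u\sqrt{\alpha_n}\cdot\alpha_n^{-1/2}\cdot n/k_n$ scaled appropriately. Under the tilted law the sum is approximately Gaussian, with a local CLT guaranteed by the uniform Cramér condition established in the proof of Lemma \ref{ed}. Since $u^3/\sqrt{k_nn}\lesssim(\log\alpha_n)^{3/2}/n$, this is $o((\log\alpha_n)^{-1})$. Finally, Mills' ratio gives $\Psi(u)=\frac{e^{-u^2/2}}{\sqrt{2\pi}u}(1+O(u^{-2}))$, and $u^{-2}\asymp(\log\alpha_n)^{-1}$. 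This produces exactly the stated error $O((\log\alpha_n)^{-1})$.

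\textbf{Part (2).} For $j_n<j\le t_n$ we no longer need precision, so a Chernoff bound suffices, refining the argument of Lemma \ref{H}. Starting from \eqref{uppermjj} with $\psi(n-m+s)-\psi(n-m)\le s/(n-m)$, we get
\[
\log M^{(n)}_{j,j}\le \frac{k_nt^2}{2(n-m)}-t\sqrt{k_n/n}\,\widehat u_n\cdot(\ldots).
\]
In standardized form this reads $\log M\le \tfrac{\tau^2}{2}(1+O(m/n))-\tau\widehat u_n$. Choosing $\tau=\widehat u_n$ gives $-\tfrac{u^2}{2}(1+o(1))$, which is at most $-\tfrac{3}{8}u^2$ together with the prefactor $1/u$ once we retain the Gaussian prefactor. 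A cleaner route is to split into two ranges. Where $u_n\le n^{1/6}$, the moderate-deviation estimate from Part (1) still applies and yields $\Psi(u)(1+o(1))\le u^{-1}e^{-3u^2/8}$. Where $u_n$ is larger, the plain Chernoff bound gives $e^{-u^2/2\cdot(1-o(1))}$, and this is at most $n^{-4/5}$ because $u\gtrsim n^{1/6}$. The additive $n^{-4/5}$ also absorbs the Berry–Esseen-type remainders in the borderline range.

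The main obstacle is making the relative-error moderate-deviation bound in Part (1) uniform in $j\le j_n$ and in $k_n$, which may be bounded. When $k_n=O(1)$ the sum has few terms, but each $\log S$ is itself nearly Gaussian, with cumulants $\kappa_r\asymp n^{1-r}$ after scaling by $n$. So we would handle this by a direct saddle-point analysis of $\mathbb{E}e^{t\log Y}=(\Gamma(n-m+t)/\Gamma(n-m))^{k_n}$, using Stirling's formula, rather than relying on asymptotics in the number of summands.
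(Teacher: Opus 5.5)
Your standardization $\widehat u_n=u_n(j,x)(1+O(j/n))$ and the Mills-ratio bookkeeping are fine. Part (1), however, has a genuine gap, precisely at the step you yourself call the main obstacle.

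Everything in Part (1) rests on a relative-error moderate-deviation estimate $\mathbb{P}(Z_\ell\ge \widehat u)=\Psi(\widehat u)\bigl(1+O((\log\alpha_n)^{-1})\bigr)$ for $Z_\ell=(\log Y_\ell-k_n\psi(\ell))/\sqrt{k_n\psi'(\ell)}$. It must hold uniformly in a regime where $k_n$ may stay bounded, and none of your justifications provides it.

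\begin{itemize}
\item Petrov's theorem is asymptotic in the number of summands with a fixed summand law. Here there are only $k_n$ summands, and the law of $\log S_{\ell,1}$ moves with $n$.
\item Lemma \ref{ed} is stated under $n\lesssim k_n$, the opposite regime. Like any local CLT driven by the per-summand Cram\'er condition, it is an expansion in powers of $k_n^{-1/2}$.
\item After an exponential tilt (whose parameter is $t\approx u\sqrt{\alpha_n}$, not as you wrote), the Bahadur--Rao step needs the tilted standardized sum to be within Kolmogorov distance $O((\log\alpha_n)^{-3/2})$ of the normal law. Berry--Esseen with $k_n$ summands only gives $O(k_n^{-1/2})$, because a nearly Gaussian summand has a third-absolute-moment ratio of order one.
\end{itemize}

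So the announced saddle-point analysis is not a technicality; it is the entire content of Part (1). It can be carried out, for instance through a cumulant-method (Statulevi\v{c}ius-type) moderate-deviation lemma. Since $|\psi^{(r-1)}(\ell)|\le (r-1)!\,\ell^{1-r}$ for $r\ge 3$ and $\psi'(\ell)\ge \ell^{-1}$, one has $|\kappa_r(Z_\ell)|\le (r-1)!\,(k_n\ell)^{1-r/2}$. This gives an effective sample size $k_n\ell\ge n/2$ rather than $k_n$. But this argument has to be supplied.

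The paper sidesteps the issue by linearizing, $\log(S_{\ell,r}/\ell)=(S_{\ell,r}-\ell)/\ell+R_{\ell,r}$.
\begin{itemize}
\item The main part sums to $\ell^{-1}\bigl(\sum_{i=1}^{\ell k_n}\xi_i-\ell k_n\bigr)$ with $\xi_i$ i.i.d.\ Exp$(1)$. This is a fixed law with $\ell k_n\to\infty$ summands, so Petrov's theorem applies uniformly with relative error $O(u^3/\sqrt{nk_n})$.
\item The remainder has variance $k_n(2\ell^2)^{-1}(1+O(\ell^{-1}))$ and is handled by Chebyshev at level $\varepsilon=\alpha_n^{-3/5}$. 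This moves the standardized threshold only by about $\alpha_n^{-1/10}$ and costs an additive $\mathbb{P}(B_\varepsilon)\le n^{-4/5}k_n^{-1/5}$.
\item That additive cost is negligible against the main term $\gtrsim\alpha_n^{-18/25}(\log\alpha_n)^{-1/2}$ for $j\le j_n$. It is also the origin of the $n^{-4/5}$ in Part (2).
\end{itemize}

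Your Part (2), by contrast, is fine with your first (Chernoff) idea alone. The split into ranges, which leans on the unproved Part (1), is unnecessary. Concavity of $\psi$ gives $\psi(\ell+s)-\psi(\ell)\le s\psi'(\ell)$, so optimizing the Chernoff bound yields exactly $M^{(n)}_{j,j}(x)\le e^{-\widehat u_n^2/2}$. Since $\widehat u_n^2=u_n^2(1+O(j/n))$, with $u_n^2t_n/n\to0$ and $u_n\to\infty$ uniformly, this is $\le u_n^{-1}e^{-3u_n^2/8}$, even without the $n^{-4/5}$.

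A minor slip: $u^3/\sqrt{k_nn}$ is bounded by $(\log\alpha_n)^{3/2}/\sqrt{nk_n}\le(\log\alpha_n)^{3/2}\alpha_n^{-1/2}$, not by $(\log\alpha_n)^{3/2}/n$. The conclusion is unaffected.
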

\begin{proof}

Even $\log Y_{n-j+1}$ is a sum of i.i.d. random variables, we cannot directly apply the central limit theorem because $k_n$ may be a finite constant. Instead, we relate $\log S_{n-j+1, r}$ to $S_{n-j+1, r}$ by noting that
\begin{equation}\label{newsum}\sum_{r=1}^{k_n} S_{n-j+1, \,r}\stackrel{d}{=}\sum_{i=1}^{(n-j+1) k_n}\xi_i,\end{equation}
where $\{\xi_i\}_{i\ge 1}$ are i.i.d. exponential with parameter $1$.

Indeed, with $\ell:=n-j+1$, $M^{(n)}_{j, j}(x)$ can be rewritten as
$$\aligned \mathbb{P}\big(\sum_{r=1}^{k_n}\frac{S_{\ell, r}-\ell}{\ell}&>k_n(\psi (n)-\log \ell )+\frac{a_n+b_nx}{\sqrt{\alpha_n}}-\sum_{r=1}^{k_n}(\log \frac{S_{\ell,r}}{ \ell}
-\frac{S_{\ell,r}-\ell}{\ell})\big).\endaligned$$

For some $\varepsilon>0$ to be chosen later, define
$$B_{\varepsilon}:=\bigl\{\bigl|\sum_{r=1}^{k_n}\bigl(\log \frac{S_{\ell,r}}{ \ell}-\frac{S_{\ell, r}-\ell}{\ell}\bigr)\bigr|\ge \varepsilon\bigr\}.$$
Then we have the following bounds:
\begin{equation}\label{A}
\aligned M^{(n)}_{j, j}(x)&\le \mathbb{P}\bigl(\sum_{r=1}^{k_n}\frac{S_{\ell, r}-\ell}{\ell}>k_n(\psi (n)-\log \ell)+\frac{a_n+b_nx}{\sqrt{\alpha_n}}-\varepsilon\bigr)+\mathbb{P}(B_{\varepsilon})
\endaligned
\end{equation}
and
\begin{equation}\label{B}
\aligned M^{(n)}_{j, j}(x)&\ge \mathbb{P}\bigl(\sum_{r=1}^{k_n}\frac{S_{\ell,r}-\ell}{\ell}>k_n(\psi (n)-\log \ell)+\frac{a_n+b_nx}{\sqrt{\alpha_n}}+\varepsilon\bigr).
\endaligned
\end{equation}

Let $\varepsilon=\alpha_n^{-3/5}.$ We first consider the case 
$1\le j\leq j_n$, which guarantees
$$1\ll u_n(j, x)\le u_n(j_n, 2\log\log \alpha_n)=\frac{6}{5}\sqrt{\log \alpha_n}(1+o(1)).$$ 
Define 
$$A_{\pm \epsilon}:=\{\sum_{r=1}^{k_n}\frac{S_{\ell,r}-\ell}{\ell}>k_n(\psi (n)-\log \ell)+\frac{a_n+b_nx}{\sqrt{\alpha_n}}\pm\varepsilon\}.$$
When \( 1\ll u_n(j, x) \lesssim \sqrt{\log \alpha_n} \), we claim that  
\begin{equation}\label{equaforsumn} \aligned 
	\mathbb{P}(A_{\pm \epsilon})=\frac{1+O(u_n^{-2}(j, x))}{\sqrt{2\pi}u_n(j, x)}\exp(-\frac12 u_n^2(j, x)),
	\endaligned 
\end{equation}
whose proof is given in the Appendix.
The right hand side of \eqref{equaforsumn} is decreasing in $u_n(j, x),$ and then the fact $u_n(j_n, x)=\frac{6}{5}\sqrt{\log\alpha_n}+O(\log\log \alpha_n)$ indicates
\begin{equation}\label{tailp}\mathbb{P}(A_{\pm \epsilon})\gtrsim u_n^{-1}(j_n, x)\exp(-\frac12 u_n^2(j, x))\asymp  (\log\alpha_n)^{-1/2} \alpha_n^{-18/25}. \end{equation}  

We now give an upper bound for $B_{\varepsilon}.$

By the triangle inequality, we first obtain
$$\aligned \mathbb{P}(B_{\varepsilon})
&\leq \mathbb{P}\Bigl(\Bigl|\sum_{r=1}^{k_n} \bigl(\log \frac{S_{\ell, r}}{\ell} - \frac{S_{\ell, r}-\ell}{\ell}\bigr)-k_n(\psi(\ell)-\log \ell)\Bigr| \geq \varepsilon-k_n|\psi(\ell)-\log \ell|\Bigr). \endaligned$$

Applying Markov's inequality together with basic properties of variance, and under the condition $\varepsilon > k_n|\psi(\ell)-\log \ell|$, we obtain
$$ \mathbb{P}\left(B_{\varepsilon}\right)	
\leq\frac{k_n\operatorname{Var}\bigl(\log S_{\ell, 1} - \frac{S_{\ell, 1}}{\ell}\bigr)}{(\varepsilon-k_n|\psi(\ell)-\log \ell|)^{2}}.  $$	
Lemma \ref{le} gives
$$\aligned \operatorname{Var}\bigl(\log S_{\ell, 1} - \tfrac{S_{\ell, \,1}}{\ell}\bigr)
&=\operatorname{Var}(\log S_{\ell, 1})+\ell^{-2}\operatorname{Var}(S_{\ell, 1})-2\ell^{-1}\mathbb{E}\bigl((S_{\ell, 1}-\ell)\log S_{\ell, 1}\bigr)\\
&=\psi'(\ell)+\ell^{-1}-2\bigl(\psi(\ell+1)-\psi(\ell)\bigr), \endaligned$$
and then by Lemma \ref{diagammapro} we have
$$\aligned \operatorname{Var}\bigl(\log S_{\ell, 1} - \tfrac{S_{\ell, \,1}}{\ell}\bigr)=\frac{1}{2\ell^2}+O(\ell^{-3}).\endaligned$$
Since $\varepsilon=\alpha_n^{-3/5}\gg k_n|\psi(\ell)-\log \ell|$, we derive
\begin{equation}\label{upforb}\mathbb{P}(B_{\varepsilon})\leq n^{-4/5} k_n^{-1/5}.\end{equation}

Comparing the right-hand sides of \eqref{tailp} and \eqref{upforb}, we see that $\mathbb{P}(B_{\varepsilon})$ is negligible in both \eqref{A} and \eqref{B}. Consequently, from \eqref{equaforsumn} we obtain
$$M^{(n)}_{j, j}(x)=\frac{1+O((\log \alpha_n)^{-1})}{\sqrt{2\pi}u_n(j,x)}\exp\bigl(-\frac{u^{2}_n(j, x)}{2}\bigr)$$
uniformly for $1\le j\le j_n$ and $|x|\le 2\log\log \alpha_n$.

For $j_n < j \le  t_n$, it is not necessarily true that $\mathbb{P}(A_{\pm \varepsilon}) \gg \mathbb{P}(B_{\varepsilon})$; in other words, $\mathbb{P}(B_{\varepsilon})$ may not be negligible. Hence, we only provide an upper bound for $M^{(n)}_{j, j}(x)$ using \eqref{A} and \eqref{upforb}:
$$M^{(n)}_{j, j}(x)\leq \mathbb{P}(A_{- \varepsilon})+\mathbb{P}(B_{\varepsilon})\leq \frac{1}{u_n(j,x)}e^{-\frac{3u^{2}_n(j, x)}{8}}+n^{-4/5},$$
where the proof of the inequality
\begin{equation}\label{a83}
\mathbb{P}(A_{- \varepsilon}) \leq \frac{1}{u_n(j,x)}e^{-\frac{3u^{2}_n(j, x)}{8}}
\end{equation}
will be given in the appendix together with \eqref{equaforsumn}.

	\end{proof}

We now provide an appropriate estimate for \(\widetilde{M}^{(n)}_{j, j}(x).\) Recall that
\[
\widetilde{b}_n = \frac{\sqrt{2}}{\sqrt{\log(\alpha_n+e^2) }},\qquad 
\widetilde{a}_n = \sqrt{\frac{\log(\alpha_n+1) }{2}} - \frac{\sqrt{2}\bigl(\log (2^{-\frac{3}{4}}\pi)+\frac{5}{4}\log \log(\alpha_n+e^{2^{3/5}\pi^{-4/5}})\bigr)}{\sqrt{\log(\alpha_n+e^2) }}.
\]
Define
\[
g_n(j, x, t)=\mathbb{P}\Bigl(\log Y_{n-j+1}\ge k_n\psi(n)+\frac{\widetilde{a}_n+\widetilde{b}_n x}{\sqrt{\alpha_n}}+t\Bigr),\quad t>0.
\]
Then by the law of total expectation,
\begin{equation}\label{widecn}
\widetilde{M}^{(n)}_{j, j}(x)=\frac{1}{\pi}\int_0^{+\infty} g_n(j, x, t) (e^{t}-1)^{-1/2}\, dt.
\end{equation}
Comparing the definitions of \({M}^{(n)}_{j, j}(x)\) and \(g_n(j, x, t),\) the only difference is that \(u_n(j, x)=\frac{j-1}{\sqrt{\alpha_n}}+a_n+b_n x\) is replaced by
\begin{equation}\label{defh}
h_n(j, x, t):=\frac{j-1}{\sqrt{\alpha_n}}+\widetilde{a}_n + \widetilde{b}_n x+\sqrt{\alpha_n} t.
\end{equation}
Note that when \(\alpha_n\gg 1,\) we have \(h_n(j, x, t)\gg 1\) just as \(u_n(j, x)\gg 1,\) which is the key to the estimates for \({M}^{(n)}_{j, j}(x).\)  
Therefore, following the same line of arguments as in Lemmas \ref{H} and \ref{333}, we similarly obtain the following lemma.
\begin{lem}\label{gnmxt}
	Let  $h_n(j, x, t)$ be defined as in \eqref{defh}. For $|x|\lesssim \log\log \alpha_n$ and $1\leq j\leq t_n,$ the following estimates hold.
			\begin{enumerate}
			\item
	If  $t> \alpha_n^{-\frac{9}{20}},$  then
	$$\aligned 
	g_n(j, x, t)\lesssim
	\frac{e^{-\frac{3}{8} h^{2}_n(j, x, t)}}{h_n(j, x, t)}+(nk_n)^{-1/2}h_n^{-3}(j, x, t)+n^{-4/5}. 
	\endaligned $$
	\item If $0<t\leq \alpha_n^{-\frac{9}{20}}$ and $1\leq j\leq j_n,$ then 
	\begin{equation}\label{z}
		\aligned 
	\frac{1+O(h_n^{-2}(j, x,t))}{\sqrt{2\pi}h_n(j, x,t)}e^{-\frac{h^{2}_n(j, x,t)}{2}}\leq	g_n(j,x,t)
		\leq \frac{1+O(h_n^{-2}(j, x,t))}{\sqrt{2\pi}h_n(j, x,t)}e^{-\frac{h^{2}_n(j, x,t)}{2}}+n^{-4/5}. 
		\endaligned 
	\end{equation}
	\item If $0<t\leq \alpha_n^{-\frac{9}{20}}$ and $j_n< j\leq t_n,$ then
	$$g_n(j,x,t)
	\lesssim \frac{1}{h_n(j, x, t)}e^{-\frac{3h^{2}_n(j, x,t)}{8}}+n^{-4/5}.  $$
	\item	In particular, when $j=t_n$, we have $$g_n(j,x,t)\leq 	\exp\big\{-\frac{(j-1)^2}{4\alpha_n}-\frac{(j-1)(\widetilde{a}_n+\widetilde{b}_n x)}{2\sqrt{\alpha_n}} -\frac{(j-1)t}{4}\big\}, \quad \text{for} \quad t\geq 0.$$
\end{enumerate}
\end{lem}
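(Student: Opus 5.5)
The plan is to reduce $g_n(j,x,t)$ to exactly the setting of Lemmas \ref{H} and \ref{333}. The only change is that the threshold $u_n(j,x)$ becomes $h_n(j,x,t)$, because of the extra shift $t$ in the exceedance level. Concretely,
\[
\frac{\widetilde a_n+\widetilde b_n x}{\sqrt{\alpha_n}}+t=\frac{\widetilde a_n+\widetilde b_n x+\sqrt{\alpha_n}\,t}{\sqrt{\alpha_n}},
\]
so every estimate there that depends on the level only through $u_n$ transfers verbatim, with $u_n$ replaced by $h_n$. For $|x|\lesssim\log\log\alpha_n$ we have $\widetilde a_n+\widetilde b_nx\asymp\sqrt{\log\alpha_n}\gg1$, hence $h_n\gg1$ uniformly in $t\ge0$.

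\textbf{Part (4).} I would rerun the Markov argument of Lemma \ref{H}. Bound (\ref{k12}) with the new level gives
\[
\log g_n\le \frac{s^2-(j-1)s}{\alpha_n}-\frac{(\widetilde a_n+\widetilde b_nx)s}{\sqrt{\alpha_n}}-st .
\]
Choosing $s=(j-1)/2$ yields the exponent $-(j-1)^2/(4\alpha_n)-(j-1)(\widetilde a_n+\widetilde b_nx)/(2\sqrt{\alpha_n})-(j-1)t/2$. This is stronger than the claimed $-(j-1)t/4$, so we have room to absorb constants. The restriction $j\ll n$ required by Lemma \ref{H} holds because $t_n\asymp\sqrt{\alpha_n\log\alpha_n}\ll n$.

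\textbf{Parts (2) and (3).} When $t\le\alpha_n^{-9/20}$, the shift $\sqrt{\alpha_n}t\le\alpha_n^{1/20}$ keeps $h_n$ in the range where the decomposition via (\ref{newsum}), (\ref{A}), (\ref{B}) works, with $\varepsilon=\alpha_n^{-3/5}$.

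For (2), I would prove the analogue of claim (\ref{equaforsumn}) for $A_{\pm\varepsilon}$ with $h_n$ in place of $u_n$. This is the moderate-deviation estimate for a sum of $\ell k_n$ exponentials from the appendix, which should hold whenever $1\ll h_n=o((nk_n)^{1/6})$. The lower bound then comes from (\ref{B}) directly, and the upper bound from (\ref{A}) together with (\ref{upforb}), giving the additive $n^{-4/5}$. Unlike Lemma \ref{333}, no comparison showing $\mathbb P(B_\varepsilon)$ is negligible is needed, since the statement keeps $n^{-4/5}$ explicitly.

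For (3), I would use (\ref{A}) together with the analogue of (\ref{a83}), $\mathbb P(A_{-\varepsilon})\le h_n^{-1}e^{-3h_n^2/8}$, which follows from a Chernoff bound with a slightly suboptimal exponent.

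\textbf{Part (1).} This is the main obstacle, since for $t>\alpha_n^{-9/20}$ the level $h_n$ may be much larger than the moderate-deviation range. I would split according to the size of $h_n$.

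If $h_n\le (nk_n)^{1/6}$, I would again use (\ref{A}). The term $\mathbb P(A_{-\varepsilon})$ is compared with a Gaussian tail by a Berry--Esseen / nonuniform Edgeworth bound for the sum of $\ell k_n$ i.i.d. exponentials, in the spirit of Lemma \ref{ed}. The nonuniform error term $(nk_n)^{-1/2}(1+h_n)^{-3}$ is what produces the middle term $(nk_n)^{-1/2}h_n^{-3}$, and the Gaussian part is bounded by $h_n^{-1}e^{-3h_n^2/8}$. The $n^{-4/5}$ again comes from $\mathbb P(B_\varepsilon)$.

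If $h_n$ is larger, I would apply the Chernoff bound directly, as in Lemma \ref{H}, with $s\asymp\sqrt{\alpha_n}\,h_n$. One must check that this choice keeps $s\ll n$ so that Lemma \ref{diagammapro}(a) applies; this gives $e^{-ch_n^2}$ or $e^{-c\sqrt{\alpha_n}h_n}$, either of which is dominated by the claimed right-hand side.

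The delicate point is bookkeeping the uniformity in $j\le t_n$ and $t$, especially near the transition $t\approx\alpha_n^{-9/20}$. There I would simply verify that both regimes give bounds within a constant factor, which the $\lesssim$ in (1) permits.
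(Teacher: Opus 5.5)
Your proposal follows the paper's own route: the paper proves this lemma only by saying that the arguments of Lemmas~\ref{H} and~\ref{333} carry over with $u_n(j,x)$ replaced by $h_n(j,x,t)$, and your details fill in what that sentence leaves implicit, namely the Markov bound with $s=(j-1)/2$ for (4) (which even gives $-(j-1)t/2$), the sandwich \eqref{A}--\eqref{B} with $\mathbb{P}(B_\varepsilon)\le n^{-4/5}$ for (2)--(3), and, for (1), a nonuniform Berry--Esseen bound, which is where the $(nk_n)^{-1/2}h_n^{-3}$ term comes from, as in the proof of Lemma~\ref{mgeq1cn}. One caveat, shared with the paper: the fixed $\varepsilon=\alpha_n^{-3/5}$ shifts the standardized level by about $\alpha_n^{-1/10}$, so the sandwich gives the relative error $1+O(h_n^{-2}+h_n\alpha_n^{-1/10})$ in (2); this is $1+O(h_n^{-2})$ only for $h_n\lesssim\alpha_n^{1/30}$, not on the whole range $h_n=o((nk_n)^{1/6})$ you assert (the appendix itself states \eqref{equaforsumn} only for $u_n\lesssim\sqrt{\log\alpha_n}$), though this is harmless in Lemma~\ref{tildec}, where values with $\sqrt{\alpha_n}\,t\gg1$ contribute only exponentially small amounts.
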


To capture the asymptotic behavior of \(\widetilde{M}^{(n)}_{j, j}(x),\) we examine the upper bounds of \(g_n(j, x, t)\) and \eqref{widecn}. This leads to the following specific integral, whose proof is deferred to the appendix.

\begin{lem}\label{int}
	Given \(h, v\) satisfying \(1\ll h\) and \(\frac{4}{h}\log h\le v,\) we have
	$$ \int_{0}^{v}\frac{1}{\sqrt{s}(h+s)}e^{-\frac{(h+s)^{2}}{2}}ds=\sqrt{\frac{\pi}{h^{3}}}e^{-\frac{h^{2}}{2}}(1+O(h^{-2})).$$
\end{lem}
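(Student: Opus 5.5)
Set $I:=\int_{0}^{v}\frac{1}{\sqrt{s}(h+s)}e^{-\frac{(h+s)^{2}}{2}}ds$. The plan is a Laplace-type analysis at the endpoint $s=0$. First we factor out the value at $s=0$:
\[
e^{-\frac{(h+s)^2}{2}}=e^{-\frac{h^2}{2}}e^{-hs}e^{-\frac{s^2}{2}},
\]
so that
\[
I=\frac{e^{-h^2/2}}{h}\int_0^v \frac{1}{\sqrt s}\,\frac{e^{-hs-s^2/2}}{1+s/h}\,ds .
\]
The effective scale of the integral is $s\asymp h^{-1}$. We substitute $s=u/h$, which gives
\[
I=\frac{e^{-h^2/2}}{h^{3/2}}\int_0^{hv}u^{-1/2}e^{-u}\,\frac{e^{-u^2/(2h^2)}}{1+u/h^2}\,du .
\]
The leading term should come from replacing the last factor by $1$ and extending the upper limit to $\infty$. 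This gives $\int_0^\infty u^{-1/2}e^{-u}du=\Gamma(1/2)=\sqrt{\pi}$, which is exactly the claimed main term $\sqrt{\pi/h^3}\,e^{-h^2/2}$.

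Next we control the two errors. For the correction factor we use $0\le 1-e^{-a}\le a$ and $1-\frac{1}{1+b}\le b$. These give
\[
\Bigl|1-\frac{e^{-u^2/(2h^2)}}{1+u/h^2}\Bigr|\le \frac{u}{h^2}+\frac{u^2}{2h^2},
\]
and hence the resulting error is at most
\[
h^{-2}\int_0^\infty u^{-1/2}(u+u^2/2)e^{-u}\,du=O(h^{-2}).
\]
For the truncation, the hypothesis $v\ge \frac{4}{h}\log h$ means $hv\ge 4\log h$. The tail $\int_{hv}^\infty u^{-1/2}e^{-u}du$ is therefore bounded by $e^{-hv}\le h^{-4}$, which is $o(h^{-2})$. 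Combining these bounds yields $I=\sqrt{\pi/h^3}\,e^{-h^2/2}(1+O(h^{-2}))$.

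Both estimates are elementary. The only point needing care is the truncation: the constant $4$ in the lower bound on $v$ is what makes $e^{-hv}$ smaller than the $h^{-2}$ relative error. Note that $v$ may be large, but this causes no difficulty, since the integrand is positive and the extension is only needed on the tail side.
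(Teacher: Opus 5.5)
Your proof is correct and follows essentially the same route as the paper. Both arguments rescale to the natural scale $s\asymp h^{-1}$, control the correction factor by the same elementary inequality (which produces the same Gamma integrals $\Gamma(1/2)=\sqrt{\pi}$ and $\frac{7}{8}\sqrt{\pi}$), and use $hv\ge 4\log h$ to discard the tail. The only cosmetic difference is how the tail is handled: the paper bounds $\int_v^\infty$ in the original variable and compares $v^{-3/2}(v+h)^{-1}e^{-(v+h)^2/2}$ with $h^{-7/2}e^{-h^2/2}$, whereas you truncate the Gamma integral after the substitution, giving the cleaner bound $e^{-hv}\le h^{-4}$ (this uses $hv\ge 1$, so that $u^{-1/2}\le 1$ on the tail).
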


We now present the asymptotic behavior of \(\widetilde{M}^{(n)}_{j, j}(x).\)
Set 
$$\widetilde{u}_n(j, x) = \frac{j-1}{\sqrt{\alpha_n}} + \widetilde{a}_n + \widetilde{b}_n x$$
and choose 
$$\widetilde{\ell}_{1,\infty}(n)=\frac{1}{2}\log\log\alpha_n,\qquad 
\widetilde{\ell}_{2,\infty}(n)=\frac{5}{4}\log \log(\alpha_n+e^{2^{3/5}\pi^{-4/5}}). $$

\begin{lem}\label{tildec}
	Uniformly on $x\in [-\widetilde{\ell}_{1,\infty}(n), \widetilde{\ell}_{2,\infty}(n)],$ we have the following estimates.
	\begin{enumerate}
		\item If $1\leq j\leq j_n,$ then
		\begin{equation}\label{cn1}
			\widetilde{M}^{(n)}_{j, j}(x)=\frac{(1+O((\log\alpha_n)^{-1}))}{\sqrt{2}\pi\alpha_n^{1/4}\tilde{u}^{3/2}_n(j, x)}e^{-\frac{\widetilde{u}^{2}_n(j,x)}{2}}.
		\end{equation} 
		\item If $j_n< j< t_n,$ then
		\begin{equation}\label{cn2}
			\widetilde{M}^{(n)}_{j, j}(x)\lesssim\frac{1}{\alpha_n^{1/4}\widetilde{u}^{3/2}_n(j,x)}e^{-\frac{3\widetilde{u}^{2}_n(j,x)}{8}}+n^{-4/5} .
		\end{equation} 
		\item For the endpoint $j=t_n,$
		\begin{equation}\label{cn3}
			\widetilde{M}^{(n)}_{t_n, t_n}(x)\leq	n^{-4}.
		\end{equation}
	\end{enumerate}

\end{lem}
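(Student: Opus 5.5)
The plan is to work from the representation \eqref{widecn}, $\widetilde M^{(n)}_{j,j}(x)=\frac1\pi\int_0^\infty g_n(j,x,t)(e^t-1)^{-1/2}\,dt$, and to split the $t$-integral at $t_*=\alpha_n^{-9/20}$, matching the case distinction in Lemma \ref{gnmxt}. First we record that on $x\in[-\widetilde\ell_{1,\infty}(n),\widetilde\ell_{2,\infty}(n)]$ and $1\le j\le t_n$ we have $\widetilde u_n(j,x)\asymp$ at least $\sqrt{\log\alpha_n}$ up to $\log\log$ corrections. In particular $1\ll\widetilde u_n$, and $h_n(j,x,t)=\widetilde u_n(j,x)+\sqrt{\alpha_n}\,t$.

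\textbf{Case $1\le j\le j_n$.} On $(0,t_*]$ we substitute $s=\sqrt{\alpha_n}\,t$. Since $t\le t_*\to0$, we have $(e^t-1)^{-1/2}=t^{-1/2}(1+O(t))$, so $dt\,(e^t-1)^{-1/2}=\alpha_n^{-1/4}s^{-1/2}(1+O(\alpha_n^{-9/20}))\,ds$. Using the two-sided bound \eqref{z}, the main term becomes
\[
\frac{1}{\pi\sqrt{2\pi}\,\alpha_n^{1/4}}\int_0^{v}\frac{e^{-(\widetilde u_n+s)^2/2}}{\sqrt s(\widetilde u_n+s)}\,ds,\qquad v=\alpha_n^{1/20}.
\]
Here $v\ge\frac{4}{\widetilde u_n}\log\widetilde u_n$ holds trivially, so Lemma \ref{int} gives $\sqrt{\pi}\,\widetilde u_n^{-3/2}e^{-\widetilde u_n^2/2}(1+O(\widetilde u_n^{-2}))$. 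This produces exactly $\frac{1}{\sqrt2\pi\alpha_n^{1/4}\widetilde u_n^{3/2}}e^{-\widetilde u_n^2/2}$, and since $\widetilde u_n^{-2}=O((\log\alpha_n)^{-1})$ the error is of the claimed size.

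Two remainders must then be shown negligible. The first is the additive $n^{-4/5}$ in \eqref{z}, integrated against $t^{-1/2}$ over $(0,t_*]$, which contributes $O(n^{-4/5}\alpha_n^{-9/40})$. The second is the tail $t>t_*$, bounded via Lemma \ref{gnmxt}(1) by
\[
\int_{t_*}^\infty\Bigl(\frac{e^{-3h_n^2/8}}{h_n}+(nk_n)^{-1/2}h_n^{-3}+n^{-4/5}\Bigr)(e^t-1)^{-1/2}\,dt .
\]
For $t\ge t_*$ we have $h_n\ge\sqrt{\alpha_n}\,t_*=\alpha_n^{1/20}$, so the first piece is super-polynomially small in $\alpha_n$. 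The other two pieces are $O((nk_n)^{-1/2}\alpha_n^{-3/20}+n^{-4/5})$ times $\int_0^\infty(e^t-1)^{-1/2}\,dt=\pi$.

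The main obstacle is comparing these polynomial errors with the main term. At $j=j_n$ and the upper end of the $x$-window, the main term is of order $\alpha_n^{-1/4}(\log\alpha_n)^{-3/4}\alpha_n^{-c}$ with $c\approx\frac12(\frac{1}{5}+\frac{1}{\sqrt2})^2<\frac{1}{2}$, because $\widetilde a_n\approx\sqrt{\log\alpha_n/2}$ and $j_n/\sqrt{\alpha_n}\approx\frac15\sqrt{\log\alpha_n}$. We then use $n=\alpha_nk_n\ge\alpha_n$ to see that $n^{-4/5}$ and $(nk_n)^{-1/2}\alpha_n^{-3/20}\le\alpha_n^{-13/20}$ are $o$ of the main term times $(\log\alpha_n)^{-1}$. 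This exponent bookkeeping is where the choices of $j_n$ and $t_*$ are used, and if it is tight we would refine the $n^{-4/5}$ bound by taking $\varepsilon$ smaller as in Lemma \ref{333}.

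\textbf{Case $j_n<j<t_n$.} The same split is used with parts (1) and (3) of Lemma \ref{gnmxt}. On $(0,t_*]$ the factor $e^{-3h_n^2/8}$ is handled by the analogue of Lemma \ref{int} with $\frac38$ in place of $\frac12$: the substitution $s\mapsto\frac{\sqrt3}{2}s$ yields $\lesssim\alpha_n^{-1/4}\widetilde u_n^{-3/2}e^{-3\widetilde u_n^2/8}$. The remaining pieces give $O(n^{-4/5})$ exactly as in the first case, which proves \eqref{cn2}.

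\textbf{Endpoint $j=t_n$.} Lemma \ref{gnmxt}(4) gives
\[
\widetilde M^{(n)}_{t_n,t_n}(x)\le\frac1\pi e^{-\frac{(t_n-1)^2}{4\alpha_n}-\frac{(t_n-1)(\widetilde a_n+\widetilde b_nx)}{2\sqrt{\alpha_n}}}\int_0^\infty e^{-(t_n-1)t/4}(e^t-1)^{-1/2}\,dt .
\]
The integral is $O(1)$. The exponent is at most $-16\log\alpha_n(1+o(1))$, so the bound is $\le\alpha_n^{-15}$. This is not directly $\le n^{-4}$ when $k_n$ is large, so we would instead keep $t$ free in the Markov bound \eqref{k12}, with the $k_n$-dependence retained. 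Writing $(j-1)^2/(4\alpha_n)=k_n(j-1)^2/(4n)$ and noting $t_n^2/\alpha_n\asymp\log\alpha_n$ gives only powers of $\alpha_n$, so we expect to need the refinement that the Markov bound with $t=\frac{j-1}{2}$ over all $k_n$ factors yields $e^{-c\,k_n\log\alpha_n}$. Combining this with $n=\alpha_nk_n$ should give $n^{-4}$. We expect this final conversion from $\alpha_n$-powers to $n$-powers to be the delicate point of \eqref{cn3}.
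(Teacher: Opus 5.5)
Your treatment of parts (1) and (2) follows the paper's route: split \eqref{widecn} at $\alpha_n^{-9/20}$, substitute $s=\sqrt{\alpha_n}\,t$, apply Lemma~\ref{int}, and compare with the polynomial errors. Your plan for \eqref{cn3}, however, has a genuine gap. You are right that with $t_n=\lfloor 8\sqrt{\alpha_n\log\alpha_n}\rfloor$ (the definition given before Lemma~\ref{333}) Lemma~\ref{gnmxt}(4) only gives a power of $\alpha_n$. But the refinement you hope for does not exist, because the Chernoff bound already uses all $k_n$ factors. With $\ell=n-j+1$, each factor contributes at most $t^2/(2\ell)-t(\psi(n)-\psi(\ell))\approx t^2/(2\ell)-t(j-1)/n$. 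Multiplying by $k_n=n/\alpha_n$ is exactly what produces the exponent $(t^2-(j-1)t)/\alpha_n$ in \eqref{k12}. Optimizing in $t$ then gives an exponent of order $-(j-1)^2/\alpha_n\asymp-\log\alpha_n$ at $j=t_n$, with no surviving factor $k_n$.

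This is not a weakness of the method. At level $\widetilde u_n(t_n,x)\asymp\sqrt{\log\alpha_n}\ll n^{1/6}$ the tail of $\log Y_{n-t_n+1}$ is Gaussian; this is the moderate-deviation input behind \eqref{equaforsumn}. Hence $\widetilde M^{(n)}_{t_n,t_n}(x)$ is bounded below by $\alpha_n^{-C}$ for some fixed $C$. For slowly growing $\alpha_n$, e.g.\ $\alpha_n=\log n$, this is far larger than $n^{-4}$. So \eqref{cn3} is false for that $t_n$, and no estimate can yield it.

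The paper's proof of \eqref{cn3}, like the proof of Theorem~\ref{main} for $\alpha=+\infty$, actually takes $t_n=[8\sqrt{\alpha_n\log n}]$. Then three facts finish the argument at once:
\begin{itemize}
\item $(t_n-1)^2/(4\alpha_n)\ge 16\log n\,(1+o(1))$;
\item $(t_n-1)(\widetilde a_n+\widetilde b_n x)/(2\sqrt{\alpha_n})\ge 0$ on the window, because $\widetilde u_n(1,x)>0$ there;
\item $g_n$ is monotone in $t$ and $\int_0^\infty\frac{dt}{\pi\sqrt{e^t-1}}=1$.
\end{itemize}
Together these give $\widetilde M^{(n)}_{t_n,t_n}(x)\le n^{-4}$. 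Parts (1) and (2) are unaffected by this redefinition, since $\widetilde u_n(t_n,x)\asymp\sqrt{\log n}\ll n^{1/6}$ keeps Lemma~\ref{gnmxt} in force.

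There is also an exponent slip in part (1). The tight term is not $n^{-4/5}$, which is at most $\alpha_n^{-4/5}$ and comfortably negligible; it is the piece $(nk_n)^{-1/2}h_n^{-3}$ of the tail $t>\alpha_n^{-9/20}$. There you only use $h_n\ge\alpha_n^{1/20}$ and arrive at $\alpha_n^{-13/20}$. When $k_n$ stays bounded (which is allowed here), $(nk_n)^{-1/2}=\sqrt{\alpha_n}/n\asymp\alpha_n^{-1/2}$, so your bound really is of order $\alpha_n^{-0.65}$. The main term at $j=j_n$, $x=\widetilde\ell_{2,\infty}(n)$ is $\asymp\alpha_n^{-1/4-c}(\log\alpha_n)^{-3/4}$ with $c=\frac12\bigl(\frac15+\frac{1}{\sqrt{2}}\bigr)^2\approx0.411$. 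That is roughly $\alpha_n^{-0.661}$, which is smaller than your error bound.

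You need to use the decay in $t$. Since $(e^t-1)^{-1/2}\le t^{-1/2}$ and $h_n\ge\sqrt{\alpha_n}\,t$, this piece is at most $(nk_n)^{-1/2}\alpha_n^{-3/2}\int_{\alpha_n^{-9/20}}^\infty t^{-7/2}\,dt\lesssim\alpha_n^{1/8}/n$. This is $\le\alpha_n^{-7/8}$, which is enough for \eqref{cn1}. It is also $\le n^{-7/8}$, which is what your claim in part (2) that ``the remaining pieces give $O(n^{-4/5})$'' actually needs. This is how the paper bounds $\mathrm{J}_2$.
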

\begin{proof}
We split the integral in \eqref{widecn} into two parts at the point $\alpha_n^{-9/20}$:
$$\aligned
\widetilde{M}^{(n)}_{j, j}(x)
&=\int_{0}^{\alpha_n^{-9/20}}\frac{1}{\pi\sqrt{e^{t}-1}}g_n(j,x,t)dt+\int_{\alpha_n^{-9/20}}^{+\infty}\frac{1}{\pi\sqrt{e^{t}-1}}g_n(j,x,t)dt\\
&=:\mathrm{J}_1+\mathrm{J}_2.
\endaligned$$
For $1\le j\le j_n$, the dominant contribution to $\widetilde{M}^{(n)}_{j, j}(x)$ comes from the first integral $\mathrm{J}_1$, while the second satisfies $\mathrm{J}_2\ll \mathrm{J}_1 (\log \alpha_n)^{-1}.$

Observe that
$$\frac{1}{\sqrt{e^{t}-1}}=\frac{1+O(\alpha_n^{-9/20})}{\sqrt{t}}, \quad\text{uniformly for }0< t\leq\alpha_n^{-9/20},$$
and
$$ h_n^{-2}(j, x, t)\lesssim \widetilde{u}^{-2}_n(j, x)\lesssim (\log\alpha_n)^{-1} $$
for all $1\le j\le t_n$ and $-\widetilde{\ell}_{1,\infty}(n)\leq x\leq \widetilde{\ell}_{2,\infty}(n).$ 
Lemma \ref{gnmxt} (ii) gives the two-sided bound
\begin{equation}\label{Ige}
	\frac{1+O((\log\alpha_n)^{-1})}{\sqrt{2}\pi^{3/2}}\int_{0}^{\alpha_n^{-\frac{9}{20}}}\frac{e^{-\frac{1}{2}h^{2}_n(j, x,t)}}{\sqrt{t}h_n(j, x,t)}dt
	\leq \mathrm{J}_1
\end{equation}
and
\begin{equation}\label{ILE}
		\mathrm{J}_1\leq 	\frac{1+O((\log\alpha_n)^{-1})}{\sqrt{2}\pi^{3/2}}\int_{0}^{\alpha_n^{-\frac{9}{20}}}\frac{e^{-\frac{1}{2} h^{2}_n(j, x,t)}}{\sqrt{t}h_n(j, x,t)}dt+n^{-4/5}, 
\end{equation}
where the term $n^{-4/5}$ in \eqref{ILE} appears because 
\begin{equation}\label{Densi}\int_{0}^{\alpha_n^{-9/20}}\frac{1}{\pi\sqrt{e^{t}-1}}dt\leq \int_{0}^{+\infty}\frac{1}{\pi\sqrt{e^{t}-1}}dt =1.\end{equation}
We now analyze the common integral in \eqref{Ige} and \eqref{ILE}. Using the substitution $s=\sqrt{\alpha_n}t$, we obtain
$$ \int_{0}^{\alpha_n^{-\frac{9}{20}}}\frac{\exp(-\frac{1}{2}h^{2}_n(j, x,t))}{\sqrt{t}h_n(j, x,t)}dt=\alpha_n^{-1/4}\int_{0}^{\alpha_n^{1/20}}\frac{\exp(-\frac{1}{2}(\widetilde{u}_n(j,x)+s)^{2})}{\sqrt{s}(\widetilde{u}_n(j, x)+s)} ds.$$
The conditions on $j$ and $x$ ensure $\widetilde{u}_n(j, x)\gg 1 $ and $\alpha_n^{1/20}\gg \widetilde{u}_n(j, x),$ which allows us to apply Lemma \ref{int}
with $h=\widetilde{u}_n(j, x)$ and $v=\alpha_n^{1/20}$. Consequently,
\begin{equation}\label{Imain}
		\int_{0}^{\alpha_n^{-\frac{9}{20}}}\frac{\exp(-\frac{1}{2}h^{2}_n(j, x,t))}{\sqrt{t}h_n(j, x,t)}dt=\frac{\sqrt{\pi}(1+O(\widetilde{u}^{-2}_n(j,x)))}{\alpha_n^{1/4}\widetilde{u}^{3/2}_n(j,x)}\exp(-\frac{1}{2}\widetilde{u}_n^2(j, x)).
\end{equation}
Next we compare the magnitude of the main term with the error $n^{-4/5}.$
Since $\widetilde{u}_n(j, x)$ is increasing in both $j$ and $x$ and $\frac{1}{\sqrt{2}}<\frac{5}{7}$, we have 
$$\widetilde{u}_n(j,x)\lesssim\widetilde{u}_n(j_n, \widetilde{\ell}_{2,\infty}(n))=\bigl(\frac{1}{5}+\frac{1}{\sqrt{2}}\bigr)\sqrt{\log \alpha_n}(1+o(1))\leq\frac{32}{35}\sqrt{\log\alpha_n}\leq\sqrt{\frac{10}{11}\log\alpha_n}.  $$ 
Using that $y^{-3/2} e^{-y^2/2}$ is decreasing, we deduce
$$\frac{\exp(-\frac{\widetilde{u}^{2}_n(j,x)}{2})}{\alpha_n^{1/4}\widetilde{u}^{3/2}_n(j,x)} \gtrsim\alpha_n^{-1/4}(\log\alpha_n)^{-3/4}e^{-\frac{5}{11}\log\alpha_n}=(\log\alpha_n)^{-3/4} \alpha_n^{-\frac{31}{44}}\ge(\alpha_n\log\alpha_n)^{-3/4}.$$
A direct comparison yields
 $$
 	\frac{(\alpha_n\log\alpha_n)^{-3/4}}{\alpha_n^{-4/5} \log \alpha_n} =(\alpha_n)^{\frac{1}{20}}(\log\alpha_n)^{-7/4}\gg 1,
 	 $$ 
which, recalling $n=\alpha_n k_n\ge \alpha_n$, implies
\begin{equation}\label{IGG}n^{-4/5}\le \alpha_n^{-4/5}\ll (\log \alpha_n)^{-1} \int_{0}^{\alpha_n^{-\frac{9}{20}}}\frac{\exp(-\frac{1}{2}h^{2}_n(j, x,t))}{\sqrt{t}h_n(j, x,t)}dt. \end{equation}  
Combining \eqref{Ige}, \eqref{ILE}, \eqref{Imain} and \eqref{IGG}, we obtain
$$\mathrm{J}_1= \frac{(1+O((\log\alpha_n)^{-1}))}{\sqrt{2}\pi\alpha_n^{1/4}\widetilde{u}^{3/2}_n(j,x)}\exp\bigl(-\frac{1}{2}\widetilde{u}^{2}_n(j, x)\bigr).$$
We now prove $\mathrm{J}_2\ll\mathrm{J}_1\,  (\log \alpha_n)^{-1} $.
Lemma \ref{gnmxt} gives
$$\mathrm{J}_2\lesssim\int_{\alpha_n^{-9/20}}^{+\infty}\frac{1}{\sqrt{e^{t}-1}}\bigl(	\frac{e^{-\frac{3}{8}h^{2}_n(j, x,t)}}{h_n(j,x,t)}+(nk_n)^{-1/2} h_n^{-3}(j,x,t)+n^{-4/5}\bigr) dt.$$
For $t\ge \alpha_n^{-\frac{9}{20}}$, we have $\sqrt{e^t-1}>\sqrt{t}\ge \alpha_n^{-\frac{9}{40}}$, and $h_n(j, x, t)>\sqrt{\alpha_n} \,t$. Hence,
$$\aligned \mathrm{J}_2&\lesssim \alpha_n^{9/40}\int_{\alpha_n^{-9/20}}^{+\infty}\frac{\exp(-\frac{3}{8}h^{2}_n(j, x,t))}{h_n(j,x,t)}dt+\alpha_n^{-2}\int_{\alpha_n^{-9/20}}^{+\infty}t^{-\frac{7}{2}}dt+n^{-4/5}.
\endaligned$$
For the first integral, set $y=h_n(j,x,t)=\widetilde{u}_n(j,x)+\sqrt{\alpha_n}\, t $. Then
$$\aligned \alpha_n^{9/40}\int_{\alpha_n^{-9/20}}^{+\infty}\frac{\exp(-\frac{3}{8}h^{2}_n(j, x,t))}{h_n(j,x,t)}dt&=\alpha_n^{-\frac{11}{40}}\int_{\tilde{u}_n(j,x)+\alpha_n^{1/20}}^{+\infty}y^{-1}e^{-\frac{3}{8}y^2}dy\\
&\lesssim\alpha_n^{-\frac{11}{40}}\frac{\exp\bigl(-\frac{3}{8}(\widetilde{u}_n(j,x)+\alpha_n^{1/20})^{2}\bigr)}{(\widetilde{u}_n(j,x)+\alpha_n^{1/20})^{2}}\\
&\le\alpha_n^{-3/8}\exp\Bigl(-\frac{3}{8}\alpha_n^{1/10}\Bigr),
\endaligned$$
where the last inequality uses $\widetilde{u}_n(j,x)>0$ for $-\widetilde{\ell}_{1,\infty}(n)\leq x\leq \widetilde{\ell}_{2,\infty}(n).$ 
Moreover,
$$\alpha_n^{-2}\int_{\alpha_n^{-9/20}}^{+\infty}t^{-\frac{7}{2}}dt\lesssim \alpha_n^{-2}(\alpha_n)^{\frac{9}{8}}=\alpha_n^{-7/8}.$$
Thus,
\begin{equation}\label{IIup}\mathrm{J}_2\le \alpha_n^{-3/8}\exp\Bigl(-\frac{3}{8}\alpha_n^{1/10}\Bigr)+\alpha_n^{-7/8}\lesssim \alpha_n^{-4/5},\end{equation}
and \eqref{IGG} again implies 
$$\mathrm{J}_2\ll\mathrm{J}_1(\log \alpha_n)^{-1}.$$
Consequently,	 
$$\widetilde{M}^{(n)}_{j, j}(x)=\frac{(1+O((\log\alpha_n)^{-1}))}{\sqrt{2}\pi\alpha_n^{1/4}\widetilde{u}^{3/2}_n(j,x)}\exp\bigl(-\frac{1}{2}\widetilde{u}^{2}_n(j, x)\bigr)$$ 
uniformly for $1\leq j\leq j_n$ and $-\widetilde{\ell}_{1,\infty}(n)\leq x\leq \widetilde{\ell}_{2,\infty}(n).$
Examining the proof of \eqref{cn1}, we see that \eqref{ILE}, \eqref{Imain} and \eqref{IIup} still hold for $j_n<j< t_n$, from which \eqref{cn2} follows. For the special case $j=t_n=[8\sqrt{\alpha_n\log n}]$,
Lemma \ref{gnmxt} together with the equality in \eqref{Densi} and the monotonicity of $g_n(j,x,t)$ in $t$ gives
$$\aligned \widetilde{M}^{(n)}_{j, j}(x)&\leq \int_{0}^{+\infty}\frac{1}{\pi\sqrt{e^{t}-1}} \exp\Bigl\{-\frac{(j-1)^2}{4\alpha_n}-\frac{(j-1)(\widetilde{a}_n+\widetilde{b}_n x)}{2\sqrt{\alpha_n}}\Bigr\}dt\\
&\leq \exp(-4\log n)\int_0^{+\infty}\frac{1}{\pi\sqrt{e^{t}-1}} dt =n^{-4}. 
\endaligned $$ 
This completes the proof.
\end{proof}  

Now, we are able to give the proofs of Theorems  for $\alpha=+\infty.$
\subsection{Proof of Theorem \ref{main} for $\alpha=+\infty$}

 We take $ \ell_{1,\infty}(n)=\frac{1}{2}\log\log \alpha_n$ and $\ell_{2,\infty}(n)=\log\left(\sqrt{2\pi}\log (\alpha_n+e^{\frac{1}{\sqrt{2\pi}}})\right).$

 	First, we explore the exact asymptotical expression of $\mathbb{P}(X_n\leq x),$ which is expressed as   
 	$$\mathbb{P}(X_n\leq x)=\prod_{j=1}^n(1-M^{(n)}_{j, j}(x))=\exp(\beta_n(x)), $$
 	where 
 	$$\beta_n(x):=\sum_{j=1}^{n}\log(1-M^{(n)}_{j, j}(x)).$$
 	The monotonicity of $M^{(n)}_{j, j}(x)$ in both $j$ and $x$ implies that
 	$$M^{(n)}_{j, j}(x)\leq M^{(n)}_{1, 1}(-\ell_{1,\infty}(n)),$$
 	uniformly
 	on  $ x\geq-\ell_{1,\infty}(n)$ and $ j\geq1.$ By definition, 
 	$$u_n(1, -\ell_{1, \infty}(n))=\sqrt{\log(\alpha_n+1)}-\frac{\ell_{1, \infty}(n)+\ell_{2, \infty}(n)}{\sqrt{\log(\alpha_n+e)}},$$
 	 and then Lemma \ref{333} entails 
 	$$M^{(n)}_{1, 1}(-\ell_{1,\infty}(n))=O(\alpha_n^{-1/2}\log\alpha_n).$$
 	Applying the Taylor expansion $\log(1 - t) = -t (1 + O(t))$ for sufficiently small $|t|,$ we get
 	 $$\beta_n(x)=-\sum_{j=1}^{n}M^{(n)}_{j, j}(x)(1+O(\alpha_n^{-1/2}\log\alpha_n))=-{\rm Tr}(M^{(n)}(x))(1+O(\alpha_n^{-1/2}\log\alpha_n)).$$

Recall $j_n=[\frac{1}{5}\sqrt{\alpha_n\log\alpha_n}]$ and $t_n=[8\sqrt{\alpha_n\log n}].$ From the monotonicity of $M^{(n)}_{j, j}(x)$ with respect to $ j $, we obtain
 	 \begin{equation}\label{32}
 	 	\sum_{j=1}^{j_n}M^{(n)}_{j, j}(x)\leq {\rm Tr}(M^{(n)}(x))\leq\sum_{j=1}^{t_n}M^{(n)}_{j, j}(x)+nM^{(n)}_{t_n, t_n}(x).
 	 \end{equation} 
 	 Lemma \ref{H} entails that 
 	 \begin{equation}\label{33}
 	 	nM^{(n)}_{t_n, t_n}(x)\leq  n\exp\{-4\log n-2\sqrt{\log n}(a_n+b_nx)\}=o(n^{-3}),
 	 \end{equation}
 	 uniformly
 	 on  $-\ell_{1,\infty}(n)\leq x\leq\ell_{2,\infty}(n).$ Leveraging Lemmas  \ref{sum} and \ref{333}, we get
 	 $$	\aligned\sum_{j=j_n+1}^{t_n}M^{(n)}_{j, j}(x)&\leq\sum_{j=j_n+1}^{t_n}(\frac{1}{u_n(j,x)}e^{-\frac{3u^{2}_n(j, x)}{8}}+n^{-4/5}k_n^{-1/5})\\
 	 &\lesssim\frac{\sqrt{\alpha_n}}{ u_{n}^{2}(j_n+1, -\ell_{1,\infty}(n))}\exp(- \frac{3u_{n}^{2}(j_n+1, -\ell_{1,\infty}(n))}{8})+n^{-3/10}\sqrt{\log n}.	\endaligned$$
 	Note that
 	 $$u_{n}(j_n+1, -\ell_{1,\infty}(n))=\frac{6}{5}\sqrt{\log \alpha_n}+o(1)$$  and $$u_{n}^{2}(j_n+1, -\ell_{1,\infty}(n))=\frac{36}{25}\log \alpha_n-\frac{12}{5}(\ell_{1, \infty}(n)+\ell_{2, \infty}(n))+O(1).$$
Thereby,
 	 \begin{equation}\label{34}
 	 	\sum_{j=j_n+1}^{t_n}M^{(n)}_{j, j}(x)\lesssim \alpha_n^{-\frac{1}{25}}(\log \alpha_n)^{\frac{7}{20}}.
 	 \end{equation}
 	 For $ 1\leq j\leq j_n,$ Lemmas \ref{sum} and \ref{333} immediately imply that
 	 \begin{equation}\label{35}
 	 	\sum_{j=1}^{j_n}M^{(n)}_{j, j}(x) =\sum_{j=1}^{j_n}\frac{1+O((\log \alpha_n)^{-1})}{\sqrt{2\pi}u_n(j, x)}e^{-\frac{u_n^{2}(j, x)}{2}}\\
 	 	=\frac{\sqrt{\alpha_n}(1+O((\log \alpha_n)^{-1}))}{\sqrt{2\pi } u_{n}^{2}(1, x)}e^{- \frac{u_{n}^{2}(1, x)}{2}},
 	 \end{equation}
 	 where for the last equality we use the fact$$\frac{1}{u_n^2(j_n+1, x)} e^{-\frac{u_n^2(j_n+1, x)}{2}} \ll \frac{1}{ u_n^2(1, x)\log \alpha_n} e^{-\frac{u_n^2(1, x)}{2}}.$$ 
 	This inequality is true because $$u_n(j_n+1, x)=\frac{6}{5}\sqrt{\log \alpha_n}(1+o(1))\quad\text{and}\quad u_n(1, x)=\sqrt{\log \alpha_n}(1+o(1)).$$
 	Furthermore,  
 	\begin{equation}\label{e4}
 	\aligned u_n(1,x)&=\sqrt{\log (\alpha_n+1)}+ \frac{x-\ell_{2,\infty}(n)}{\sqrt{\log (\alpha_n+e)}}\\
 	&=\sqrt{\log (\alpha_n+e)}+\frac{x-\ell_{2,\infty}(n)}{\sqrt{\log (\alpha_n+e)}}+O((\log \alpha_n)^{-1/2}(\alpha_n)^{-1}) , \endaligned 
 	\end{equation}
 	 and then 
 	 \begin{equation}\label{e3} \aligned 
 	 	u_n^2(1, x)&=\log (\alpha_n+e)-2\ell_{2,\infty}(n)+2 x+\frac{(x-\ell_{2,\infty}(n))^{2}}{\log (\alpha_n+e)}+O(\alpha_n^{-1});
 	 	\\
 	 	\exp(-\frac{1}{2}u_n^2(1, x))&=\sqrt{\frac{2\pi }{\alpha_n+e}}\log (\alpha_n+e^{\frac1{\sqrt{2\pi}}})\exp(-x-\frac{(\ell_{2,\infty}(n)-x)^{2}}{2\log(\alpha_n+e)})e^{O(\alpha_n^{-1})}\\
 	 	&=\sqrt{\frac{2\pi }{\alpha_n}}\log (\alpha_n)\exp(-x-\frac{(\ell_{2,\infty}(n)-x)^{2}}{2\log(\alpha_n+e)})(1+O(\alpha_n^{-1})).
 	 	\endaligned
 	 \end{equation}
 	 Putting \eqref{33}, \eqref{34}  \eqref{35} and \eqref{e3} back into \eqref{32}, we see
 	$${\rm Tr}(M^{(n)}(x))=\frac{1+O((\log  \alpha_n)^{-1})}{(1+\frac{x-\ell_{2,\infty}(n)}{\log (\alpha_n+e)})^{2}}\exp(-x-\frac{(x-\ell_{2,\infty}(n))^2}{2\log  (\alpha_n+e)})	
 	$$
 	and then 
 	 \begin{equation}\label{betanx}
 	 \beta_n(x)=-\frac{1+O((\log  \alpha_n)^{-1})}{(1+\frac{x-\ell_{2,\infty}(n)}{\log (\alpha_n+e)})^{2}}\exp(-x-\frac{(x-\ell_{2,\infty}(n))^2}{2\log  (\alpha_n+e)}).	
 	 \end{equation}
The expression \eqref{betanx} guarantees that 
$$\aligned |\beta_n(x)+e^{-x}|&=e^{-x}|1-\frac{1+O((\log \alpha_n)^{-1})}{(1+\frac{x-\ell_{2,\infty}(n)}{\log (\alpha_n+e)})^{2}} \exp(-\frac{(x-\ell_{2,\infty}(n))^2}{2\log  (\alpha_n+e)})|\\
&=\frac{e^{-x}(1+o(1))}{(1+\frac{x-\ell_{2,\infty}(n)}{\log (\alpha_n+e)})^{2}}|\frac{(x-\ell_{2,\infty}(n))^2}{2\log(\alpha_n+e)}+\frac{2(x-\ell_{2,\infty}(n))}{\log(\alpha_n+e)}|. \endaligned $$
The choices of $\ell_{i, \infty}(n)$ (i=1,2)  are designed to ensure $|\beta_n(x)+e^{-x}|=o(1),$ whence 
 	 \begin{equation}\label{aphagg1}\aligned |\mathbb{P}(X_n\leq x)-e^{-e^{-x}}|&=e^{-e^{-x}}|\exp(\beta_n(x)+e^{-x})-1| \\
 	 &=e^{-e^{-x}}(1+o(1))|\beta_n(x)+e^{-x}|\\
 	 &=e^{-x-e^{-x}} \frac{|(x-\ell_{2,\infty}(n))^2+4(x-\ell_{2,\infty}(n))|}{2\log(\alpha_n+e)}(1+o(1))
 	 \endaligned \end{equation}
 	 uniformly on $[-\ell_{1, \infty}(n), \ell_{2, \infty}(n)].$
We see clearly  $$\sup_{x\in\mathbb{R}} e^{-x-e^{-x}} |x^k|<+\infty, \quad k=1, 2$$ and $\sup_{x\in\mathbb{R}} e^{-x-e^{-x}} =e^{-1},$ which together with \eqref{aphagg1}, imply that \begin{equation}\label{midd}
\sup_{x\in [-\ell_{1, \infty}(n), \ell_{2, \infty}(n)]}|\mathbb{P}(X_n\leq x)-e^{-e^{-x}}|=\frac{(\log\log \alpha_n)^2}{2e\log\alpha_n}(1+o(1)).
\end{equation}
While for the two side intervals, we have 
	$$\aligned \sup\limits_{x\in(-\infty, -\ell_{1,\infty}(n)]}|\mathbb{P}(X_n\leq x)-e^{-e^{-x}}|&\leq\mathbb{P}(X_n\leq -\ell_{1,\infty}(n))+e^{-e^{\ell_{1,\infty}(n)}}\lesssim e^{-e^{\ell_{1,\infty}(n)}};\endaligned $$ 
	and analogously $$\sup\limits_{x\in[\ell_{2,\infty}(n),+\infty)}|\mathbb{P}(X_n\leq x)-e^{-e^{-x}}|\lesssim1-e^{-e^{-\ell_{2,\infty}(n)}}.  $$
	Thereby, 
	\begin{equation}\label{twoside}\sup\limits_{x\in(-\infty, -\ell_{1,\infty}(n)] \cup [\ell_{2,\infty}(n),+\infty) }|\mathbb{P}(X_n\leq x)-e^{-e^{-x}}|\lesssim \frac{1}{\log \alpha_n}.
	\end{equation}
Combining \eqref{midd} and \eqref{twoside}, we derive 
	 	$$\sup_{x\in \mathbb{R}}|\mathbb{P}(X_n\le x)-e^{-e^{-x}}|= \frac{(\log \log \alpha_n)^{2}}{2e\log \alpha_n}(1+o(1)).$$
	 	The proof of Theorem \ref{main} for $\alpha=+\infty$
is completed. 	 	

\subsection{Proof of Theorem \ref{thmrealpart} for $\alpha=+\infty$} By the classical inequality between the trace and the determinant for the matrix (see (7.11) in \cite{Gohberg}), we have  
\begin{equation}\label{E2}
\begin{aligned}
&\bigl| \det\bigl({\rm I}_n - \widetilde{M}^{(n)}(x)\bigr) - \exp\bigl(-\operatorname{Tr}(\widetilde{M}^{(n)}(x))\bigr) |\lesssim \|\widetilde{M}^{(n)}(x)\|_{\rm HS} \; 
        \exp\bigl\{ - \operatorname{Tr}(\widetilde{M}^{(n)}(x)) \bigr\}
\end{aligned}
\end{equation}
once $\|\widetilde{M}^{(n)}(x)\|_{\rm HS}<1.$ 

We first state a lemma on $\|\widetilde{M}^{(n)}(x)\|_{\rm HS},$ whose proof is postponed to the Appendix.  

\begin{lem}\label{Mhslem}
For the matrix $\widetilde{M}^{(n)}(x),$ we have 
\begin{equation}\label{condionM}
	\|\widetilde{M}^{(n)}(x)\|_{\rm HS}\ll  e^{-x} \frac{(\log\log \alpha_n)^2}{\log\alpha_n}
\end{equation}
uniformly for $-\widetilde{\ell}_{1,\infty}(n)\leq x\leq \widetilde{\ell}_{2,\infty}(n).$
\end{lem}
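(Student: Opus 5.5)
Our plan is to bound $\|\widetilde{M}^{(n)}(x)\|_{\rm HS}^2=\sum_{j,k}(\widetilde{M}^{(n)}_{j,k}(x))^2$ using the two estimates of Lemma \ref{Mjjlem}, namely $|\widetilde{M}^{(n)}_{j,k}(x)|\le \sqrt{\widetilde{M}^{(n)}_{j,j}(x)\widetilde{M}^{(n)}_{k,k}(x)}$ and $|\widetilde{M}^{(n)}_{j,k}(x)|\le \widetilde{M}^{(n)}_{m,m}(x)$ with $m=\frac{j+k}{2}$, together with the diagonal asymptotics of Lemma \ref{tildec}. We split the index set according to whether $\max(j,k)\le t_n$ or not. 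On the block $j,k\le t_n$ we use the Cauchy--Schwarz bound. It gives
\[
\sum_{j,k\le t_n}(\widetilde{M}^{(n)}_{j,k}(x))^2\le \Bigl(\sum_{j\le t_n}\widetilde{M}^{(n)}_{j,j}(x)\Bigr)^2,
\]
but this only shows that the HS norm is at most the trace, which is of order $e^{-x}$. That is too crude, so on this block we use the finer structure instead. In $\widetilde{M}^{(n)}_{j,k}$ the prefactor $\frac{2((n-m)!)^{k_n}}{\pi((n-j)!(n-k)!)^{k_n/2}}$ equals $\exp(-\frac{(j-k)^2}{4\alpha_n}(1+o(1)))$ by Stirling. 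Combining this with $|\cos|\le1$ gives $|\widetilde{M}^{(n)}_{j,k}(x)|\lesssim e^{-(j-k)^2/(4\alpha_n)}\widetilde{M}^{(n)}_{m,m}(x)$.

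Summing over $d=j-k$ and $m$ then yields
\[
\|\widetilde{M}^{(n)}(x)\|_{\rm HS}^2\lesssim \sum_{d}e^{-d^2/(2\alpha_n)}\sum_{m}(\widetilde{M}^{(n)}_{m,m}(x))^2\lesssim \sqrt{\alpha_n}\sum_{m\le t_n}(\widetilde{M}^{(n)}_{m,m}(x))^2+(\text{tail}).
\]
The key point is that the inner sum is a sum of \emph{squares}. For $m\le j_n$, Lemma \ref{tildec}(1) gives $(\widetilde{M}^{(n)}_{m,m})^2\asymp \alpha_n^{-1/2}\widetilde{u}_n^{-3}e^{-\widetilde{u}_n^2}$. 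Lemma \ref{sum} with $\gamma_n=\sqrt{\alpha_n}$ sums this to $\asymp \widetilde{u}_n(1,x)^{-4}e^{-\widetilde{u}_n^2(1,x)}$, so the total contribution is $\lesssim\sqrt{\alpha_n}\,\widetilde{u}_n^{-4}e^{-\widetilde{u}_n^2(1,x)}$. From the choice of $\widetilde a_n,\widetilde b_n$ one has $e^{-\widetilde{u}_n^2(1,x)/2}\asymp \alpha_n^{-1/2}(\log\alpha_n)^{5/4}e^{-x}$ times bounded factors on the window. This is the same computation that makes the trace equal to $e^{-x}(1+o(1))$, in analogy with \eqref{e3}. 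Hence the main part is $\lesssim \alpha_n^{-1/2}(\log\alpha_n)^{1/2}e^{-2x}$. Its square root is $\lesssim \alpha_n^{-1/4}(\log\alpha_n)^{1/4}e^{-x}$, which is $\ll e^{-x}(\log\log\alpha_n)^2/\log\alpha_n$.

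For $j_n<m<t_n$, Lemma \ref{tildec}(2) gives terms of size $\widetilde{u}_n^{-3/2}\alpha_n^{-1/4}e^{-3\widetilde{u}_n^2/8}+n^{-4/5}$ with $\widetilde{u}_n\ge\frac{6}{5}\sqrt{\log\alpha_n}(1+o(1))$. Squaring, summing over at most $t_n\lesssim\sqrt{\alpha_n\log n}$ indices and multiplying by $\sqrt{\alpha_n}$ gives a negative power of $\alpha_n$ plus $n^{-8/5}\alpha_n\sqrt{\log n}\le n^{-1/2}$. Because $e^{-x}\ge(\log\alpha_n)^{-5/4}$ on the window, this is negligible against $e^{-2x}(\log\log\alpha_n)^4/(\log\alpha_n)^2$. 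The small $(1+o(1))$ Stirling errors are absorbed because $d\lesssim t_n\ll n$.

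For the tail where $\max(j,k)\ge t_n$ we argue as follows. If $m\ge t_n$, monotonicity in the index and Lemma \ref{tildec}(3) give $|\widetilde{M}^{(n)}_{j,k}|\le n^{-4}$, and there are at most $n^2$ such pairs. If $m<t_n$ but $\max(j,k)\ge t_n$, then $|d|\ge t_n-m$. When $m\ge t_n/2$ we again use $\widetilde{M}^{(n)}_{m,m}\le \widetilde M^{(n)}_{t_n/2,t_n/2}$, which is tiny by Lemma \ref{gnmxt}(4)-type bounds. When $m<t_n/2$ we have $|d|\ge t_n/2$, so the factor $e^{-d^2/(4\alpha_n)}\le e^{-16\log n}$ dominates. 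In both subcases the contribution is polynomially small in $n$.

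The main obstacle is making the Stirling estimate of the prefactor uniform for $|d|$ up to order $n$. We handle it with the crude bound $\frac{((n-m)!)^2}{(n-j)!(n-k)!}\le\exp(-\frac{d^2}{4n}(1+o(1)))$, valid uniformly for $|d|\le n/2$ via log-convexity of $\Gamma$. Raised to the power $k_n$, this gives the Gaussian factor in $d$ needed above. For $|d|>n/2$ we fall back on the trivial bound $\le1$ combined with $\widetilde M^{(n)}_{m,m}$ being small.
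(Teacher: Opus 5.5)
There is a genuine gap, and it lies in the main block $m\le j_n$, not in the tail. The bound $|\widetilde{M}^{(n)}_{j,k}(x)|\lesssim e^{-(j-k)^2/(4\alpha_n)}\widetilde{M}^{(n)}_{m,m}(x)$ discards the oscillation of $\cos((j-k)\theta)$, and what remains is too weak. Your computation hides this through a slip in the exponent. Since $\widetilde{a}_n=\sqrt{\log\alpha_n/2}+O(\log\log\alpha_n/\sqrt{\log\alpha_n})$, \eqref{widetildeu} gives $e^{-\widetilde{u}_n^2(1,x)/2}\asymp \alpha_n^{-1/4}(\log\alpha_n)^{5/4}e^{-x}$, not $\alpha_n^{-1/2}(\log\alpha_n)^{5/4}e^{-x}$. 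This is exactly what makes $\alpha_n^{1/4}\widetilde{u}_n^{-5/2}(1,x)e^{-\widetilde{u}_n^2(1,x)/2}$ in \eqref{finalreal} equal to $e^{-x}(1+o(1))$; your value would make the trace tend to $0$. With the correct value,
\[
\sqrt{\alpha_n}\sum_{m\le j_n}\bigl(\widetilde{M}^{(n)}_{m,m}(x)\bigr)^2\asymp \sqrt{\alpha_n}\,\widetilde{u}_n^{-4}(1,x)\,e^{-\widetilde{u}_n^2(1,x)}\asymp (\log\alpha_n)^{1/2}e^{-2x}.
\]
This is even worse than the Cauchy--Schwarz bound $(\operatorname{Tr}\widetilde{M}^{(n)}(x))^2\asymp e^{-2x}$ that you rejected. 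The reason is structural. The diagonal entries decay in $m$ on the scale $\sqrt{\alpha_n}/\widetilde{u}_n(1,x)\asymp\sqrt{\alpha_n/\log\alpha_n}$. The Stirling factor decays in $j-k$ only on the larger scale $\sqrt{\alpha_n}$, so it yields no gain at all. Uniformity of Stirling for large $|d|$, which you flag as the main obstacle, is not the issue.

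The missing idea is cancellation in the $\theta$-integral of \eqref{Mjkfor} for large $q=\frac{|j-k|}{2}$. The paper uses the trivial bound $|\widetilde{M}^{(n)}_{j,k}|\le \widetilde{M}^{(n)}_{p,p}$, with $p=\frac{j+k}{2}$, only for $q<\alpha_n^{2/5}$. There the count $\alpha_n^{2/5}$ times $\sum_{p}(\widetilde{M}^{(n)}_{p,p})^2\asymp \alpha_n^{-1/2}(\log\alpha_n)^{1/2}e^{-2x}$ is still a negative power of $\alpha_n$; any cutoff $\alpha_n^{\gamma}$ with $\gamma<\frac12$ would do. For $\alpha_n^{2/5}\le q<p\le j_n$ the paper proceeds as follows. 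It substitutes $t=-\log\cos^2\theta$ and restricts to $t\le \alpha_n^{-9/20}$, the rest being $O(\alpha_n^{-4/5})$. It replaces $\cos(2q\arccos e^{-t/2})$ by $\cos(2q\sqrt{t})$, and $g_n(p,x,t)$ by $h_n^{-1}e^{-h_n^2/2}$ via Lemma \ref{gnmxt}. It then sets $s=\sqrt t$ and integrates by parts once. Because $s\mapsto h_n^{-1}(p,x,s^2)e^{-h_n^2(p,x,s^2)/2}$ is decreasing, this gives
\[
|\widetilde{M}^{(n)}_{j,k}(x)|\lesssim \frac{e^{-\widetilde{u}_n^2(p,x)/2}}{q\,\widetilde{u}_n(p,x)}+O\bigl(\alpha_n^{-4/5}+(\log\alpha_n)^{1/4}\alpha_n^{-27/40}e^{-x}\bigr),
\]
and $\sum_{q\ge\alpha_n^{2/5}}q^{-2}\lesssim\alpha_n^{-2/5}$ makes this region negligible. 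Heuristically, the angular profile forces decay in $q$ on the scale $\alpha_n^{1/4}(\log\alpha_n)^{1/4}$, much shorter than $\sqrt{\alpha_n/\log\alpha_n}$, and that is the decay the HS bound actually needs. Your handling of $p>j_n$ and $p\ge t_n$ is fine and parallels the paper's $S_1,S_2$. Without an oscillation estimate of this kind, however, the main block, and hence the lemma, is not established.
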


Examining the expressions from \eqref{32} to \eqref{35}, while using estimates for $\widetilde{M}^{(n)}_{j, j}(x)$ in Lemma \ref{tildec}, we have 
\begin{equation}\label{finalreal}\aligned \operatorname{Tr}(\widetilde{M}^{(n)}(x))&= \sum_{j=1}^{n}\widetilde{M}^{(n)}_{j, j}(x)=(1+O((\log\alpha_n)^{-1}))\sum_{j=1}^{j_n}\widetilde{M}^{(n)}_{j, j}(x)\\
&=(1+O((\log\alpha_n)^{-1}))\sum_{j=1}^{j_n}\frac{e^{-\frac{\widetilde{u}^{2}_n(j,x)}{2}}}{\sqrt{2}\pi\alpha_n^{1/4}\tilde{u}^{3/2}_n(j,x)}\\
&=\frac{\alpha_n^{1/4}(1+O((\log\alpha_n)^{-1}))}{\sqrt{2}\pi\widetilde{u}^{5/2}_n(1, x)}\exp(-\frac{1}{2}\widetilde{u}^{2}_n(1, x)). \endaligned \end{equation}
Review $\widetilde{\ell}_{2,\infty}(n)=\frac{5}{4}\log \log(\alpha_n+e^{2^{3/5}\pi^{-4/5}}), \;\widetilde{b}_n = \frac{\sqrt{2}}{\sqrt{\log(\alpha_n+e^2) }}$ and  $$
\widetilde{a}_n = \sqrt{\frac{\log(\alpha_n+1) }{2}} - \frac{\sqrt{2}(\log (2^{-\frac{3}{4}}\pi)+\frac{5}{4}\log \log(\alpha_n+e^{2^{3/5}\pi^{-4/5}}))}{\sqrt{\log(\alpha_n+e^2) }}
. $$
Now 
\begin{equation}\label{widetildeu}\aligned \widetilde{u}^{2}_n(1,x)&=(\widetilde{a}_n +\widetilde{b}_n x)^2\\
&=\big(\sqrt{\frac{\log(\alpha_n+1) }{2}}-\frac{\sqrt{2}(\log (2^{-\frac{3}{4}}\pi)+\widetilde{\ell}_{2,\infty}(n)-x)}{\sqrt{\log(\alpha_n+e^2) }}\big)^2\\
&=\frac{\log \alpha_n}{2}-2(\log (2^{-3/4}\pi)+\widetilde{\ell}_{2,\infty}(n))+2x+\frac{2(\widetilde{\ell}_{2,\infty}(n)-x)^{2}}{\log\alpha_n}+O(\frac{\log\log\alpha_n}{\log\alpha_n})\endaligned  \end{equation}
and
\begin{equation}\label{widetildeuu} \widetilde{u}^{5/2}_n(1,x)=2^{-5/4}(\log\alpha_n)^{5/4}(1+O(\frac{\log\log\alpha_n}{\log\alpha_n}))\end{equation}  uniformly on $-\widetilde{\ell}_{1,\infty}(n)\leq x\leq \widetilde{\ell}_{2,\infty}(n). $
Putting these two asymptotics back into \eqref{finalreal}, we have 
\begin{equation}\label{Trwidefinal}
	\operatorname{Tr}(\widetilde{M}^{(n)}(x))=\exp\{-x-\frac{(\widetilde{\ell}_{2,\infty}(n)-x)^{2}}{\log\alpha_n}\}(1+O(\frac{\log\log\alpha_n}{\log\alpha_n})) \end{equation}
 and then similarly as \eqref{aphagg1} 
 \begin{equation}\label{Trwidedif}
	|\exp(-\operatorname{Tr}(\widetilde{M}^{(n)}(x)))-\exp(-e^{-x})|=\exp(-e^{-x}-x)\frac{(\widetilde{\ell}_{2, \infty}(n)-x)^{2}}{\log\alpha_n}(1+o(1)). \end{equation} 
The asymptotic \eqref{Trwidefinal} and the fact $(e^{-x}\vee 1)(\widetilde{\ell}_{2, \infty}(n)-x)^{2}\ll \log\alpha_n$ also imply
$$\aligned \operatorname{Tr}(\widetilde{M}^{(n)}(x)))&=\exp(-x)(1+O(\frac{(\widetilde{\ell}_{2,\infty}(n)-x)^{2}}{\log\alpha_n}))\\
&=\exp(-x)+O(e^{-x}(\frac{\widetilde{\ell}_{2,\infty}(n)-x)^{2}}{\log\alpha_n})) =\exp(-x)+o(1),\endaligned $$
which helps us to write 
 \begin{equation}\label{Trwideexp}
	\exp(-\operatorname{Tr}(\widetilde{M}^{(n)}(x)))=\exp(-e^{-x})(1+o(1))\asymp \exp(-e^{-x}).\end{equation} 	
	The inequality \eqref{E2} and the expressions \eqref{Trwidedif} and \eqref{Trwideexp} yield that 
	$$\bigl| \det\bigl({\rm I}_n - \widetilde{M}^{(n)}(x)\bigr) - \exp(-e^{-x})\bigr|$$ lies in the interval 
	$$\big[|\exp(-\operatorname{Tr}(\widetilde{M}^{(n)}(x)))-\exp(-e^{-x})| \pm \|\widetilde{M}^{(n)}(x)\|_{\rm HS} \exp(-e^{-x})\big].$$ 
Examining the expressions \eqref{Trwidedif} and the upper bound \eqref{condionM}, we derive  
$$\bigl| \det\bigl({\rm I}_n -\widetilde{M}^{(n)}(x)\bigr) - \exp(-e^{-x})\bigr|=|\exp(-\operatorname{Tr}(\widetilde{M}^{(n)}(x)))-\exp(-e^{-x})|(1+o(1))$$ 
uniformly on $-\widetilde{\ell}_{1,\infty}(n)\leq x\leq \widetilde{\ell}_{2,\infty}(n).$ The proof is then completed. 
Following the same line of reasoning as \eqref{aphagg1}, \eqref{midd} and \eqref{twoside}, we derive 
$$\aligned
	\sup_{x\in \mathbb{R}}|\mathbb{P}(\widetilde{X}_n\le x)-\exp(-e^{-x})|=\frac{\widetilde{\ell}_{2,\infty}^{2}(n)}{e\log\alpha_n}(1+o(1))=\frac{25(\log\log\alpha_n)^{2}}{16e\log\alpha_n}(1+o(1)).\endaligned $$
This finishes the proof of Theorem \ref{thmrealpart} for $\alpha=+\infty.$

 \section{Proof of the Theorems for $ \alpha=0$}\label{sec:alphazero}
  In this section, we address the case \(\alpha = 0\). Briefly, when $\alpha=0,$ we are able to prove two things: 
  
  \begin{enumerate}
  \item $M^{(n)}_{j, j}(x)$ is small enough for $2\le j\le n$ such that
  $$\mathbb{P}(X_n\le x)=(1-M^{(n)}_{1, 1}(x))(1+o(1)).$$
  \item The matrix \(\widetilde{M}^{(n)}(x)\) satisfies  
\[
\sum_{j<k}\frac{\bigl(\widetilde{M}_{j,k}^{(n)}(x)\bigr)^2}{\bigl(1-\widetilde{M}_{j,j}^{(n)}(x)\bigr)\bigl(1-\widetilde{M}_{k,k}^{(n)}(x)\bigr)}\ll 1,
\]  
and the diagonal entries \(\widetilde{M}_{j,j}^{(n)}(x)\) are sufficiently small for \(2\le j\le n\). These two properties together yield
  $$\mathbb{P}(\widetilde{X}_n\le x)=(1-\widetilde{M}^{(n)}_{1, 1}(x))(1+o(1)).$$
	
  \end{enumerate}

Set \(z_n = \alpha_n^{-1/2} \wedge n.\) We are going to obtain precise asymtotics for \({M}^{(n)}_{j, j}(x)\) and $\widetilde{M}^{(n)}_{j, j}(x)$ uniformly on $|x|\le \sqrt{2\log z_n}.$

\subsection{Estimates on \({M}^{(n)}_{j, j}(x)\) and $\widetilde{M}^{(n)}_{j, j}(x)$} 
We begin this subsection with the following lemma on \({M}^{(n)}_{1, 1}(x),\) obtained by applying the Edgeworth expansion.
\begin{lem}\label{ed2} Let $\Phi$ and $\phi$ be the distribution function and the density function of standard normal, respectively. 
    	We have 
  	$${M}^{(n)}_{1, 1}(x)=1-\Phi(x)-(\sqrt{\alpha_n}-\frac{x}{4n})\phi(x)+O(z_n^{-19/10})$$ uniformly on $|x|\le \sqrt{2\log z_n}.$
  	    \end{lem}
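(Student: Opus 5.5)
The plan is to reduce $M^{(n)}_{1,1}(x)$ to a standardized tail of $\log Y_n$, apply the Edgeworth expansion of Lemma \ref{ed} with $m=0$, and Taylor-expand around $x$.

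\textbf{Step 1 (reduction).} By Lemma \ref{ctor},
\[
M^{(n)}_{1,1}(x)=1-\mathbb{P}\Bigl(\frac{\log Y_n-k_n\psi(n)}{\sqrt{k_n\psi'(n)}}\le y_n\Bigr),
\qquad
y_n:=\frac{a_n+b_nx}{\sqrt{\alpha_n k_n\psi'(n)}}=\frac{a_n+b_nx}{\sqrt{n\psi'(n)}},
\]
using $\alpha_nk_n=n$. Lemma \ref{ed} is stated for ``$\le$''; replacing it by ``$<$'' costs nothing, since $\log Y_n$ has a density.

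\textbf{Step 2 (expanding $y_n$).} As $\alpha_n\to0$ we have $\log(\alpha_n+1)=\alpha_n+O(\alpha_n^2)$, so $\sqrt{\log(\alpha_n+1)}=\sqrt{\alpha_n}+O(\alpha_n^{3/2})$. Write $c=1/\sqrt{2\pi}$. Then $\log(\alpha_n+e^{c})=c+e^{-c}\alpha_n+O(\alpha_n^2)$, hence
\[
\log\bigl(\sqrt{2\pi}\log(\alpha_n+e^{c})\bigr)=O(\alpha_n),
\]
and also $b_n=1+O(\alpha_n)$. This gives $a_n+b_nx=x+\sqrt{\alpha_n}+O(\alpha_n(1+|x|))$. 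By Lemma \ref{diagammapro}(b), $n\psi'(n)=1+\frac1{2n}+O(n^{-2})$, so $(n\psi'(n))^{-1/2}=1-\frac1{4n}+O(n^{-2})$. Therefore
\[
y_n=x+\sqrt{\alpha_n}-\frac{x}{4n}+\delta_n,\qquad
\delta_n=O\bigl((\alpha_n+n^{-2})(1+|x|)+\sqrt{\alpha_n}\,n^{-1}\bigr),
\]
and $|\delta_n|\lesssim z_n^{-2}\sqrt{\log z_n}$ on $|x|\le\sqrt{2\log z_n}$, because $z_n^{-1}=\sqrt{\alpha_n}\vee n^{-1}$.

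\textbf{Step 3 (Edgeworth and Taylor).} The hypotheses of Lemma \ref{ed} hold: $n\lesssim k_n$ since $\alpha_n\to0$, and $|y_n|\lesssim\sqrt{\log n}\ll n^{1/6}$. Lemma \ref{ed} then gives
\[
\mathbb{P}(\cdot\le y_n)=\Phi(y_n)-\frac{(1-y_n^2)\phi(y_n)}{6\sqrt{k_nn}}+O\bigl(k_n^{-3/2}+k_n^{-1/2}n^{-3/2}\bigr).
\]
Here $1/\sqrt{k_nn}=\sqrt{\alpha_n}/n\le z_n^{-2}$, and $(1-y_n^2)\phi(y_n)$ is bounded. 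Moreover $k_n^{-3/2}=\alpha_n^{3/2}n^{-3/2}\le z_n^{-3}$ and $k_n^{-1/2}n^{-3/2}\le z_n^{-2}$. So this whole correction is $O(z_n^{-2})$.

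Next Taylor-expand $\Phi(y_n)$ around $x$ with increment $\eta_n=\sqrt{\alpha_n}-\frac{x}{4n}+\delta_n$, which satisfies $|\eta_n|\lesssim z_n^{-1}\sqrt{\log z_n}$:
\[
\Phi(y_n)=\Phi(x)+\Bigl(\sqrt{\alpha_n}-\frac{x}{4n}\Bigr)\phi(x)+O\Bigl(|\delta_n|+\eta_n^2\sup_{|t-x|\le|\eta_n|}|t|\phi(t)\Bigr).
\]
Both remainders are $O(z_n^{-2}\log z_n)=O(z_n^{-19/10})$. Collecting terms yields
\[
M^{(n)}_{1,1}(x)=1-\Phi(x)-\Bigl(\sqrt{\alpha_n}-\frac{x}{4n}\Bigr)\phi(x)+O(z_n^{-19/10})
\]
uniformly on $|x|\le\sqrt{2\log z_n}$.

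\textbf{Main obstacle.} The delicate step is Step 2. One must expand $a_n$ and $b_n$ precisely enough, including the $\log(\sqrt{2\pi}\log(\alpha_n+e^{1/\sqrt{2\pi}}))$ term, which vanishes only to order $\alpha_n$. One must also check that every cross term, such as $\alpha_n|x|$ or $\sqrt{\alpha_n}/n$, is genuinely $o(z_n^{-19/10})$ uniformly in the window. The spare factor $z_n^{-1/10}$ is exactly what absorbs the polylogarithmic losses coming from $|x|\le\sqrt{2\log z_n}$.
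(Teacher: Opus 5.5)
Your proposal is correct and follows the paper's proof essentially step for step: the same reduction of $M^{(n)}_{1,1}(x)$ to the standardized tail of $\log Y_n$ at level $(n\psi'(n))^{-1/2}(a_n+b_nx)$, the same expansions $a_n=\sqrt{\alpha_n}+O(\alpha_n)$, $b_n=1+O(\alpha_n)$ and $(n\psi'(n))^{-1/2}=1-\frac{1}{4n}+O(n^{-2})$, then Lemma \ref{ed} with $m=0$ to show the skewness correction is $O(z_n^{-2})$, and finally a Taylor expansion of $\Phi$ about $x$. Your error bookkeeping is, if anything, slightly more explicit than the paper's: you track remainders of order $z_n^{-2}\log z_n$ and only at the end absorb them into $O(z_n^{-19/10})$.
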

    \begin{proof}
   Note that
\[
\begin{aligned}
{M}^{(n)}_{1, 1}(x) &= \mathbb{P}\left( \log Y_n > k_n \psi(n) + (k_n / n)^{1/2} (a_n + b_n x) \right) \\
&= \mathbb{P}( \frac{\log Y_n - k_n \psi(n)}{\sqrt{k_n \psi'(n)}} > (n \psi'(n))^{-1/2} (a_n + b_n x) ).
\end{aligned}
\]
Here, review 
\[
a_n = \sqrt{\log(\alpha_n + 1)} - \frac{\log\big(\sqrt{2\pi} \log\big(\alpha_n + e^{1/\sqrt{2\pi}}\big)\big)}{\sqrt{\log(\alpha_n + e)}}, \quad 
b_n = \frac{1}{\sqrt{\log(\alpha_n + e)}}.
\]
Under the condition \(\alpha = 0\), the parameters \(a_n\) and \(b_n\) satisfy the asymptotic relations
\begin{equation}\label{alpha0}
a_n = \sqrt{\alpha_n} + O(\alpha_n), \quad \text{and} \quad b_n = 1 + O(\alpha_n).
\end{equation}
Lemma \ref{diagammapro} leads
\[
\begin{aligned}
(n \psi'(n))^{-1/2} (a_n + b_n x) 
&= (1 - \frac{1}{4n} + O(n^{-2})) \left( \sqrt{\alpha_n} + x + O(\alpha_n (|x| + 1)) \right) \\
&= x + \sqrt{\alpha_n} - \frac{x}{4n} + O(z_n^{-19/10}).
\end{aligned}
\]
Define
\[
y_n(x) = \sqrt{\alpha_n} - \frac{x}{4n} + O(z_n^{-19/10}) = O(z_n^{-1}) \quad \text{and} \quad t_n(x) = x + y_n(x).
\]
It follows from Lemma \ref{ed} that 
\[
1 -{M}^{(n)}_{1, 1}(x) = \Phi(t_n(x)) - \frac{1 - t_n^2(x)}{6\sqrt{k_n n}} \phi(t_n(x)) + O\left(k_n^{-3/2} + k_n^{-1/2} n^{-3/2}\right).
\]
Since \(x^2 \phi(x)\) is bounded, and noting that
\[
\frac{1}{\sqrt{k_n n}} = \sqrt{\alpha_n} n^{-1} \lesssim z_n^{-2}, \quad k_n^{-3/2} + k_n^{-1/2} n^{-3/2} = (\alpha_n^{-1} n)^{-3/2} + \sqrt{\alpha_n} n^{-2} \lesssim z_n^{-3},
\]
we conclude that
\[
\frac{1 - t_n^2(x)}{6\sqrt{k_n n}} \phi(t_n(x)) + O\left(k_n^{-3/2} + k_n^{-1/2} n^{-3/2}\right) = O(z_n^{-2}).
\]
 On expanding \(\Phi(t_n(x))\) in a Taylor series about \(x\) (valid for \(|x|\le \sqrt{2\log z_n}\)), we obtain

\begin{equation}\label{Phitn}
\Phi(t_n(x)) = \Phi(x) + \phi(x) \left( y_n(x) + O(y_n^2(x) |x|) \right) = \Phi(x) + ( \sqrt{\alpha_n} - \frac{x}{4n} ) \phi(x) + O(z_n^{-19/10}).
\end{equation}
Therefore,
\[
{M}^{(n)}_{1, 1}(x)= 1 - \Phi(x) - ( \sqrt{\alpha_n} - \frac{x}{4n} ) \phi(x) + O(z_n^{-19/10}).
\] 
The proof is completed. 
    \end{proof}

Next, we present a similar result on $\widetilde{M}^{(n)}_{1, 1}(x).$  
    \begin{lem}\label{m0cn} For $|x|\leq \sqrt{2\log z_n}, $ we have
	$$\widetilde{M}^{(n)}_{1, 1}(x)=1-\Phi(x)-((\frac{\sqrt{2}+4\ln2}{2})\sqrt{\alpha_n}-\frac{x}{4n})\phi(x)+O(z_n^{-19/10})$$ 
	as $n\to+\infty.$
\end{lem}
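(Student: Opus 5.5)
The plan mirrors the proof of Lemma \ref{ed2}. The only new ingredient is an average over the independent angle $\Theta$, which will produce the extra constant.

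\textbf{Step 1: reduce to a family of radial tails.} By Lemma \ref{Mjjlem}, conditioning on $\Theta$ gives
\[
\widetilde{M}^{(n)}_{1,1}(x)=\frac{2}{\pi}\int_0^{\pi/2}\mathbb{P}\Bigl(\frac{\log Y_n-k_n\psi(n)}{\sqrt{k_n\psi'(n)}}> (n\psi'(n))^{-1/2}\bigl(\widetilde a_n+\widetilde b_n x+\sqrt{\alpha_n}\,\tau(\theta)\bigr)\Bigr)d\theta,
\]
where $\tau(\theta):=-\log\cos^2\theta\ge 0$.

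Next I expand the scaling constants as $\alpha_n\to0$, as in \eqref{alpha0}. We have $\sqrt{\log(\alpha_n+1)/2}=\sqrt{\alpha_n/2}+O(\alpha_n^{3/2})$ and $\widetilde b_n=1+O(\alpha_n)$. For the constant part of $\widetilde a_n$, put $c=2^{3/5}\pi^{-4/5}$. Then
\[
\tfrac54\log\log(e^{c})=\tfrac54\log c=\tfrac34\log 2-\log\pi,
\]
and this exactly cancels $\log(2^{-3/4}\pi)$. So the second term of $\widetilde a_n$ is $O(\alpha_n)$, and $\widetilde a_n=\sqrt{\alpha_n/2}+O(\alpha_n)$.

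Together with $(n\psi'(n))^{-1/2}=1-\frac1{4n}+O(n^{-2})$ from Lemma \ref{diagammapro}, the normalized threshold becomes
\[
t_n(x,\theta)=x+\sqrt{\alpha_n}\Bigl(\tfrac{1}{\sqrt2}+\tau(\theta)\Bigr)-\frac{x}{4n}+O\bigl(z_n^{-19/10}(1+\tau(\theta))\bigr),
\]
uniformly for $|x|\le\sqrt{2\log z_n}$.

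\textbf{Step 2: split the angular integral.} Fix a cutoff $\theta_n$ defined by $\sqrt{\alpha_n}\,\tau(\theta_n)=z_n^{-1/20}$.

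On $[0,\theta_n]$ the shift $t_n(x,\theta)-x$ is $o(1)$ and $|t_n|\lesssim\sqrt{\log z_n}\ll n^{1/6}$. So Lemma \ref{ed} applies uniformly in $\theta$. Exactly as in the proof of Lemma \ref{ed2}, the Edgeworth correction and remainder are $O(z_n^{-2})$. A second-order Taylor expansion of $\Phi$ around $x$ then gives
\[
1-\Phi(x)-\phi(x)\Bigl(\sqrt{\alpha_n}\bigl(\tfrac1{\sqrt2}+\tau(\theta)\bigr)-\tfrac{x}{4n}\Bigr)+O\bigl(\alpha_n(1+\tau(\theta)^2)\sqrt{\log z_n}+z_n^{-19/10}(1+\tau(\theta))\bigr).
\]

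On $(\theta_n,\pi/2)$ I bound the probability trivially by $1$. Since $\cos\theta\approx\pi/2-\theta$ there, the measure of this set is $\asymp\exp\bigl(-z_n^{-1/20}/(2\sqrt{\alpha_n})\bigr)$, which is far smaller than any power of $z_n$. The same holds for the omitted part of the main term, because $\tau$ has exponential tails under the uniform law.

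\textbf{Step 3: integrate and identify the constant.} The key identity is
\[
\frac{2}{\pi}\int_0^{\pi/2}\tau(\theta)\,d\theta=-\frac{4}{\pi}\int_0^{\pi/2}\log\cos\theta\,d\theta=2\ln2 .
\]
Moreover $\int_0^{\pi/2}\tau^2\,d\theta<\infty$, so the Taylor errors integrate to $O(\alpha_n\sqrt{\log z_n}+z_n^{-19/10})=O(z_n^{-19/10})$. Averaging the main term therefore gives a shift of
\[
\sqrt{\alpha_n}\Bigl(\tfrac1{\sqrt2}+2\ln2\Bigr)=\tfrac{\sqrt2+4\ln2}{2}\sqrt{\alpha_n},
\]
which is the claimed formula.

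\textbf{Main obstacle.} I expect the hardest part to be uniformity near $\theta=\pi/2$. There $\tau(\theta)$ blows up, so neither the Taylor expansion nor the range condition $|x_n|\lesssim n^{1/6}$ of Lemma \ref{ed} holds. The cutoff $\theta_n$ is chosen so that the excluded region has super-polynomially small measure while the expansion remains valid on the rest.
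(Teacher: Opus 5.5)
Your proof is correct and follows essentially the paper's route: condition on the angle, apply Lemma \ref{ed} on a bulk range of angles, discard the complementary range (its measure is super-polynomially small), and identify the extra constant from $\frac{2}{\pi}\int_0^{\pi/2}(-\log\cos^2\theta)\,d\theta=2\ln 2$. The paper writes this integral, after the substitution $t=-\log\cos^2\theta$, as $\frac{1}{\pi}\int_0^{\infty}t(e^t-1)^{-1/2}\,dt=2\ln 2$, and uses the cutoff $t\le\alpha_n^{-1/4}$. Your handling of the quadratic Taylor remainder, bounding it by $O(\alpha_n(1+\tau^2))$ and integrating against the angular law, is slightly more careful than the paper's write-up, as is your explicit check that the constant part of $\widetilde a_n$ cancels. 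The paper instead asserts a pointwise-in-$t$ error $O(z_n^{-3/2})$, so it only records $O(z_n^{-3/2})$ rather than the stated $O(z_n^{-19/10})$.
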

 \begin{proof}
    We recall that
\[
\begin{aligned}
g_n(1, x, t) &= \mathbb{P}\Bigl(\log Y_{n}\ge k_n\psi(n)+\frac{\widetilde{a}_n+\widetilde{b}_n x}{\sqrt{\alpha_n}}+t\Bigr) \\
&= \mathbb{P}\Bigl( \frac{\log Y_n - k_n \psi(n)}{\sqrt{k_n \psi'(n)}} > (n \psi'(n))^{-1/2} (\widetilde{a}_n + \widetilde{b}_n x+\sqrt{\alpha_n}t) \Bigr),
\end{aligned}
\]
with
\[
\widetilde{b}_n = \frac{\sqrt{2}}{\sqrt{\log(\alpha_n +e^{2})}},\qquad 
\widetilde{a}_n = \sqrt{\frac{\log(\alpha_n+1) }{2}} - \frac{\sqrt{2}\bigl(\log (2^{-\frac{3}{4}}\pi)+\frac{5}{4}\log \log(\alpha_n+e^{2^{3/5}\pi^{-4/5}})\bigr)}{\sqrt{\log(\alpha_n +e^{2})}}.
\]
These parameters admit the expansions
\[
\widetilde{a}_n = \sqrt{\frac{\alpha_n}{2}} + O(\alpha_n), \qquad 
\widetilde{b}_n = 1 + O(\alpha_n).
\]
By Lemma \ref{diagammapro}, we have
\[
\begin{aligned}
 (n \psi'(n))^{-1/2} (\widetilde{a}_n + \widetilde{b}_n x+\sqrt{\alpha_n}t) 
&= \bigl(1 - \frac{1}{4n} + O(n^{-2})\bigr) \bigl( \sqrt{\frac{\alpha_n}{2}} + x +\sqrt{\alpha_n}t + O(\alpha_n (|x|+1)) \bigr) \\
&= \sqrt{\frac{\alpha_n}{2}} + x +\sqrt{\alpha_n}t - \frac{x}{4n} + O(z_n^{-19/10}+z_n^{-2}t),
\end{aligned}
\]
uniformly for \(|x|\le \sqrt{2\log z_n}\). For \(0\leq t\leq \alpha_n^{-1/4}\), we have
\[
\sqrt{\alpha_n}t \leq \alpha_n^{1/4} \ll 1,\qquad z_n^{-2}t \leq z_n^{-3/2}.
\]
Applying Lemma \ref{ed} and an argument analogous to Lemma \ref{ed2}, we rewrite \(y_n\) in \eqref{Phitn} as
\[
y_n = \sqrt{\frac{\alpha_n}{2}} + \sqrt{\alpha_n}\, t - \frac{x}{4n} + O(z_n^{-3/2}),
\]
which yields
\[
g_n(1,x,t) = 1 - \Phi(x) - \Bigl( \sqrt{\frac{\alpha_n}{2}} + \sqrt{\alpha_n}\, t - \frac{x}{4n} \Bigr) \phi(x) + O(z_n^{-3/2}).
\]
Substituting this into \eqref{widecn} gives
\[
\widetilde{M}^{(n)}_{1, 1}(x) = \Bigl( \int_{0}^{\alpha_n^{-1/4}} + \int_{\alpha_n^{-1/4}}^{+\infty} \Bigr) \frac{g_n(1,x,t)}{\pi \sqrt{e^t - 1}} \, dt.
\]

Since \(g_n(1,x,t) \le 1\), the tail integral is estimated via the change of variable \(t = -2\log u\):
\[
\begin{aligned}
\int_{\alpha_n^{-1/4}}^{+\infty} \frac{g_n(1,x,t)}{\pi \sqrt{e^t-1}} \, dt
&\leq \int_{\alpha_n^{-1/4}}^{+\infty} \frac{1}{\pi \sqrt{e^t-1}} \, dt
= \int_{0}^{e^{-\frac12 \alpha_n^{-1/4}}} \frac{2}{\pi\sqrt{1-u^2}} \, du \\
&= \frac{2}{\pi}\arcsin\!\bigl(e^{-\frac12 \alpha_n^{-1/4}}\bigr)
\lesssim e^{-\frac12 \alpha_n^{-1/4}}.
\end{aligned}
\]

We now focus on the integral over \([0,\alpha_n^{-1/4}]\). Inserting the expression of \(g_n(1,x,t)\) and expanding, we obtain
\[
g_n(1,x,t) = w_n(x) - t\sqrt{\alpha_n}\,\phi(x) + O(z_n^{-3/2}),
\]
where
\[
w_n(x) := 1 - \Phi(x) - \Bigl( \sqrt{\frac{\alpha_n}{2}} - \frac{x}{4n} \Bigr) \phi(x).
\]
Hence,
\[
\begin{aligned}
\int_{0}^{\alpha_n^{-1/4}} \frac{g_n(1,x,t)}{\pi \sqrt{e^t-1}} \, dt
&= \int_{0}^{\alpha_n^{-1/4}} \frac{w_n(x) - t\sqrt{\alpha_n}\,\phi(x)}{\pi \sqrt{e^t-1}} \, dt + O(z_n^{-3/2}) \\
&= w_n(x) \int_{0}^{\alpha_n^{-1/4}} \frac{dt}{\pi \sqrt{e^t-1}}
   - \sqrt{\alpha_n}\,\phi(x) \int_{0}^{\alpha_n^{-1/4}} \frac{t}{\pi \sqrt{e^t-1}} \, dt
   + O(z_n^{-3/2}).
\end{aligned}
\]
From the previous tail estimate,
\[
\int_{0}^{\alpha_n^{-1/4}} \frac{dt}{\pi \sqrt{e^t-1}}
= \Bigl( \int_{0}^{+\infty} - \int_{\alpha_n^{-1/4}}^{+\infty} \Bigr) \frac{dt}{\pi \sqrt{e^t-1}}
= 1 + O\!\bigl(e^{-\frac12 \alpha_n^{-1/4}}\bigr),
\]
and a standard computation gives
\[
\int_{0}^{\alpha_n^{-1/4}} \frac{t}{\pi \sqrt{e^t-1}} \, dt
= \Bigl( \int_{0}^{+\infty} - \int_{\alpha_n^{-1/4}}^{+\infty} \Bigr) \frac{t}{\pi \sqrt{e^t-1}} \, dt
= 2\ln 2 + O\!\bigl(\alpha_n^{-1/4} e^{-\frac12 \alpha_n^{-1/4}}\bigr).
\]
Thus,
\[
\begin{aligned}
\int_{0}^{\alpha_n^{-1/4}} \frac{g_n(1,x,t)}{\pi \sqrt{e^t-1}} \, dt
&= w_n(x)\bigl(1 + O(e^{-\frac12 \alpha_n^{-1/4}})\bigr) \\
&\quad - \sqrt{\alpha_n}\,\phi(x) \bigl( 2\ln 2 + O(\alpha_n^{-1/4} e^{-\frac12 \alpha_n^{-1/4}}) \bigr)
+ O(z_n^{-3/2}).
\end{aligned}
\]
Since \(\alpha_n^{-1/4} e^{-\frac12 \alpha_n^{-1/4}} \ll z_n^{-3/2}\), the errors simplify to
\[
\int_{0}^{\alpha_n^{-1/4}} \frac{g_n(1,x,t)}{\pi \sqrt{e^t-1}} \, dt
= w_n(x) - 2\ln 2\,\sqrt{\alpha_n}\,\phi(x) + O(z_n^{-3/2}).
\]
Finally, substituting the definition of \(w_n(x)\) yields
\[
\int_{0}^{\alpha_n^{-1/4}} \frac{g_n(1,x,t)}{\pi \sqrt{e^t-1}} \, dt
= 1 - \Phi(x) - \Bigl( \sqrt{\frac{\alpha_n}{2}} + 2\ln 2\,\sqrt{\alpha_n} - \frac{x}{4n} \Bigr) \phi(x) + O(z_n^{-3/2}).
\]
Adding the tail integral, which is of lower order, completes the proof.   
      \end{proof} 
        
  Now we present the asymptotic on $\prod_{j=2}^{n}(1-{M}^{(n)}_{j, j}(x))$ and $\prod_{j=2}^{n}(1-\widetilde{M}^{(n)}_{j, j}(x)), $ whose proof is postponed to the Appendix.  
    \begin{lem}\label{mgeq1cn}
     	For sufficiently large 
     $	n$
     	and all $|x|\leq \sqrt{2\log z_n},$  we have  
     	$$\prod_{j=2}^{n}(1-{M}^{(n)}_{j, j}(x))=1+O(z_n^{-9/2})$$
     	and analogously      	$$\prod_{j=2}^{n}(1-\widetilde{M}^{(n)}_{j, j}(x))=1+O(z_n^{-9/2}).$$ 
     \end{lem}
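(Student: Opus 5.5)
Since every factor lies in $[0,1]$, we will use $\log(1-t)=-t(1+O(t))$ for small $t$. Then it suffices to show
\[
\sum_{j=2}^{n}M^{(n)}_{j,j}(x)=O(z_n^{-9/2}),
\qquad
\sum_{j=2}^{n}\widetilde M^{(n)}_{j,j}(x)=O(z_n^{-9/2}),
\]
uniformly on $|x|\le\sqrt{2\log z_n}$. Once these hold, each individual term is $o(1)$, and the product equals $\exp(-(1+o(1))\sum_{j\ge2}M_{j,j}^{(n)})=1+O(z_n^{-9/2})$. The main input is the Chernoff-type bound of Lemma \ref{H}, together with the observation that for $\alpha=0$ the exponent contains $(j-1)^2/(4\alpha_n)\ge 1/(4\alpha_n)$. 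This quantity is huge compared with $\log z_n$ whenever $z_n=\alpha_n^{-1/2}$.

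\textbf{Step 1: spectral radius.} For $2\le j\ll n$ we apply Lemma \ref{H}. By \eqref{alpha0}, $a_n+b_nx=x+O(\sqrt{\alpha_n})$, so $|a_n+b_nx|\lesssim\sqrt{\log z_n}$. This gives
\[
M^{(n)}_{j,j}(x)\le\exp\Bigl\{-\frac{(j-1)^2}{4\alpha_n}+\frac{(j-1)C\sqrt{\log z_n}}{2\sqrt{\alpha_n}}\Bigr\}\le \exp\Bigl\{-\frac{(j-1)^2}{8\alpha_n}\Bigr\},
\]
because $\sqrt{\alpha_n\log z_n}\to0$. Summing over $j$ yields $\lesssim e^{-1/(8\alpha_n)}$, which is $O(\alpha_n^{K})$ for every $K$. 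In particular it is $O(z_n^{-9/2})$ when $z_n=\alpha_n^{-1/2}$.

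If instead $z_n=n$, meaning $\alpha_n\ge n^{-2}$, then $e^{-1/(8\alpha_n)}$ need not be small. In that case we will not optimize at $t=(j-1)/2$. We will go back to \eqref{k12} and keep the exact gap $k_n(\psi(n)-\psi(n-j+1))\approx k_n(j-1)/n=(j-1)/\alpha_n$. The exponent is then at least $((j-1)t-t^2)/\alpha_n$ minus lower-order terms, and since $\alpha_n\to0$ a larger choice of $t$ is available. I expect the honest bound to be $\exp(-c(j-1)^2/\alpha_n)$ with the $\alpha_n$ scale, so this case collapses into the previous one.

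Indices $j\ge \varepsilon n$ are not covered by Lemma \ref{H}. For these we will use monotonicity of $M^{(n)}_{j,j}$ in $j$, bounding the tail sum by $n\,M^{(n)}_{j_0,j_0}(x)$ with $j_0=\lfloor\varepsilon n\rfloor$. At $j_0$ the bound is $\exp(-c n^2/\alpha_n)$, and this factor easily absorbs the prefactor $n$.

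\textbf{Step 2: rightmost eigenvalue.} We use representation \eqref{widecn}, $\widetilde M^{(n)}_{j,j}(x)=\frac1\pi\int_0^\infty g_n(j,x,t)(e^t-1)^{-1/2}dt$, together with $g_n(j,x,t)\le g_n(j,x,0)$. Since $\frac1\pi\int_0^\infty(e^t-1)^{-1/2}dt=1$, we get $\widetilde M^{(n)}_{j,j}(x)\le g_n(j,x,0)$. Now $g_n(j,x,0)$ is exactly $M^{(n)}_{j,j}$ with $(a_n,b_n)$ replaced by $(\widetilde a_n,\widetilde b_n)$. These parameters have the same expansions, $\widetilde a_n=O(\sqrt{\alpha_n})$ and $\widetilde b_n=1+O(\alpha_n)$, so the Step 1 argument transfers verbatim.

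\textbf{Main obstacle.} The delicate point is the uniformity of the Chernoff bound in the regime where $\alpha_n$ is not much smaller than $n^{-2}$ (so $z_n=n$), together with the range $j\asymp n$ excluded by Lemma \ref{H}. There I would redo the computation behind \eqref{k12} with the exact moment generating function $(\Gamma(\ell+t)/\Gamma(\ell))^{k_n}$, using $\psi(\ell+s)-\psi(\ell)\le s/\ell$, to obtain a bound of the form $\exp(-c(j-1)^2/\alpha_n)$ valid for all $2\le j\le n$.
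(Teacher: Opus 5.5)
\textbf{Verdict: correct, by a different route from the paper.} Your argument is correct in substance but differs from the paper's.

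\textbf{Comparison with the paper.} The paper bounds each $M^{(n)}_{j,j}(x)$ for $2\le j\le\lfloor n^{5/6}\rfloor$ by a non-uniform Berry--Esseen estimate,
$M^{(n)}_{j,j}(x)\le u_n^{-1}(j,x)e^{-u_n^2(j,x)/3}+O(k_n^{-1/2}u_n^{-3}(j,x))$.
It sums these with Lemma \ref{sum} and disposes of $j>n^{5/6}$ by monotonicity. The polynomial remainder $\sum_j k_n^{-1/2}u_n^{-3}(j,x)\asymp n^{-1/2}\alpha_n^{2}\le z_n^{-9/2}$ is exactly where the exponent $9/2$ comes from.

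You use instead the Chernoff bound of Lemma \ref{H}, which has no polynomial remainder. This is more elementary, since Lemma \ref{H} is valid for every $x$ in this regime. It also gives the stronger conclusion $\sum_{j\ge 2}M^{(n)}_{j,j}(x)=O(e^{-1/(8\alpha_n)})=O(e^{-z_n^2/8})$. Your treatment of $\widetilde M^{(n)}$ via $\widetilde M^{(n)}_{j,j}(x)\le g_n(j,x,0)$ and $\widetilde a_n=O(\sqrt{\alpha_n})$, $\widetilde b_n=1+O(\alpha_n)$ is the same reduction the paper uses.

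\textbf{Slip 1: the case split rests on a reversed inequality.} $z_n=\alpha_n^{-1/2}\wedge n=n$ means $n\le\alpha_n^{-1/2}$, i.e.\ $\alpha_n\le n^{-2}$, not $\alpha_n\ge n^{-2}$. In every case $\alpha_n\le z_n^{-2}$, so $e^{-1/(8\alpha_n)}\le e^{-z_n^2/8}=O(z_n^{-9/2})$ immediately. The paragraph about re-optimizing $t$ addresses a difficulty that does not exist and should be deleted. Even if the case were real, that paragraph only reproduces the bound of Lemma \ref{H} on the same $\alpha_n^{-1}$ scale, so it would not resolve it.

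\textbf{Slip 2: choice of the cut-off $j_0$.} Lemma \ref{H} is stated only for $j\ll n$, so do not evaluate it at $j_0=\lfloor\varepsilon n\rfloor$. Take $j_0=\lfloor n^{1/2}\rfloor$ instead (the paper uses $n^{5/6}$). Monotonicity in $j$ then gives $\sum_{j>j_0}M^{(n)}_{j,j}(x)\le n\,M^{(n)}_{j_0,j_0}(x)\le n\exp\bigl(-(j_0-1)^2/(8\alpha_n)\bigr)$, which is negligible compared with $e^{-1/(8\alpha_n)}$. With this choice, no Chernoff bound valid for all $2\le j\le n$ is needed, and your ``main obstacle'' disappears.
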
 
             
 Now that the behaviors of \({M}^{(n)}_{j, j}(x)\) and \(\widetilde{M}^{(n)}_{j, j}(x)\) have been fully understood, we proceed to prove the theorems in the regime \(\alpha = 0\). 
 
 \subsection{Proof of Theorem \ref{main} for $\alpha=0$} 
  We now proceed with the proof for the case \(\alpha = 0\). Recall that \(z_n = \alpha_n^{-1/2} \wedge n\). By Lemmas \ref{ed2} and \ref{mgeq1cn}, we have uniformly for \(|x| \le \sqrt{2\log z_n}\) that
\begin{equation}\label{79}
\begin{aligned}
\bigl| \mathbb{P}(X_n \le x) - \Phi(x) \bigr|
&= \bigl| (1 - {M}^{(n)}_{1, 1}(x))(1 + O(z_n^{-9/2})) - \Phi(x) \bigr| \\
&= \bigl| 1 - {M}^{(n)}_{1, 1}(x) - \Phi(x) + O(z_n^{-9/2}) \bigr| \\
&= \bigl| \bigl( \sqrt{\alpha_n} - \frac{x}{4n} \bigr) \phi(x) + O(z_n^{-19/10}) \bigr|. 
\end{aligned}
\end{equation}

The leading term \(\bigl( \sqrt{\alpha_n} - \frac{x}{4n} \bigr) \phi(x)\) is of order \(z_n^{-1}\), since \(\sqrt{\alpha_n}\vee n^{-1} = z_n^{-1}\). Its supremum over \(\mathbb{R}\) is attained at a point independent of \(n\) and is of the same order. Consequently,
\begin{equation}\label{middlesup}
\sup_{|x| \le\sqrt{2\log z_n}} \bigl| \mathbb{P}(X_n \le x) - \Phi(x) \bigr|
= (1+o(1)) \sup_{x \in \mathbb{R}} \phi(x) \bigl| \sqrt{\alpha_n} - \frac{x}{4n} \bigr|. 
\end{equation} 
The two terms \(\sqrt{\alpha_n}\) and \(n^{-1}\) compete to determine the asymptotic order. To compare them, it is convenient to introduce the parameter
\[
\beta := \lim_{n \to \infty} \frac{n^3}{k_n},
\]
since \(\sqrt{\alpha_n} = \sqrt{n/k_n}= n^{-1}\sqrt{n^3/k_n}\). The value of \(\beta\) therefore determines whether \(\sqrt{\alpha_n}\) or \(n^{-1}\) dominates.

For any \(d_1, d_2 \ge 0\) with \(d_1^2 + d_2^2 > 0\),
\begin{equation}\label{suprecal} \sup_{x\in\mathbb{R}}|d_1 - d_2 x|\phi(x)=\frac{d_1 + \sqrt{d_1^2 + 4d_2^2}}{2\sqrt{2\pi}} \exp\Bigl\{ -\frac{(d_1-\sqrt{d_1^2+4d_2^{2}})^{2}}{8d_2^2} \Bigr\}.\end{equation} 
Applying this formula yields the following estimates.

- If \(\beta = +\infty\), i.e., \(\sqrt{\alpha_n}\gg n^{-1}\), then  
\[
\sup_{x\in\mathbb{R}}\phi(x)\bigl|\sqrt{\alpha_n}-\frac{x}{4n}\bigr| = \sqrt{\alpha_n}\sup_{x\in\mathbb{R}}\phi(x)=\frac{\sqrt{\alpha_n}}{\sqrt{2\pi}}.
\]

- If \(\beta = 0\), i.e., \(\sqrt{\alpha_n}\ll n^{-1}\), then  
\[
\sup_{x\in\mathbb{R}}\phi(x)\bigl|\sqrt{\alpha_n}-\frac{x}{4n}\bigr| = \frac{1}{4n}\sup_{x\in\mathbb{R}}|x|\phi(x)= \frac{1}{4\sqrt{2\pi e} \, n}.
\]

- If \(\beta \in (0, +\infty)\), then
\[
\begin{aligned}
\sup_{x\in\mathbb{R}}\phi(x)\bigl|\sqrt{\alpha_n}-\frac{x}{4n}\bigr|
&= \sqrt{\alpha_n}\sup_{x\in\mathbb{R}}\phi(x)\Bigl|1-\frac{ x}{ 4n \sqrt{\alpha_n} } \Bigr| \\
&=\frac{2n\sqrt{\alpha_n}+\sqrt{1+4n^2\alpha_n}}{4\sqrt{2\pi} n}\exp\!\bigl(-\frac12\bigl(\sqrt{1+4n^2\alpha_n}-2n\sqrt{\alpha_n}\bigr)^2\bigr)\\
&=\frac{(2\sqrt{\beta} + \sqrt{1+4\beta})(1+o(1))}{4\sqrt{2\pi e} \, n} \exp\!\bigl( \frac{2\sqrt{\beta}}{\sqrt{4\beta + 1} + 2\sqrt{\beta}} \bigr),
\end{aligned}
\]
where the last equality uses the fact that \(n^2\alpha_n = \beta(1+o(1))\).

We next establish the uniform tail bound
\begin{equation}\label{tailsup}
\sup_{|x| > \sqrt{2\log z_n}} \bigl| \mathbb{P}(X_n \le x) - \Phi(x) \bigr| \ll z_n^{-1}. 
\end{equation}
A straightforward computation using the expansion in \eqref{79} gives
\[
\mathbb{P}(|X_n| \ge \sqrt{2\log z_n}) \asymp \Psi(\sqrt{2\log z_n}) + O(z_n^{-19/10})
\lesssim (z_n\sqrt{\log z_n})^{-1} + O(z_n^{-19/10}) \ll z_n^{-1}.
\]
We now treat the two tails separately. For \(x < -\sqrt{2\log z_n}\), by the triangle inequality and monotonicity,
\begin{equation}\label{01}
\begin{aligned}
\sup_{x < -\sqrt{2\log z_n}} \bigl| \mathbb{P}(X_n \le x) - \Phi(x) \bigr|
&\le \mathbb{P}(X_n \le -\sqrt{2\log z_n}) + \Phi(-\sqrt{2\log z_n}) \\
&\lesssim \Phi(-\sqrt{2\log z_n}) + O(z_n^{-19/10}) \ll z_n^{-1}.
\end{aligned}
\end{equation}
Similarly, for \(x > \sqrt{2\log z_n}\),
\begin{equation}\label{02}
\begin{aligned}
\sup_{x > \sqrt{2\log z_n}} \bigl| \mathbb{P}(X_n \le x) - \Phi(x) \bigr|
&\le \mathbb{P}(X_n \ge \sqrt{2\log z_n}) + \Psi(\sqrt{2\log z_n}) \ll z_n^{-1}.
\end{aligned}
\end{equation}
Combining \eqref{01} and \eqref{02} yields \eqref{tailsup}.

With the tail estimate \eqref{tailsup} established, together with the central estimate \eqref{middlesup} and the uniform bound on \(\phi(x) \bigl| \sqrt{\alpha_n} - \frac{x}{4n} \bigr|\), the proof of Theorem \ref{main} for \(\alpha = 0\) is complete.
    
  \subsection{Proof of Theorem 2 for $\alpha=0$} \label{sec:alphainzero}
 When \(\alpha = +\infty\), the Hilbert-Schmidt norm of \(\widetilde{M}^{(n)}(x)\) becomes negligible, allowing the approximation
\[
\det\bigl({\rm I}_n - \widetilde{M}^{(n)}(x)\bigr) \approx \exp\bigl(-\operatorname{Tr} \widetilde{M}^{(n)}(x)\bigr).
\]
This is a standard technique in the literature and ultimately yields the Gumbel distribution.

For \(\alpha = 0\), however, the situation is fundamentally different: the norm no longer tends to zero. Indeed, Lemma \ref{m0cn} provides the lower bound
\[
\|\widetilde{M}^{(n)}(x)\|_{\mathrm{HS}} \ge \widetilde{M}^{(n)}_{1, 1}(x) \asymp 1-\Phi(x),
\]
which can be of constant order (e.g., for bounded \(|x|\)) and is not uniformly small over the relevant range \(|x| \le \sqrt{2\log z_n}\), where \(z_n = \alpha_n^{-1/2} \wedge n\). Consequently, the reduction to the trace alone is no longer valid in this regime, and a different approach is required to handle the determinant.

To this end, we first establish the following auxiliary lemma, whose proof is given in the Appendix.    

 \begin{lem}\label{traceB2}Let 
    $\widetilde{M}_{j,k}^{(n)}(x)$  be defined as above. Then
     	$$\sum_{j< k}\frac{(\widetilde{M}_{j, k}^{(n)}(x))^2}{(1-\widetilde{M}_{j, j}^{(n)}(x))(1-\widetilde{M}_{k, k}^{(n)}(x))}\ll z_n^{-2}$$ 
     	uniformly for $|x|\le \sqrt{2\log z_n}$ and sufficiently large $n.$
     \end{lem}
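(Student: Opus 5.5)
Since $j<k$ and $\widetilde M^{(n)}_{j,k}(x)=0$ for $j-k$ odd, I only treat pairs with $k=j+2m$, $m\ge 1$. The plan is to separate the pairs by whether the smaller index equals $1$, because only $\widetilde M^{(n)}_{1,1}(x)$ can be of constant order. By Lemma~\ref{m0cn}, $1-\widetilde M^{(n)}_{1,1}(x)\asymp \Phi(x)+O(z_n^{-1})$, which is bounded below by a multiple of $\Phi(x)$ on $|x|\le\sqrt{2\log z_n}$; indeed $\Phi(-\sqrt{2\log z_n})\gtrsim (z_n\sqrt{\log z_n})^{-1}$ dominates the error. For $j\ge 2$ I will show $\widetilde M^{(n)}_{j,j}(x)\le 1/2$, so those denominators are harmless.

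\textbf{Diagonal bounds for $j\ge2$.} I would adapt Lemma~\ref{H} to $g_n(j,x,t)$, replacing $a_n+b_nx$ by $\widetilde a_n+\widetilde b_nx+\sqrt{\alpha_n}t$. This gives $g_n(j,x,t)\le \exp\{-\frac{(j-1)^2}{4\alpha_n}-\frac{(j-1)(\widetilde a_n+\widetilde b_n x)}{2\sqrt{\alpha_n}}\}$ for $t\ge0$. Averaging with the density $\frac{1}{\pi\sqrt{e^t-1}}$ as in \eqref{widecn} yields the same bound for $\widetilde M^{(n)}_{j,j}(x)$. With $|x|\le\sqrt{2\log z_n}$ and $\alpha_n\to0$, the exponent satisfies $\frac{(j-1)^2}{4\alpha_n}-\frac{(j-1)|x|}{2\sqrt{\alpha_n}}\gtrsim \frac{(j-1)^2}{\alpha_n}$, so $\widetilde M^{(n)}_{j,j}(x)\le e^{-c(j-1)^2/\alpha_n}$ for $j\ge2$. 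When $n$ is small relative to $\alpha_n^{-1/2}$ I instead use $z_n=n$ and the same bound; it is then even smaller, since $\alpha_n\le n^{-2}$.

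\textbf{Off-diagonal sum.} By Lemma~\ref{Mjjlem}, $|\widetilde M^{(n)}_{j,k}|^2\le \widetilde M^{(n)}_{j,j}\widetilde M^{(n)}_{k,k}$, and also $|\widetilde M^{(n)}_{j,k}|\le \widetilde M^{(n)}_{\frac{j+k}2,\frac{j+k}2}$.

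For $2\le j<k$, the corresponding part of the sum is bounded by $4\big(\sum_{j\ge2}\widetilde M^{(n)}_{j,j}\big)^2\lesssim e^{-2c/\alpha_n}$. This is far below $z_n^{-2}$, since $e^{-c/\alpha_n}\ll\alpha_n^{p}$ for every $p$ and $\alpha_n\le z_n^{-2}$.

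For $j=1$ and $k=1+2m$, I use the midpoint bound $|\widetilde M^{(n)}_{1,k}|\le \widetilde M^{(n)}_{1+m,1+m}\le e^{-cm^2/\alpha_n}$. The $m$-sum is then $\lesssim e^{-2c/\alpha_n}$. Dividing by $1-\widetilde M^{(n)}_{1,1}(x)\gtrsim (z_n\sqrt{\log z_n})^{-1}$ costs only a factor $z_n\sqrt{\log z_n}$, which the super-polynomial decay absorbs. So this part is also $\ll z_n^{-2}$.

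\textbf{Main obstacle.} The delicate point is the uniformity of the adapted Markov bound for $g_n(j,x,t)$. Lemma~\ref{H} was stated for $2\le j\ll n$, so large $j$ need separate care, for instance via monotonicity in $j$ together with a crude count of at most $n^2$ terms against $e^{-c/\alpha_n}$. The $\alpha_n$-dependence of $\widetilde a_n$ must also be tracked when $x$ is negative. A further concern is the regime where $\alpha_n$ is not tiny compared with $n^{-2}$, so that $z_n=\alpha_n^{-1/2}$. There I must check that $\widetilde M^{(n)}_{2,2}(x)$ stays bounded away from $1$, which requires $\frac1{4\alpha_n}\gg \frac{|x|}{2\sqrt{\alpha_n}}$; this holds because $\sqrt{\alpha_n}\sqrt{\log z_n}\to0$.
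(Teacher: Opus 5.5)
Your argument is correct, but it rests on a different key estimate than the paper's.

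\textbf{Comparison with the paper.} The paper bounds $\widetilde M^{(n)}_{j,j}(x)$ for $j\ge 2$ by the Berry--Esseen-type estimate \eqref{widecnup}. Its remainder $k_n^{-1/2}\widetilde u_n^{-3}(j,x)$ decays only polynomially. The paper therefore splits the double sum at $i_n=\lfloor n^{2/3}\rfloor$ and controls the far block by the crude count $n^2\widetilde M^{(n)}_{i_n,i_n}(x)$. It sums the rest with Lemma~\ref{sum}, ending with bounds such as $\sqrt{\log z_n}\,z_n^{-5/2}$.

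You instead observe that $\widetilde M^{(n)}_{j,j}(x)\le g_n(j,x,0)$ and rerun the exponential-moment argument of Lemma~\ref{H} with $\widetilde a_n,\widetilde b_n$. This yields the Gaussian-in-$j$ bound $e^{-c(j-1)^2/\alpha_n}$, since $\alpha_n\log z_n\to0$. Combined with Cauchy--Schwarz for $j,k\ge2$ and the midpoint bound of Lemma~\ref{Mjjlem} for the first row, the whole sum is $O(z_n\sqrt{\log z_n}\,e^{-2c/\alpha_n})$ or smaller. That is super-polynomially small in $z_n$, far stronger than $o(z_n^{-2})$.

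Your route is more elementary: it uses only the exact Gamma moment generating function and digamma inequalities, with no normal approximation. The paper's route has the advantage of reusing the estimate \eqref{widecnup} that it needs anyway for Lemma~\ref{mgeq1cn}.

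\textbf{Two points to tighten.}
\begin{enumerate}
\item Writing $1-\widetilde M^{(n)}_{1,1}(x)\asymp\Phi(x)+O(z_n^{-1})$ does not give your lower bound. An $O(z_n^{-1})$ error swamps $\Phi(-\sqrt{2\log z_n})\asymp(z_n\sqrt{\log z_n})^{-1}$. You need the precise form of Lemma~\ref{m0cn}: the remainder is $O(z_n^{-19/10})$, and the $\phi(x)$-correction is nonnegative for $x\le 0$. This is exactly \eqref{widecn0xlb}. Since your numerators are super-polynomially small, any polynomial lower bound would suffice.
\item For large $j$, do not count against $e^{-c/\alpha_n}$. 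If $\alpha_n\to0$ slowly, for example $\alpha_n=1/\log\log n$, then $n^2e^{-c/\alpha_n}\to\infty$. However, the proof of Lemma~\ref{H} goes through for all $2\le j\le n/2$ with a slightly smaller constant; it only needs $n-j+1\ge n/2$. So take $J=\lfloor n/2\rfloor$ and use monotonicity for $j>J$. The tail is then at most $n\,\widetilde M^{(n)}_{J,J}(x)\le n\,e^{-cn^2/\alpha_n}$, which is negligible.
\end{enumerate}
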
     
We now set  
\[
d_j(x) = 1 - \widetilde{M}_{j,j}^{(n)}(x) > 0, \quad \Lambda(x) = \operatorname{diag}(d_1(x), \dots, d_n(x)),
\]  
and define  
\[
N_{j,k}(x) = \widetilde{M}_{j,k}^{(n)}(x) \mathbf{1}_{j \neq k},
\]  
so that  
\[
\mathrm{I}_n - \widetilde{M}^{(n)}(x) = \Lambda(x) - N(x).
\]  
Observe that  
\[
\det(\Lambda(x)) = \prod_{j=1}^{n} \bigl(1 - \widetilde{M}_{j,j}^{(n)}(x)\bigr),
\]  
and Lemmas  \ref{m0cn} and \ref{mgeq1cn}  guarantee that \(\det(\Lambda(x))>0.\)  

Introduce  
\[
B(x) = \Lambda^{-1}(x)N(x),
\]  
whose entries are  
\[
B_{j,k}(x) = \frac{N_{j,k}(x)}{d_j(x)} = \frac{\widetilde{M}_{j,k}^{(n)}(x)}{1 - \widetilde{M}_{j, j}^{(n)}(x)} \quad (k \neq j), \qquad B_{j, j}(x) = 0.
\]  
Then we have the factorization  
\begin{equation}\label{detMzerom}
\det\bigl(\mathrm{I}_n - \widetilde{M}^{(n)}(x)\bigr) = \det(\Lambda(x)) \det\bigl(\mathrm{I}_n - B(x)\bigr)
= \det\bigl(\mathrm{I}_n - B(x)\bigr) \prod_{j=1}^{n} \bigl(1 - \widetilde{M}_{j,j}^{(n)}(x)\bigr).
\end{equation}  

Let \(\lambda_1(x), \dots, \lambda_n(x)\) be the eigenvalues of \(B(x)\). From the definition of the trace,
\[
\operatorname{Tr}\bigl(B^2(x)\bigr) = \sum_{i=1}^{n} \lambda_i^2(x) = \sum_{j=1}^{n}\sum_{k=1}^{n} B_{j, k}(x)B_{k, j}(x).
\]  
Using the symmetry \(\widetilde{M}_{j, k}^{(n)}(x) = \widetilde{M}_{k, j}^{(n)}(x)\), the fact that \(B_{j, j}(x) = 0\), and Lemma \ref{traceB2}, we obtain  
\[
\operatorname{Tr}\bigl(B^2(x)\bigr) = 2\sum_{j<k} \frac{\bigl(\widetilde{M}_{j,k}^{(n)}(x)\bigr)^2}{\bigl(1-\widetilde{M}_{j, j}^{(n)}(x)\bigr)\bigl(1-\widetilde{M}_{k, k}^{(n)}(x)\bigr)} \ll z_n^{-2}.
\]  
This estimate yields  
\[
\rho_n(x):=\max_{1\le j\le n}|\lambda_j(x)| \leq \sqrt{\operatorname{Tr}\bigl(B^2(x)\bigr)} \ll 1.
\]  
Consequently, the following series expansion is valid:  
\[
\log\bigl(\mathrm{I}_n - B(x)\bigr) = -\sum_{m=1}^{\infty} \frac{1}{m} B^{m}(x).
\]  
Using \(\det(\mathrm{I}_n - B(x)) = \exp\bigl(\operatorname{Tr} \log (\mathrm{I}_n - B(x))\bigr)\), we get  
\[
\det\bigl(\mathrm{I}_n - B(x)\bigr) = \exp\bigl( -\sum_{m=1}^{\infty} \frac{1}{m} \operatorname{Tr}\bigl(B^{m}(x)\bigr) \bigr).
\]  
For \(m \geq 3\), we have the estimate  
\[
\bigl|\operatorname{Tr}\bigl(B^{m}(x)\bigr)\bigr| = \bigl| \sum_{i=1}^{n} \lambda_i^{m}(x) \bigr| \leq \sum_{i=1}^{n} |\lambda_i(x)|^{m} \leq \rho_{n}^{m-2}(x) \sum_{i=1}^{n} \lambda_i^{2}(x) \ll \operatorname{Tr}\bigl(B^{2}(x)\bigr).
\]  
Since \(\operatorname{Tr}(B(x)) = 0\) and \(\operatorname{Tr}(B^{2}(x)) \ll z_n^{-2}\), we conclude that  
\[
\det\bigl(\mathrm{I}_n - B(x)\bigr) = 1 + o(z_n^{-2}).
\]  
Substituting this asymptotic together with Lemmas \ref{m0cn} and \ref{mgeq1cn} into \eqref{detMzerom}, we obtain 
\[
\begin{aligned}
\mathbb{P}(\widetilde{X}_{n}\leq x) &= \det\bigl({\mathrm I}_n-\widetilde{M}^{(n)}(x)\bigr)\\
&= (1+o(z_n^{-2}))\bigl(1-\widetilde{M}^{(n)}_{1, 1}(x)\bigr) \\
&= \Phi(x) + \bigl( \frac{\sqrt{2}+4\ln2}{2}\sqrt{\alpha_n} - \frac{x}{4n} \bigr) \phi(x) + O(z_n^{-19/10}).
\end{aligned}
\]
Repeating the argument that led from \eqref{79} to \eqref{02} gives  
\[
\sup_{x\in \mathbb{R}} \bigl| \mathbb{P}(\widetilde{X}_n \leq x) - \Phi(x) \bigr| = (1+o(1)) \sup_{x\in\mathbb{R}} \phi(x) \bigl| \frac{\sqrt{2}+4\ln2}{2}\sqrt{\alpha_n} - \frac{x}{4n} \bigr|.
\]

Now recall the parameter \(\beta := \lim_{n \to \infty} n^3 / k_n\). The asymptotic behavior of the right-hand side depends on the value of \(\beta\):

- If \(\beta = +\infty\), i.e. \(\sqrt{\alpha_n} \gg n^{-1}\), the linear term in \(x\) is negligible and we obtain
\[
\sup_{x\in\mathbb{R}} \phi(x) \bigl| \frac{\sqrt{2}+4\ln2}{2}\sqrt{\alpha_n} - \frac{x}{4n} \bigr|
= \frac{\sqrt{2}+4\ln2}{2} \sqrt{\alpha_n} \sup_{x\in\mathbb{R}} \phi(x)
= \frac{(1+2\sqrt{2}\ln2)\sqrt{\alpha_n}}{2\sqrt{\pi}}.
\]

- If \(\beta = 0\), i.e. \(\sqrt{\alpha_n} \ll n^{-1}\), the \(\sqrt{\alpha_n}\) term is negligible and we get
\[
\sup_{x\in\mathbb{R}} \phi(x) \bigl| \frac{\sqrt{2}+4\ln2}{2}\sqrt{\alpha_n} - \frac{x}{4n} \bigr|
= \frac{1}{4n} \sup_{x\in\mathbb{R}} |x|\phi(x)
= \frac{1}{4\sqrt{2\pi e} \, n}.
\]

- If \(\beta \in (0, +\infty)\), we again use relation \eqref{suprecal}.  Set \(c := (\sqrt{2}+4\ln2)/2\) and note that \(n^2\alpha_n = \beta(1+o(1))\). Then
\[
\begin{aligned}
&\sup_{x\in\mathbb{R}} \phi(x) \bigl| \frac{\sqrt{2}+4\ln2}{2}\sqrt{\alpha_n} - \frac{x}{4n} \bigr| \\
&= \sqrt{\alpha_n}(1+o(1)) \sup_{x\in\mathbb{R}} \phi(x) \bigl| c - \frac{x}{4\sqrt{\beta}} \bigr| \\
&= \frac{\bigl(2c\sqrt{\beta} + \sqrt{1+4c^2\beta}\bigr)(1+o(1))}{4\sqrt{2\pi e} \, n} \exp\!\bigl( \frac{2c\sqrt{\beta}}{\sqrt{4c^2\beta + 1} + 2c\sqrt{\beta}} \bigr).
\end{aligned}
\]  

This completes the proof of Theorem \ref{thmrealpart} for the case \(\alpha = 0\).
   
 \section{Proof of the Theorems for $\alpha\in(0, +\infty)$}\label{sec:alphafinite}

This section addresses the intermediate scaling regime where $$\alpha = \lim_{n\to\infty} \frac{n}{k_n} \in (0, +\infty),$$ implying $k_n \asymp n$. We begin this section with the proof of Theorem \ref{main}. 

\subsection{Proof of Theorem \ref{main} for $\alpha\in(0,+\infty)$}

Recall the definitions

\[
M^{(n)}_{j, j}(x)=\mathbb{P}(\log Y_{n-j+1}>k_n\psi (n)+a_n+b_n x),
\]

with

\[
u_n(j, x)=\frac{j-1}{\sqrt{\alpha_n}}+a_n +b_n x,\qquad 
v_\alpha(j, x)=\frac{j-1}{\sqrt{\alpha}}+a+b x,
\]
and
\[
\Phi_{\alpha}(x)=\prod_{j=1}^{\infty}\Phi(v_{\alpha}(j, x)).
\]
We introduce the parameter \(s_n = |\alpha_n - \alpha|^{-1} \wedge n\) and select

\[
\ell_{1,\alpha}(n) = \Bigl(\frac{1}{10} \log s_n\Bigr)^{1/2}, \quad \ell_{2,\alpha}(n) = \frac{4\sqrt{\log s_n} - a_n}{b_n}, \quad r_n = \lfloor s_n^{1/10} \rfloor.
\]

Since \(k_n \gg 1\) and \(u_n(j,x)\) remains bounded for fixed \(j\) and \(x\), we may apply an Edgeworth expansion to \(\log Y_{n-j+1}\) to obtain a precise asymptotic expression for \(M^{(n)}_{j, j}(x)\). This approximation holds uniformly for \(x \in [-\ell_{1,\alpha}(n), \ell_{2,\alpha}(n)]\) and \(1 \le j \le r_n\).

Set \[\beta_n(x) := \log \mathbb{P}(X_n \le x)=\sum_{j=1}^{n} \log(1 - M^{(n)}_{j, j}(x)).\] Then
\begin{equation}\label{totalmiddle}
\bigl|\mathbb{P}(X_n \le x) - \Phi_\alpha(x)\bigr| = \Phi_\alpha(x) \big| \exp\bigl( \beta_n(x) - \sum_{j=1}^{\infty} \log \Phi(v_\alpha(j, x)) \bigr) - 1 \big|. 
\end{equation}
The core of the proof is to show that the exponent is asymptotically negligible, i.e.,
\begin{equation}\label{o1}
\beta_n(x) - \sum_{j=1}^{\infty} \log \Phi(v_\alpha(j, x)) = o(1). 
\end{equation} 
Once this holds, \eqref{totalmiddle} implies that 
\begin{equation}\label{totalalpha}
\bigl|\mathbb{P}(X_n \le x) - \Phi_\alpha(x)\bigr|=(1+o(1)) \Phi_\alpha(x) |\beta_n(x) - \sum_{j=1}^{\infty} \log \Phi(v_\alpha(j, x))|.	
\end{equation}
We decompose the difference as
\begin{equation}\label{o2}
\begin{aligned}
\beta_n(x) - \sum_{j=1}^{\infty} \log \Phi(v_\alpha(j, x))
&=  \sum_{j=1}^{r_n} \log\!\big( 1 + \frac{\Psi(v_\alpha(j, x)) - M^{(n)}_{j, j}(x) }{\Phi(v_\alpha(j, x))} \big) \\
&\quad + \sum_{j=r_n+1}^{n} \log(1 -M^{(n)}_{j, j}(x)) - \sum_{j=r_n+1}^{\infty} \log \Phi(v_\alpha(j, x)). \end{aligned}
\end{equation}
We now present two lemmas, the first one is for the first term of \eqref{o2}
and the second one is for the last two terms of \eqref{o2}. 

\begin{lem}\label{m1} Let \(1\leq j\leq r_n.\) Set  
\[
c_1=\frac{\sqrt{\log(\alpha+1)}}{w(\alpha+1)}+\frac{2}{\sqrt{\log(\alpha+e)}w(\alpha+e^{\frac{1}{\sqrt{2\pi}}})}-\frac{\log(\sqrt{2\pi}\log(\alpha+e^{\frac{1}{\sqrt{2\pi}}}))}{w(\alpha+e)\sqrt{\log(\alpha+e)}}
\]  
and  
\[
c_2=\frac{1}{2(\alpha+e)(\log(\alpha+e))^{3/2}},
\]  
where \(w(t)=2t\log t.\) Then, uniformly on \(-\ell_{1,\alpha}(n)\leq x\leq \ell_{2,\alpha}(n)\),
\[
M^{(n)}_{j, j}(x)=\Psi(v_\alpha(j, x))-\phi(v_\alpha(j, x))\Bigl(\frac{1}{n}q_{1}(j, x)+(\alpha_n-\alpha)q_2(j, x)\Bigr)+O(s_n^{-3/2}),
\]  
where  
\[
q_{1}(j, x)=\frac{2\alpha(v^{2}_\alpha(j, x)-1)-3\sqrt{\alpha}(2j-1)v_{\alpha}(j, x)+6j(j-1)}{12\sqrt{\alpha}}
\]  
and  
\[
q_{2}(j, x)= c_1-c_2x-\frac{j-1}{2\alpha^{3/2}}.
\]
\end{lem}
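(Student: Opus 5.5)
The plan is to apply the Edgeworth expansion of Lemma \ref{ed} with $m=j-1$ to $M^{(n)}_{j,j}(x)$, re-expand the standardized threshold around $v_\alpha(j,x)$, and Taylor expand $\Phi$ (Lemma \ref{diagammapro} supplies all digamma expansions). Concretely, write
\[
M^{(n)}_{j,j}(x)=\mathbb P\Bigl(\frac{\log Y_{n-j+1}-k_n\psi(n-j+1)}{\sqrt{k_n\psi'(n-j+1)}}>\tau_n(j,x)\Bigr),
\qquad
\tau_n(j,x):=\frac{k_n(\psi(n)-\psi(n-j+1))+\alpha_n^{-1/2}(a_n+b_nx)}{\sqrt{k_n\psi'(n-j+1)}} .
\]
On the window $-\ell_{1,\alpha}(n)\le x\le \ell_{2,\alpha}(n)$ with $j\le r_n=\lfloor s_n^{1/10}\rfloor$ we have $|\tau_n|\lesssim s_n^{1/10}+\sqrt{\log s_n}\ll n^{1/6}$. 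Lemma \ref{ed} therefore gives
\[
1-M^{(n)}_{j,j}(x)=\Phi(\tau_n)-\frac{1-\tau_n^2}{6\sqrt{k_nn}}\phi(\tau_n)+O(n^{-3/2}),
\]
and $k_n\asymp n$ makes the remainder $O(s_n^{-3/2})$.

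The main step is a second-order expansion of $\tau_n(j,x)$. Using $\psi(n)-\psi(n-j+1)=\frac{j-1}{n}+\frac{(j-1)(j-2)}{2n^2}+\frac{j-1}{2n^2}+O(j^3n^{-3})$ (more compactly $\frac{j-1}{n}+\frac{j(j-1)}{2n^2}-\dots$, to be computed exactly) together with $\psi'(n-j+1)^{-1/2}=\sqrt{n}\bigl(1-\frac{2j-1}{4n}+\dots\bigr)$, I will write
\[
\tau_n=\sqrt{n\alpha_n^{-1}}\,\alpha_n^{-1/2}\cdots,
\]
that is, $\tau_n=u_n(j,x)+\frac1n\bigl[\text{quadratic in } j \text{ and } u_n\bigr]+\dots$. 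In this form the $\frac{j(j-1)}{2n}\cdot\frac{1}{\sqrt{\alpha}}$ term and the $-\frac{(2j-1)}{4n}u$ term appear. Next I replace $u_n(j,x)$ by $v_\alpha(j,x)$ via
\[
u_n-v_\alpha=(j-1)\bigl(\alpha_n^{-1/2}-\alpha^{-1/2}\bigr)+(a_n-a)+(b_n-b)x=(\alpha_n-\alpha)\Bigl(-\frac{j-1}{2\alpha^{3/2}}+a'(\alpha)+b'(\alpha)x\Bigr)+O(|\alpha_n-\alpha|^2 j).
\]
Differentiating $a$ and $b$ in $\alpha$ should produce exactly $c_1=a'(\alpha)$ and $-c_2=b'(\alpha)$, since $\frac{d}{d\alpha}\sqrt{\log(\alpha+1)}=\frac{\sqrt{\log(\alpha+1)}}{w(\alpha+1)}$, and so on; I will check these three derivative terms against the stated $c_1$. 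This gives $q_2$.

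Collecting the $1/n$ terms means combining the shift of $\tau_n$ (the $j(j-1)/(2\sqrt\alpha)$ and $-(2j-1)v/4$ pieces) with the Edgeworth term $-\frac{(1-v^2)}{6\sqrt{\alpha}}\cdot\frac1n$, after noting that $\sqrt{k_nn}=n/\sqrt{\alpha_n}$. The result should be $\frac{1}{12\sqrt\alpha}[2\alpha(v^2-1)-3\sqrt\alpha(2j-1)v+6j(j-1)]$ with the sign convention $M=\Psi(v)-\phi(v)(\cdots)$. Exact matching of coefficients is the bookkeeping I expect to be error-prone. I would carry all terms symbolically and then verify the final $q_1$.

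The main obstacle is controlling the remainders uniformly. The Taylor step $\Phi(\tau_n)=\Phi(v)+\phi(v)(\tau_n-v)+O(|\tau_n-v|^2(1+|v|))$ needs $|\tau_n-v|\lesssim j^2/n+j|\alpha_n-\alpha|\le s_n^{-4/5}$. The quadratic error is then $O(s_n^{-8/5}\log s_n)$, and the higher digamma terms $O(j^3n^{-2})$, $O(j^2 n^{-1}|\alpha_n-\alpha|)$, and $O(|\alpha_n-\alpha|^2 j)$ are all $O(s_n^{-3/2})$ because $j\le s_n^{1/10}$. The restriction $r_n=s_n^{1/10}$ is chosen precisely so that these errors close, and I would verify each one explicitly. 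Replacing $\phi(u_n)$ by $\phi(v_\alpha)$ inside the correction terms costs only a product of two small quantities, so it is also $O(s_n^{-3/2})$.
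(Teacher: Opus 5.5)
Your route is the same as the paper's. You apply Lemma \ref{ed} with $m=j-1$, expand the threshold to second order (your $\tau_n$ is the paper's $v_n(j,x)$) using Lemma \ref{diagammapro} and first-order expansions of $a_n,b_n,\alpha_n^{-1/2}$ about $\alpha$, and then Taylor-expand $\Phi$ and $\phi$ using $|\tau_n-v_\alpha|\lesssim s_n^{-4/5}$. Your remainder accounting is fine. However, the coefficient bookkeeping you deferred contains two slips and one check that will fail.

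\textbf{The two slips.} First, since $\sqrt{k_nn}=n/\sqrt{\alpha_n}$, the Edgeworth piece inside the bracket is $\frac{\sqrt{\alpha}\,(v^2-1)}{6n}$, not $\frac{v^2-1}{6\sqrt{\alpha}\,n}$. Only the former produces the $2\alpha(v^2-1)$ in $q_1$. Second, $\psi(n)-\psi(n-j+1)=\sum_{i=1}^{j-1}(n-i)^{-1}=\frac{j-1}{n}+\frac{j(j-1)}{2n^2}+O(j^3n^{-3})$. Your first version sums to $\frac{(j-1)^2}{2n^2}$ instead. That would break the cancellation $-\frac{2j-1}{4n}+\frac{j}{2n}=\frac{1}{4n}$, which is what produces the $\frac{j(j-1)}{2\sqrt{\alpha}\,n}$ term.

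\textbf{The failing check.} The identity $b'(\alpha)=-c_2$ is correct, but
\[
a'(\alpha)=\frac{\sqrt{\log(\alpha+1)}}{w(\alpha+1)}-\frac{2}{w(\alpha+e^{1/\sqrt{2\pi}})\sqrt{\log(\alpha+e)}}+\frac{\log\bigl(\sqrt{2\pi}\log(\alpha+e^{1/\sqrt{2\pi}})\bigr)}{w(\alpha+e)\sqrt{\log(\alpha+e)}},
\]
so the last two terms of the printed $c_1$ have the wrong signs. The paper's proof only asserts $a_n=a+c_1(\alpha_n-\alpha)+O((\alpha_n-\alpha)^2)$ without computing it, and so inherits the same error.

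Your argument therefore proves the lemma with $c_1$ replaced by $a'(\alpha)$. With the printed constant, a leftover term $\phi(v_\alpha(j,x))(\alpha_n-\alpha)\bigl(a'(\alpha)-c_1\bigr)$ remains. For $j=1$, bounded $x$, and all but isolated values of $\alpha$, this term is of exact order $|\alpha_n-\alpha|$. Whenever $|\alpha_n-\alpha|\ge n^{-1}$, that order equals $s_n^{-1}\gg s_n^{-3/2}$, so the claimed error bound fails. You should state the lemma with $c_1=a'(\alpha)$ and carry the corrected constant into $q_2$ in Theorem \ref{main}(3).
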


\begin{lem}\label{jn1} Uniformly for \(x\ge -\ell_{1, \alpha}(n)\) as \(n\to\infty\),
\[
\sum_{j=r_n+1}^{n}\log(1-M^{(n)}_{j, j}(x))=o\!\big(e^{-\frac{s_n^{1/5}}{3\alpha}}\big)\quad\text{and}\quad
\sum_{j=r_n+1}^{+\infty}\log\Phi(v_\alpha(j, x))=o(e^{-\frac{s_n^{1/5}}{3\alpha}}). \]
\end{lem}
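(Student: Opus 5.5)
The plan is to bound both tail sums by tails of an explicit Gaussian-type series, using that both $v_\alpha(j,x)$ and $u_n(j,x)$ grow linearly in $j$ with slope of order $\alpha^{-1/2}$. We then compare with $\exp(-s_n^{1/5}/(3\alpha))$, noting that $r_n^2=s_n^{1/5}(1+o(1))$.

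\textbf{Second (limiting) sum.} For $x\ge-\ell_{1,\alpha}(n)$ and $j\ge r_n+1$, we have
\[
v_\alpha(j,x)\ge \frac{r_n}{\sqrt\alpha}+a-b\,\ell_{1,\alpha}(n).
\]
Since $\ell_{1,\alpha}(n)=(\tfrac1{10}\log s_n)^{1/2}\ll r_n=\lfloor s_n^{1/10}\rfloor$, this lower bound tends to $+\infty$. Hence $|\log\Phi(v)|\le 2\Psi(v)\le \phi(v)/v$ for large $v$, and
\[
\sum_{j>r_n}|\log\Phi(v_\alpha(j,x))|\lesssim \sum_{j>r_n}\frac{1}{v_\alpha(j,x)}\exp\Bigl(-\tfrac12 v_\alpha^2(j,x)\Bigr).
\]
Apply Lemma \ref{sum}(2) with $\gamma_n=\sqrt\alpha$ bounded, $L=r_n+1$, $c=1/2$. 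The bound is $\lesssim e^{-v_\alpha^2(r_n+1,x)/2}/v_\alpha(r_n+1,x)$. Moreover,
\[
\tfrac12 v_\alpha^2(r_n+1,x)\ge \frac{r_n^2}{2\alpha}-\frac{r_n}{\sqrt\alpha}\,O\bigl(\sqrt{\log s_n}\bigr),
\]
which exceeds $\frac{s_n^{1/5}}{3\alpha}+\omega_n$ with $\omega_n\to\infty$, because $\frac{r_n^2}{2\alpha}-\frac{r_n^2}{3\alpha}=\frac{r_n^2}{6\alpha}$ dominates $r_n\sqrt{\log s_n}$. This gives the $o(e^{-s_n^{1/5}/(3\alpha)})$ bound. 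Monotonicity in $x$ makes it uniform over $x\ge-\ell_{1,\alpha}(n)$.

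\textbf{First (finite-$n$) sum.} Here we use Lemma \ref{H}, valid uniformly for $2\le j\ll n$:
\[
M^{(n)}_{j,j}(x)\le\exp\Bigl\{-\frac{(j-1)^2}{4\alpha_n}-\frac{(j-1)(a_n+b_nx)}{2\sqrt{\alpha_n}}\Bigr\}.
\]
The exponent $-(j-1)^2/(4\alpha_n)$ only gives $r_n^2/(4\alpha)$, and $\tfrac14<\tfrac13$. So Lemma \ref{H} with $t=(j-1)/2$ is too weak, and this is the main obstacle. I would instead return to \eqref{k12} and choose $t$ optimally, or better, use the Gaussian-scale bound directly. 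For $r_n<j\le n^{1/3}$ (say), the deviation in standardized units is $u_n(j,x)\asymp j/\sqrt\alpha$, which is $\ll (k_n)^{1/6}$-moderate. A Cramér-type moderate deviation for the sum of $k_n\asymp n$ i.i.d. $\log S_{n-j+1,r}$, which satisfy the uniform Cramér condition shown in the proof of Lemma \ref{ed}, gives
\[
M^{(n)}_{j,j}(x)\le \Psi\bigl(u_n(j,x)\bigr)(1+o(1)) \le \exp\bigl(-\tfrac{1}{2}(1-\delta)u_n^2(j,x)\bigr)
\]
for any small fixed $\delta$. Since $\alpha_n\to\alpha$ and $\tfrac12(1-\delta)>\tfrac13$, summing as before yields $o(e^{-s_n^{1/5}/(3\alpha)})$.

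For $j>n^{1/3}$, Lemma \ref{H} with $t=(j-1)/2$ already gives $\exp(-cn^{2/3})$ per term, and this remains true up to $j\ll n$. For $j$ comparable to $n$, one uses monotonicity: $M^{(n)}_{j,j}\le M^{(n)}_{j_0,j_0}$ with $j_0=\lfloor n^{1/2}\rfloor$, times $n$ terms. This is super-polynomially small and hence negligible against $e^{-s_n^{1/5}/(3\alpha)}$, since $s_n\le n$. Finally, $\log(1-M)=-M(1+O(M))$ because all these $M^{(n)}_{j,j}$ are uniformly $o(1)$.

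If the moderate-deviation refinement is unavailable, the fallback is to optimize $t$ in \eqref{k12} using the exact second-order term $t^2/(2(n-j+1))$ rather than $t^2/n$. This yields an exponent $\approx (j-1)^2/(2\alpha_n)$, which again beats $\tfrac13$.
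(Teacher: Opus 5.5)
Your proposal is correct and follows the paper's skeleton. You reduce to $x=-\ell_{1,\alpha}(n)$ by monotonicity, bound $M^{(n)}_{j,j}$ by a Gaussian tail on a polynomial range of $j$, sum with Lemma~\ref{sum}, and use monotonicity in $j$ for the far tail. The limiting series is handled by Mills' ratio and Lemma~\ref{sum}.

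The difference is in how the Gaussian bound on $M^{(n)}_{j,j}$ is obtained. The paper asserts the sharp asymptotic \eqref{c1}, which is really a moderate-deviation statement, only for $r_n<j\ll n^{1/6}$. It therefore splits into two cases according to whether $s_n$ is governed by $|\alpha_n-\alpha|^{-1}$ or by $n$, cutting at $\lfloor n^{1/10}\rfloor$ and bounding the rest by $n$ times one diagonal entry. You rightly discard Lemma~\ref{H} in the critical range ($\tfrac14<\tfrac13$) and use it only for $j>n^{1/3}$.

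There are two slips in your main route.
\begin{itemize}
\item For $j$ up to $n^{1/3}$, $u_n(j,x)\asymp j/\sqrt\alpha$ is not $o(k_n^{1/6})=o(n^{1/6})$. So the Cram\'er form $\Psi(u_n)(1+o(1))$ is not available on that whole range. Either cut at $n^{c}$ with $c\in(\tfrac1{10},\tfrac16)$, where Lemma~\ref{H} still suffices beyond since $n^{2c}/(4\alpha)\gg n^{1/5}/(3\alpha)\ge s_n^{1/5}/(3\alpha)$, or claim only the weaker bound $e^{-(1-\delta)u_n^2/2}$.
\item Being ``super-polynomially small'' is not by itself why the far tail is negligible, because $e^{-s_n^{1/5}/(3\alpha)}$ can be as small as $e^{-n^{1/5}/(3\alpha)}$. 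It is the explicit rate $e^{-cn^{2/3}}$ that does it.
\end{itemize}

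Your fallback is in fact the cleanest argument, and I would make it the main one. By concavity of $\psi$,
$\int_0^t(\psi(\ell+s)-\psi(\ell))\,ds\le\tfrac{t^2}{2}\psi'(\ell)\le\tfrac{t^2}{2}\bigl(\tfrac1\ell+\tfrac1{\ell^2}\bigr)$.
Use this rather than Lemma~\ref{diagammapro}(a), which is stated only for $s\ge1$. Also $k_n(\psi(n)-\psi(\ell))\ge (j-1)/\alpha_n$, since $\psi'(y)\ge 1/y$. Taking $t=\sqrt{\alpha_n}\,u_n(j,x)/\kappa_j$ in the Chernoff bound gives
\[
M^{(n)}_{j,j}(x)\le \exp\Bigl(-\frac{u_n^2(j,x)}{2\kappa_j}\Bigr),\qquad \kappa_j=\frac{n(\ell+1)}{\ell^2},\quad \ell=n-j+1,
\]
for every $x$ with $u_n(j,x)>0$. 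Here $\kappa_j=1+O(j/n)$ for $j\le n/2$. This needs no moderate-deviation theorem for the triangular array $\{\log S_{n-j+1,r}\}$, which the paper's appeal to the central limit theorem glosses over. It also removes the case distinction.

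Finally, your explicit check that $\tfrac12 v_\alpha^2(r_n+1,x)-s_n^{1/5}/(3\alpha)\to\infty$ is more careful than the paper. The paper concludes via $\exp(-r_n^2/(3\alpha))$, and since $r_n^2\le s_n^{1/5}$, that bound alone does not give $o(e^{-s_n^{1/5}/(3\alpha)})$.
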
  
Lemma \ref{jn1} and \eqref{o2} give
\begin{equation}\label{betanxdif}
\beta_n(x) - \sum_{j=1}^{+\infty} \log \Phi(v_{\alpha}(j, x))
= \sum_{j=1}^{r_n } \log\!\big(1 + \frac{1 - M^{(n)}_{j, j}(x) - \Phi(v_{\alpha}(j, x))}{\Phi(v_{\alpha}(j, x))}\big) + o\!\big(e^{-\frac{s_n^{1/5}}{3\alpha}}\big). 
\end{equation}
For the term inside the logarithmic function of \eqref{betanxdif},  Lemma \ref{m1} indicates that 
\begin{equation}\label{dalpha}
\aligned
d_\alpha(j,x) : &= \Psi(v_\alpha(j,x))  - M^{(n)}_{j, j}(x)\\
&= \phi(v_\alpha(j,x)) \left( n^{-1} q_1(j,x) + (\alpha_n - \alpha) q_2(j,x) \right) + O(s_n^{-3/2}) 
\endaligned
\end{equation}
and then the facts that $\phi$ is bounded and $|q_1(j, x)+q_2(j, x)|\le \log s_n$ imply  
\[|d_\alpha(j, x)| \lesssim s_n^{-1} \log s_n \ll 1\] uniformly on \(1\le j\le r_n.\)
Thereby, it follows from \eqref{betanxdif} that
\begin{equation}\label{betanxdifnew}
\beta_n(x) - \sum_{j=1}^{+\infty} \log \Phi(v_{\alpha}(j, x))
= \sum_{j=1}^{r_n } \log\!\big(1 + \frac{d_\alpha(j, x)}{\Phi(v_{\alpha}(j, x))}\big) + o\big(e^{-\frac{s_n^{1/5}}{3\alpha}}\big). 
\end{equation}
 Using the monotonicity of the standard normal distribution function \(\Phi\), we obtain for all \(-\ell_{1,\alpha}(n) \le x \le \ell_{2,\alpha}(n)\) and \(1 \le j \le r_n\),
\[
\frac{1}{\Phi(v_\alpha(j,x))} \le \frac{1}{\Phi(v_\alpha(1,-\ell_{1,\alpha}(n)))} \lesssim v_\alpha(1,-\ell_{1,\alpha}(n)) e^{\frac12 v_\alpha^2(1,-\ell_{1,\alpha}(n))} \lesssim \sqrt{\log s_n} \, s_n^{\frac1{20}}.
\]
Consequently,
\begin{equation}\label{dalphaPhi}
\frac{|d_\alpha(j, x)|}{\Phi(v_\alpha(j, x))} \lesssim s_n^{-\frac{19}{20}} (\log s_n)^{\frac32}. 
\end{equation}
A lower bound is obtained by considering the term with \(j=1\):
\[
\sum_{j=1}^{r_n} \frac{|d_\alpha(j,x)|}{\Phi(v_\alpha(j,x))} \gtrsim \frac{|d_\alpha(1,x)|}{\Phi(v_\alpha(1,x))} \gtrsim s_n^{-\frac{19}{20}} \sqrt{\log s_n} \gtrsim \exp(-\frac{s_n^{\frac{1}{5}}}{3\alpha}).
\]
Comparing the upper and lower bounds, the term \(o\big(e^{-s_n^{1/5}/(3\alpha)}\big)\) in \eqref{betanxdifnew} is negligible and then 
\[
\bigl| \beta_n(x) - \sum_{j=1}^{\infty} \log \Phi(v_\alpha(j, x)) \bigr| = (1+o(1)) \sum_{j=1}^{r_n} \frac{|d_\alpha(j,x)|}{\Phi(v_\alpha(j,x))}.
\]
The upper bound \eqref{dalphaPhi} and the fact \(r_n \asymp s_n^{\frac{1}{10}}\) give
\[
\bigl| \beta_n(x) - \sum_{j=1}^{\infty} \log \Phi(v_\alpha(j, x)) \bigr| \lesssim r_n s_n^{-\frac{19}{20}} (\log s_n)^{\frac32} = s_n^{-\frac{17}{20}} (\log s_n)^{\frac32} = o(1),
\]
whence it follows from \eqref{totalalpha} and \eqref{dalpha} that
\begin{equation}\label{basicabove}
\bigl| \mathbb{P}(X_n \le x) - \Phi_{\alpha}(x) \bigr|
= (1 + o(1)) \, \Phi_{\alpha}(x) \bigl| \sum_{j=1}^{+\infty} \frac{\phi(v_{\alpha}(j, x))}{\Phi(v_{\alpha}(j, x))} \bigl( n^{-1} q_1(j, x) + (\alpha_n - \alpha) q_2(j, x) \bigr) \bigr| 
\end{equation}
uniformly for \(x \in [-\ell_{1,\alpha}(n), \ell_{2,\alpha}(n)]\). This establishes the desired convergence rate in the intermediate regime.

As will be shown in the Appendix,

\[
\sup_{x \in \mathbb{R}} \Phi_\alpha(x) \sum_{j=1}^{\infty} |q_k(j,x)| \frac{\phi(v_\alpha(j,x))}{\Phi(v_\alpha(j,x))} < +\infty \qquad (k=1,2).
\]

To complete the proof of Theorem \ref{main} for \(\alpha\in (0, +\infty)\), we justify replacing the supremum over the middle interval \([-\ell_{1,\alpha}(n), \ell_{2,\alpha}(n)]\) by the supremum over \(\mathbb{R}\). From \eqref{basicabove}, for \(-\ell_{1,\alpha}(n) \le x \le \ell_{2,\alpha}(n)\),

\begin{equation}\label{ignoring}
\bigl| \mathbb{P}(X_n \le x) - \Phi_\alpha(x) \bigr| \asymp \Phi_\alpha(x) s_n^{-1} \ll 1.
\end{equation}

Using \eqref{ignoring} together with monotonicity of the cumulative distribution function yields
\[
\sup_{x \in (-\infty, -\ell_{1,\alpha}(n)]} \bigl| \mathbb{P}(X_n \le x) - \Phi_\alpha(x) \bigr|
\le \mathbb{P}(X_n \le -\ell_{1,\alpha}(n)) + \Phi_\alpha(-\ell_{1,\alpha}(n))
\lesssim \Phi_\alpha(-\ell_{1,\alpha}(n)),
\]

and similarly,

\[
\sup_{x \in [\ell_{2,\alpha}(n), +\infty)} \bigl| \mathbb{P}(X_n \le x) - \Phi_\alpha(x) \bigr|
\lesssim  1-\Phi_\alpha(\ell_{2,\alpha}(n)).
\]
Choose \(m_1 = \bigl\lfloor \sqrt{\alpha} (b \ell_{1,\alpha}(n)/2 - a) \bigr\rfloor+1\). Under this choice,
\[
v_\alpha(m_1, x) =\frac{m_1-1}{\sqrt{\alpha}}+a+b x= -\frac{b \ell_{1,\alpha}(n)}{2}(1 + o(1)).
\]
Consequently,
\[
\Phi_\alpha(-\ell_{1,\alpha}(n)) \le \bigl(\Phi(v_\alpha(m_1, -\ell_{1,\alpha}(n))) \bigr)^{m_1}
\ll \ell_{1,\alpha}^{-m_1}(n) \exp(-\frac{\sqrt{\alpha} b^3 \ell_{1,\alpha}^3(n)}{20} )
\ll s_n^{-1}.
\]
On the other hand, the definition \(\ell_{2,\alpha}(n) = \frac{4\sqrt{\log s_n} - a_n}{b_n}\) gives \[v_\alpha(j, \ell_{2,\alpha}(n))\ge v_\alpha(1, \ell_{2,\alpha}(n)) = 4\sqrt{\log s_n} \gg 1,\] which together with Mills' ratio ensures
\[
\Psi(v_\alpha(j, \ell_{2,\alpha}(n)))=\frac{1 + o(1)}{\sqrt{2\pi}} v_\alpha^{-1}(j, \ell_{2,\alpha}(n)) \exp( -\frac{v_\alpha^2(j, \ell_{2,\alpha}(n))}{2} )\ll 1. 
\]
Hence, leveraging the elementary inequality \(1-e^{-t}\le t\) for \(t>0\), we have 

\[
\begin{aligned}
1 - \Phi_\alpha(\ell_{2,\alpha}(n))
&= 1 - \exp \{ \sum_{j=1}^{\infty} \log (1-\Psi(v_\alpha(j, \ell_{2,\alpha}(n)))) \} \\
&\lesssim \sum_{j=1}^{\infty} v_\alpha^{-1}(j, \ell_{2,\alpha}(n)) \exp\!( -\frac{v_\alpha^2(j, \ell_{2,\alpha}(n))}{2} ).
\end{aligned}
\]
Lemma \ref{sum} then gives
\[
1 - \Phi_\alpha(\ell_{2,\alpha}(n))\lesssim v_\alpha^{-1}(1, \ell_{2,\alpha}(n)) \exp( -\frac{v_\alpha^2(1, \ell_{2,\alpha}(n))}{2} )
= 16 s_n^{-7} (\log s_n)^{-1/2} \ll s_n^{-1}.
\]

Combining the estimates above, both tails are of order \(o(s_n^{-1})\), which is negligible compared to the bound on the central interval. Therefore,

\[
\sup_{x \in \mathbb{R}} \bigl| \mathbb{P}(X_n \le x) - \Phi_\alpha(x) \bigr|
= \sup_{-\ell_{1,\alpha}(n) \le x \le \ell_{2,\alpha}(n)} \bigl| \mathbb{P}(X_n \le x) - \Phi_\alpha(x) \bigr|.
\]

This completes the proof of Theorem \ref{main} for the case \(\alpha \in (0, +\infty)\).
 
 \subsection{Proof of Theorem 2 for $ \alpha\in(0,+\infty)$} 
 
For the largest real-part \(\max_{1\le j\le n} \Re Z_j\), a fundamental difficulty of a different nature arises. In this regime, the matrix \(\widetilde{M}^{(n)}(x)\) has a non-negligible number of off-diagonal entries of order one, rendering the previous two approximations invalid: we can no longer approximate the determinant by \(\exp(-\operatorname{Tr}(\widetilde{M}^{(n)}(x))\) (since \(\|\widetilde{M}^{(n)}\|_{\rm HS}\) is not small) nor by \(\prod_{j=1}^n(1-\widetilde{M}_{j, j}^{(n)}(x))\) (because off diagonal contributions are significant).

Thus, the analysis must confront the full nonlinear structure of \(\det({\rm I}_n - \widetilde{M}^{(n)}(x))\), a problem of considerable complexity. However, for each fixed \(\alpha \in (0, +\infty)\), we can study the limiting operator \(\widetilde{M}(x,\alpha)\) to which \(\widetilde{M}^{(n)}(x)\) converges in an appropriate sense. As the asymptotics become significantly more delicate, we forgo the convergence rate and instead focus on establishing the existence of the limit and proving that it defines a distribution function. To make this precise, set
\[
\widetilde{v}_{\alpha}(j,x)=\frac{j-1}{\sqrt{\alpha}}+\widetilde{a}+\widetilde{b}x,
\]
and recall that
\[
s_n = |\alpha_n - \alpha|^{-1} \wedge n, \qquad r_n = \lfloor s_n^{1/10} \rfloor.
\]
The following lemma, whose proof is deferred to the Appendix, confirms that \(\widetilde{M}(x, \alpha)\) is the entrywise limit of \(\widetilde{M}^{(n)}(x)\). 
 \begin{lem}\label{M0infty} 
 	Let $r_n$ and $s_n$ be  defined as above and  let $\Psi(t)=1-\Phi(t).$ 
 	 	\begin{itemize}\item[(1).] For any $1 \le j \le r_n$ and $|x| \le \sqrt{\log s_n}$, it holds 
 $$\aligned \widetilde{M}_{j, j}^{(n)}(x)&=\frac{2}{\pi}\int_{0}^{\pi/2}\Psi(\widetilde{v}_{\alpha}(j, x)-\sqrt{\alpha}\log\cos^2\theta) d\theta+O(s_n^{-1})
.\endaligned $$
 \item[(2).] For any $1\le j\neq k\le r_n$ with $j-k$ even and $|x| \le \sqrt{\log s_n},$ we have 
 \begin{equation}\label{mjklimit} 
 	\widetilde{M}_{j, k}^{(n)}(x)=e^{-\frac{(j-k)^2}{4\alpha}}\frac{2}{\pi}\int_{0}^{\pi/2}\cos((j-k)\theta)\Psi(\widetilde{v}_{\alpha}(\frac{j+k}{2},x)-\sqrt{\alpha}\log \cos^2\theta)d\theta+O(s_n^{-9/10}). 
 \end{equation}
\item[(3).] Whenever $r_n+1\le j\le n$ and $x \ge -\sqrt{\log s_n},$ we have 
 \begin{equation}\label{1-mjj}
 	\widetilde{M}_{j, j}^{(n)}(x)\ll s_n^{-1/10}\exp(-s_n^{1/6}). 
 \end{equation}
 \item[(4).] For $|x| \le \sqrt{\log s_n},$ we have 
 \begin{equation}\label{summjjj}
 	\sum_{j=r_n+1}^{n}\widetilde{M}_{j, j}^{(n)}(x)\ll \exp(-\frac{r^2_n}{3\alpha}).
 \end{equation}
 \end{itemize}
  \end{lem}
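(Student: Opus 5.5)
The plan is to run the four parts off the two exact representations of Lemma \ref{Mjjlem}, namely \eqref{Mjjfor} and \eqref{Mjkfor}, together with the Edgeworth expansion of Lemma \ref{ed} and the Markov/Chernoff bound of Lemma \ref{H}. Throughout, $k_n\asymp n$ and $\alpha_n=\alpha+O(s_n^{-1})$.

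\textbf{Part (1).} Fix $\theta$. The event in \eqref{Mjjfor} reads
\[
\log Y_{n+1-j}\ge k_n\psi(n)+\frac{\widetilde a_n+\widetilde b_n x}{\sqrt{\alpha_n}}-\log\cos^2\theta .
\]
I standardize with $k_n\psi(n+1-j)$ and $\sqrt{k_n\psi'(n+1-j)}$. By Lemma \ref{diagammapro},
\[
k_n\bigl(\psi(n)-\psi(n+1-j)\bigr)=\frac{(j-1)}{\alpha_n}+O\!\left(\frac{j^2}{n\alpha_n}\right),
\]
and $\sqrt{k_n\psi'(\cdot)}=\alpha_n^{-1/2}(1+O(j/n))$. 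So the threshold becomes
\[
\widetilde v_\alpha(j,x)-\sqrt{\alpha}\log\cos^2\theta+O(s_n^{-1}\log s_n\,(1+|\log\cos^2\theta|)).
\]
The $\widetilde a_n,\widetilde b_n$ versus $\widetilde a,\widetilde b$ difference is $O(|\alpha_n-\alpha|)$. Lemma \ref{ed} then gives $\Psi$ of this argument up to $O(k_n^{-1/2}n^{-1/2})=O(n^{-1})$. A Taylor step in the argument, using that $\phi$ is bounded, absorbs the perturbation. The $\theta$-integral against $|\log\cos\theta|$ is finite, which handles the singularity at $\theta=\pi/2$. For $\theta$ so close to $\pi/2$ that $|\log\cos^2\theta|>n^{1/6}$, Lemma \ref{ed} is out of range; both sides are then exponentially small, and the measure of that set is $O(e^{-n^{1/6}/2})$. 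This yields (1). I expect the error to be $O(s_n^{-1}\log s_n)$ naively, so tightening it to $O(s_n^{-1})$ requires noting $j\le r_n=s_n^{1/10}$. The $j^2/n$ terms are then $O(s_n^{-4/5})$, which is enough once they are multiplied by $\phi$; if not, I would state $O(s_n^{-9/10})$ as in (2).

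\textbf{Part (2).} Here the extra ingredient is the prefactor in \eqref{Mjkfor}, whose $\tfrac{2}{\pi}$ is already accounted for. Writing $j=m+d$ and $k=m-d$ with $m=\frac{j+k}{2}$, Stirling gives
\[
\log\frac{((n-m)!)^2}{(n-m-d)!\,(n-m+d)!}=-\frac{d^2}{n-m}+O\!\left(\frac{d^4}{n^3}+\frac{d^2 m}{n^2}\right).
\]
Multiplying by $k_n$ gives
\[
-\frac{d^2}{\alpha_n}+O(r_n^3/n)=-\frac{(j-k)^2}{4\alpha}+O(s_n^{-9/10}).
\]
The probability inside the integral is exactly the quantity treated in Part (1) with index $\frac{j+k}{2}$. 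Since $|\cos((j-k)\theta)|\le1$, the error from Part (1) transfers directly, and the bound follows.

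\textbf{Parts (3) and (4).} I use the Chernoff estimate of Lemma \ref{H}, redone for $\widetilde M$ as in Lemma \ref{gnmxt}(4): since $\log\cos^2\Theta\le0$, we have $\widetilde M^{(n)}_{j,j}(x)\le g_n(j,x,0)$. Taking $t=\frac{j-1}{2}$ gives a bound of the form
\[
\exp\!\left(-\frac{(j-1)^2}{4\alpha_n}-\frac{(j-1)(\widetilde a_n+\widetilde b_n x)}{2\sqrt{\alpha_n}}\right).
\]
For $j>r_n$ and $x\ge-\sqrt{\log s_n}$ this is at most $\exp(-r_n^2/(4\alpha)(1+o(1)))$. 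Since $r_n^2=s_n^{1/5}\gg s_n^{1/6}$, Part (3) follows. Lemma \ref{H} is only stated for $j\ll n$; for $j$ comparable to $n$ I would instead use the bound at $j=t$ with a fixed $t$ of order $n$, together with the monotonicity of $\widetilde M_{j,j}$ in $j$. Summing the Gaussian-type tail over $j>r_n$ via Lemma \ref{sum}, or simply comparing with a geometric series, gives $\exp(-r_n^2/(3\alpha))$ after absorbing constants, which is (4).

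The main obstacle I expect is the uniformity in Part (1) near $\theta=\pi/2$ and in large $j$, where the Edgeworth range of Lemma \ref{ed} must be respected. I would split the $\theta$-integral at $\cos^2\theta=e^{-s_n^{1/10}}$ and bound the remaining piece crudely.
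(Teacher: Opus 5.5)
Your handling of (1) and (3) follows the paper's route: the representations \eqref{Mjjfor}--\eqref{Mjkfor}, the Edgeworth expansion at fixed $\theta$, and monotonicity in $j$. Using the Chernoff bound in (3) instead of the normal tail is harmless, because $r_n^2\sim s_n^{1/5}\gg s_n^{1/6}$ leaves room. Part (4), however, does not follow from that bound, and this is a genuine gap.

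\textbf{Part (4).} Lemma~\ref{H} gives the exponent $\frac{(j-1)^2}{4\alpha_n}$. Summing over $j>r_n$ therefore only yields $\exp\bigl(-\frac{r_n^2}{4\alpha}(1+o(1))\bigr)$. This exceeds $\exp(-\frac{r_n^2}{3\alpha})$ by the diverging factor $\exp\bigl(\frac{r_n^2}{12\alpha}(1+o(1))\bigr)$; a constant multiplying $r_n^2\to\infty$ inside the exponent cannot be absorbed. The loss comes from the crude step $\frac{t^2}{2(n-j+1)}\le\frac{t^2}{n}$ in the proof of Lemma~\ref{H}. You need a tail with the Gaussian constant. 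One way: keep $\frac{k_nt^2}{2(n-j+1)}=\frac{t^2}{2\alpha_n}(1+O(j/n))$ and take $t=j-1$. This gives the exponent $\frac{(j-1)^2}{2\alpha_n}(1-O(j/n))-O(j\sqrt{\log s_n})$ for $r_n<j\le\varepsilon n$, and monotonicity handles $j>\varepsilon n$. Alternatively, use the Mills-ratio tail as in \eqref{c1}, which is what the paper's appeal to Lemma~\ref{jn1} relies on.

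\textbf{Part (2).} The prefactor in \eqref{Mjkfor} equals $\bigl(\frac{((n-m)!)^2}{(n-j)!(n-k)!}\bigr)^{k_n/2}$. So your Stirling expansion must be multiplied by $k_n/2$, not $k_n$, and the result is $-\frac{k_nd^2}{2(n-m)}=-\frac{(j-k)^2}{8\alpha_n}(1+O(m/n))$. For instance, when $j-k=2$ the prefactor is exactly $\bigl(\frac{N}{N+1}\bigr)^{k_n/2}$ with $N=n-m$, i.e.\ $e^{-1/(2\alpha_n)}(1+o(1))$, not $e^{-1/\alpha_n}$.

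The paper's proof makes the identical slip (``together with $k_n\asymp n$''). Your argument therefore reaches the stated $e^{-(j-k)^2/(4\alpha)}$ only through this error. What \eqref{Mjkfor} actually gives is (2) with $e^{-(j-k)^2/(8\alpha)}$, and the discrepancy is of order one. To see this, take $j-k=2$ and let $f(\theta)=\Psi(\widetilde v_\alpha(\frac{j+k}{2},x)-\sqrt{\alpha}\log\cos^2\theta)$, which is strictly decreasing. Then the $\theta$-integral equals $\int_0^{\pi/4}\cos(2u)\,[f(u)-f(\frac{\pi}{2}-u)]\,du>0$. The same constant propagates into $\widetilde M(x,\alpha)$ in Theorem~\ref{thmrealpart}.

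\textbf{Smaller points.} First, in (2) the additive exponent errors $O(d^2m/n)$ and $O(d^2|\alpha_n-\alpha|)$ are only $O(s_n^{-7/10})$ and $O(s_n^{-4/5})$ when $d\sim r_n$. To reach $O(s_n^{-9/10})$, keep the Gaussian factor in front and use $\sup_d d^2e^{-cd^2}<\infty$, as the paper does. Second, in (1) your argument actually delivers $O(s_n^{-1}\log s_n)$, as does the paper's ``similarly as Lemma~\ref{m1}''; the remark about $j\le r_n$ does not improve it. This still suffices downstream, where only entrywise $O(s_n^{-9/10})$ is used.
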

 
From Lemma \ref{M0infty}, we obtain  the entrywise limit 
  \begin{equation}\label{Mxalphajk} \widetilde{M}_{j, k}(x, \alpha)=\frac{2}{\pi}\exp(-\frac{(j-k)^2}{4\alpha})\int_0^{\pi/2}\cos((j-k)\theta)\Psi(\widetilde{v}_{\alpha}(\frac{j+k}{2}, x)-\sqrt{\alpha}\log\cos^2\theta)d\theta\end{equation}
for any $j, k\ge 1$ with $j-k$ even and $\widetilde{M}_{j, k}(x, \alpha)=0$ when $j-k$ is odd. 

Next, we will show \(\widetilde{M}(x,\alpha)\) is trace class, so that $$\widetilde{\Phi}_{\alpha}(x):=\det({\rm I}-\widetilde{M}(x, \alpha))$$ is well defined and  $\widetilde{\Phi}_{\alpha}$ is a distribution function. Moreover, we establish the convergence
\begin{equation}\label{lastverifi}\lim_{n\to\infty}\det ({\rm I}_n-\widetilde{M}^{(n)}(x))=\det({\rm I}-\widetilde{M}(x, \alpha)).\end{equation} 

Although \(\det({\rm I} - \widetilde{M}(x,\alpha))\) does not admit a closed form expression in general, its properties-such as continuity in \(\alpha\)-can be derived from the operator \(\widetilde{M}(x,\alpha)\). Using these properties, we characterize the continuous phase transition across \(\alpha \in (0,\infty)\) without an explicit formula. Specifically, by examining the limits \(\alpha \to +\infty\) and \(\alpha \to 0^+\), we connect the behavior in this intractable regime to the solvable Gaussian and Gumbel limits, thereby completing the proof of the continuous transition.

\subsubsection{$\widetilde{\Phi}_{\alpha}$ is a distribution function} 
 
Due to the decreasingness of $\Psi$ and the fact $|\cos|\le 1,$ we have 
$$\aligned |\widetilde{M}_{j, k}(x, \alpha)|&\lesssim \exp(-\frac{(j-k)^2}{4\alpha}) \Psi(\widetilde{v}_{\alpha}(\frac{j+k}{2}, x)) \\
&\lesssim \exp(-\frac{(j-k)^2}{4\alpha}) \widetilde{v}_{\alpha}^{-1}(\frac{j+k}{2}, x)\exp(-\frac12\widetilde{v}_{\alpha}^2(\frac{j+k}{2}, x)).
	\endaligned 
$$
Using the substitution $p=\frac{j+k}{2}$ and $q=\frac{j-k}{2},$ together with Lemma \ref{sum}, we derive 
\begin{equation}\label{M1norm}\aligned \sum_{1\le k\le j}|\widetilde{M}_{j, k}(x, \alpha)|&\lesssim\sum_{q=0}^{+\infty}\exp(-\frac{q^2}{\alpha}) \sum_{p=1}^{+\infty}\widetilde{v}_{\alpha}^{-1}(p, x)\exp(-\frac12\widetilde{v}_{\alpha}^2(p, x)) \\
&\lesssim\widetilde{v}_{\alpha}^{-1}(1, x)\exp(-\frac12\widetilde{v}_{\alpha}^2(1, x))<+\infty
\endaligned \end{equation} 
uniformly on $\alpha\in (0, N]$ for some finite $N.$
This guarantees that $M(x, \alpha)$ is trace class when $\alpha>0$ fixed and then 
$$\widetilde{\Phi}_\alpha(x):={\rm det}({\rm I}-\widetilde{M}(x, \alpha))$$ is well defined.
By \eqref{M1norm} and the continuity of each \(\widetilde{M}_{j,k}(x,\alpha)\) in \(x\), the map \(x \mapsto \widetilde{M}(x,\alpha)\) is continuous in the trace norm; consequently, \(\widetilde{\Phi}_\alpha\) is continuous. Next, using the expression \eqref{Mxalphajk} and the bound \(|\widetilde{M}_{j,k}(x,\alpha)|\le 1\), together with the dominated convergence theorem, we examine the limits as \(x\to\pm\infty\). As \(x\to -\infty\), \(\Psi(-\infty)=1\), so
\[
\lim_{x\to -\infty} \widetilde{M}_{j,k}(x,\alpha) = \frac{2}{\pi}\exp\!\big(-\frac{(j-k)^2}{4\alpha}\big)\int_0^{\pi/2}\!\cos((j-k)\theta)\,d\theta.
\]
For \(j=k\), the integral equals \(\pi/2\), giving \(\lim\limits_{x\to -\infty} \widetilde{M}_{j,j}(x,\alpha)=1\). For \(j\neq k\), note that \(\widetilde{M}_{j, k}(x,\alpha)=0\) when \(j-k\) is odd, and when \(j-k\) is even and nonzero the integral vanishes because \(\int_0^{\pi/2}\cos(n\theta)d\theta = \sin(n\pi/2)/n =0\) for even \(n\). Hence, \(\lim\limits_{x\to -\infty} \widetilde{M}_{j,k}(x,\alpha)=0\) for all \(j\neq k\). Thus, \(\widetilde{M}(-\infty,\alpha)={\mathrm I}\), the identity matrix, and consequently
\[
\widetilde{\Phi}_\alpha(-\infty)=\det({\mathrm I}-{\mathrm I})=0.
\]
On the other hand, as \(x\to +\infty\), \(\Psi(+\infty)=0\), so
\[
\lim_{x\to +\infty} \widetilde{M}_{j,k}(x,\alpha) = \frac{2}{\pi}\exp\big(-\frac{(j-k)^2}{4\alpha}\big)\int_0^{\pi/2}\!\cos((j-k)\theta)\cdot0\,d\theta = 0
\]
for all \(j,k\ge 1\). Therefore, \(\widetilde{M}(+\infty,\alpha)=\mathbf{0}\), and
\[
\widetilde{\Phi}_\alpha(+\infty)=\det({\mathrm I}-\mathbf{0})=1.
\]
Combined with the monotonicity of \(\widetilde{\Phi}_\alpha\) (which follows from the fact that \(\widetilde{M}(x,\alpha)\) is decreasing in \(x\)), we conclude that \(\widetilde{\Phi}_\alpha\) is a distribution function.

\subsubsection{Verification of \eqref{lastverifi}}
Since \(\widetilde{M}(x,\alpha)\) is trace class, its Fredholm determinant can be approximated by determinants of finite-dimensional truncations:
\[
\det({\rm I} - \widetilde{M}(x,\alpha)) = \lim_{n \to \infty} \det({\rm I}_n - \widehat{M}^{(n
)}(x)),
\]
where \(\widehat{M}^{(n)}(x)\) denotes the \(n \times n\) principal submatrix of \(\widetilde{M}(x,\alpha)\).  
Therefore, \eqref{lastverifi} is equivalent to 
\begin{equation}\label{alphalasttogo}
\det({\rm I}_n - \widetilde{M}^{(n)}(x)) = \det({\rm I}_n - \widehat{M}^{(n)}(x)) + o(1) \quad \text{as } n \to \infty.
\end{equation}

	We first outline the main ideas. By partitioning \(\widetilde{M}^{(n)}(x)\) and \(\widehat{M}^{(n)}(x)\) and focusing on their leading \(r_n \times r_n\) principal submatrices, denoted by \(\widetilde{M}^{(1, n)}(x)\) and \(\widehat{M}^{(1, n)}(x)\) respectively, we show that the complementary blocks are negligible. This follows from the estimates \eqref{1-mjj} and \eqref{CSIneq}. Then, using the block determinant formula, the problem asymptotically reduces to comparing \(\det({\rm I}_{r_n} - \widetilde{M}^{(1, n)}(x))\) and \(\det({\rm I}_{r_n} - \widehat{M}^{(1, n)}(x))\). A perturbation argument shows that their difference tends to zero, thereby establishing the lemma.
	
	We now proceed with the detailed estimates. 
	Set \[
	\widetilde{M}^{(n)}(x) = \begin{pmatrix} \widetilde{M}^{(1, n)}(x) & \widetilde{M}^{(2, n)}(x) \\ (\widetilde{M}^{(2, n)}(x))' & \widetilde{M}^{(3, n)}(x) \end{pmatrix},
	\]
	where $\widetilde{M}^{(1, n)}(x)$ and $\widetilde{M}^{(3, n)}(x)$ are $r_n\times r_n$  and $(n-r_n)\times (n-r_n)$  submatrices, respectively. 
	Applying the block determinant formula yields
	\begin{equation}\label{detprod}\aligned
		\det(\mathrm{I}_n - \widetilde{M}^{(n)}&(x))
		 = \det(\mathrm{I}_{n-r_n} - \widetilde{M}^{(3, n)}(x))\\
		 &\times\det((\mathrm{I}_{r_n} - \widetilde{M}^{(1, n)}(x)) - \widetilde{M}^{(2, n)}(x)(\mathrm{I}_{n-r_n} - \widetilde{M}^{(3, n)}(x))^{-1}(\widetilde{M}^{(2, n)}(x))'). \endaligned 
	\end{equation}
The inequality \eqref{summjjj} ensures 
	\begin{equation}\label{Trnign}\operatorname{Tr}(\widetilde{M}^{(3, n)}(x))=\sum_{j=r_n+1}^{n} \widetilde{M}_{j,j}^{(n)}(x)\ll \exp(-\frac{r^2_n}{3\alpha})\ll 1 \end{equation}
	and then the inequality \eqref{CSIneq} tells 
	$$\aligned
	\|\widetilde{M}^{(3, n)}(x)\|_{\text{HS}}^2\leq (\sum_{j=r_n+1}^{n}
	\widetilde{M}_{j,j}^{(n)}(x))^2\ll\exp\{-\frac{2r^2_{n}}{3\alpha}\}.
	\endaligned$$
	Then, \eqref{E2} helps us to get
	\begin{equation}\label{M3nx}
		\det(\mathrm{I}_{n-r_n} - \widetilde{M}^{(3, n)}(x)) = \exp(-\operatorname{Tr}(\widetilde{M}^{(3, n)}(x)))(1 + o(1)) = 1 + o(\exp\{-\frac{r_n^2}{3\alpha}\}).
	\end{equation}
	Define $$\widetilde{E}(x)=\widetilde{M}^{(2, n)}(x)(\mathrm{I}_{n-r_n} - \widetilde{M}^{(3, n)}(x))^{-1}(\widetilde{M}^{(2, n)}(x))',
	\quad\quad \widetilde{D}(x)=\mathrm{I}_{r_n} - \widetilde{M}^{(1, n)}(x).$$
	We now show that $\det(\widetilde{D}(x)-\widetilde{E}(x)) =\det(\widetilde{D}(x))+o(1)$. 
	
The upper bound \eqref{1-mjj} ensures that \[({\rm I}_{n-r_n} - \widetilde{M}^{(3,n)}(x))^{-1} ={\rm I}_{n-r_n}(1 + o(\exp\{-\frac{r_n^2}{3\alpha}\})).\] Consequently,
\[
\widetilde{E}(x) = \widetilde{M}^{(2,n)}(x) \bigl( \widetilde{M}^{(2,n)}(x) \bigr)'(1 + o(\exp\{-\frac{r_n^2}{3\alpha}\})).
\]
	Set \(\widehat{E}^{(2, n)}(x) = \widetilde{M}^{(2,n)}(x) (\widetilde{M}^{(2,n)}(x) )'\).  Then for $1 \leq j, k \leq r_n$,
	\[\widehat{E}^{(2, n)}_{j, k}(x) = \sum_{i=1}^{n-r_n}\widetilde{M}^{(2,n)}_{j, i}(x) \widetilde{M}^{(2,n)}_{k,i}(x) = \sum_{i=r_n+1}^{n}\widetilde{M}^{(n)}_{j, i}(x) \widetilde{M}^{(n)}_{k,i}(x).\]
	From \eqref{CSIneq}, \eqref{Trnign}, and the bound $|\widetilde{M}_{j,j}^{(n)} |\leq 1$, we obtain
	$$|\widehat{E}^{(2, n)}_{j, k}(x)|\leq  \sum_{i=r_n+1}^{n}| \sqrt{\widetilde{M}_{j,j}^{(n)}(x)\widetilde{M}_{k, k}^{(n)}(x)}\widetilde{M}_{i, i}^{(n)}(x)|\leq \sum_{i=r_n+1}^{n}\widetilde{M}_{i, i}(x)\ll \exp\{-\frac{r_n^2}{3\alpha}\}.$$
	Consequently,
	\begin{equation}\label{E}
		\|\widetilde{E}(x)\|_{\max}:=\max_{j, k}|\widetilde{E}_{j, k}(x)|\ll e^{-\frac{r_n^2}{3\alpha}}. 
	\end{equation}
	Since $\|\widetilde{M}^{(1, n)}(x)\|_{\max}\le 1$ and its diagonal entries are positive,
	\begin{equation}\label{D}
		\max\{\|\widetilde{D}(x)\|_{\max}, \|\widetilde{D}(x)-\widetilde{E}(x)\|_{\max}\}\leq  1.
	\end{equation}
	Combining \eqref{E} and \eqref{D} with the perturbation bound for determinants gives
	$$\aligned 	|\det(\widetilde{D}(x)-\widetilde{E}(x)) - \det(\widetilde{D}(x))| \ll (r_n)! \exp(-\frac{r_n^{2}}{3\alpha}) .
	\endaligned$$
	Stirling's formula leads 
	$$r_n!\exp(-\frac{r_n}{3\alpha})
	\lesssim \sqrt{r_n}\exp\{-\frac{r_n^2}{3\alpha}+r_n\log r_n-r_n\}\ll e^{-\frac{r_n^2}{4\alpha}}\ll1, $$
	which implies 
	\begin{equation}\label{DEdif}\det(\widetilde{D}(x)-\widetilde{E}(x))=\det(\widetilde{D}(x))+o(e^{-\frac{r_n}{4\alpha}}).\end{equation}
	Putting \eqref{M3nx} and \eqref{DEdif} into \eqref{detprod}, we derive 
	$$	\det(\mathrm{I}_n - \widetilde{M}^{(n)}(x))=\det(\mathrm{I}_{j_n} - \widetilde{M}^{(1, n)}(x))+o(e^{-\frac{r_n^2}{4\alpha}}). $$
	Partition \(\widehat{M}^{(n)}(x)\) analogously:
	\[
	\widehat{M}^{(n)}(x) = \begin{pmatrix} \widehat{M}^{(1, n)}(x) & \widehat{M}^{(2, n)}(x)\\ (\widehat{M}^{(2, n)}(x))' & \widehat{M}^{(3, n)}(x) \end{pmatrix},
	\]
	and apply the same analysis to obtain
	\[
	\det\bigl(\mathrm{I}_n - \widehat{M}^{(n)}(x)\bigr) = \det(\widehat{D}(x))+o(e^{-\frac{r_n^2}{4\alpha}}),
	\]		
	where $\widehat{D}(x)={\rm I}_{r_n}-\widehat{M}^{(1, n)}(x).$
	It remains to prove that \[|\det(\widetilde{D}(x))- \det(\widehat{D}(x))| \to 0\] as \(n \to \infty\).	
Setting  
	$$F(x):=\widetilde{D}(x)-\widehat{D}(x),$$
whose entries are all $O(s_n^{-9/10})$ guaranteed by Lemma \ref{M0infty} and then 
	\begin{equation}\label{Fhs}
		\|F(x)\|_{\text{HS}}^2=\sum_{j=1}^{r_n}\sum_{k=1}^{r_n}F_{j,k}^{2}\le r_n^2 \max_{1\leq j,k\leq j_n} F_{j,k}^{2}\lesssim\frac{r_n^2}{s_n^{9/5}}. 
	\end{equation}
	Let $(\widetilde{\lambda}_i)_{1\le i\le r_n}$ and $(\widehat{\lambda}_i)_{1\le i\le r_n}$ be the eigenvalues of $\widetilde{D}(x)$ and $\widehat{D}(x)$, respectively, and set $w_i=\widehat{\lambda}_i-\widetilde{\lambda}_i$ and then
	\begin{equation}\label{deterdiff}\det (\widehat{D}(x))-\det (\widetilde{D}(x))=\prod_{i=1}^{r_n}(\widetilde{\lambda}_{i}+w_i)-\prod_{i=1}^{r_n} \widetilde{\lambda}_{i}.\end{equation} 
	
	Recall that $\phi_{n-j}(z) $ is orthogonal on the complex plane, and
		\[
	\widetilde{M}_{j, k}^{(1,n)}(x) = \int_{\widetilde{A}(x)} \phi_{n-j}(z)\;\overline{\phi_{n-k}(z)}\,  \; d^2 z=\langle \phi_{n-j},\,\phi_{n-k}\rangle_{\widetilde{A}(x)}.
	\]
	Hence,  $\widetilde{M}_{j, k}^{(1,n)}(x)$ is a Gram matrix and thus  positive semidefinite, yielding
	$1-\widetilde{\lambda}_i\geq 0,$ and consequently 
	 $\widetilde{\lambda}_i\leq 1.$
	For any $c\in \mathbb{C}^{r_n},$ set $f(z)=\sum_{j=1}^{r_n} c_{j}\phi_{n-j}(z).$ Using the orthonormality of $\{\phi_{j}\}$, we have
	$$c'\widetilde{D}(x)c=c'({\rm I}_{r_n}-\widetilde{M}_{j, k}^{(1,n)}(x))c=||c||^2-\int_{ \widetilde{A}(x)}|f|^2 \; d^2 z=\int_{\mathbb{C}\backslash \widetilde{A}(x)}|f|^2\; d^2z\geq 0  .$$
	Thus, $\widetilde{D}(x)$ is positive semidefinite, which implies $\widetilde{\lambda}_i\geq0.$ 
Expanding the product \(\prod_{i=1}^{r_n}(\widetilde{\lambda}_i + w_i)\) as a sum over all subsets of \(\{1,\dots, r_n\}\) and applying \eqref{deterdiff} yields
 \begin{equation}\label{dd1}
 \begin{aligned}
 |\det ( \widehat{D}(x))-\det( \widetilde{ D}(x))| &= \Bigl| \sum_{i=1}^{r_n} w_i \prod_{j \neq i}\widetilde{\lambda}_j  + \sum_{i < j} w_i w_j \prod_{k \neq i,j} \widetilde{\lambda}_k + \cdots + \prod_{i=1}^{r_n} w_i \Bigr| \\
&\leq \sum_{i=1}^{r_n} |w_i| + \sum_{i < j} |w_i||w_j| + \cdots + \prod_{i=1}^{r_n} |w_i| \\
 &= \bigl(1+\sum_{i=1}^{r_n} |w_i|\bigr)^{r_n} - 1.
 \end{aligned}
\end{equation}
Both $\widetilde{D}(x)$ and $\widehat{D}(x)$ are symmetric, Weyl's inequality together with \eqref{Fhs} and  \(r_n = \lfloor s_n^{1/10}\rfloor\) gives
	$$|w_i|=|\widehat{\lambda}_i-\widetilde{\lambda}_{i}|\leq \|F(x)\|_{\text{HS}}\lesssim\frac{r_n}{s_n^{9/10}}\ll 1.$$
Consequently, 
	$$(1+\sum_{i=1}^{r_n} |w_i|)^{r_n}-1=e^{r_n\log (1+\sum_{i=1}^{r_n} |w_i|)}-1\asymp r_n \sum_{i=1}^{r_n} |w_i|\lesssim \frac{r_n^2}{s_n^{9/10}}=s_n^{-7/10}\ll 1,$$ which is putting back to \eqref{dd1} to ensure 
	\[
	|\det(\widehat{ D}(x)) - \det(\widetilde{D}(x))| \ll 1.
	\]
	This finishes the proof.

 \section{Verification of the continuous transition}\label{sec:continuous trans}    
 In this section, we provide the verification of the continuous transition of  $\Phi_{\alpha}$ and $\widetilde{\Phi}_{\alpha}$ for $\alpha\to\infty$ and $\alpha\to 0^+.$
 \subsection{Proof of the Continuous Transition of $\Phi_{\alpha}$} Recall 
  $$\Phi_{\alpha}(x)=\prod_{j=1}^{+\infty}\Phi(v_{\alpha}(j, x)),$$
  where $v_{\alpha}(j, x)=\frac{j-1}{\sqrt{\alpha}}+a+b x$ and 
  \begin{equation} \label{abform}a = \sqrt{\log(\alpha + 1)} - \frac{\log\big(\sqrt{2\pi} \log\big(\alpha + e^{1/\sqrt{2\pi}}\big)\big)}{\sqrt{\log(\alpha + e)}}, \quad 
b = \frac{1}{\sqrt{\log(\alpha + e)}}.\end{equation} To emphasize the dependence of $a$ and $b$ on $\alpha,$ denote $a=a(\alpha)$ and $b=b(\alpha).$ 
\subsubsection{The case where $\alpha\to 0^+$}

Given that $0<\alpha\ll 1,$ then when $|x|\leq \alpha^{-1/10}$, we have 
  $$v_{\alpha}(j, x)\gg 1, \quad \,\forall \;j\ge 2.$$ Thus, Mills's ratio again implies 
  $$1-\Phi(v_{\alpha}(j, x))=\frac{1+o(1)}{\sqrt{2\pi} v_{\alpha}(j, x)}\exp(-\frac{1}{2}v_{\alpha}^2(j, x)),$$ whence it follows from Lemma \ref{sum} that 
 
  $$\aligned \sum_{j=2}^{+\infty} (1-\Phi(v_{\alpha}(j, x)))
  &\lesssim \sum_{j=2}^{+\infty}\frac{1}{v_{\alpha}(j, x)}\exp(-\frac{1}{2}v_{\alpha}^2(j, x))\\
 & \lesssim \frac{1}{v_{\alpha}(2, x)}\exp(-\frac{1}{2}v_{\alpha}^2(2, x))\ll \sqrt{\alpha}e^{-\frac{1}{3\alpha}}
  .
  \endaligned $$
The last inequality holds because $ v_{\alpha}(2, x)=\alpha^{-1/2}(1+o(1))$ for $|x|\leq \alpha^{-1/10}.$ Thus,
  	\begin{equation}\label{56}
  	\prod_{j=2}^{+\infty}\Phi(v_{\alpha}(j, x))=\exp\{-\sum_{j=2}^{+\infty} (1-\Phi(v_{\alpha}(j, x)))
  	(1+o(1))\}=1+o(\sqrt{\alpha}e^{-\frac{1}{3\alpha}}).
  	\end{equation} 
  	 Similarly as \eqref{alpha0}, we have from \eqref{abform} that 	 
  $$	 a(\alpha) = \sqrt{\alpha}+O(\alpha)\quad \text{ and} \quad 
  	b(\alpha) =1+O(\alpha),$$
 which implies $$v_\alpha(1,x)=a(\alpha)+b(\alpha)\;x=x+\sqrt{\alpha}+O((|x|+1)\alpha)$$
  	  uniformly on $|x|\leq \alpha^{-1/10}.$ We apply again Taylor's expansion to obtain 
  	  $$\aligned\Phi(v_\alpha(1,x))
  	  &=\Phi(x)+\sqrt{\alpha}\phi(x)+O(\alpha),
  	  \endaligned$$
  	  where the last equality is due to the boundedness of $x\phi(x).$ Taking account of \eqref{56}, we get
  	  $$\Phi_{\alpha}(x)=\Phi(x)+\sqrt{\alpha}\phi(x)+O(\alpha). $$
  	By analogy with equations \eqref{01} and \eqref{02}, we  conclude that
  	 $$\sup_{|x|>\alpha^{-1/10}}|\Phi_{\alpha}(x)-\Phi(x)|\lesssim1-\Phi(\alpha^{-1/10})\lesssim \alpha^{1/10}\exp(-\frac{1}{2\alpha^{1/5}})\ll \sqrt{\alpha}.$$
  	 Therefore,
  	  $$\sup_{x\in\mathbb{R}}|\Phi_{\alpha}(x)-\Phi(x)|=\sup_{x\in\mathbb{R}}\sqrt{\alpha}\phi(x)(1+o(1))=\sqrt{\frac{\alpha}{2\pi}}(1+o(1)).$$
  \subsubsection{{The case $\alpha\to+\infty$ }}	  Since $a(\alpha)\gg 1,$ then for any
  	  $j\geq 1$ and $|x|\leq 2\log\log\alpha,$ we have
  	  $$v_\alpha(j, x)\ge  v_{\alpha}(1, x)\gg 1 ,$$ 
  	  which, together with $\log(1+t)=t+O(t^2)$ for $|t|$ small enough, implies
  	 	\begin{equation}\label{proPhi}
  	 \aligned	\prod_{j=1}^{+\infty}\Phi(v_{\alpha}(j, x))&=\exp\{-(1+O(\Psi(v_{\alpha}(1, x))))\sum_{j=1}^{+\infty} \Psi(v_{\alpha}(j, x))
  	 	\}\\
  	 	&= \exp\{-(1+O(\Psi(v_{\alpha}(1, x))))\sum_{j=1}^{+\infty}\frac{1}{\sqrt{2\pi}v_{\alpha}(j, x)}\exp(-\frac{1}{2}v_{\alpha}^2(j, x))\}.
  	 	\endaligned
  	 \end{equation} 
  	  Lemma \ref{sum} ensures that 
  	  $$ \sum_{j=1}^{+\infty}\frac{1}{\sqrt{2\pi}v_{\alpha}(j, x)}\exp(-\frac{1}{2}v_{\alpha}^2(j, x))=\frac{\sqrt{\alpha}(1+O((\log  \alpha)^{-1}))}{\sqrt{2\pi}v^{2}_{\alpha}(1, x)}\exp(-\frac{1}{2}v_{\alpha}^2(1, x)).$$
  	  
  	  Let $\ell_{\alpha}=\log(\sqrt{2\pi} \log(\alpha + e^{\frac1{\sqrt{2\pi}}})).$ Similar to the equations \eqref{e4} and \eqref{e3}, the following expressions for $v_\alpha(1,x)$ are given:
  	  	\begin{equation}
  	  	\aligned v_\alpha(1,x)&=\sqrt{\log (\alpha+e)}+\frac{x-\ell_{\alpha}}{\sqrt{\log (\alpha+e)}}+O((\log \alpha)^{-1/2}\alpha^{-1}) ;\\
  	  		  	v_\alpha^2(1, x)&=\log (\alpha+e)-2\ell_{\alpha}+2 x+\frac{(x-\ell_{\alpha})^{2}}{\log (\alpha+e)}+O(\alpha^{-1});
  	  	\\
  	  	\exp(-\frac{1}{2}v_\alpha^2(1, x))
  	  	&=\sqrt{\frac{2\pi  }{\alpha}} \log \alpha \exp(-x-\frac{(\ell_{\alpha}-x)^{2}}{2\log(\alpha+e)})(1+O(\alpha_n^{-1})). \endaligned 
  	  \end{equation}
Now the expression inside the exponential of \eqref{proPhi} denoted by $\beta_{\alpha}(x)$ satisfies  $$ \beta_{\alpha}(x)=\frac{1+O((\log  \alpha)^{-1})}{(1+\frac{x-\ell_{\alpha}}{\log (\alpha+e)})^{2}}\exp(-x-\frac{(x-\ell_{\alpha})^2}{2\log  (\alpha+e)}),$$
whence $$\aligned |\beta_\alpha(x)+e^{-x}|=\frac{e^{-x}(1+O((\log \alpha)^{-1}))}{(1+\frac{x-\ell_{\alpha}}{\log (\alpha+e)})^{2}}|\frac{(x-\ell_{\alpha})^2}{2\log(\alpha+e)}+\frac{2(x-\ell_{\alpha})}{\log(\alpha+e)}|=o(1) \endaligned $$
  	  uniformly on $|x|\leq 2\log\log \alpha.$
  	  The same calculus as in Theorem 1 for the case $\alpha=+\infty$ yields  $$\sup_{x\in \mathbb{R}}|\Phi_{\alpha}(x)-e^{-e^{-x}}|= \frac{(\log \log \alpha)^{2}}{2e\log \alpha}(1+o(1))\quad \text{for $\alpha\gg 1$}.$$  
  	  The proof is then completed.

 \subsection{Proof of the Continuous Transition of $\widetilde{\Phi}_{\alpha}$}
 Recall $$\widetilde{\Phi}_{\alpha}(x)={\rm det}(\mathrm{I}-\widetilde{M}(x, \alpha)),$$
 where $\widetilde{M}(x, \alpha)=(\widetilde{M}_{j, k}(x, \alpha))_{j, \, k\ge 1}$ with 
 \begin{equation}\label{Mjkxalpha}
 	\widetilde{M}_{j, k}(x, \alpha)=\exp(-\frac{(j-k)^2}{4\alpha})\frac{2}{\pi}\int_0^{\frac{\pi}{2}}\cos((j-k)\theta)\Psi(\widetilde{v}_{\alpha}(\frac{j+k}2, x)-\sqrt{\alpha}\log \cos^2\theta)d\theta\end{equation} for $j, k\ge 1$ with $j-k$ even and zero if $j-k$ odd. 
  Here, $\widetilde{v}_{\alpha}(j, x)=\frac{j-1}{\sqrt{\alpha}}+\widetilde{a}(\alpha)+\widetilde{b}(\alpha) x$ with $\widetilde{b}(\alpha) = \frac{\sqrt{2}}{\sqrt{\log(\alpha+e^2) }}$ and 
$$
\widetilde{a}(\alpha) =\sqrt{\frac{\log(\alpha+1) }{2}} - \frac{\sqrt{2}\big(\log (2^{-\frac{3}{4}}\pi)+\frac{5}{4}\log \log(\alpha+e^{2^{3/5}\pi^{-4/5}})\big)}{\sqrt{\log(\alpha +e^{2})}}.
$$
   
\subsubsection{{For the case $\alpha\to 0^+$}} 
First, the expression \eqref{M1norm} guarantees that $\widetilde{M}(x, \alpha)$ is trace class uniformly for $0< \alpha\le 1.$ 
Hence, the elementary property of Fredholm determinant, together with the fact $\widetilde{M}_{j, k}(x, \alpha)$ is continuous on $\alpha,$  implies 
$$\lim_{\alpha\to 0^+}{\rm det}(\mathrm{I}-\widetilde{M}(x, \alpha))={\rm det}(\mathrm{I}-\lim_{\alpha\to 0^+}\widetilde{M}(x, \alpha)).$$   
Now, we check the infinite dimensional matrix $\widetilde{M}(x, 0):=\lim_{\alpha\to 0^+}\widetilde{M}(x, \alpha).$ 
We see clearly from \eqref{Mjkxalpha}  that 
$\lim_{\alpha\to 0^+}\widetilde{M}_{j, k}(x, \alpha)=0$ if $j\neq 1$ or $k\neq 1$ and $$\lim_{\alpha\to 0^+}\widetilde{M}_{1, 1}(x, \alpha)=\lim_{\alpha\to 0^+}\Psi(\widetilde{a}(\alpha)+\widetilde{b}(\alpha) x)=\Psi(x).$$
Here, we use the fact that $\lim_{\alpha\to 0^+}\widetilde{a}(\alpha)=0$ and $\lim_{\alpha\to 0^+}\widetilde{b}(\alpha)=1.$ Thus,
 $${\rm det}(\mathrm{I}-\widetilde{M}(x, 0))=1-\widetilde{M}_{1, 1}(x, 0)=1-\Psi(x)=\Phi(x).$$ 
 That is  
 $$\lim_{\alpha\to 0^+}{\rm det}(\mathrm{I}-\widetilde{M}(x, \alpha))=\Phi(x)$$ 
 for any $x\in\mathbb{R}.$ 
  \subsubsection{{The case $\alpha\to+\infty$}}  

Using the substitution $t=-\log\cos^2\theta$ in  \eqref{Mjkxalpha} gives  
 	\begin{equation}\label{1-M}
 		\aligned
 		\widetilde{M}_{j, j}(x, \alpha)&=\frac{1}{\pi}\int_{0}^{+\infty}\frac{\Psi(\widetilde{v}_{\alpha}(j,x)+\sqrt{\alpha}t)}{\sqrt{e^t-1}}dt.
 		\endaligned
 	\end{equation}
Comparing this expression with the expression \eqref{widecn}, one sees that the term $g_n(j, x, t)$ in \eqref{widecn} is replaced by  $\Psi(\widetilde{v}_{\alpha}(j,x)+\sqrt{\alpha}t).$ While examining the proof of \eqref{cn1}, we know the key ingredient is approximating $g_n(j, x, t)$ by
$\frac{e^{-\frac{h^{2}_n(j, x,t)}{2}}}{\sqrt{2\pi}h_n(j, x,t)},$ which indeed could be regarded as $\Psi(h_n(j, x,t)).$ Therefore, following the same reasoning line for \eqref{cn1}, we derive similarly 
  \begin{equation}\label{Mjjinf}\aligned
 \widetilde{M}_{j, j}(x, \alpha)
 &=\frac{\exp(-\frac12\widetilde{v}_{\alpha}^2(j, x))}{\sqrt{2}\pi \, \alpha^{1/4} \, \widetilde{v}_{\alpha}^{\,3/2}(j, x)}(1+o(1)).	\endaligned 
 \end{equation}
 The asymptotic identities \eqref{finalreal} and \eqref{Trwidefinal} work for $\alpha $ large enough to guide \begin{equation}\label{sumMjjinf} \sum_{j=1}^{\infty}\frac{e^{-\frac{\widetilde{v}^{2}_{\alpha}(j, x)}{2}}}{\sqrt{2}\pi \alpha^{1/4}\widetilde{v}^{3/2}_{\alpha}(j, x)}=\frac{\alpha^{1/4}(1+o(1))}{\sqrt{2}\pi\widetilde{v}^{5/2}_{\alpha}(1, x)}\exp(-\frac{1}{2}\widetilde{v}^{2}_{\alpha}(1, x))=e^{-x}+o(1),\end{equation} 
which is  put into \eqref{Mjjinf} to bring   \begin{equation}\label{tracealphainf} {\rm Tr}(\widetilde{M}(x, \alpha))=\sum_{j=1}^{+\infty}\widetilde{M}_{j, j}(x, \alpha)= (1+o(1)) \, e^{-x}\end{equation} 
 as $\alpha\to+\infty, $  which eventually leads 
 $$\lim_{\alpha\to+\infty} {\rm det}({\mathrm I}-\widetilde{M}(x, \alpha))=\exp(-\lim_{n\to\infty}{\rm Tr}(\widetilde{M}(x, \alpha)))=\exp(-e^{-x})$$ 
 once $$\|\widetilde{M}(x, \alpha)\|_{\rm HS}^2=\sum_{j, k}\widetilde{M}_{j, k}^2(x, \alpha)\ll 1.$$ 
  
In fact, Lemmas \ref{Mjjlem} and \ref{M0infty} give 
  
$$\widetilde{M}_{j, k}^2(x, \alpha)\le \exp(-\frac{(j-k)^2}{2\alpha}) \widetilde{M}^2_{\frac{j+k}{2}, \frac{j+k}{2}}(x, \alpha)$$ and then \eqref{Mjjinf} enhances this upper bound as  
$$
\widetilde{M}_{j, k}^2(x, \alpha)\lesssim  \frac{1}{\sqrt{\alpha}\widetilde{v}_{\alpha}^3(\frac{j+k}{2}, x)}\exp(-\widetilde{v}_{\alpha}^2(\frac{j+k}{2}, x))\exp(-\frac{(j-k)^2}{2\alpha}).$$
Using the substitution $p=\frac{j+k}2$ and $q=\frac{j-k}{2}$ again, we have
$$\aligned \sum_{j, k} \widetilde{M}_{j, k}^2(x, \alpha)&\lesssim \frac{1}{\sqrt{\alpha}\widetilde{v}_{\alpha}^3(1, x)}\sum_{q=0}^{+\infty}\exp(-\frac{q^2}{2\alpha})\sum_{p=1}^{+\infty}\exp(-\widetilde{v}_{\alpha}^2(p, x)).\endaligned $$ 
Applying Lemma \ref{sum} on the two summations, we see 
\begin{equation}\aligned
\sum_{j, k}\widetilde{M}_{j, k}^2(x, \alpha)&\lesssim \frac{\sqrt{\alpha}}{\widetilde{v}_{\alpha}^3(1, x)}\exp(-\widetilde{v}_{\alpha}^2(1, x))\ll 1,
	\endaligned
\end{equation}
where we use the fact that $$\exp(-\widetilde{v}_{\alpha}^2(1, x))\asymp \exp(-\widetilde{a}^2 (\alpha))\asymp \exp(-\frac{\log \alpha}{2})=\alpha^{-1/2}.$$
The proof is completed now. 
\subsubsection{The Convergence Rate of $\widetilde{\Phi}_{\alpha}$} In verifying the continuous transition of \(\widetilde{\Phi}_{\alpha}\), we did not focus on the details needed to capture the convergence rate. We now briefly explain the approach and state the result directly.

For the case \(\alpha \to +\infty\), \eqref{sumMjjinf}, together with the definition of \(\widetilde{v}^{2}_{\alpha}(1, x)\), yields
\[
\operatorname{Tr}(\widetilde{M}(x, \alpha)) = \exp\Bigl\{-x - \frac{(\widetilde{\ell}_{1}(\alpha)-x)^{2}}{\log\alpha}\Bigr\}\Bigl(1 + O\Bigl(\frac{\log\log\alpha}{\log\alpha}\Bigr)\Bigr),
\]
where \(\widetilde{\ell}_{1}(\alpha) = \frac{5}{4}\log \log(\alpha_n + e^{2^{3/5}\pi^{-4/5}})\). Moreover, a more accurate upper bound,
\[
\|\widetilde{M}(x, \alpha)\|_{\rm HS} \ll e^{-x} \frac{(\log\log \alpha)^2}{\log \alpha},
\]
holds uniformly on some interval, which can be established by an argument analogous to that used in the Appendix for \(\|\widetilde{M}^{(n)}(x)\|_{\rm HS}\). Consequently, by following the same reasoning as in Section 3 for \(\mathbb{P}(\widetilde{X}_n \le x)\) in the regime \(\alpha_n \to +\infty\), we obtain the convergence rate
\[
\lim_{\alpha\to+\infty}\frac{\log\alpha}{(\log\log\alpha)^2}\sup_{x\in\mathbb{R}}\bigl|\widetilde{\Phi}_{\alpha}(x)-\exp(-e^{-x})\bigr| = \frac{25}{16e}.
\]

For the case \(\alpha \to 0^+\), a more refined analysis based on \eqref{1-M} yields
\[
\widetilde{M}_{1, 1}(x, \alpha) = \Psi(x) - \frac{\sqrt{2}+4\ln2}{2}\sqrt{\alpha}\,\phi(x) + O(\alpha),
\]
along with the estimates
\[
\begin{aligned}
&\sum_{j\neq k\ge 1}\frac{\widetilde{M}_{j, k}^2(x, \alpha)}{(1-\widetilde{M}_{j, j}(x, \alpha))(1-\widetilde{M}_{k, k}(x, \alpha))} \lesssim \alpha^{\frac32}, \\
&\prod_{j=2}^{+\infty}(1-\widetilde{M}_{j, j}(x, \alpha)) = 1 + O(\alpha).
\end{aligned}
\]
Therefore,
\[
\widetilde{\Phi}_{\alpha}(x) = \det\bigl({\rm I} - \widetilde{M}(x, \alpha)\bigr) = \Phi(x) + \frac{\sqrt{2}+4\ln2}{2}\sqrt{\alpha}\,\phi(x) + O(\alpha).
\]
While the above argument is presented for fixed \(x\in\mathbb{R}\), it remains valid on a suitable central interval, leading to the uniform convergence rate
\[
\lim_{\alpha\to 0^+}\frac{1}{\sqrt{\alpha}}\sup_{x\in\mathbb{R}}\bigl|\widetilde{\Phi}_{\alpha}(x) - \Phi(x)\bigr| = \frac{\sqrt{2}+4\ln2}{2\sqrt{2\pi}}.
\]
The whole proof is completed. 

\section{Appendix}\label{sec:appendix} 
In this section, we provide proofs of some key equations, lemmas and remarks. 
\subsection{Proof of the Lemmas in section \ref{sec:alphainfinity}} We first give the proofs of the lemmas in the third section. 
\subsubsection{Proof of \eqref{equaforsumn} and \eqref{a83}}
Recall that
	$$u_n(j, x) = \frac{j-1}{\sqrt{\alpha_n}} + a_n + b_n x,$$
$\alpha_n\gg 1$	and with $\ell := n-j+1$,
	$$A_{\pm \epsilon}:=\{\sum_{r=1}^{k_n}\frac{S_{\ell,r}-\ell}{\ell}>k_n(\psi (n)-\log \ell)+\frac{a_n+b_nx}{\sqrt{\alpha_n}}\pm\varepsilon\}.$$
We now prove the following estimates. 
	For any $x$ and $j$ such that $1 \ll u_n(j, x) \ll n^{1/6},$ 
	$$\mathbb{P} (A_{\pm\alpha_n^{-3/5}})\le \frac{1}{u_n(j,x)}e^{-\frac{3u^{2}_n(j, x)}{8}}. 
 $$
	Furthermore, for $x$ and $j$ such that $1\ll u_n(j, x)\lesssim \sqrt{\log \alpha_n},$  
	\begin{equation}\label{z}
	\mathbb{P} (A_{\pm\alpha_n^{-3/5}})
		=\frac{1+O(u_n^{-2}(j, x))}{\sqrt{2\pi}u_n(j, x)}e^{-\frac{u^{2}_n(j, x)}{2}}. 
	\end{equation}
Once these are established, equations \eqref{equaforsumn} and \eqref{a83} follow directly under their respective conditions.

	Let $\{\xi_i\}$ be i.i.d. random variables obeying an exponential distribution with parameter  $1$ and then $$\sum_{r=1}^{k_n} S_{\ell, \,r}\stackrel{d}{=}\sum_{i=1}^{\ell k_n}\xi_i.$$   
	It follows that
	$$\aligned 
	&\mathbb{P} ( \sum_{r=1}^{k_n} (\frac{S_{\ell,r}}{\ell}-1) >k_n(\psi (n)-\log \ell)+\frac{a_n+b_n x}{\sqrt{\alpha_n}}\pm \alpha_n^{-\frac{3}{5}})=\mathbb{P} (\frac{\sum_{i=1}^{\ell k_n} (\xi_i-1)}{\sqrt{\ell k_n}}  >\widehat{u}_n(j,x)),
	\endaligned $$
	where $$ \widehat{u}_n(j,x)=\sqrt{\ell k_n} (\psi (n)-\log \ell) + \sqrt{\frac{\ell}{n}}(a_n+b_n x) \pm\sqrt{\frac{\ell}{n}} \alpha_n^{-\frac1{10}}.$$ 
	Since $j\ll n^{2/3}$ and $k_n\ll n,$ we know $\sqrt{\ell}=\sqrt{n}(1+O((j-1) n^{-1}))$ and then  we have 
	\begin{equation}\label{tildeuu}\aligned 
		\widehat{u}_n(j, x)&=(\frac{j-\frac{3}{2}}{\sqrt{\alpha_n}} + a_n+b_nx\pm \alpha_n^{-\frac1{10}})(1+O(\frac{m}{n}))\\
		&=u_n(j, x)(1+O(\alpha_n^{-\frac{1}{10}}u_n^{-1}(j,x)+\frac{j-1}{n}))\\
		&=u_n(j, x)(1+o(1)).
		\endaligned \end{equation} 
	For $1\ll u_n(j, x)\ll n^{\frac{1}{6}},$ the same holds for $\widehat{u}_n(j, x).$ The 
	Theorem 1  from \cite{Petrov1975}  entails that
	\begin{equation}\label{prec}\mathbb{P} (\frac{\sum_{i=1}^{\ell k_n} (\xi_i-1)}{\sqrt{\ell k_n}}  >\widehat{u}_n(j, x))=(1-\Phi(\widehat{u}_n(j, x)))(1+O((nk_n)^{-1/2}u_n^{3}(j, x))).\end{equation}
	Recall the Mills ratio
	$$	 1 - \Phi(t) = \frac{1}{\sqrt{2\pi}\,t} e^{-t^2/2} \left(1 + O(t^{-2})\right)$$ for $t\gg 1.$
	Eventually, uniformly on $1 \ll u_n(j, x) \ll n^{1/6},$ we have from \eqref{tildeuu} and \eqref{prec}  that 
	$$\aligned \mathbb{P} (\frac{\sum_{i=1}^{\ell k_n} (\xi_i-1)}{\sqrt{\ell k_n}}  >\widehat{u}_n(j,x))&=\frac{1+O(u_n^{-2}(j, x)+(nk_n)^{-1/2}u_n^{3}(j,x))}{\sqrt{2\pi}\widehat{u}_n(j,x)}e^{-\frac{\widehat{u}^{2}_n(j, x)}{2}}\\
	&\leq \frac{e^{-\frac{3u^{2}_n(j, x)}{8}}}{u_n(j,x)}.\endaligned$$
	Particularly, if $1\ll u_n(j, x)\lesssim \sqrt{\log \alpha_n},$ which implies $j\lesssim \sqrt{\alpha_n\log\alpha_n},$ then $$u_n^{2}(j,x)(\alpha_n^{-\frac{1}{10}}u_n^{-1}(j,x)+\frac{j-1}{n})\lesssim u_n(j, x) \alpha_n^{-1/10}=o(1).$$ Thereby, 
	$$\exp(-\frac{1}{2}\widehat{u}_n^2(j, x))=\exp(-\frac12 u_n^2(j, x))(1+O(u_n(j, x)\alpha_n^{-1/10}))$$ and 
	$$\frac{1+O(u_n^{-2}(j, x)+(nk_n)^{-1/2}u_n^{3}(j,x))}{\widehat{u}_n(j,x)}=\frac{1+O(u_n^{-2}(j, x))}{u_n(j, x)}.$$ 
	Thus, we have 
	$$\mathbb{P}(\frac{\sum_{i=1}^{\ell k_n} (\xi_i-1)}{\sqrt{\ell k_n}}  >\widehat{u}_n(j,x))=\frac{1+O(u_n^{-2}(j, x))}{\sqrt{2\pi} u_n(j, x)} e^{-\frac{1}{2}u_n^2(j, x)}.$$
	The proof is then completed.  
	
	\subsubsection{Proof of Lemma \ref{int}}
We are going to prove that  
		$$ \int_{0}^{v}\frac{1}{\sqrt{s}(h+s)}e^{-\frac{(h+s)^{2}}{2}}ds=\sqrt{\frac{\pi}{ h^{3}}}e^{-\frac{ h^{2}}{2}}(1+O(h^{-2}))$$
for any $h, v$ satisfying  $1\ll h$ and $\frac{4}{ h}\log h\le v.$ 

Dominating the integrand by $v^{-3/2} \exp(-\frac{(h+s)^2}{2})$ when $s\ge v,$ it is ready to see from $1\ll h$ that 
\begin{equation}
\label{righte}	
\int_{v}^{+\infty}\frac{1}{\sqrt{s}(h+s)}e^{-\frac{(h+s)^{2}}{2}}ds\le v^{-3/2} \int_{v+h}^{+\infty} e^{-\frac{t^2}{2}}dt\lesssim v^{-3/2}(v+h)^{-1} e^{-(v+h)^2/2}.\end{equation}
Now we consider the integral on $[0, +\infty),$ which can be rewritten as $$ \aligned \int_{0}^{+\infty}\frac{1}{\sqrt{s}(h+s)}e^{-\frac{(h+s)^{2}}{2}}ds&=h^{-1/2} e^{-h^2/2}\int_{0}^{+\infty}\frac{1}{\sqrt{t}(1+t)}e^{-h^2t} e^{-h^2t^2/2}dt. \endaligned $$
The elementary inequality tells 
$$1-\frac{h^2t^2}{2}-t \le \frac{1}{1+t} e^{-h^2t^2/2}\le 1$$ for any $t>0,$ whence 
$$\int_{0}^{+\infty}\frac{1}{\sqrt{t}}e^{-h^2t}(1-t-\frac{h^2t^2}{2})dt\le \int_{0}^{+\infty}\frac{1}{\sqrt{t}(1+t)}e^{-h^2t} e^{-t^2/2}dt\le \int_{0}^{+\infty}\frac{1}{\sqrt{t}}e^{-h^2t} dt.$$
The property  of Gamma function derives 
	$$\int_{0}^{+\infty}\frac{1}{\sqrt{t}}e^{-h^2 t}dt=h^{-1}\Gamma(\frac{1}{2})=\frac{\sqrt{\pi}}{h}$$
	and similarly 
	$$\int_{0}^{+\infty}(\frac12 h^2t^{3/2}+\sqrt{t})e^{-h^2 t} dt=\frac{7\sqrt{\pi}}{8h^3}. $$
Thus, we have 
$$\int_{0}^{+\infty}\frac{1}{\sqrt{t}(1+t)}e^{-h^2t} e^{-t^2h^2/2}dt=\frac{\sqrt{\pi}}{h}(1+O(h^{-2})),$$ which ensures 
\begin{equation}
\label{lefte}	
\int_{0}^{+\infty}\frac{1}{\sqrt{s}(h+s)}e^{-\frac{(h+s)^{2}}{2}}ds=\frac{\sqrt{\pi}}{h^{3/2}} e^{-\frac{h^2}{2}}(1+O(h^{-2})).\end{equation}
Leveraging \eqref{righte} and \eqref{lefte}, once \begin{equation}
\label{conditionvh} v^{-3/2}(v+h)^{-1} e^{-(v+h)^2/2}\lesssim (h^{-3/2}e^{-h^2/2}) h^{-2},\end{equation} the proof is completed. The requirement  \eqref{conditionvh} is verified because the conditions $\frac{4}{h}\log h\le v$ and $1\ll h$ indicate
$$\log \frac{v^{-3/2}(v+h)^{-1} e^{-(v+h)^2/2}}{h^{-7/2}e^{-h^2/2}}\lesssim \frac{3}{2}\log (v^{-1})+\frac{5}{2}\log h-h v\le 4\log h-\frac32\log\log h-h v,$$ 
which tends to $-\infty$. The proof is complete. 

\subsubsection{Proof of Lemma \ref{Mhslem}} 
Recall the context of Lemma \ref{Mhslem}:  
\begin{equation}\label{wideMHSup}
\|\widetilde{M}^{(n)}(x)\|_{\rm HS}^2 = \sum_{1\le j,k\le n} \bigl(\widetilde{M}_{j,k}^{(n)}(x)\bigr)^2 \ll e^{-x} \frac{(\log\log \alpha_n)^2}{\log\alpha_n}
\end{equation}
holds uniformly for \(-\widetilde{\ell}_{1,\infty}(n) \le x \le \widetilde{\ell}_{2,\infty}(n)\), where  
\[
\widetilde{\ell}_{1,\infty}(n) = \frac12 \log\log\alpha_n,\qquad 
\widetilde{\ell}_{2,\infty}(n) = \frac54 \log\log\bigl(\alpha_n + e^{2^{3/5}\pi^{-4/5}}\bigr),
\]  
and \(\alpha_n \gg 1\).

With
\[
j_n = \Bigl\lfloor \frac15 \sqrt{\alpha_n \log \alpha_n}\Bigr\rfloor,\qquad 
t_n = \bigl\lfloor 8\sqrt{\alpha_n \log \alpha_n}\bigr\rfloor,
\]  
we cut the summation range in \eqref{wideMHSup} into several parts to obtain the upper bound.

Use the substitution $q:=\frac{j-k}{2}$ and $p:=\frac{j+k}{2}.$ Let $S_1$ be the sum in \eqref{wideMHSup} for $p\ge t_n,$ and the monotonicity of $\widetilde{M}^{(n)}_{j, j}(x)$ on $j$ and the estimate in \eqref{cn3} give
\begin{equation}\label{s1}\aligned 
	{\rm S}_{1}\le n^2( \widetilde{M}^{(n)}_{t_n, t_n}(x))^2\leq n^{-6}\ll e^{-2x}\frac{(\log\log \alpha_n)^4}{(\log \alpha_n)^2}. 
	\endaligned 
\end{equation}
Now $S_2$ is the corresponding sum when $j_n+1\le p<t_n,$ and we have   
by \eqref{cn2} and Lemma \ref{sum} that  
$$\aligned 
	{\rm S}_{2}&\lesssim \sum_{p=j_n+1}^{t_n}p \big(\frac{e^{-\frac{3\widetilde{u}^{2}_n(p, x)}{4}}}{\alpha_n^{1/2}\widetilde{u}^{3}_n(p, x)}+n^{-8/5} \big)\lesssim \sum_{p=j_n+1}^{t_n}\frac{1}{\widetilde{u}^{2}_n(p, x)}e^{-\frac{3\widetilde{u}^{2}_n(p, x)}{4}}+t_nn^{-8/5}\\
	&\lesssim\frac{\sqrt{\alpha_n}}{\widetilde{u}^{2}_n(j_n+1, x)}e^{-\frac{3\widetilde{u}^{2}_n(j_n+1, x)}{4}}+\sqrt{\log\alpha_n}\,\alpha_n^{-11/10}.
	\endaligned 
$$
Here, the second inequality is due to $\widetilde{u}_n(j, x)=\frac{j-1}{\sqrt{\alpha_n}}+\widetilde{a}_n+\widetilde{b}_n x\ge \frac{j-1}{\sqrt{\alpha_n}}.$ Now 
$$\aligned \widetilde{u}_n(j_n+1,x)&=\frac{5\sqrt{2}+2}{10}\sqrt{\log \alpha_n}(1+o(1)); \\
\widetilde{u}^{2}_n(j_n+1, x)&=\frac{54+20\sqrt{2}}{100}\log\alpha_n(1+o(1))\geq \frac{4}{5}\log\alpha_n,
\endaligned  $$
which implies $$\frac{\sqrt{\alpha_n}}{\widetilde{u}^{2}_n(j_n+1, x)}e^{-\frac{3\widetilde{u}^{2}_n(j_n+1,x)}{4}}\le \alpha_n^{-1/10}(\log \alpha_n)^{-1}.$$ 
And also, $\widetilde{\ell}_{2,\infty}(n)=\frac{5}{4}\log \log(\alpha_n+e^{2^{3/5}\pi^{-4/5}})$ helps us to derive 
$$\frac{\alpha_n^{-1/10}(\log \alpha_n)^{-1}}{e^{-2x} \frac{(\log\log \alpha_n)^4}{(\log \alpha_n)^2}}\le \exp(2\widetilde{\ell}_{2,\infty}(n))\frac{\log \alpha_n}{\alpha_n^{1/10}(\log\log\alpha_n)^4}\ll 1.$$
As a direct consequence, it follows 
\begin{equation}\label{s2}
S_2\ll e^{-2x} 	\frac{(\log\log \alpha_n)^4}{(\log \alpha_n)^2}.
\end{equation}
We continue to tear an easier part  defined by 
$$S_3:=\sum_{0\le q< \alpha_n^{2/5}\wedge p, 1\le p\le j_n} (\widetilde{M}_{j, k}^{(n)}(x))^2\le \alpha_n^{2/5}\sum_{p=1}^{j_n} (\widetilde{M}_{p, p}^{(n)}(x))^2.$$
Similarly as \eqref{finalreal}, we have 
$$
	\aligned 
\sum_{p=1}^{j_n} (\widetilde{M}_{p, p}^{(n)}(x))^2\asymp\sum_{j=1}^{j_n}\frac{e^{-\widetilde{u}^{2}_n(j,x)}}{\alpha_n^{1/2}\tilde{u}^{3}_n(j,x)}\asymp
\frac{\exp(-\widetilde{u}^{2}_n(1, x))}{\widetilde{u}^{4}_n(1, x)}, \endaligned 
$$
which guarantees $$\sum_{p=1}^{j_n} (\widetilde{M}_{p, p}^{(n)}(x))^2\asymp e^{-2x}\frac{\sqrt{\log \alpha_n}}{\sqrt{\alpha_n}}. $$
Hence, \begin{equation}\label{s3}S_3\lesssim  e^{-2x}\frac{\alpha_n^{2/5}\sqrt{\log \alpha_n}}{\sqrt{\alpha_n}}\ll e^{-2x}\frac{(\log\log \alpha_n)^4}{(\log\alpha_n)^2}.\end{equation}
The most delicate regime is
\[
\alpha_n^{2/5} \le q <p \le j_n,
\]
where Lemma \ref{Mjjlem} ceases to be effective. In this range, the off-diagonal correlations decay too slowly to be neglected.
To overcome this, we need to be more careful on the integral  for $M_{j, k}^{(n)}(x)$ in Lemma \ref{Mjjlem}. 

Setting $t = -\log \cos^{2}\theta,$ similarly as for $\widetilde{M}^{(n)}_{j, j}(x),$ we have  
 $$\aligned |\widetilde{M}_{j, k}^{(n)}(x)|&\leq |\int_{0}^{+\infty} \frac{\cos(2q\arccos(e^{-t/2}))}{\sqrt{e^{t}-1}}g_n(p, x, t)dt|\\
&\le| \int_{0}^{\alpha_n^{-9/20}}\frac{\cos(2q\arccos(e^{-t/2}))}{\sqrt{e^{t}-1}}g_n(p, x, t)dt|+\int_{\alpha_n^{-9/20}}^{+\infty}\frac{g_n(p, x, t)}{\sqrt{e^{t}-1}}dt.
\endaligned$$ Under the conditions $1\le j \leq j_n$, the second integral is proportional to $\mathrm{I\!I}$ in the proof of \eqref{cn2} and then 
$$\int_{\alpha_n^{-9/20}}^{+\infty}\frac{g_n(p,x,t)}{\sqrt{e^{t}-1}}dt\lesssim \alpha_n^{-4/5}.$$ It suffices to estimate the first integral, denoted by ${\rm I}_1$. 
$$\aligned &\quad |{\rm I}_1-\int_{0}^{\alpha_n^{-9/20}}\frac{\cos(2 q\arccos(e^{-t/2}))}{h_n(p, x, t)\sqrt{e^{t}-1}}\exp(-\frac{1}{2}h_n^2(p, x, t))dt|\\
&\le n^{-4/5}\int_{0}^{\alpha_n^{-9/20}}\frac{|\cos(2q\arccos(e^{-t/2}))|}{\sqrt{e^{t}-1}}dt\le n^{-4/5}\lesssim \alpha_n^{-4/5}.\endaligned $$
Now we have the asymptotic 
$$\arccos(e^{-t/2})=\sqrt{t}+O(t^{3/2})\quad \text{and}\quad \frac{1}{\sqrt{e^{t}-1}}=\frac{1+O(\alpha_n^{-9/20})}{\sqrt{t}}$$ for $0\le t \le \alpha_n^{-9/20}.$
Now, $$q t^{3/2} \le j_n \alpha_n^{-27/40}\lesssim \alpha_n^{-7/40}\sqrt{\log\alpha_n} \ll 1,$$ whence
$$\cos(2q\arccos(e^{-t/2}))=\cos(2q\sqrt{t}+O(q\, t^{3/2}))=\cos(2q\sqrt{t})+O(\alpha_n^{-7/40}\sqrt{\log\alpha_n}). $$
Thus,
\begin{equation}\label{conc}\aligned &\quad|\int_{0}^{\alpha_n^{-9/20}}\frac{\cos(2q\arccos(e^{-t/2}))}{h_n(p, x, t)\sqrt{e^{t}-1}}\exp(-\frac{1}{2}h_n^2(p, x, t))dt|\\
&=|\int_{0}^{\alpha_n^{-9/20}}\frac{\cos(2q \sqrt{t})(1+O(\alpha_n^{-7/40}\sqrt{\log\alpha_n}))}{h_n(p, x, t)\sqrt{t}}\exp(-\frac{1}{2}h_n^2(p, x, t))dt|\\
&\le |\int_{0}^{\alpha_n^{-9/20}}\frac{\cos(2q \sqrt{t})}{h_n(p, x, t)\sqrt{t}}\exp(-\frac{1}{2}h_n^2(p, x, t))dt|\\
&+|O(\alpha_n^{-7/40}\sqrt{\log\alpha_n})|\int_{0}^{\alpha_n^{-9/20}} \frac{\exp(-\frac 12 h_n^2(p, x, t))}{\sqrt{t} h_n(p, x, t)}dt.
\endaligned\end{equation} The fact $  h_n(p, x, t) = \widetilde{u}_n(p, x)+\sqrt{\alpha_n}t\ge\widetilde{u}_n(1, x)$ and \eqref{Imain}  
work together to bring   
\begin{equation}\label{hexph}\aligned \int_{0}^{\alpha_n^{-9/20}} \frac{\exp(-\frac 12 h_n^2(p, x, t))}{\sqrt{t} h_n(p, x, t)}dt\lesssim  \frac{\exp(-\frac{1}{2}\widetilde{u}_n^2(1, x))}{\alpha_n^{1/4}\widetilde{u}_n^3(1, x)}\lesssim \frac{e^{-x}}{\sqrt{\alpha_n} (\log \alpha_n)^{1/4}} \endaligned \end{equation} and then the error term in the last line of  \eqref{conc} is bounded by 
$$\alpha_n^{-7/40}\sqrt{\log \alpha_n} \times \frac{e^{-x}}{\sqrt{\alpha_n} (\log \alpha_n)^{1/4}}=(\log\alpha_n)^{1/4}(\alpha_n)^{-27/40}e^{-x}.$$  
We now bound the integral in the last but second line of \eqref{conc}, denoted by $\mathrm{I}_2.$  
Using the substitution $s = \sqrt{t},$ we have 
$$\aligned \mathrm{I}_2=2\int_{0}^{\alpha_n^{-9/40}}\frac{\cos(2qs)}{h_n(p, x, s^2)}\exp(-\frac{1}{2}h_n^2(p, x, s^2))ds.\endaligned$$
Setting $$	f_n(s) = \frac{1}{h_n(p, x, s^2)}\, \exp(-\frac{1}{2}h_n^2(p, x, s^2))
	$$ and using once the integration by parts formula, 
we derive 
$$	\mathrm{I}_2= \frac{\sin(2qs) f_n(\alpha_n^{-9/40})}{q}  - \frac{1}{q} \int_{0}^{\alpha_n^{-9/40}} \sin(2qs) f_n'(s)\, ds.$$
	Now $$f'_n(s)=-2\sqrt{\alpha_n}s \exp(-\frac{1}{2} h_n^2(p, x, s^2))(1+\frac{1}{h_n^2(p, x, s^2)}) $$ and then 
$$	|f'(s)| \lesssim \sqrt{\alpha_n}\, s \, \exp(-\frac{1}{2} h_n^2(p, x, s^2)).$$
Thus, the fact $h_n(p, x, s^2)=\widetilde{u}_n(p, x)+\sqrt{\alpha_n} s^2$ again helps us to obtain 
$$\aligned	|\mathrm{I}_2|& \lesssim \frac{|f(\alpha_n^{-9/40})|}{q} + \frac{1}{q} \int_0^{\alpha_n^{-9/40}} |f'(s)|\, ds\\
&\lesssim\frac{1}{q \widetilde{u}_n(p, x) }\exp(-\frac12 (\widetilde{u}_n(p, x)+\alpha_n^{\frac{1}{20}})^2)+\frac{1}{q}\int_{0}^{\alpha_n^{-9/40}} \sqrt{\alpha_n} s \exp(-\frac{1}{2} h_n^2(p, x, s^2))ds.
\endaligned	$$
By the variable of change  $y = \widetilde{u}_n(p, x) + \sqrt{\alpha_n} s^2,$ the second integral right above becomes  
$$	\int_{\widetilde{u}_n(p, x)}^{\widetilde{u}_n(p, x) + \alpha_n^{1/20}} e^{-y^2/2}\, dy \le \int_{\widetilde{u}_n(p, x)}^{\infty} e^{-y^2/2}\, dy\lesssim \frac{1}{\widetilde{u}_n(p, x)} e^{-\frac12\widetilde{u}_n^2(p, x)}.$$		
	Therefore,
$$	|\mathrm{I}_2| \lesssim \frac{1}{q\widetilde{u}_n(p, x)} \exp(-\frac12\widetilde{u}_n^2(p, x)).$$
Thus, picking up the two error terms we see 
\begin{equation}\label{s40}
	\aligned S_4:&=\sum_{(j, k)\in D: \alpha_n^{2/5}\le q<p\le j_n} (\widetilde{M}_{j, k}^{(n)}(x))^2\\
	&\lesssim  \sum_{p=\alpha_n^{2/5}}^{j_n}\frac{\exp(-\widetilde{u}_n^2(p, x))}{\widetilde{u}_n^2(p, x)} \sum_{\alpha_n^{2/5}\le q<p } \frac{1}{q^2}+j_n^2 O(\alpha_n^{-8/5}+(\log\alpha_n)^{1/2}(\alpha_n)^{-27/20}e^{-2x}). \endaligned \end{equation} 
The fact $\sum_{\alpha_n^{2/5}\le q<p} q^{-2}\lesssim \alpha_n^{-2/5}$ and Lemma \ref{sum} ensure the first part in the second line of \eqref{s40} is bounded by $$\alpha_n^{-2/5}\sqrt{\alpha_n} \widetilde{u}_n^{-3}(1, x)\exp(-\widetilde{u}_n^2(1, x))\asymp \alpha_n^{-2/5} (\log\log\alpha_n)^{5/2}(\log\alpha_n)^{-3/2}\exp(-2x).$$ Putting this upper bound back into \eqref{s40} and comparing the orders, we get 
\begin{equation}\label{s4}
	\aligned S_4\lesssim  \alpha_n^{-2/5} (\log\log\alpha_n)^{5/2}(\log\alpha_n)^{-3/2}\exp(-2x)\ll \frac{(\log\log \alpha_n)^4}{(\log \alpha_n)^2} e^{-2x}. \endaligned \end{equation} 
Combining \eqref{s1}, \eqref{s2}, \eqref{s3} and \eqref{s4} together, we complete the proof of Lemma \ref{Mhslem}. 	

\subsection{Proof of Lemmas \ref{mgeq1cn} and \ref{traceB2}} 
We provide two important lemmas in the fourth section here. 

\subsubsection{Proof of Lemma \ref{mgeq1cn}} 

     The fact $0<\alpha_n\ll 1$ implies that  $$u_n(j,x):=\frac{j-1}{\sqrt{\alpha_n}}+a_n +b_n x\gg1$$  uniformly on $2\leq j\ll n$ and $x\geq -\sqrt{2\log z_n}.$ Then, we apply the Berry-Esseen bound for the sum of i.i.d random sequence (\cite{Chen2011}) under the condition $k_n\gg 1$ to derive   
    	$${M}^{(n)}_{j, j}(x)\leq u_n^{-1}(j, x)e^{-\frac{u_n^{2}(j, x)}{3}}+O(k_n^{-\frac{1}{2}}u^{-3}_n(j, x)).$$ 
    	First, it follows from the monotonicity of ${M}^{(n)}_{j, j}(x)$ that
    		$$\aligned {M}^{(n)}_{j, j}(x)&\leq {M}^{(n)}_{2, 2}(-\sqrt{2\log z_n})\\
    	&\leq u_n^{-1}(2,-\sqrt{2\log z_n})\exp(-\frac13 u^{2}_n(2,-\sqrt{2\log z_n}))+O(k_n^{-\frac{1}{2}}u^{-3}_n(2,-\sqrt{2\log z_n}))\\
    	&\ll1\endaligned$$
    	uniformly on  $|x|\leq \sqrt{2\log z_n}$ and $2\leq j\leq n.$ Hence,
    	\begin{equation}\label{511}
    		\prod_{j=2}^{n}(1-{M}^{(n)}_{j, j}(x))=\exp\{-(1+o(1))\sum_{j=2}^{n}{M}^{(n)}_{j, j}(x)\}.
    	\end{equation}
    	Next, we prove that $\sum_{j=2}^{n}{M}^{(n)}_{j, j}(x)=o(1).$ Note that 
    $u_n(2,x)=\alpha_n^{-1/2}(1+o(1)),$ which implies together with Lemma \ref{sum} with $\gamma_n=\sqrt{\alpha_n}$ bounded that 
    \begin{equation}\label{67} \aligned 
    \sum_{j=2}^{+\infty}u_n^{-1}(j, x)e^{-\frac{u_n^{2}(j,x)}{3}}&\lesssim u_n^{-1}(2,x)e^{-\frac{u_n^{2}(2,x)}{3}}\lesssim  \sqrt{\alpha_n}\exp(-\frac{\alpha_n^{-1}}{4});
    \\
    \sum_{j=2}^{+\infty}u^{-3}_n(j,x)&\lesssim \sqrt{\alpha_n} u_{n}^{-2}(2,x)\lesssim \alpha_n^{3/2}.\endaligned 
    \end{equation}
   Based on the decay of \({M}^{(n)}_{j, j}(x)\) in \(j\), we separate the sum at \(j = \lfloor n^{5/6} \rfloor\):
   \begin{equation}\label{0c1}
   	\sum_{j=2}^{n} {M}^{(n)}_{j, j}(x) \leq \sum_{j=2}^{\lfloor n^{5/6} \rfloor} {M}^{(n)}_{j, j}(x) \;+\; n \, {M}^{(n)}_{\lfloor n^{5/6} \rfloor, \lfloor n^{5/6} \rfloor}(x).
   \end{equation}
 We then proceed to bound each segment from above.

From the equation \eqref{67}, we know that
\begin{equation}\label{0c2}
	\aligned \sum_{j=2}^{[n^{5/6}]}{M}^{(n)}_{j, j}(x)&\leq\sum_{j=2}^{n^{5/6}}(u_n^{-1}(j, x)e^{-\frac{u_n^{2}(j,x)}{3}}+O(k_n^{-\frac{1}{2}}u^{-3}_n(j,x)))\\
	&\lesssim \alpha_n^{1/2} e^{-\frac{1}{4\alpha_n}}+k_n^{-\frac{1}{2}}\alpha_n^{\frac{3}{2}}.\endaligned
\end{equation}
   	The choice $z_n=n\wedge \alpha_n^{-1/2}$ ($\alpha_n=n/k_n$) ensures $\alpha_n^{1/2}\vee n^{-1}\le z_n^{-1}\ll 1,$ whence  
   	\begin{equation}\label{0c3}
   		\sum_{j=2}^{[n^{5/6}]}{M}^{(n)}_{j, j}(x)\lesssim z_n^{-1}\exp(-\frac{z_n^2}4)+n^{-1/2}\alpha_n^2 \lesssim z_n^{-9/2}.
   	\end{equation}
   	 Now $$u_{n}([n^{5/6}],x)\geq u_{n}([n^{5/6}],-\sqrt{2\log z_n})=n^{1/3}\sqrt{k_n}(1+o(1))$$ and then 
   	 \begin{equation}\label{0c4}
   	 	\aligned n \, {M}^{(n)}_{\lfloor n^{5/6} \rfloor, \lfloor n^{5/6} \rfloor}(x)&\leq  \frac{n^{2/3}}{\sqrt{k_n}}\exp(-\frac{1}{4}n^{2/3}k_n)+O(nk_n^{-\frac{1}{2}}(n^{1/3}\sqrt{k_n})^{-3})\\
   	 	&\lesssim k_n^{-2}=n^{-2}\alpha_n^{2}\le z_n^{-6}.\endaligned
   	 \end{equation}
   	 Consequently, $\sum_{j=2}^{n}{M}^{(n)}_{j, j}(x)=O(z_n^{-9/2})$ and, by \eqref{511},
   	$$\prod_{j=2}^{n}(1-{M}^{(n)}_{j, j}(x))=1+O(z_n^{-9/2}).$$ 
   Analogous to $(1)$ in Lemma \ref{gnmxt}, for \(n\gg j\ge 2\) and \(t\ge0\) we have  
\[
g_n(j, x, t)\lesssim \frac{e^{-\frac{3}{8} h^{2}_n(j, x,t)}}{h_n(j, x,t)}+k_n^{-1/2}h_n^{-3}(j,x,t),
\]  
where \(h_n(j,x,t)=\widetilde{u}_n(j,x)+\sqrt{\alpha_n}t.\)  Since \(\log \cos^2\theta\leq 0\), it follows that  
\[
\widetilde{M}^{(n)}_{j, j}(x)=\mathbb{P}\bigl(\log Y_{n-j+1}+\log \cos^2\Theta\ge k_n\psi(n)+\frac{\widetilde{a}_n+\widetilde{b}_n x}{\sqrt{\alpha_n}}\bigr)\leq g_n(j, x, 0).
\]  
Setting \(t=0\) gives \(h_n(j, x, 0)=\widetilde{u}_n(j, x)\), and hence for \(n\gg j\geq 2\),  
\begin{equation}\label{widecnup}
\widetilde{M}^{(n)}_{j, j}(x)\lesssim\frac{e^{-\frac{3}{8} \widetilde{u}^{2}_n(j, x)}}{\widetilde{u}_n(j,x)}+k_n^{-1/2}\widetilde{u}_n^{-3}(j,x)\ll 1.
\end{equation}  
By an argument similar to the one leading to \eqref{511}, we obtain the asymptotic product representation  
\[
\prod_{j=2}^{n}(1-\widetilde{M}^{(n)}_{j, j}(x))=\exp\bigl\{-(1+o(1))\sum_{j=2}^{n}\widetilde{M}^{(n)}_{j, j}(x)\bigr\}.
\]
Turning to the estimation of the sum, we recall from the definition that  
\[
\widetilde{u}_n(j,x)=u_n(j,x)(1+o(1))=(j-1)\alpha_n^{-1/2}(1+o(1)).
\]  
Now, performing the same summation procedure as in \eqref{0c1}-\eqref{0c4} but with \(\widetilde{u}_n(j, x)\) in place of \(u_n(j, x)\), we obtain  
\(
\sum_{j=2}^{n} \widetilde{M}^{(n)}_{j, j}(x) = O\bigl(z_n^{-9/2}\bigr).
\)  
Inserting this into the exponential representation and using \(e^{-y}=1+O(y)\) for \(y=O(z_n^{-9/2})\to0\), we conclude  
\[
\prod_{j=2}^{n}(1-\widetilde{M}^{(n)}_{j, j}(x))=1+O(z_n^{-9/2}).
\] 
	
	\subsubsection{Proof of Lemma \ref{traceB2}} 
	Recall our task is to prove 

     	$$\sum_{j< k}\frac{(\widetilde{M}_{j,k}^{(n)}(x))^2}{(1-\widetilde{M}_{j,j}^{(n)}(x))(1-\widetilde{M}_{k,k}^{(n)}(x))}\ll z_n^{-2}$$ 
     	uniformly on $|x|\le \sqrt{2\log z_n}.$ Here, $z_n=\alpha_n^{-1/2} \wedge n$ and $0<\alpha_n\ll 1.$

    The inequality \eqref{CSIneq} gives
\begin{equation}\label{CSIneqn}  
\frac{(\widetilde{M}_{j,k}^{(n)}(x))^2}{(1-\widetilde{M}_{j,j}^{(n)}(x))(1-\widetilde{M}_{k,k}^{(n)}(x))}\le \frac{\widetilde{M}_{j,j}^{(n)}(x)\widetilde{M}_{k,k}^{(n)}(x)}{(1-\widetilde{M}_{j,j}^{(n)}(x))(1-\widetilde{M}_{k,k}^{(n)}(x))}.
\end{equation}
The function \(t\mapsto \frac{t}{1-t}\) is increasing on \((0,1)\); together with the monotonicity of \(\widetilde{M}_{j,j}^{(n)}(x)\) in $j$, which allows us to bound the two factor in \eqref{CSIneqn} by its value at the smallest index in any given range. To proceed, we split the double sum over \(j<k\) into two parts according to whether \(j<\lfloor n^{2/3}\rfloor=i_n\) or \(j\ge i_n\). Estimating each part using the above observations, we obtain
\begin{equation}\label{Mij}
\begin{aligned}
& \sum_{1\le k<j\le n}\frac{(\widetilde{M}_{j,k}^{(n)}(x))^2}{(1-\widetilde{M}_{j,j}^{(n)}(x))(1-\widetilde{M}_{k, k}^{(n)}(x))}\\
&\le \frac{n^{2}\,\widetilde{M}^{(n)}_{i_n, i_n}(x)\,\widetilde{M}_{1, 1}^{(n)}(x)}{(1-\widetilde{M}^{(n)}_{i_n, i_n}(x))(1-\widetilde{M}_{1, 1}^{(n)}(x))}+\sum_{j=2}^{i_n}\sum_{k=1}^{j-1}\frac{\widetilde{M}_{k, k}^{(n)}(x)\,\widetilde{M}_{j, j}^{(n)}(x)}{(1-\widetilde{M}_{j, j}^{(n)}(x))(1-\widetilde{M}_{k, k}^{(n)}(x))}.
\end{aligned}
\end{equation}
 Note
     $$ \Phi(-\sqrt{2\log z_n})=\frac{1}{2\sqrt{\pi}z_n\sqrt{\log z_n}}(1+o(1))$$
     and 
     $$\widetilde{u}_n(i_n, x)=n^{2/3}\alpha_n^{-1/2}(1+o(1)) .$$
     Now Lemma \ref{m0cn} leads  
     	\begin{equation}\label{widecn0xlb} 1-\widetilde{M}_{1, 1}^{(n)}(x)\gtrsim  \Phi(-\sqrt{2\log z_n})\gtrsim \frac{1}{z_n\sqrt{\log z_n}} \end{equation} 
     	as well as \eqref{widecnup} gives 
     	$$\widetilde{M}_{i_n, i_n}^{(n)}(x)\lesssim\frac{e^{-\frac{3}{8} \widetilde{u}^{2}_n(i_n, x)}}{\widetilde{u}_n(i_n,x)}+z_n^{-3/2}\widetilde{u}_n^{-3}(i_n,x)\lesssim z_n^{-3/2}n^{-2}\alpha_{n}^{3/2}\ll 1.$$
     	This, together with $z_n=\alpha_n^{-1/2} \wedge n,$ implies that
     	\begin{equation}\label{Mij1}
     		\frac{n^2 \widetilde{M}_{i_n, i_n}^{(n)}(x)\widetilde{M}_{1, 1}^{(n)}(x)}{(1-\widetilde{M}_{i_n, i_n}^{(n)}(x))(1-\widetilde{M}_{1, 1}^{(n)}(x))}\lesssim\frac{n^2 \widetilde{M}_{i_n, i_n}^{(n)}(x)}{1-\widetilde{M}_{1, 1}^{(n)}(x)}\lesssim z_n^{-1/2}\alpha_n^{3/2}\sqrt{\log z_n}\ll z_n^{-2}.
     	\end{equation}
       Next, we estimate the double sum in \eqref{Mij}. 
     Note that $$\widetilde{u}_n(2,x)=\alpha_n^{-1/2}(1+o(1))\gtrsim z_n, $$ and thus the monotonicity and \eqref{widecnup} ensures 
     	$$\widetilde{M}_{j, j}^{(n)}(x)\le \widetilde{M}_{2, 2}^{(n)}(x)\lesssim \frac{e^{-\frac{3}{8} \widetilde{u}^{2}_n(2, x)}}{\widetilde{u}_n(2,x)}+z_n^{-3/2}\widetilde{u}_n^{-3}(2,x)\lesssim z_n^{-7/2}\ll 1. $$
 This indicates     
     \(1-\widetilde{M}^{(n)}_{j, j}(x)\asymp 1\) uniformly for \(2\le j\le n\) and \(|x|\le \sqrt{2\log z_n}\) and hence the factors \((1-\widetilde{M}^{(n)}_{j, j}(x))^{-1}\) are bounded and can be treated as constants in the estimates.

Consider first the terms with \(k=1\). Using \eqref{widecn0xlb}, \eqref{widecnup} and then Lemma \ref{sum}, we derive 
\[\aligned
\sum_{j=2}^{i_n}\frac{\widetilde{M}^{(n)}_{j, j}(x)\widetilde{M}^{(n)}_{1, 1}(x)}{(1-\widetilde{M}^{(n)}_{j, j}(x))(1-\widetilde{M}^{(n)}_{1, 1}(x))}
&\lesssim z_n\sqrt{\log z_n}\sum_{j=2}^{i_n} \widetilde{M}^{(n)}_{j, j}(x)\\
&\lesssim z_n\sqrt{\log z_n}\sum_{j=2}^{\infty}\bigl(\frac{e^{-\frac{3}{8}\widetilde{u}_n^2(j,x)}}{\widetilde{u}_n(j,x)}+z_n^{-3/2}\widetilde{u}_n^{-3}(j,x)\bigr)\\
&\lesssim z_n\sqrt{\log z_n}(\frac{e^{-\frac{3}{8} \widetilde{u}^{2}_n(2, x)}}{\widetilde{u}_n(2,x)}+z_n^{-3/2}\widetilde{u}_n^{-3}(2, x) )\\
&\lesssim \sqrt{\log z_n} z_n^{-5/2}\ll z_n^{-2}.\endaligned 
\]
For the remaining terms ($k\ge 2$), removing the bounded denominators and do similar estimate on the infinite sum, we have \[
\sum_{j=3}^{i_n}\sum_{k=2}^{j-1}\frac{\widetilde{M}^{(n)}_{j, j}(x)\widetilde{M}^{(n)}_{k, k}(x)}{(1-\widetilde{M}^{(n)}_{j, j}(x))(1-\widetilde{M}^{(n)}_{k, k}(x))}
\lesssim \bigl(\sum_{j=3}^{i_n}\widetilde{M}^{(n)}_{j, j}(x)\bigr)^2\asymp z_n^{-3} \widetilde{u}_n^{-6}(2,x)\ll z_n^{-2}.
\]
Inserting these estimates together with \eqref{Mij1} into \eqref{Mij} yields
\[
\sum_{j<k}\frac{(\widetilde{M}^{(n)}_{j,k}(x))^2}{(1-\widetilde{M}^{(n)}_{j, j}(x))(1-\widetilde{M}^{(n)}_{k,k}(x))}\ll z_n^{-2}.
\]

\subsection{Proofs of Lemmas \ref{m1}, \ref{jn1} and \ref{M0infty}} 
This subsection is devoted to the proofs of the lemmas in section \ref{sec:alphafinite}.

\subsubsection{Proof of Lemma \ref{m1}} 
We start by writing \(\log Y_{n-j+1} = \sum_{r=1}^{k_n} \log S_{n-j+1, \,r}\). This expression represents a sum of i.i.d. random variables \(\{\log S_{n-j+1, r}\}_{1 \le r \le k_n}\). Substituting \(\ell = n - j+1\), we reformulate \(M^{(n)}_{j, j}(x)\) as follows:
\[
M^{(n)}_{j, j}(x) = \mathbb{P}\big( \frac{\log Y_{\ell} - k_n \psi(\ell)}{\sqrt{k_n \psi'(\ell)}} > \frac{k_n (\psi(n) - \psi(\ell))}{\sqrt{k_n \psi'(\ell)}} + \frac{a_n + b_n x}{\sqrt{n \psi'(\ell)}} \big).
 \]
 Now, define the function
 \begin{equation}\label{vndef}
v_n(j, x) = \frac{k_n (\psi(n) - \psi(\ell))}{\sqrt{k_n \psi'(\ell)}} + \frac{a_n + b_n x}{\sqrt{n \psi'(\ell)}}.
\end{equation}
An application of Lemma \ref{diagammapro} yields the approximation $$v_n(j, x) = u_n(j, x)(1 + O((j-1) n^{-1})).$$
Review \(s_n = |\alpha_n - \alpha|^{-1} \wedge n\) and 
 \[
 \ell_{1,\alpha}(n) = (\frac{1}{10} \log s_n)^{1/2}, \quad \ell_{2,\alpha}(n) = \frac{4\sqrt{\log s_n} - a_n}{b_n}, \quad r_n= \lfloor s_n^{1/10} \rfloor.
 \]

 Given that \(a_n\), \(b_n\), and \(\alpha_n\) are bounded, we observe that \(u_n(j, x) \gg 1\) whenever \(j\ge r_n\) and $x\in[-\ell_{1, \alpha}(n),\ell_{2, \alpha}(n)]$. Therefore, the asymptotic behavior of \(M^{(n)}_{j, j}(x)\) in this regime mirrors the case where \(\alpha = +\infty\). In contrast, for \(1 \le j \le r_n \) and finite $x$, the quantity \(u_n(j, x)\) varies from a constant to positive infinity. This range of values introduces significant complexity into obtaining a precise asymptotic description of \(M^{(n)}_{j, j}(x)\).

 By Lemma \ref{ed}, we obtain
\begin{equation}\label{ccc}
M^{(n)}_{j, j}(x)= 1 - \Phi(v_n(j, x)) + \frac{1 - v_n^2(j, x)}{6\sqrt{n k_n}} \phi(v_n(j, x)) + O(n^{-3/2}).
\end{equation}
We next reduce the expression for \(v_n(j, x)\) to \(v_{\alpha}(j, x)\). Recall that
\[
a_n = \sqrt{\log(\alpha_n + 1)} - \frac{\log(\sqrt{2\pi} \log(\alpha_n + e^{\frac{1}{\sqrt{2\pi}}}))}{\sqrt{\log(\alpha_n + e)}}, \quad
b_n = \frac{1}{\sqrt{\log(\alpha_n + e)}}.
\]
Using Taylor expansion and the fact that \(\alpha_n - \alpha = o(1)\), we have for any fixed \(r > 0\),
\[
\sqrt{\log(\alpha_n + r)} = \sqrt{\log(\alpha + r)} + \frac{\alpha_n - \alpha}{2(\alpha + r)\sqrt{\log(\alpha + r)}} + O((\alpha_n - \alpha)^2),
\]
\[
\frac{1}{\sqrt{\log(\alpha_n + r)}} = \frac{1}{\sqrt{\log(\alpha + r)}} - \frac{\alpha_n - \alpha}{2(\alpha + r)(\log(\alpha + r))^{3/2}} + O((\alpha_n - \alpha)^2),
\]
as well as
\[
a_n = a + c_1(\alpha_n - \alpha) + O((\alpha_n - \alpha)^2), \quad
b_n = b - c_2(\alpha_n - \alpha) + O((\alpha_n - \alpha)^2).
\]
Similarly,
\[
\frac{j-1}{\sqrt{\alpha_n}} = \frac{j-1}{\sqrt{\alpha}} (1 - \frac{\alpha_n - \alpha}{2\alpha})\left(1 + O((\alpha_n - \alpha)^2)\right),
\]
and by Lemma \ref{diagammapro},
\[
\frac{1}{\sqrt{n \psi'(n-j+1)}} = 1 - \frac{2j - 1}{4n} + O(\frac{j^2}{n^2}),
\]
\[
\psi(n) - \psi(n - j+1) = \frac{j-1}{n} + \frac{j(j-1)}{2n^2} + O(\frac{j^3}{n^3}).
\]
Therefore,
\begin{equation}\label{vneq1}
\begin{aligned}
\frac{k_n (\psi(n) - \psi(n - j+1))}{\sqrt{k_n \psi'(n - j+1)}} 
&= \frac{j-1}{\sqrt{\alpha_n}} (1 - \frac{2j - 1}{4n} + \frac{j}{2n})(1 + O(\frac{j^2}{n^2})) \\
&= \frac{j-1}{\sqrt{\alpha}} (1 + \frac{1}{4n} - \frac{\alpha_n - \alpha}{2\alpha})(1 + O(j^2 n^{-2} + (\alpha_n - \alpha)^2)),
\end{aligned}
\end{equation}
and
\begin{equation}\label{vneq2}
\begin{aligned}
\frac{a_n + b_n x}{\sqrt{n \psi'(n - j+1)}}
&= (a + b x - \frac{(a + b x)(2j - 1)}{4n} + (c_1 - c_2 x)(\alpha_n - \alpha)) \\
&\quad \times (1 + O(\frac{j^2}{n^2} + (\alpha_n - \alpha)^2)).
\end{aligned}
\end{equation}
The choices of \(s_n\), \(j\) and \(\ell_{1, \alpha}(n), \ell_{2, \alpha}(n) \) ensure that
\[
\frac{j^2 (j+ |x|)}{n^2} + (\alpha_n - \alpha)^2 (j + |x|) \lesssim s_n^{-\frac{17}{10}}.
\]
Substituting \eqref{vneq1} and \eqref{vneq2} into \eqref{vndef} yields
\begin{equation}\label{asymforun0}
\begin{aligned}
v_n(j, x) &= v_{\alpha}(j, x) - \frac{(2j - 1)v_{\alpha}(j, x)}{4n} + \frac{j(j-1 )}{2\sqrt{\alpha} \, n} \\
&\quad + (\alpha_n - \alpha)(c_1 - c_2 x - \frac{j-1}{2\alpha^{3/2}}) + O(s_n^{-\frac{17}{10}}).
\end{aligned}
\end{equation}
Define
\[\aligned
\zeta_n(j, x) :&=v_n(j, x) - v_{\alpha}(j, x)\\
&= -\frac{(2j- 1)v_{\alpha}(j, x)}{4n} + \frac{j(j -1)}{2\sqrt{\alpha} \, n} + (\alpha_n - \alpha)\big(c_1 - c_2 x - \frac{j-1}{2\alpha^{3/2}}\big) + O(s_n^{-\frac{17}{10}}),\\
\endaligned 
\]
so that \(|\zeta_n(j, x)| \lesssim s_n^{-\frac45}\). 

Applying Taylor expansions to \(\Phi\) and \(\phi\), we obtain
\[
\Phi(v_n(j, x)) = \Phi(v_\alpha(j, x)) + \phi(v_\alpha(j, x)) \left(\zeta_n(j, x) + O(v_{\alpha}(j, x) \zeta_n^2(j, x))\right),
\]
and
\[
\phi(v_n(j, x)) = \phi(v_\alpha(j, x)) \left(1 + O(v_{\alpha}(j, x) |\zeta_n(j, x)|)\right).
\]
  Note that for $1\leq j\leq r_n$ and $-\ell_{1,\alpha}(n)\leq x\leq \ell_{2,\alpha}(n), $ 
 	$$v_{\alpha}(j, x)\zeta_n^{2}(j,x)\lesssim s_n^{-\frac32} \quad \text{ and} \quad \frac{|v_\alpha(j,x)\zeta_n(j,x)|}{n}\lesssim s_n^{-\frac{17}{10}}.$$
 	Therefore, it follows from \eqref{ccc} that 
 			$$\aligned 
 			&M^{(n)}_{j, j}(x)\\
 			=&1-\Phi(v_\alpha(j, x))-\phi(v_\alpha(j, x))\big(\frac{\sqrt{\alpha}(v^{2}_\alpha(j, x)-1)}{6n}+\zeta_n(j,x)+O(s_n^{-\frac{3}{2}})\big)+O(n^{-\frac{3}{2}})\\
 			=&1-\Phi(v_\alpha(j, x))-\phi(v_\alpha(j, x))(n^{-1} q_{1}(j, x)+(\alpha_n-\alpha)q_2(j, x))+O(s_n^{-\frac32})
 			.\endaligned $$
  The proof is then completed. 

\subsubsection{Proof of Lemma \ref{jn1}}
 Since both \(1 - M^{(n)}_{j, j}(x)\) and \(\Phi(v_{\alpha}(j, x))\) are increasing in \(x\) and bounded above by 1, it suffices to prove Lemma \ref{jn1} for \(x = -\ell_{1,\alpha}(n)\).

By Lemma \ref{diagammapro}, we have
\[
v_n(j, -\ell_{1,\alpha}(n)) = u_n(j, -\ell_{1,\alpha}(n))(1 + O(j n^{-1})) = O(j).
\]
Applying the central limit theorem to the sequence \((\log S_{n - j+1, r})_{1 \le r \le k_n}\), we obtain
\begin{equation}\label{c1}
M^{(n)}_{j, j}(-\ell_{1,\alpha}(n)) = \frac{1 + o(1)}{\sqrt{2\pi} \, u_n(j, -\ell_{1,\alpha}(n))} \exp\big(-\frac{1}{2} u_n^2(j, -\ell_{1,\alpha}(n))\big)
\end{equation}
uniformly for \(r_n<j\ll n^{1/6}\). The monotonicity of \(M^{(n)}_{j, j}(x)\) in \(j\) implies
\[\aligned
&\quad M^{(n)}_{j, j}(-\ell_{1,\alpha}(n))\le M^{(n)}_{r_n+1, r_n+1}(-\ell_{1,\alpha}(n)) \lesssim \frac{\exp\big(-\frac{1}{2} u_n^2(r_n+1, -\ell_{1,\alpha}(n))\big)}{u_n(r_n+1, -\ell_{1,\alpha}(n))}  = o(1)
\endaligned 
\]
uniformly for \(j \ge r_n+1\), noting that
\begin{equation}\label{256}
u_n(r_n+1, -\ell_{1,\alpha}(n)) = \frac{r_n}{\sqrt{\alpha}}(1 + o(1)) \gg 1.
\end{equation}
Consequently,
\begin{equation}\label{cm}
\sum_{j= r_n+1}^{n} \log(1 - M^{(n)}_{j, j}(-\ell_{1,\alpha}(n))) = -(1 + o(1))\sum_{j = r_n+1}^{n } M^{(n)}_{j, j}(-\ell_{1,\alpha}(n)).
\end{equation}

We now consider two cases.

{\bf Case 1}: \((\alpha_n - \alpha)^{-1} \ll n\), i.e., \(r_n = \lfloor (\alpha_n - \alpha)^{-\frac1{10}} \rfloor\ll n^{1/10}\). Then by \eqref{c1},
\[
\begin{aligned}
\sum_{j = r_n+1}^{n } M^{(n)}_{j, j}(-\ell_{1,\alpha}(n)))
&= \sum_{j=r_n+1}^{\lfloor n^{\frac1{10}}-1 \rfloor} M^{(n)}_{j, j}(-\ell_{1,\alpha}(n)))+ \sum_{j= \lfloor n^{\frac1{10}} \rfloor}^{n }M^{(n)}_{j, j}(-\ell_{1,\alpha}(n))) \\
&\lesssim \sum_{j =r_n+1}^{\lfloor n^{\frac1{10}} \rfloor} \frac{\exp\left(-\frac{1}{2} u_n^2(j, -\ell_{1,\alpha}(n))\right)}{u_n(j, -\ell_{1,\alpha}(n))} + \frac{n \exp\left(-\frac{1}{2} u_n^2(\lfloor n^{\frac1{10}} \rfloor, -\ell_{1,\alpha}(n))\right)}{u_n(\lfloor n^{\frac1{10}} \rfloor, -\ell_{1,\alpha}(n))}.
\end{aligned}
\]
Applying Lemma \ref{sum} with \(\gamma_n = \sqrt{\alpha_n}\) bounded to the first sum yields
\[
\sum_{j = r_n+1}^{n } M^{(n)}_{j, j}(-\ell_{1,\alpha}(n)))
\lesssim \frac{\exp\left(-\frac{1}{2} u_n^2(r_n+1, -\ell_{1,\alpha}(n))\right)}{u_n(r_n+1, -\ell_{1,\alpha}(n))} + \frac{n \exp\left(-\frac{1}{2} u_n^2(\lfloor n^{\frac1{10}} \rfloor, -\ell_{1,\alpha}(n))\right)}{u_n(\lfloor n^{\frac1{10}} \rfloor, -\ell_{1,\alpha}(n))}.
\]
Note that
\[
u_n(\lfloor n^{\frac1{10}} \rfloor, -\ell_{1,\alpha}(n)) = \frac{n^{\frac1{10}}}{\sqrt{\alpha}}(1 + o(1)),
\]
which implies
\[
\frac{n \exp\left(-\frac{1}{2} u_n^2(\lfloor n^{\frac1{10}} \rfloor, -\ell_{1,\alpha}(n))\right)}{u_n(\lfloor n^{\frac1{10}} \rfloor, -\ell_{1,\alpha}(n))} \lesssim \exp\{ -\frac{n^{\frac1{5}}(1 + o(1))}{2\alpha} + \frac{9}{10} \log n\} \ll \exp\{ -\frac{n^{\frac1{5}}}{3\alpha} \}.
\]
It follows from \eqref{256} that
\[
\frac{\exp\left(-\frac{1}{2} u_n^2(r_n+1, -\ell_{1,\alpha}(n))\right)}{u_n(r_n+1, -\ell_{1,\alpha}(n))} + \frac{n \exp\left(-\frac{1}{2} u_n^2(\lfloor n^{\frac1{10}} \rfloor, -\ell_{1,\alpha}(n))\right)}{u_n(\lfloor n^{\frac1{10}} \rfloor, -\ell_{1,\alpha}(n))} = o(\exp(-\frac{1}{3\alpha} s_n^{\frac15})).
\]
{\bf Case 2}: \((\alpha_n - \alpha)^{-1} \gtrsim n\), under which  \(r_n = \lfloor n^{\frac1{10}} \rfloor\). Then
\[
\sum_{j = r_n+1}^{n } M^{(n)}_{j, j}(-\ell_{1,\alpha}(n)))\lesssim \frac{n \exp\left(-\frac{1}{2} u_n^2(r_n+1, -\ell_{1,\alpha}(n))\right)}{u_n(r_n+1, -\ell_{1,\alpha}(n))} \ll \exp(-\frac{1}{3\alpha}r_n^2).
\]
In both cases, we conclude
\[
\sum_{j = r_n+1}^{n } M^{(n)}_{j, j}(-\ell_{1,\alpha}(n)) =o(\exp(-\frac{1}{3\alpha}s_n^{\frac15})),
\]
and hence \eqref{cm} gives 
\[
\sum_{j = r_n+1}^{n} \log(1 - M^{(n)}_{j, j}(-\ell_{1,\alpha}(n))) = o(\exp(-\frac{1}{3\alpha}s_n^{\frac15})).
\]
Next, we establish the second asymptotic relation of Lemma \ref{jn1}. For  \(j \geq r_n+1\), we have
\[v_\alpha(j, -\ell_{1,\alpha}(n))\geq
v_\alpha(r_n+1, -\ell_{1,\alpha}(n)) \gg 1.
\]
The Mills ratio gives
\[
1 - \Phi(v_\alpha(j, -\ell_{1,\alpha}(n))) = \frac{1 + o(1)}{\sqrt{2\pi}}  \frac{\exp\left(-\frac{1}{2} v^2_\alpha(j, -\ell_{1,\alpha}(n))\right)}{v_\alpha(j, -\ell_{1,\alpha}(n))}.
\]
Consequently,
\[
\sum_{j = r_n+1}^{+\infty} \log \Phi(v_\alpha(j, -\ell_{1,\alpha}(n))) = -\frac{1 + o(1)}{\sqrt{2\pi}} \sum_{j = r_n+1}^{+\infty} \frac{\exp\left(-\frac{1}{2} v^2_\alpha(j, -\ell_{1,\alpha}(n))\right)}{v_\alpha(j, -\ell_{1,\alpha}(n))}.
\]
Applying Lemma \ref{sum} once more, and using the estimate
\[
v_\alpha(r_n+1, -\ell_{1,\alpha}(n)) = \frac{r_n}{\sqrt{\alpha}}(1 + o(1)),
\]
we obtain
\[
\sum_{j= r_n+1}^{+\infty} \frac{\exp\left(-\frac{1}{2} v^2_\alpha(j, -\ell_{1,\alpha}(n))\right)}{v_\alpha(j, -\ell_{1,\alpha}(n))} \lesssim \frac{\exp\left(-\frac{1}{2} v^2_\alpha(r_n+1, -\ell_{1,\alpha}(n))\right)}{v_\alpha(r_n+1, -\ell_{1,\alpha}(n))} \ll \exp\left(-\frac{r_n^2}{3\alpha}\right).
\]
Hence,
\[
\sum_{j = r_n+1}^{+\infty} \log \Phi(v_\alpha(j, -\ell_{1,\alpha}(n))) = o\big(\exp\big(-\frac{s_n^{1/5}}{3\alpha}\big)\big).
\]
This completes the proof.	

\subsubsection{Proof of Lemma \ref{M0infty}} For $1 \le j \le r_n$, the expression \eqref{Mjjfor} in Lemma \ref{Mjjlem} tells
 $$\aligned \widetilde{M}_{j, j}^{(n)}(x)&=\mathbb{P}(\log Y_{n+1-j}+\log \cos^2\Theta\ge k_n\psi(n)+\frac{\widetilde{a}_n+\widetilde{b}_n x}{\sqrt{\alpha_n}})\\
 &=\frac{2}{\pi}\int_0^{\pi/2}\mathbb{P}(\log Y_{n+1-j}\ge k_n\psi(n)+\frac{\widetilde{a}_n+\widetilde{b}_n x}{\sqrt{\alpha_n}}-\log\cos^2\theta)d\theta.
 \endaligned $$ 
 
 Similarly as Lemma \ref{m1}, one gets 
 \begin{equation}\label{Yn-m}
 	\mathbb{P}(\log Y_{n-j+1}\ge k_n\psi(n)+\frac{\widetilde{a}_n+\widetilde{b}_n x}{\sqrt{\alpha_n}}-\log\cos^2\theta)=\Psi(\widetilde{v}_{\alpha}(j,x)-\sqrt{\alpha}\log\cos^2\theta)+O(s_n^{-1}),
 \end{equation}
  whence 
  $$\widetilde{M}_{j, j}^{(n)}(x)=\frac{2}{\pi}\int_{0}^{\pi/2}\Psi(\widetilde{v}_{\alpha}(j, x)-\sqrt{\alpha}\log\cos^2\theta) d\theta+O(s_n^{-1}).$$
  
 Similarly, the expression \eqref{Mjkfor}, together with the expression \eqref{Yn-m} for $n-\frac{j+k}{2}+1,$ gives 
$$ \begin{aligned}
\widetilde{M}_{j, k}^{(n)}(x) &= \frac{2(\big( n-\frac{j+k}{2} \big)!)^{k_n}}{\pi \bigl( (n-j)! (n-k)! \bigr)^{k_n/2}} \\
&\quad \times \int_{0}^{\pi/2} \cos\bigl( (j-k)\theta \bigr) \,(\Psi(\widetilde{v}_{\alpha}(\frac{j+k}{2},x)-\sqrt{\alpha}\log \cos^2\theta)+O(s_n^{-1}))d\theta.
\end{aligned}
 $$  
 Noting that $j+k\lesssim r_n\ll n$ and $2(n-\frac{j+k}{2})=n-j+n-k,$ Stirling's formula and some simple calculus give 
 $$\aligned \log&\frac{(\big( n-\frac{j+k}{2} \big)!)^2}{ (n-j)! (n-k)! }
=2(n+\frac12-\frac{j+k}{2})\log(n+1-\frac{j+k}{2})\\
 &-(n+\frac12-j)\log(n+1-j)-(n+\frac12-k)\log(n+1-k)+O(\frac{(j-k)^2}{n^3})
 \endaligned$$
 and then by the corresponding Taylor's expansion for logarithmic  function, we continue to write 
 $$\aligned 
 \log&\frac{(\big( n-\frac{j+k}{2} \big)!)^2}{ (n-j)! (n-k)! }
=-\frac{(j-k)^2}{4n}+O(\frac{(j-k)^2(j+k)}{n^2}),
 \endaligned $$
 which, together with $k_n\asymp n,$ tells \begin{equation}\label{coefMjk}\frac{(\big( n-\frac{j+k}{2} \big)!)^{k_n}}{\bigl( (n-j)! (n-k)! \bigr)^{k_n/2}}=\exp(-\frac{(j-k)^2}{4\alpha_n})(1+O(\frac{(j-k)^2(j+k)}{n})).\end{equation}
Now $$\alpha_n^{-1}=(\alpha(1+\frac{\alpha_n-\alpha}{\alpha}))^{-1}=\alpha(1+O(s_n^{-1}))$$
and then  we have
  $$ \exp(-\frac{(j-k)^2}{4\alpha_n})=\exp(-\frac{(j-k)^2}{4\alpha})(1+O((j-k)^2s_n^{-1})).$$
Inserting this asymptotic into \eqref{coefMjk} yields
$$\frac{(\big( n-\frac{j+k}{2} \big)!)^{k_n}}{\bigl( (n-j)! (n-k)! \bigr)^{k_n/2}}=\exp(-\frac{(j-k)^2}{4\alpha})(1+O(\frac{j_n(j-k)^2}{n}))=\exp(-\frac{(j-k)^2}{4\alpha})+O(s_n^{-9/10}).$$
 Consequently, for 
$1\leq j,k\leq r_n$, together with the boundedness of $\Psi$, we obtain the representation $$ \widetilde{M}_{j, k}^{(n)}(x) = \frac{2\exp(-\frac{(j-k)^2}{4\alpha})}{\pi}  \int_{0}^{\pi/2} \cos\bigl( (j-k)\theta \bigr) \Psi(\widetilde{v}_{\alpha}(\frac{j+k}{2},x)-\sqrt{\alpha}\log \cos^2\theta)d\theta+O(s_n^{-9/10}).$$
 The second item is verified. 
  
  The monotonicity of $\widetilde{M}_{j, j}^{(n)}(x)$ in $j$ leads  
  \begin{equation}\label{mjj1}
  	\widetilde{M}_{j, j}^{(n)}(x) \le \widetilde{M}_{j_n+1, j_n+1}^{(n)}(x)
  \end{equation}
  for $r_n+1\le j\le n$ and $x \ge -\sqrt{\log s_n}.$ Note that $\log \cos^2\Theta < 0$ and then 
 $$
  	\aligned \widetilde{M}_{r_n+1, r_n+1}^{(n)}(x)\leq\mathbb{P}(\log Y_{n-j_n}\ge k_n\psi(n)+\frac{\widetilde{a}_n+\widetilde{b}_n x}{\sqrt{\alpha_n}}),\endaligned 
  $$ 
  whence similarly as \eqref{c1} 
$$
	\widetilde{M}_{r_n+1, r_n+1}^{(n)}(x)\lesssim \frac{1}{\widetilde{u}_n(r_n+1, x)}\exp(-\frac{\widetilde{u}_n^{2}(r_n+1,x)}{2}).
$$
 With $\widetilde{u}_n(r_n+1, -\sqrt{\log s_n})=\frac{s_n^{1/10}}{\sqrt{\alpha}}(1+o(1)),$ we have
 $$\widetilde{M}_{r_n+1, r_n+1}^{(n)}(x)\ll s_n^{-1/10}\exp(-s_n^{1/6}),$$
 uniformly on $x\geq -\sqrt{\log s_n}.$ The proof of the third item is completed. 
 Since $a_n, b_n,$
 $\widetilde{a}_n$ and $\widetilde{b}_n$
  are constants when $\alpha_n\in(0, +\infty)$, we have 
  $$\mathbb{P}(\log Y_{n-j}\ge k_n\psi(n)+\frac{\widetilde{a}_n+\widetilde{b}_n x}{\sqrt{\alpha_n}})\asymp \mathbb{P}(\log Y_{n-j}\ge k_n\psi(n)+\frac{a_n+b_n x}{\sqrt{\alpha_n}}),$$
  for $r_n+1\leq j\leq n.$ Therefore, with the help of Lemma \ref{jn1}, we obtain 
  $$ \sum_{j=r_n+1}^{n}\widetilde{M}_{j, j}^{(n)}(x)\asymp \sum_{j=r_n+1}^{n}M_{j, j}^{(n)}(x)\ll \exp(-\frac{r_n^2}{3\alpha}) ,$$
   which implies \eqref{summjjj}. The proof is finished now.

 \subsection{Proof of Remark \ref{upperbound}} For simplicity, use $v_{\alpha}$ to replace $v_{\alpha}(j, x)$ and rewrite $q_{1}(j, x)$ as 
  $$\aligned q_{1}(j, x)&=\frac{\sqrt{\alpha}}{6}v_{\alpha}^2-\frac{1}{2}\big(\sqrt{\alpha}(a+b x)-\frac12\big) v_{\alpha}+\frac{\sqrt{\alpha}}{2}(a+bx)^2-\frac12(a+bx)-\frac{\sqrt{\alpha}}{6}\\
  &=:f_1 v_{\alpha}^2-f_2(x) v_{\alpha}+f_3(x) \endaligned $$   with $f_1(x)=\frac{\sqrt{\alpha}}{6},$
  $$f_2(x)=\frac{1}{2}\big(\sqrt{\alpha}(a+b x)-\frac12\big) \quad \text{and} \quad f_3(x)=\frac{\sqrt{\alpha}}{2}(a+bx)^2-\frac12(a+bx)-\frac{\sqrt{\alpha}}{6}.$$
  Then 
  $$\aligned |\sum_{j=1}^{+\infty} q_{1}(j, x)\frac{\phi(v_{\alpha})}{\Phi(v_{\alpha})}|&\le f_1\sum_{j=1}^{+\infty} v_{\alpha}^2\frac{\phi(v_{\alpha})}{\Phi(v_{\alpha})}+|f_2(x)|\sum_{j=1}^{+\infty} |v_{\alpha}|\frac{\phi(v_{\alpha})}{\Phi(v_{\alpha})}+|f_3(x)|\sum_{j=1}^{+\infty} \frac{\phi(v_{\alpha})}{\Phi(v_{\alpha})}, \endaligned $$ 
  which in further ensures that   
 \begin{equation}\label{sumcontrol} \aligned \Phi_{\alpha}(x)\sum_{j=1}^{+\infty} \frac{|v^{k}_{\alpha}|\phi(v_{\alpha})}{\Phi(v_{\alpha})}&\le \sum_{j=1}^{+\infty} |v_{\alpha}^k|\phi(v_{\alpha})\le \sqrt{\alpha},
  \endaligned \end{equation}
   where the last inequality holds because the second term is controlled by the following integral 
   $$\sqrt{\alpha}\int_{-\infty}^{+\infty} |t|^k \phi(t) dt$$ for $k=0, 1, 2.$ This  observation tells us that 
    \begin{equation}\label{sumtotal} \aligned &\quad \Phi_{\alpha}(x)\sum_{j=1}^{+\infty} |q_{1}(j, x)|\frac{\phi(v_{\alpha})}{\Phi(v_{\alpha})}\le \frac{\alpha}{6}+(|f_2(x)|+|f_3(x)|)\Phi_{\alpha}(x)\sum_{j=1}^{+\infty} (|v_{\alpha}|\vee 1)\frac{\phi(v_{\alpha})}{\Phi(v_{\alpha})}.
  \endaligned \end{equation} 
  As a direct consequence, for $x$ such that $|v_{\alpha}(1, x)|\le 1$ ensuring the boundedness of $|f_2(x)|+|f_3(x)|,$ we have  
   $$\aligned \sup_{x: \; |v_{\alpha}(1, x)|\le 1}\Phi_{\alpha}(x)|\sum_{j=1}^{+\infty} q_{1}(j, x)\frac{\phi(v_{\alpha})}{\Phi(v_{\alpha})}|
   &\le \frac{\alpha}{6}+\sqrt{\alpha}\sup_{x: \; |v_{\alpha}(1, x)|\le 1} (|f_2(x)|+|f_3(x)|)\\
   &\le \frac{4(\alpha+\sqrt{\alpha})}{3}.\endaligned $$ 
 For $x$ such that $v_{\alpha}(1, x)>1,$ it follows from the monotonicity of $t^k \phi(t)$ on $t>1$ that 
 \begin{equation}\label{sumkbound}\aligned \sum_{j=1}^{+\infty} |v_{\alpha}^k|\phi(v_{\alpha})&\le v_{\alpha}^k(1, x)\phi(v_{\alpha}(1, x))+\sqrt{\alpha}\int_{v_{\alpha}(1, x)}^{+\infty} t^k \phi(t) dt\\
 &\le v_{\alpha}^{k-1}(1, x)(v_{\alpha}(1, x)+\sqrt{\alpha})\phi(v_{\alpha}(1, x))\endaligned\end{equation}
 for $k=0, 1.$
 Therefore,  we have from \eqref{sumtotal} and \eqref{sumkbound} that 
 $$\aligned &\quad \Phi_{\alpha}(x)|\sum_{j=1}^{+\infty} q_{1}(j, x)\frac{\phi(v_{\alpha})}{\Phi(v_{\alpha})}|\le \frac{\alpha}{6}+\phi(v_{\alpha}(1, x))(v_{\alpha}(1, x)+\sqrt{\alpha})(|f_2(x)|+|f_3(x)|v_{\alpha}^{-1}(1, x)).\endaligned$$  
Hence, use the substitution $t=v_{\alpha}(1, x)$ to have 
$$\aligned  \sup_{x: \; v_{\alpha}(1, x)>1}\Phi_{\alpha}(x)|\sum_{j=1}^{+\infty} q_{1}(j, x)\frac{\phi(v_{\alpha})}{\Phi(v_{\alpha})}|
 &\le \frac{\alpha}{6}+\sup_{t>1}\phi(t)(t+\sqrt{\alpha})(\sqrt{\alpha} t+\frac34+\frac{\sqrt{\alpha}}{6t})\\
 &\le \alpha+\frac{2\sqrt{\alpha}+1}{5}.\endaligned$$
 While for $x$ such that $v_{\alpha}(1, x)<-1,$ define 
  $$j_0:=\inf\{j: v_{\alpha}(j, x)\ge 0\},$$ which 
  implies with the monotonicity of $\Phi$ and $\Phi\le 1$ that $$\frac{\Phi_{\alpha}(x)}{\Phi(v_{\alpha}(j, x))}\le \Phi^{j_0-1}(v_{\alpha}(j_0, x))\le 2^{-j_0+1}.$$
   Thus, using the second inequality in \eqref{sumcontrol}, we have 
    $$\aligned  \Phi_{\alpha}(x)\sum_{j=1}^{+\infty} \frac{|v_{\alpha}|^k\phi(v_{\alpha})}{\Phi(v_{\alpha})}&\le \sqrt{\alpha} 2^{-j_0+1}\leq\sqrt{\alpha} 2^{\sqrt{\alpha} v_{\alpha}(1, x)+1},\endaligned $$ 
 where we use the fact that $j_0\geq -\sqrt{\alpha} v_{\alpha}(1, x).$
 Consequently, we have 
  $$\aligned \sup_{x: \; v_{\alpha}(1, x)<-1}\Phi_{\alpha}(x)|\sum_{j=1}^{+\infty} q_{1}(j, x)\frac{\phi(v_{\alpha})}{\Phi(v_{\alpha})}|&\le \frac{\alpha}{6}+2\sqrt{\alpha}\sup_{x: \; v_{\alpha}(1, x)<-1} 2^{\sqrt{\alpha} v_{\alpha}(1, x)}(|f_2(x)|+|f_3(x)|)\\
  &=\frac{\alpha}{6}+\sup_{t\ge \sqrt{\alpha}} 2^{-t}(t^2 +(1+\sqrt{\alpha})t+\frac{\sqrt{\alpha}}{2}+\frac{\alpha}{3})\\
  &\le \frac{\alpha}{6}+\frac{4}{3}.\endaligned$$ 
  When comparing these three suprema, we have  
   $$\aligned \sup_{x\in\mathbb{R}}\Phi_{\alpha}(x)|\sum_{j=1}^{+\infty} q_{1}(j, x)\frac{\phi(v_{\alpha})}{\Phi(v_{\alpha})}|&\le \frac{4}{3}(\alpha+\sqrt{\alpha}+1).\endaligned$$ 
   
We now estimate $\sup_{x\in\mathbb{R}}\Phi_{\alpha}(x)|\sum_{j=1}^{+\infty} q_{2}(j, x)\frac{\phi(v_{\alpha})}{\Phi(v_{\alpha})}|.$
A simple calculus indicates 
$$|q_2(j, x)|\le \frac{1}{2\alpha}|v_{\alpha}(j, x)|+(\frac1{2\alpha}-\frac{c_2}{b})|v_{\alpha}(1, x)|+c_1+\frac{c_2\alpha}{b}=:\frac{1}{2\alpha}|v_{\alpha}(j, x)|+f_4(x),$$
whence 
\begin{equation}\label{sumcontrol1}\Phi_{\alpha}(x)|\sum_{j=1}^{+\infty} q_{2}(j, x)\frac{\phi(v_{\alpha})}{\Phi(v_{\alpha})}|\le f_4(x)\Phi_{\alpha}(x)\sum_{j=1}^{+\infty}\frac{\phi(v_{\alpha})}{\Phi(v_{\alpha})}+\frac{1}{2\alpha}\Phi_{\alpha}(x)\sum_{j=1}^{+\infty} \frac{|v_{\alpha}|\phi(v_{\alpha})}{\Phi(v_{\alpha})}.\end{equation} Thereby, it follows from the second inequality of \eqref{sumcontrol} that 
 $$\aligned \sup_{x: \; |v_{\alpha}(1, x)|\le 1}\Phi_{\alpha}(x)|\sum_{j=1}^{+\infty} q_{2}(1, x)\frac{\phi(v_{\alpha})}{\Phi(v_{\alpha})}|
   &\le \sqrt{\alpha}\sup_{x: \; |v_{\alpha}(j, x)|\le 1} f_4(x)+\frac{1}{2\sqrt{\alpha}}\\
   &=\sqrt{\alpha}(\frac1{\alpha}+\frac{c_2(\alpha-1)}{b}+c_1).\endaligned $$ 
 and similarly 
   $$\aligned \sup_{x: \; v_{\alpha}(1, x)\le -1}\Phi_{\alpha}(x)|\sum_{j=1}^{+\infty} q_{2}(j, x)\frac{\phi(v_{\alpha})}{\Phi(v_{\alpha})}|
   &\le \frac{1}{2\sqrt{\alpha}}+2 \sqrt{\alpha} \sup_{x: \; v_{\alpha}(1, x)\le -1} 2^{\sqrt{\alpha}v_{\alpha}(1, x) } f_4(x)\\
   &\le \frac{1}{2\sqrt{\alpha}}+\frac{2}{e\ln 2}(c_1+\frac{c_2(\alpha-1)}{b}+\frac1{\alpha}).\endaligned $$  
   Here, for the last inequality we use $\sup_{t>0} 2^{-t} t=(e\ln 2)^{-1}.$

 We also have from \eqref{sumkbound} that 
   $$\aligned &\quad \sup_{x: \; v_{\alpha}(1, x)\ge 1}\Phi_{\alpha}(x)|\sum_{j=1}^{+\infty} q_{2}(j, x)\frac{\phi(v_{\alpha})}{\Phi(v_{\alpha})}|\\
 &\le \sup_{x: \; v_{\alpha}(1, x)\ge 1} \phi(v_{\alpha}(1, x))(v_{\alpha}(1, x)+\sqrt{\alpha} )(v_{\alpha}(1, x)^{-1}f_4(x)+\frac{1}{2\alpha})\\
 &=\sup_{t>1} \phi(t)\big((\frac{1}{\alpha}-\frac{c_2}{b}) t+\frac{c_1b+c_2\alpha}{b}+(\frac{1}{\alpha}-\frac{c_2}{b})\sqrt{\alpha}+\frac{c_1b+c_2\alpha}{b t} \sqrt{\alpha}\big)\\
 &=e^{-\frac12}(\sqrt{\alpha}+1)(\frac{1}{\alpha}+c_1+\frac{c_2(\alpha-1)}{b}).
 \endaligned$$ 
 Comparing these three upper bounds, we have 
$$\aligned \sup_{x\in\mathbb{R}}\Phi_{\alpha}(x)|\sum_{j=1}^{+\infty} q_{2}(j, x)\frac{\phi(v_{\alpha})}{\Phi(v_{\alpha})}|&\le \frac{2}{e\ln 2}(c_1+\frac{c_2(\alpha-1)}{b}+\frac1{\alpha})(1+\sqrt{\alpha}).\endaligned$$ 

\subsection*{Acknowledgment}  Yutao Ma acknowledges partial support from the National Natural Science Foundation of China (Grants No. 12171038 and 12571149) and the 985 Project. The authors also wish to thank Professor Forrester for pointing out several relevant references, and Professor Liming Wu and Professor Zhenqi Wang for their encouragement and valuable help throughout the course of this research.


\begin{thebibliography}{SOSL90} 
\bibitem{Abramowitz1968} Abramowitz M, Stegun I A. Handbook of mathematical functions with formulas, graphs, and mathematical tables. US Government printing office, 1968.

\bibitem{Adhi} Adhikari K, Reddy N K, Reddy T R, Saha K. Determinantal point processes in
the plane from products of random matrices. \emph{Ann. Inst. H. Poincare Probab. Statist.},
2016, {\bf 52}: 16-46.

\bibitem{AB}
 Akemann G, Bender M. Interpolation between Airy and Poisson statistics for unitary chiral non-Hermitian random matrix theory. \emph{J. Math. Phys.}, 2010, \textbf{51}, 103524. 
 

\bibitem{AkBur} Akemann G, Burda Z. Universal microscopic correlation functions for products of independent Ginibre matrices. \emph{ J. Phys. A: Math. Theor.}, 2012, {\bf 45}(46): 465201.

\bibitem{AkIp15} Akemann G, Ipsen J R. Recent exact and asymptotic results for products of independent random matrices. \emph{Acta Phys. Pol. B,} 2015, {\bf 46}(9): 1747-1784.

\bibitem{AnG}
 Anderson G, Guionnet A, Zeitouni O. \emph{An Introduction to Random Matrices}. Cambridge University Press, Cambridge, 2010.


\bibitem{BaiY98} Bai Z D, Yin Y Q. Limiting behavior of the norm of products of random matrices and two problems of Geman-Hwang. \emph{Probab. Th. Relat. Fields}, 1998, {\bf 73}: 555-569.

\bibitem{bellman} Bellman R. Limit theorems for non-commutative operations. \emph{I. Duke Math. J.}, 1954, {\bf 21}: 491-500. 

\bibitem{Benderellip} Bender M. Edge scaling limits for a family of non-Hermitian random matrix ensembles. \emph{Probab. Th. Relat. Fields}, 2010, {\bf 147}: 241-271.

\bibitem{Bordenave} Bordenave C. On the spectrum of sum and product of non-Hermitian random matrices. \emph {Electron. Commun. Probab.}, 2011, {\bf 16}: 104-113.

\bibitem{Circular law} Bordenave C, Chafai D. Around the circular law. \emph{Probab. Surv.}, 2012, {\bf 9}: 1-89.  

\bibitem{BouLa} Bougerol P, Lacroix J. Products of random matrices with applications to Schrodinger operators. Progress in Probability and Statistics {\bf 8}. Birkhauser Boston, Inc, Boston, 1985. 

\bibitem{BurdaJW10} Burda Z, Janik R A, Waclaw B. Spectrum of the product of independent random Gaussian matrices. \emph{Phys. Rev. E,} 2010, {\bf 81}: 041132.

\bibitem{BJLN10} Burda Z, Jarosz A, Livan G, Nowak M A,  Swiech A.  Eigenvalues and singular values of products of rectangular Gaussian random matrices. \emph{ 
Phys. Rev. E}, 2010, {\bf 82}(6), 061114.

\bibitem{Burda13} Burda Z. Free products of large random matrices-a short review of recent developments. \emph{J. Phys.: Conf. Ser.}, 2013, {\bf 473}: 012002.
\bibitem{Byun2025}
Byun S S, Forrester P J. \emph{Progress on the Study of the Ginibre Ensembles}, 1st ed., Springer Singapore, 2025.

\bibitem{CJQ25} Chang S, Jiang T, Qi Y. Eigenvalues of product of Ginibre ensembles and their inverses and that of truncated Haar Unitary Matrices and their inverses. \emph{J. Math. Phys.}, 2025, {\bf 66}, 063301. 

\bibitem{ChangQ17} Chang S, Qi Y. Empirical distribution of scaled eigenvalues for product of matrices from the spherical ensemble. \emph{Stat. Probab. Lett.}, 2017, {\bf 128}: 8-13.

\bibitem{ChangLQ} Chang S, Li D, Qi Y. Limiting distributions of spectral radii for product of matrices from the spherical ensemble. \emph{J. Math. Anal. Appl.}, 2018, {\bf 461}: 1165-1176.

\bibitem{Chen2011} Chen L H Y, Goldstein L, Shao Q M. Normal approximation by Stein's method. Springer, 2011. 

\bibitem{Cipolloni22Directional} Cipolloni G, Erd\"{o}s L, Schr\"{o}der D, Xu Y. Directional extremal statistics for Ginibre eigenvalues. \emph{J. Math. Phys.}, 2022, \textbf{63}(10), 103303.

\bibitem{Cipolloni22rightmost}
		Cipolloni G, Erd\"{o}s L, Schr\"{o}der D, Xu Y. On the rightmost eigenvalue of non-Hermitian random matrices. \emph{Ann. Probab.}, 2022, \textbf{51}(6): 2192-2242.

\bibitem{CipoErXu} Cipolloni G, Erd$\ddot{o}$s L, Xu Y. Universality of extremal eigenvalues of large random matrices. arXiv:2312.08325. 


\bibitem{Crisanti12} Crisanti A, Paladin G, Vulpiani A. Products of random matrices in statistical physics. Springer, 2012.

\bibitem{DasGupta2008} DasGupta A. Asymptotic theory of statistics and probability. Springer, 2008.


\bibitem{Esseen1945} Esseen C G. Fourier analysis of distribution functions. A mathematical study of the Laplace-Gaussian law. \emph{Acta Math.}, 1945, {\bf 77}: 1-125.

\bibitem{Forrester15} Forrester P J. Asymptotics of finite system Lyapunov exponents for some random matrix ensembles.  \emph{J. Phys. A: Math. Theoret.}, 2015, {\bf 48}, 215205. 

\bibitem{ForIpsen16} Forrester P J, Ipsen, I R.  Real eigenvalue statistics for products of asymmetric real Gaussian matrices. \emph{Linear Algebra Appl.}, 2016, {\bf 510}: 259-290. 

\bibitem{FurKesten} Furstenberg H, Kesten H. Products of random matrices. \emph{Ann. Math. Stat.}, 1960, {\bf 31}: 457-469. 

\bibitem{Gohberg} Gohberg I, Goldberg S,  Krupnik N. \emph{Traces and determinants of linear operators. Operator Theory: Advances and Applications.} {\bf 116} Birkhauser, Basel, 2000.

\bibitem{Gorin22} Gorin V, Sun, Y. Gaussian fluctuations for products of random matrices. \emph{Amer. J. Math.}, 2022, {\bf  144} (2): 287-393.

\bibitem{GT} G$\ddot{o}$tze F,  Tikhomirov A. On the asymptotic spectrum of products of independent random matrices. arXiv:1012.2710, 2010.

\bibitem{Gotze2015} G$\ddot{o}$tze F, Kosters H, Tikhomirov A. Asymptotic spectra of matrix-valued functions of independent random matrices and free probability. \emph{Random Matrices: Th. Appl.}, 2015, {\bf 4}: 1550005.

\bibitem{Gradshteyn2007} Gradshteyn I S, Ryzhik I M, Jeffrey A. Table of integrals, series, and products. 7th ed. Academic Press, 2007.
\bibitem{HKPV2006} Hough J B, Krishnapur M, Peres Y, Vir\'ag B. Determinantal processes and independence. \emph{Probab. Surveys}, 2006, {\bf 3}: 206-229. 
\bibitem{HKPV2009} Hough J, Krishnapur M, Peres Y, Vir\'ag B. Zeros of Gaussian analytic functions and determinantal point Processes. American Mathematical Society, Providence, RI 2009.

\bibitem{HuMa25} Hu X, Ma Y. Convergence rate of extreme eigenvalue of Ginibre ensembles to Gumbel distribution. arXiv:2506.04560, 2025. 

\bibitem{Hwang} Hwang C R. A brief survey on the spectral radius and the spectral distribution of large random matrices with iid entries. \emph{Random Matrices  Appl.}, 1986, 145-152.

\bibitem{Ipsen} Ipsen J R. Products of independent Gaussian random matrices. Bielefeld University, 2015.
\bibitem{Ipsen2015} Ipsen J R. Lyapunov exponents for products of rectangular real, complex and quaternionic Ginibre matrices. \emph{J. Phys. A: Math. Theor.}, 2015, {\bf  48} (15), 155204.

\bibitem{IpsenKie14} Ipsen J R, Kieburg M. Weak commutation relations and eigenvalue statistics for products of rectangular random matrices. \emph{Phys. Rev. E}, 2014, {\bf 89}(3): 032106. 


\bibitem{JQ17} Jiang T, Qi Y. Spectral radii of large non-Hermitian random matrices. \emph{J. Theor. Probab.,} 2017, {\bf 30}(1): 326-364.

\bibitem{JQ19} Jiang T, Qi Y. Empirical distributions of eigenvalues of product ensembles. \emph{J. Theor. Probab.}, 2019, {\bf 32}: 353-394.

\bibitem{Kopel2020} Kopel P, O'Rourke S, Vu V. Random matrix products: Universality and least singular values. \emph{ Ann. Probab.}, 2020, {\bf  48}(3): 1372-1410.

\bibitem{Kostlan1992} Kostlan E. On the spectra of Gaussian matrices. \emph{ Linear Algebra Appl.}, 1992, {\bf 162}: 385-388.

\bibitem{LW15} Liu D, Wang Y. Universality for products of random matrices I: Ginibre and truncated unitary cases. \emph{Int. Math. Res. Not.}, 2015, {\bf 16}: 6988-7015.

\bibitem{LWW23} Liu D, Wang D, Wang, Y. Lyapunov exponent, universality and phase transition for products of random matrices. \emph{Comm. Math. Phys.}, 2023, {\bf  399}: 1811-1855. 

\bibitem{LW24} Liu D, Wang Y. Phase transitions for infinite products of large non-Hermitian random matrices. \emph{Ann. Inst. H. Poincar\'e Probab. Statist.}, 2024, {\bf 60}(4): 2813-2848.

\bibitem{MaQi2024}
Ma X, Qi Y. Limiting Spectral Radii for Products of Ginibre Matrices and Their Inverses. \emph{
J. Theoret. Probab.}, 2024, {\bf 37}: 
3756-3780.

\bibitem{MaMeng2025} Ma Y, Meng X. Exact convergence rate of spectral radius of complex Ginibre to Gumbel distribution. \emph{Random matrices: Th Appl.}, 2026, 2650003. 


\bibitem{MaTian} Ma Y, Tian B. Revisit on the convergence rate of normal extremes. arXiv:2507.09496, 2025.


\bibitem{MaWang25} Ma Y, Wang S. Optimal $W_1$ and Berry-Esseen bound between the spectral radius of large chiral non-Hermitian random matrices and Gumbel. arXiv:2501.08661, 2025.

\bibitem{Mehta} Mehta M L. Random matrices and the statistical theory of energy levels. Academic Press, 1967.

\bibitem{Petrov1975} Petrov V V. Sums of independent random variables. Springer, 1975.
\bibitem{QiXie} Qi Y, Xie M. Spectral radii of products of random rectangular matrices. \emph{J. Theoret. Probab.}, 2020, {\bf 33}: 2185-2212. 

\bibitem{Reiss1989} Reiss R D. Approximate distributions of order statistics. Springer, 1989.

\bibitem{RS11} O'Rourke S, Soshnikov A. Products of independent non-Hermitian random matrices. \emph{ Electron. J. Probab.}, 2011, {\bf 16}(81): 2219-2245.

\bibitem{RRSV15} O'Rourke S, Renfrew D, Soshnikov A, Vu V. (2015) Products of independent elliptic random matrices. \emph{ J. Stat. Phys.}, 2015, {\bf 160}(1): 89-119.

\bibitem{Taobook} Tao T. Topics in random matrix theory. American Mathematical Society, 2012.

\bibitem{Tao and Vu} Tao T, Vu V. From the Littlewood-Offord problem to the circular law: universality of the spectral distribution of random matrices. \emph{ Bull. Amer. Math. Soc.}, 2009, {\bf 46}(3): 377-396.

\bibitem{Tik} Tikhomirov A N. On the asymptotics of the spectrum of the product of two rectangular random matrices. \emph{ Sib. Math. J.}, 2011, {\bf 52}(4): 747-762.
\bibitem{Wang18} Wang Y. Order statistics of the moduli of the eigenvalues of product 
random matrices from polynomial ensembles. \emph{Electron. Commun. Probab.}, 2018, {\bf 23}(21): 1-14. 

\bibitem{Zeng16} Zeng X. Eigenvalues distribution for products of independent spherical ensembles. \emph{ J. Phys. A: Math. Theor.}, 2016, {\bf 49}: 235201.

\bibitem{Zeng17} Zeng X. Limiting empirical distribution for eigenvalues of products of random rectangular matrices. \emph{Stat. Probab. Lett.}, 2017, {\bf 126}: 33-40.



\end{thebibliography}
\end{document}